\documentclass[fleqn,10pt]{article}

\usepackage{titlesec}

\titleformat{\paragraph}
{\normalfont\normalsize\bfseries}{\theparagraph}{1em}{}
\titlespacing*{\paragraph}
{0pt}{3.25ex plus 1ex minus .2ex}{1.5ex plus .2ex}

\usepackage{latexsym, graphicx, epsfig, amsmath, amssymb,amsfonts}
\usepackage{natbib,amsthm,version}
\usepackage{amsbsy,bm,multirow}
\usepackage[titletoc,page]{appendix}
\usepackage[mathscr]{eucal}
\usepackage{mathtools}
\usepackage{color}
\usepackage[utf8]{inputenc}
\usepackage[english]{babel}
\usepackage{amsthm}
\usepackage[hidelinks]{hyperref}
\usepackage{url}
\usepackage[lined,boxed,linesnumbered,ruled]{algorithm2e}
\usepackage{float}
\usepackage{arydshln}
\usepackage{dsfont,bbm}
\usepackage{courier}
\usepackage{caption,subcaption}
\usepackage{epstopdf}
\usepackage{mathabx}

\usepackage[shortlabels]{enumitem}
\setlist{nolistsep}

\usepackage{natbib}
\usepackage{pgfplots}
\pgfplotsset{compat=1.17}
\usetikzlibrary{arrows.meta}

\newcommand{\mbs}[1]{\mathbf{#1}}
\newcommand{\mbb}[1]{\mathbb{#1}}

\newtheorem{theorem}{Theorem}[section]

\newtheorem{lemma}[theorem]{Lemma}
\newtheorem{property}{Property}[section]
\newtheorem{remark}{Remark}[section]

\theoremstyle{definition}

\title{
  A Method for
  Exact Enforcement of Boundary Conditions on General Curved
  Triangular Domains: Algorithm and Application to Scientific Machine Learning
  
} 
\author{
  Suchuan Dong\thanks{Author of correspondence. Email: sdong@purdue.edu}, \
  Yuchuan Zhang
  \\
  Center for Computational and Applied Mathematics \\
  Department of Mathematics \\
  Purdue University, USA 
 } 

\date{(October 12, 2026)}

\begin{document}
\maketitle


\begin{abstract}

  We present a method for enforcing exactly Dirichlet, Neumann, and Robin
  conditions on general triangular domains with arbitrarily curved boundaries.
  The algorithmic constructions developed here complements those in
  a previous work for general quadrilateral domains.
  Our method combines (i) the exact mapping between a general curved
  triangular domain and the standard triangle, (ii) TFC (theory of functional
  connections) constrained expressions, and (iii) Barnhill transfinite
  interpolations, to provide explicit closed-form trial ansatze that mathematically
  satisfy the imposed boundary conditions exactly.
  Corner compatibility induced by the boundary conditions is
  pivotal, and enforcing exactly the imposed boundary conditions hinges on
  the exact enforcement of the induced compatibility constraints.
  The corner compatibility  associated with
  the acute vertices of the standard triangle and the coupling between 
  gradient components when the hypotenuse corresponds to a Neumann or Robin
  boundary pose particular challenges. 
  Two  strategies are critical to
  resolving these issues, by reformulating
  conditional corner constraints into equivalent unconditional forms and by introducing a
  separate TFC-style auxiliary edge construction together with an associated linear solve
  on the hypotenuse. We analyze the corner compatibility constraints
  and present formulations for the general form of functions that exactly
  satisfy the imposed Dirichlet, Neumann, or Robin conditions
  on general curved triangular domains.
  The proposed method has been implemented together with extreme learning machine (ELM) type
  randomized neural networks for solving PDE problems.
  We present ample numerical experiments with linear/nonlinear, stationary/dynamic
  boundary-value or initial/boundary-value problems on a variety of domains
  with complex boundary geometries. Numerical results show that
  the current method has enforced the Dirichlet, Neumann, and Robin conditions
  on curved domain boundaries to the machine precision.

\end{abstract}


\vspace{0.05cm}
Keywords: {\em
  exact boundary condition enforcement,
  artificial neural networks,
  scientific machine learning,
  transfinite interpolation,
  extreme learning machine
}



\section{Introduction}


Physics informed machine learning has recently emerged as a viable alternative and complement to
traditional scientific computing techniques. Physical
laws and thermodynamic principles, often expressed in the form of partial or ordinary
differential equations (PDEs/ODEs),
together with the associated boundary or initial conditions (BC/IC), are incorporated
into the training of artificial neural networks (NNs), either through
loss functions or by other means, to approximate the solution fields and
other quantities of interest in the underlying physical problems.
The universal approximation capabilities and related theoretical properties
of neural networks suggest enormous potential for scientific computing. 
However, this theoretical expressive power does not necessarily translate into
efficient, accurate, or robust numerical algorithms. Bridging this gap remains
a fundamental challenge in scientific machine learning.

Treatment of boundary conditions (including initial conditions) is a central
issue  in physics informed machine learning.
Boundary conditions represent how a physical system interacts with its environment and,
together with the governing equations, determine the admissible solution.
Their treatment intimately influences the accuracy and fidelity of scientific machine learning:
a neural approximation can exhibit a small interior PDE residual and yet remain inconsistent
with the prescribed boundary data. This issue is addressed in the literature  through
several approaches,
including residual penalties, constrained optimization, admissible trial functions,
and variational boundary terms.
The distinction between these approaches concerns not only the implementation and
training cost, but also whether boundary satisfaction relies on the
outcome of optimization or is an intrinsic property of the neural representation.

In the canonical physics-informed neural network (PINN) formulation, boundary and initial conditions contribute weighted residual terms to the training loss alongside the
differential equation \citep{Raissi2019}. This soft treatment readily accommodates
mixed boundary operators
and permits fitting incomplete or noisy observations.
However, the competing objectives may not decrease at comparable rates,
and satisfying the boundary loss at sampled points
does not ensure uniform satisfaction on the boundary.
These difficulties are linked 
to stiffness and unbalanced gradients in composite PINN objectives
by~\citep{WangTengPerdikaris2021},
which introduced adaptive balancing based on gradient statistics.
Subsequent developments include trainable pointwise loss weights in self-adaptive PINNs \citep{McClennyBragaNeto2023}, relative loss balancing with random lookback \citep{BischofKraus2025},
and Lagrangian or augmented-Lagrangian approaches that update multipliers and
penalties \citep{LuEtAl2021,Son2023,MakridakisEtAl2024}.
These techniques improve the treatment of competing constraints,
but numerical boundary satisfaction still depends on the approximation, sampling, and convergence.
In particular, constraints designated ``hard'' in an optimization formulation
may not hold identically for every network output.

Representation-based enforcement instead restricts the approximation to functions satisfying the prescribed data. This strategy predates contemporary physics informed neural network methods.
Lagaris et al.\ \citep{Lagaris1998} introduced the neural trial-solution form with a boundary-satisfying component and a correction term that leaves the boundary data unchanged. Subsequent work
explored radial-basis corrections for irregular boundaries \citep{Lagaris2000} and length-factor constructions for mixed Dirichlet--Neumann problems \citep{McFallMahan2009}.
For Dirichlet conditions, this familiar structure is a lifting of the prescribed data
plus a neural correction multiplied by a factor that vanishes on the constrained boundary.
This approach was extended to complex geometries in
\citep{BergNystrom2018}  with auxiliary neural functions.
In the penalty-free neural network method~\citep{ShengYang2021}
the boundary and interior components were constructed separately and combined into
a weak formulation.
When the lifting and correction have the required trace properties,
this structure removes the corresponding boundary constraint from the NN training.

The geometric components of admissible trial functions are crucial to both exactness and numerical performance. Sukumar and Srivastava \citep{SukumarSrivastava2022} developed a systematic framework
using approximate distance functions and the theory of R-functions
for several boundary operators. An approximate distance function can differ from the
Euclidean distance while retaining the exact boundary as its zero set; its use therefore need not introduce an error in homogeneous Dirichlet data. In contrast, unconstrained regression of a lifting or length factor generally leaves an auxiliary fitting error in the boundary trace.
Automatic boundary fitting is investigated in
\citep{XieMaWang2023}   with radial-basis and neural-network functions, reducing manual construction.
Dirichlet treatments in PINNs and variational PINNs are compared in
\citep{Berrone2023}, which finds a substantial advantage in exact output modification
from the numerical tests.

Derivative conditions introduce additional issues. A vanishing factor preserves
a Dirichlet trace but generally changes the normal derivative, and so Neumann and Robin
data require constructions that account for the boundary normal and derivatives of the
correction. Exact boundary and initial condition enforcement in
the deep mixed residual method is investigated in
\citep{LyuEtAl2021}  using distance functions and explicit constructions.
Auxiliary variables are used in \citep{LiuEtAl2022} to
reformulate several types of boundary conditions
as linear constraints and construct admissible outputs on complex geometries.
FO-PINN similarly reduces the derivative order of the PDE residual,
alleviating difficulties associated with differentiating distance-based trial
functions \citep{GladstoneEtAl2025}. For periodic problems,
neural-network layers are introduced in \citep{DongNi2021}
that enforce periodicity of the represented solution and its derivatives to a specified
finite order or to infinite orders. 
Fourier-feature embeddings for hard Neumann constraints
are subsequently developed in \citep{Straub2025} for diffusion problems,
including multiscale settings.
Such architectures encode particular boundary operators directly, although
extending them to other geometries and operators requires additional analysis.

Functional interpolation provides an alternative to the multiplicative boundary factor.
In the theory of functional connections (TFC), linear value and derivative constraints are embedded analytically in constrained expressions containing an otherwise
free function \citep{Mortari2017,MortariLeake2019}.
Deep TFC chooses a neural network as this free function and optimizes the PDE
residual \citep{LeakeMortari2020}, and extreme TFC combines constrained expressions with
randomized hidden-layer features in the neural network \citep{SchiassiEtAl2021}.
Using constrained expressions beyond regular rectangular domains requires
appropriate geometric constructions. Several bijective mappings for this purpose are investigated
in  \citep{MortariArnas2020}. In the more recent work \citep{DongZ2026}
exact domain mappings, TFC constrained expressions, and transfinite interpolation are combined
to impose Dirichlet, Neumann, and Robin conditions exactly on general curved quadrilateral domains.
The treatment therein explicitly incorporates the compatibility constraints
induced at intersecting boundaries, and their numerical tests achieve boundary
residuals at machine accuracy.
A Wachspress-based transfinite construction for exact Dirichlet conditions is developed in
\citep{SukumarRoy2026} for convex polygonal domains, addressing the derivative regularity
limitation of earlier distance-based trial functions.
These developments highlight that geometric flexibility, boundary regularity, and
compatibility of the prescribed data are integral to an exact construction.
At a corner, normal derivatives must be accounted for on the incident sides,
and a smooth trial function must satisfy the resulting compatibility requirements.

Variational formulations offer a different treatment of the boundary data.
In weak or energy formulations, Neumann conditions enter naturally through boundary
integrals, whereas essential Dirichlet conditions require an admissible trial space
or additional boundary terms. The deep Ritz method \citep{EYu2018} and hp-variational
PINNs \citep{KharazmiEtAl2021} integrate neural approximations into these classical formulations.
The deep Nitsche method \citep{LiaoMing2021} imposes essential
conditions using consistent boundary terms and stabilization, avoiding a global lifting.
Natural or Nitsche enforcement does not render the prescribed boundary operator a
pointwise identity of the network. Its accuracy depends on the variational approximation,
quadrature, and stability of the formulation. Accordingly, variational weak enforcement
should be distinguished from simply adding a boundary residual penalty to a strong-form loss.

Boundary treatment has also entered operator-learning investigations.
Generalized solution structures and orthogonal projections are introduced in \citep{Goschel2025}
for physics-informed neural operators to address instabilities in distance-function
constructions on boundaries that are only piecewise smooth. In \citep{FangWangPerdikaris2024}
boundary-integral representations are used to learn parametric PDE solutions from
boundary-only residuals, exploiting the PDE operator's integral structure.

In the present work we develop a method for exact enforcement of Dirichlet, Neumann, and
Robin conditions on general triangular domains with arbitrarily curved boundaries.
This triangular domain-based approach complements
the quadrilateral-based method of \cite{DongZ2026},
and provides an effective means of exactly enforcing boundary conditions
on a class of computational domains with complex boundary geometries.


The current method leverages a combination of three strategies --
exact mapping between a general curved triangular domain and the standard triangle,
TFC constrained expressions, and Barnhill-type transfinite interpolations --
to produce an explicit closed-form trial ansatz that mathematically satisfies
the imposed boundary conditions exactly.
The Barnhill strategy for transfinite interpolation on the right triangle~\cite{BarnhillG1975}
is key to the current constructions.
Unlike Gordon-type transfinite interpolations~\cite{Gordon1971},
which involve commutative operators,
Barnhill transfinite interpolation employs the boolean sum of two operators
that are non-commutative, one for the hypotenuse and the other for the two legs
of the right triangle. The hypotenuse operator blends the
Taylor approximations of appropriate orders on the hypotenuse
employing two-dimensional (2D) Barnhill
polynomials, while the second operator results from a boolean sum of the Taylor approximation
operators of appropriate orders on the two legs. 

The exact mapping between a general triangular domain and the standard triangular
domain is achieved by employing the Barnhill transfinite interpolation and TFC constrained
expressions.
For severely distorted domains, we further impose the Winslow
conditions~\cite{Winslow1967} to maintain univalency while preserving the exactness of
the map. Enforcing Dirichlet conditions on general triangular domains is conceptually
analogous to the formulation of the exact domain map.
Enforcing derivative-type conditions (Neumann or Robin) on curved triangular
domains, on the other hand, pose significant challenges, due to coupling
between the gradient components after transforming
into the standard triangle, especially on the hypotenuse.
Along with the Barnhill transfinite interpolation and the
four-step procedure of~\cite{DongZ2026},  two strategies
are critical to the success of the formulation for  Neumann/Robin conditions:
(i) transforming all conditional compatibility constraints associated with
an acute corner of the standard triangle
into unconditional forms, whenever that corner has an incident
Neumann or Robin side; and (ii) introducing a separate TFC-style auxiliary
edge function on the hypotenuse and an associated linear solve for
the gradient components thereupon,
whenever the hypotenuse corresponds to a Neumann or Robin boundary.
Leveraging these strategies, we analyze the corner compatibility constraints induced 
by the Dirichlet/Neumann/Robin conditions on
the incident boundaries, and devise algorithmic constructions to treat the imposed boundary
conditions and their induced compatibility constraints under two configurations:
(i) Neumann (or Robin) BC imposed on a single boundary, and (ii) Neumann (or Robin)
BC imposed on two triangle boundaries.
We present formulations for the general form of trial functions that exactly satisfy
the imposed boundary conditions and their compatibility constraints.
For all other boundary-condition configurations on the triangular domain
(e.g.~involving combinations of
Dirichlet, Neumann, and Robin conditions), the induced corner compatibility conditions
can be reduced to those associated with the configurations discussed above,
and the general form
of the trial ansatz can be constructed analogously.


The method developed here for exact boundary-condition enforcement has been implemented
together with extreme learning machine (ELM) type randomized neural
networks~\cite{DongL2021,DongL2021bip,NiD2023,DongW2023,WangD2024} for solving boundary-value or
initial/boundary-value problems (BVPs/IBVPs). ELM employs randomized feed-forward neural networks,
in which the hidden-layer coefficients are assigned to random values
and fixed (non-trainable)~\cite{HuangZS2006,IgelnikP1995} and only the output-layer
parameters are trained. Moreover, training of the neural network is by the linear
least squares method for linear problems and by the nonlinear least squares method
(Gauss-Newton method) for nonlinear
problems~\cite{DongL2021,Bjorck1996,DongY2022,DongN2025,YangD2026}, not by gradient
descent type algorithms. We refer
to~\cite{CalabroFS2021,SchiassiEtAl2021,ChenCEY2022,SunDF2024,ZhangBJZ2024,FabianiKSY2025,FalcoSC2026}
(among others)
for several related techniques in the literature.

We present extensive numerical experiments involving a range of
linear and nonlinear, stationary and time-dependent PDEs to assess
the effectiveness and performance of the proposed method.
These experiments encompass a 
variety of computational domains with complex boundary geometries,
including time-evolving domains.
The numerical results demonstrate that the ELM technique,
combined with the proposed boundary-condition enforcement method,
produces highly accurate solutions. Furthermore, the proposed method enforces
Dirichlet, Neumann, and Robin boundary conditions exactly by construction,
with the numerical boundary-condition errors consistently at
the machine precision along curved domain boundaries.


The primary contribution of this work is the development of a systematic method
for constructing trial ansatz functions that exactly satisfy Dirichlet, Neumann, and Robin
boundary conditions on general triangular domains with arbitrarily curved boundaries.
A central technical challenge arises from the corner compatibility constraints
when either acute vertex of the standard triangle lies on a Neumann or Robin boundary,
or when the hypotenuse corresponds to a Neumann or Robin boundary.
The characterization of these constraints, induced by different combinations
of Dirichlet, Neumann, and Robin conditions on adjacent boundaries,
and the development of algorithmic procedures for systematically incorporating and
enforcing them constitute a key aspect of the proposed method.
A further contribution is the numerical demonstration of the method's effectiveness
on computational domains with complex boundary geometries,
achieving boundary-condition errors at the machine precision
for Dirichlet, Neumann, and Robin conditions.

The remainder of this paper is organized as follows. Section \ref{sec_method}
develops the exact mapping between general curved triangular domains and the standard
triangle, together with formulations and algorithms for exact enforcement of Dirichlet,
Neumann, and Robin boundary conditions. It also discusses the ELM implementation for
PDE problems and the use of Winslow conditions to maintain the univalency of domain
mapping. Section \ref{sec_tests} presents numerical experiments involving linear and
nonlinear, stationary and time-dependent PDEs on domains with complex boundary geometries,
demonstrating boundary-condition enforcement to machine precision.
Section \ref{sec_summary} summarizes the method and offers concluding remarks.
The Appendix (Section \ref{sec_append}) provides the formulations for Neumann or
Robin conditions on the hypotenuse of the standard triangle, along with geometric
parameters for all computational domains used in Section \ref{sec_tests}.


\section{Exact Boundary-Condition Enforcement on Curved Triangular Domains}
\label{sec_method}

\subsection{Blending Functions on $[-1,1]$ and 2D Barnhill Polynomials}

We recall the properties of several Lagrange or Hermite type interpolation polynomials
on $\xi\in[-1,1]$, which are used extensively in 
 subsequent sections. 
Following the terminology in \cite{Gordon1971},
we refer to these polynomials  as  blending functions in one dimension (1D).
We further recall several 2D blending functions (i.e.~Barnhill polynomials), whose defining
conditions are given in~\cite{BarnhillG1975}. These polynomials
are critical to the exact enforcement of boundary conditions over the triangle.

The linear blending functions (Lagrange polynomials) are given by,
\begin{align}\label{eq_5}
  \phi_0(\xi) = \frac12(1-\xi), \quad
  \phi_1(\xi) = \frac12(1+\xi), \quad \xi\in[-1,1],
\end{align}
with the properties $\phi_0(\mp 1)=1/0$ and $\phi_1(\mp 1)=0/1$.
There is a slight abuse of notation here, and ``/'' has been used to separate
the two values. For example, $\phi_0(\mp 1)=1/0$ means that $\phi_0(-1)=1$ and $\phi_0(1)=0$.
Similar notations are adopted subsequently.

The $C^1$ blending functions
(Hermite interpolation polynomials) on $\xi\in[-1,1]$ are given by (see~\cite{DongZ2026}),
\begin{align}\label{eq_29}
  \begin{array}{ll}
    \varphi_0(\xi) = \phi_0^2(\xi)\left[1+2\phi_1(\xi) \right], &
    \varphi_1(\xi) = \phi_1^2(\xi)\left[1+2\phi_0(\xi) \right], \\
    \psi_0(\xi) = 2\phi_0^2(\xi)\phi_1(\xi), &
    \psi_1(\xi) = -2\phi_0(\xi)\phi_1^2(\xi),
  \end{array}
\end{align}
where $\phi_0$ and $\phi_1$ are the linear blending functions.
These functions have the properties,
\begin{align}\label{eq_30}
  \left\{\begin{array}{llll}
    \varphi_0(\mp 1)=1/0, & \varphi_1(\mp 1)=0/1, & \psi_0(\mp 1)=0, & \psi_1(\mp 1)=0, \\
    \varphi_0'(\mp 1)=0, & \varphi_1'(\mp 1)=0, & \psi_0'(\mp 1) = 1/0,
    & \psi_1'(\mp 1) = 0/1; \\
    \varphi_0(\xi)+\varphi_1(\xi)=1,  & \forall\xi\in[-1,1].
  \end{array}\right.
\end{align}

The $C^2$ blending
functions (Hermite interpolation polynomials) on $\xi\in[-1,1]$
are given by (see~\cite{DongZ2026}),
\begin{equation}\label{eq_n4}
  \begin{array}{lll}
    \rho_0(\xi) = \phi_0^3(\xi)\left[1+3\phi_1(\xi)+6\phi_1^2(\xi) \right],
    & \upsilon_0(\xi) = 2\phi_0^3(\xi)\phi_1(\xi)\left[1+3\phi_1(\xi) \right],
    & \omega_0(\xi) = 2\phi_0^3(\xi)\phi_1^2(\xi), \\[2pt]
    \rho_1(\xi) = \phi_1^3(\xi)\left[1+3\phi_0(\xi)+6\phi_0^2(\xi) \right],
    & \upsilon_1(\xi) = -2\phi_1^3(\xi)\phi_0(\xi)\left[1+3\phi_0(\xi) \right],
    & \omega_1(\xi) = 2\phi_1^3(\xi)\phi_0^2(\xi).
  \end{array}
\end{equation}
They have the following properties,
\begin{equation}\label{eq_93}
  \left\{\begin{array}{llllll}
    \rho_0(\mp 1)=1/0, & \rho_1(\mp 1)=0/1, & \upsilon_0(\mp 1)=0, & \upsilon_1(\mp 1)=0
    & \omega_0(\mp 1)=0, & \omega_1(\mp 1)=0 \\
    \rho_0'(\mp 1)=0, & \rho_1'(\mp 1)=0, & \upsilon_0'(\mp 1)=1/0, & \upsilon_1'(\mp 1)=0/1,
    & \omega_0'(\mp 1)=0, & \omega_1'(\mp 1)=0, \\
    \rho_0''(\mp 1)=0, & \rho_1''(\mp 1)=0, & \upsilon_0''(\mp 1)=0, & \upsilon_1''(\mp 1)=0,
    & \omega_0''(\mp 1)=1/0, & \omega_1''(\mp 1)=0/1,
  \end{array}\right.
\end{equation}
and
\begin{align}\label{eq_94}
  &\rho_0(\xi) + \rho_1(\xi)=1, \quad \forall\xi\in[-1,1].
\end{align}

We next recall the 2D blending functions, referred to as Barnhill polynomials
in this paper.
Their defining conditions are specified for the unit square $[0,1]^2$
in~\cite{BarnhillG1975}, which also provides 
the explicit forms for the $C^0$ and $C^1$ Barnhill polynomials.
When adapted to the domain $[-1,1]^2$, the $C^0$ Barnhill polynomials
consist of $\phi_1(\xi)$ and $\phi_1(\eta)$ (linear blending functions).
The $C^1$ Barnhill polynomials are given by,
\begin{align}\label{eq_31}
  \begin{array}{l}
    \alpha(\xi,\eta) = \varphi_1(\eta) + 6\phi_1(\xi)\phi_1^2(\eta)\left[
      1-\phi_1(\xi)-\phi_1(\eta) \right], \\
    \beta(\xi,\eta) = \varphi_1(\xi) + 6\phi_1^2(\xi)\phi_1(\eta)\left[
      1-\phi_1(\xi)-\phi_1(\eta) \right], \quad \xi,\eta\in[-1,1],
  \end{array}
\end{align}
where $\varphi_1$ is defined in \eqref{eq_29}. These functions have the following properties,
\begin{align}\label{eq_32}
  \small
  \left\{
  \begin{array}{llll}
    \alpha(-1,\eta) = \varphi_1(\eta), & \alpha(\xi,-1)=0,
    & \alpha(-\eta,\eta)=\varphi_1(\eta), & \alpha(\xi,-\xi)=\varphi_0(\xi);
    \\
    \beta(-1,\eta)=0, & \beta(\xi,-1) = \varphi_1(\xi), 
    & \beta(-\eta,\eta) = \varphi_0(\eta), & \beta(\xi,-\xi)=\varphi_1(\xi); \\
    \alpha_{\xi}(\xi,-1)=0, & \alpha_{\eta}(\xi,-1)=0,
    & \beta_{\xi}(-1,\eta)=0, & \beta_{\eta}(-1,\eta)=0, \\
    \alpha(-\eta,\eta)+\beta(-\eta,\eta)=1, & \alpha(\xi,-\xi)+\beta(\xi,-\xi)=1, \\
    \alpha_{\xi}(-\eta,\eta) + \beta_{\xi}(-\eta,\eta)=0,
    & \alpha_{\eta}(-\eta,\eta) + \beta_{\eta}(-\eta,\eta)=0.
  \end{array}
  \right.
\end{align}

The explicit form of $C^2$ Barnhill polynomials is not available
in~\cite{BarnhillG1975}. 
But based on the defining conditions therein,
the $C^2$ Barnhill polynomials can be explicitly expressed as,
\begin{align}\label{eq_n9}
  \begin{array}{l}
    \kappa(\xi,\eta) = \rho_1(\eta) + 30\phi_1(\xi)\phi_0(\eta)\phi_1^3(\eta)s(\xi,\eta)[1+3s(\xi,\eta)], \\
    \delta(\xi,\eta) = \rho_1(\xi) + 30\phi_1(\eta)\phi_0(\xi)\phi_1^3(\xi)s(\xi,\eta)[1+3s(\xi,\eta)],
    \quad \xi,\eta\in[-1,1],
  \end{array}
\end{align}
where $s(\xi,\eta) = 1-\phi_1(\xi) - \phi_1(\eta)=\phi_0(\xi)-\phi_1(\eta)$ and
$\rho_1$ is defined in \eqref{eq_n4}.
They have the following properties,
\begin{align}\label{eq_n10}
  \left\{\begin{array}{lll}
  \kappa(-1,\eta) = \rho_1(\eta), &  \kappa(-\eta,\eta) = \rho_1(\eta), & \kappa(\xi,-\xi)=\rho_0(\xi), \\
  \delta(\xi,-1) = \rho_1(\xi), & \delta(-\eta,\eta) = \rho_0(\eta), & \delta(\xi,-\xi)=\rho_1(\xi), \\
  \kappa(\xi,-1)=0, & \kappa_{\eta}(\xi,-1)=0, & \kappa_{\eta\eta}(\xi,-1)=0, \\
  \delta(-1,\eta) = 0, & \delta_{\xi}(-1,\eta)=0, & \delta_{\xi\xi}(-1,\eta)=0, \\
  \kappa(-\eta,\eta) + \delta(-\eta,\eta)=1, & \kappa_{\xi}(-\eta,\eta)+\delta_{\xi}(-\eta,\eta)=0,
  & \kappa_{\eta}(-\eta,\eta) + \delta_{\eta}(-\eta,\eta)=0, \\
  \kappa_{\xi\xi}(-\eta,\eta)+\delta_{\xi\xi}(-\eta,\eta)=0,
  & \kappa_{\eta\eta}(-\eta,\eta)+\delta_{\eta\eta}(-\eta,\eta)=0,
  & \kappa_{\xi\eta}(-\eta,\eta)+\delta_{\xi\eta}(-\eta,\eta)=0.
  \end{array}\right.
\end{align}
The $C^1$ and $C^2$ Barnhill polynomials are critical to the exact enforcement
of Neumann and Robin 
conditions on those boundaries that correspond to the hypotenuse of the standard triangle.

\subsection{Mapping General Triangular Domains with Curved Boundaries}
\label{sec_21}

\begin{figure}[tb]
  \centerline{
    \begin{tabular}{ccc}
    \includegraphics[width=2in]{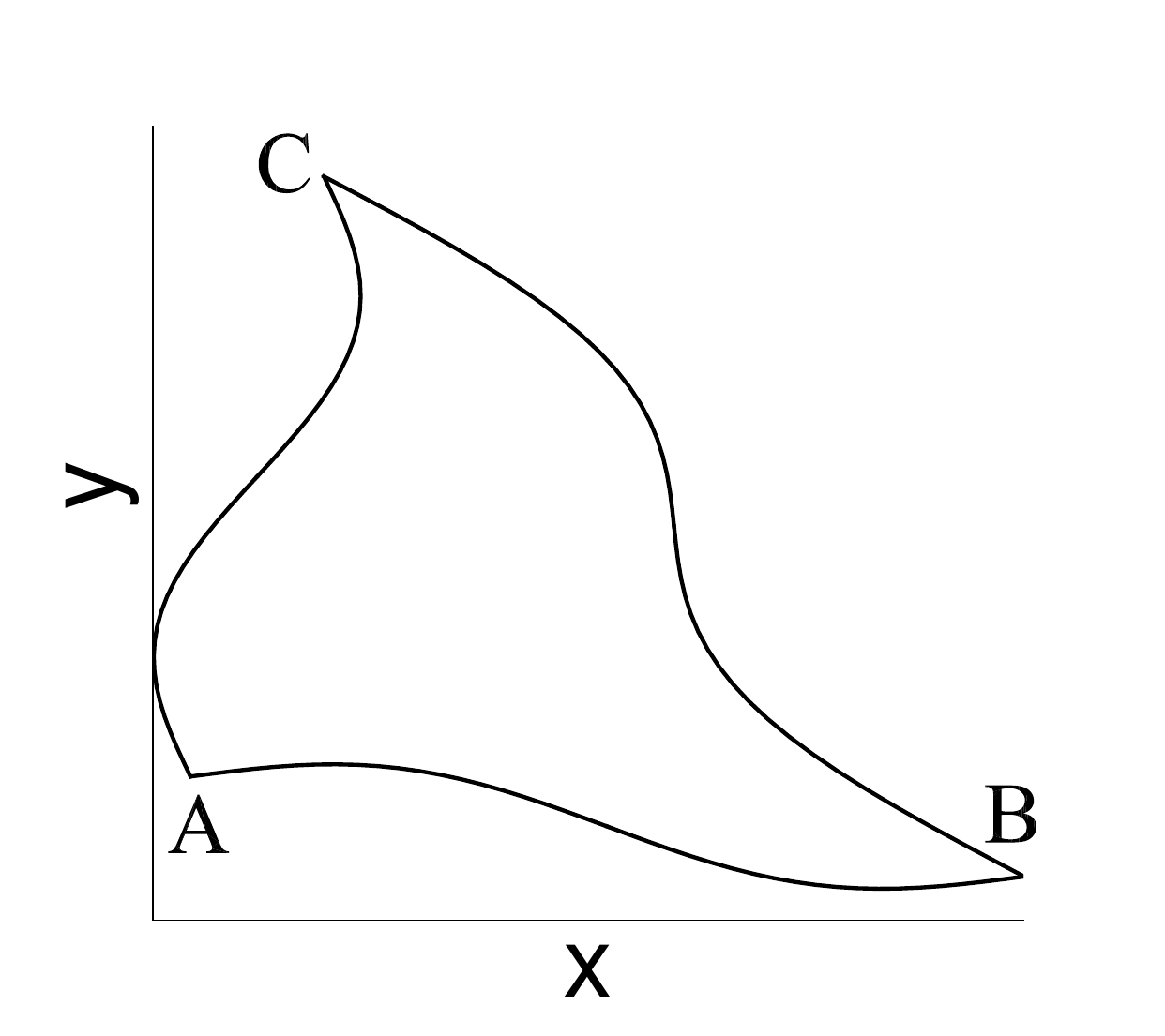}(a) 
    &
    \raisebox{0.8in}{\Large $\xLeftrightarrow{\  \text{mapping}\ }$} $\quad$
    &
    \includegraphics[width=2in]{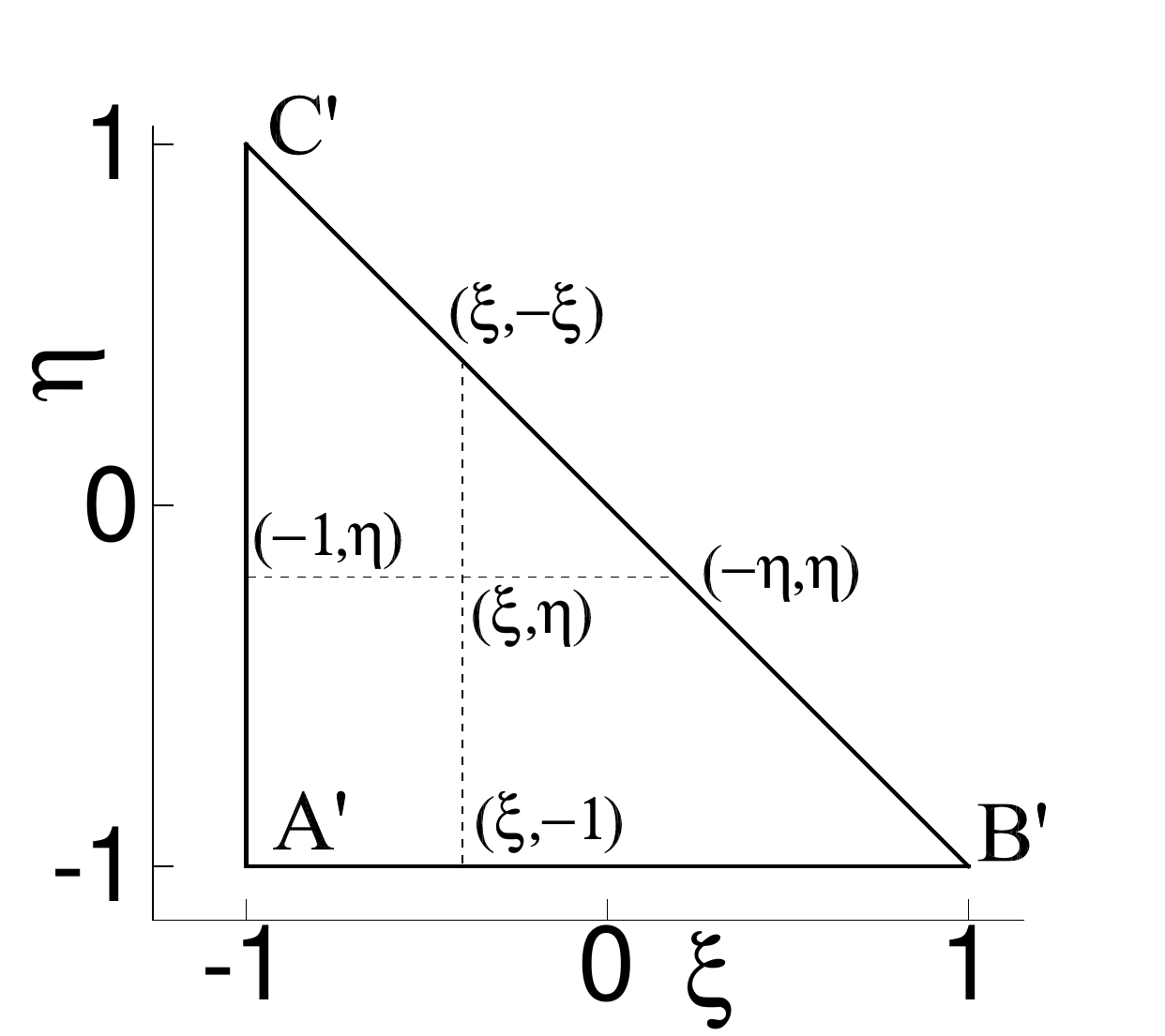}(b)
    \end{tabular}
  }
  \caption{Sketch of (a) a general triangular domain $\Omega$ with curved boundaries,
    and (b) the standard triangular domain
    $\Omega_{st}=\{(\xi,\eta)\ :\ -1\leqslant \xi, \eta \leqslant 1,\ \xi+\eta\leqslant 0 \}$.
  }
  \label{fg_1}
\end{figure}

Consider a general triangular domain $\Omega=\overline{ABC}\subset\mbb R^2$
with arbitrarily curved boundaries, as illustrated in Figure~\ref{fg_1}(a).
The boundary curves are prescribed in parametric forms, $\mbs x_{AB}(\xi)$,
$\mbs x_{BC}(\xi)$ and $\mbs x_{AC}(\xi)$ for $\xi\in[-1,1]$,
which are compatible at the shared vertices,
\begin{align}
  \mbs x_{AB}(-1)=\mbs x_{AC}(-1)=\mbs x_A, \quad
  \mbs x_{AB}(1)=\mbs x_{BC}(-1)=\mbs x_B, \quad
  \mbs x_{AC}(1)=\mbs x_{BC}(1)=\mbs x_C,
\end{align}
where $\mbs x_A$, $\mbs x_B$ and $\mbs x_C$ denote the coordinates of the
vertices $A$, $B$ and $C$, respectively.

To facilitate subsequent developments on the boundary-condition enforcement 
over $\Omega$, let $\mbs x=(x,y)=\mbs x(\xi,\eta)$ denote a regular
map~\cite{Fleming1977} (of class at least $C^1$, univalent, non-singular Jacobian matrix)
between $\Omega$ and the standard triangular domain
$\Omega_{st}=\overline{A'B'C'}=\{(\xi,\eta) : -1\leqslant \xi,\eta\leqslant 1, 
\ \xi+\eta\leqslant 0 \}$ (see Figure~\ref{fg_1}(b)).
The map should satisfy the following conditions,
\begin{subequations}\label{eq_2}
\begin{align}
  & \mbs x(-1,\eta) = \mbs x_{AC}(\eta) =: X(-1,\eta), \quad \eta\in[-1,1]; \\
  & \mbs x(\xi,-1) = \mbs x_{AB}(\xi) =: X(\xi,-1), \quad \xi\in[-1,1]; \\
  & \mbs x(-\eta,\eta) = \mbs x_{BC}(\eta) =: X(-\eta,\eta), \quad
  \mbs x(\xi,-\xi) = \mbs x_{BC}(-\xi)=X(\xi,-\xi), \quad \xi,\eta\in[-1,1].
\end{align}
\end{subequations}

The general form of functions that exactly satisfy the conditions in~\eqref{eq_2}
is given by the following TFC constrained expression,
\begin{equation}\label{eq_3}
  \mbs x(\xi,\eta) = Z(\xi,\eta) - P Z(\xi,\eta) + P X(\xi,\eta),
\end{equation}
where $Z(\xi,\eta)\in\mbb R^2$ is a free (arbitrary) function.
$P$ is a 2D transfinite interpolation operator, given by
\begingroup\small
\begin{align}\label{eq_4}
  PX(\xi,\eta) =&\ (P_1\oplus P_2)X(\xi,\eta) = P_1X(\xi,\eta) + P_2X(\xi,\eta)
  - P_1P_2X(\xi,\eta) \notag \\
  =&\ \big[X(\xi,-1)+X(-1,\eta)-X(-1,-1) \big] + 
  \phi_1(\eta)\big[X(\xi,-\xi)-X(\xi,-1) - X(-1,-\xi) \notag \\
    &\ + X(-1,-1) \big] +
  \phi_1(\xi)\big[X(-\eta,\eta) - X(-\eta,-1) - X(-1,\eta) + X(-1,-1) \big],
\end{align}\endgroup
where $\oplus$ denotes the boolean sum, and $\phi_0(\xi)$ and $\phi_1(\xi)$ are the
linear blending functions defined in \eqref{eq_5}.
$P_1$ and $P_2$ in~\eqref{eq_4} are two transfinite interpolation
operators constructed by following the
idea of Barnhill~\cite{BarnhillG1975} for right triangles, given by
\begin{subequations}\label{eq_6}\small
  \begin{align}
    P_1X(\xi,\eta) =&\ \phi_1(\eta)X(\xi,-\xi) + \phi_1(\xi)X(-\eta,\eta), \label{eq_6a}  \\
    P_2X(\xi,\eta) =&\ (T_{\xi}\oplus T_{\eta})X(\xi,\eta)
    = T_{\xi}X(\xi,\eta) + T_{\eta}X(\xi,\eta) - T_{\xi}T_{\eta}X(\xi,\eta) \notag \\
    =& X(\xi,-1) + X(-1,\eta) - X(-1,-1). \label{eq_6b}
  \end{align}
\end{subequations}
Here $T_{\xi}$ and $T_{\eta}$ are  Taylor approximation operators (0-th order) along the
$\xi$ and $\eta$ directions (about $\xi=-1$ and $\eta=-1$, respectively),
given by $T_{\xi}X(\xi,\eta)=X(-1,\eta)$ and $T_{\eta}X(\xi,\eta)=X(\xi,-1)$;
see Figure \ref{fg_1}(b).
$P_2X$ is a boolean sum of these Taylor approximators.
Note that in~\eqref{eq_6a}  $X(-\eta,\eta)$ and $X(\xi,-\xi)$
are respectively 0-th order Taylor approximations of $X(\xi,\eta)$
along the $\xi$ and $\eta$ directions (about the points  $(-\eta,\eta)$ and $(\xi,-\xi)$
on $\overline{B'C'}$; see Figure \ref{fg_1}(b)).
So $P_1X$ blends these Taylor approximations on the hypotenuse
using the $C^0$ Barnhill polynomials. 
It should also be noted that $P_1$ and $P_2$ do not commute ($P_1P_2\neq P_2P_1$), which is
the crucial difference that distinguishes the transfinite interpolations
of Barnhill~\cite{BarnhillG1975} (non-commutative)
and Gordon~\cite{Gordon1971} (commutative).

The Barnhill strategy for constructing the transfinite interpolation operator $P$
from Taylor approximation operators, by boolean sum on the legs or by blending
based on Barnhill polynomials on the hypotenuse, will be repeatedly used in this paper.


It is straightforward to verify that $PX(\xi,\eta)$ given by~\eqref{eq_4}
has the properties,
\begingroup\small\begin{equation}
  PX(-1,\eta)=X(-1,\eta), \
  PX(\xi,-1) = X(\xi,-1), \
  PX(-\eta,\eta) = X(-\eta, \eta), \
  PX(\xi,-\xi) = X(\xi,-\xi),
\end{equation}\endgroup
and that the $\mbs x(\xi,\eta)$ given by~\eqref{eq_3}
satisfies exactly the conditions in~\eqref{eq_2} for any given $Z(\xi,\eta)$.


In the present work we employ the simple map (by setting $Z=0$ in \eqref{eq_3})
\begin{align}\label{eq_8}
  \mbs x(\xi,\eta) = PX(\xi,\eta) =&\
  \phi_0(\xi)\mbs x_{AC}(\eta) + \phi_1(\xi)\left[\mbs x_{BC}(\eta) - \mbs x_{AB}(-\eta) \right]
  \notag \\
  &+ \phi_0(\eta)\mbs x_{AB}(\xi) 
  + \phi_1(\eta)\left[\mbs x_{BC}(-\xi) - \mbs x_{AC}(-\xi) \right]
  + \left[\phi_1(\xi) - \phi_0(\eta) \right]\mbs x_A,
\end{align}
in most numerical simulations. For certain cases with more complex or more severely
distorted domains, the map~\eqref{eq_8} can become inadequate (losing univalency)
and we employ the more general map~\eqref{eq_3} with a non-zero $Z(\xi,\eta)$
(see Remark~\ref{rem_1} below).

It is in general very difficult to ensure analytically the univalency of the map~\eqref{eq_8}
or~\eqref{eq_3}, as already noticed in~\cite{DongZ2026} previously 
for curved quadrilateral domains.
To achieve univalency in practice, we follow a strategy
similar to~\cite{GordonH1973,DongZ2026} in this work, by combining numerical simulations with
visualizations and by numerically monitoring the sign of the map's Jacobian.
Crossover of the $\xi$ (or $\eta$) grid lines in the visualization or
the occurance of sign change in the Jacobian suggests a compromise
in univalency, leading to a potential ``folding'' or ``spillover'' in the map. 
These abnormalities can oftentimes be remedied by reparameterization
or by introducing additional constraints in the map.


\begin{remark}\label{rem_1}
  If the domain is strongly distorted and the simple map~\eqref{eq_8} fails
  in univalency,
  we compute a more robust map~\eqref{eq_3} by imposing
  the Winslow conditions~\cite{Winslow1967,ThompsonTM1974},
   \begingroup\small \begin{align}\label{eq_9}
      & \frac{\partial^2\xi}{\partial x^2} + \frac{\partial^2\xi}{\partial y^2} = 0, \quad
      \frac{\partial^2\eta}{\partial x^2} + \frac{\partial^2\eta}{\partial y^2} = 0,
    \end{align}\endgroup
  where $\xi=\xi(x,y)$ and $\eta=\eta(x,y)$ are the inverse of the map~\eqref{eq_3}.
  These conditions can ensure that the map is folding free thanks to the maximum
  principle of harmonic functions and the convexity of the standard triangle~\cite{MastinT1978}. 
  One can reformulate~\eqref{eq_9} 
  in terms of the original map
  $\mbs x(\xi,\eta)=(x(\xi,\eta), y(\xi,\eta))$~\cite{ThompsonTM1974},
  \begin{subequations}\label{eq_10}\small
    \begin{align}
      & a_{22}\frac{\partial^2x}{\partial\xi^2}
      - 2a_{12}\frac{\partial^2x}{\partial\xi\partial\eta}
      + a_{11}\frac{\partial^2x}{\partial\eta^2}=0, \\
      & a_{22}\frac{\partial^2y}{\partial\xi^2}
      - 2a_{12}\frac{\partial^2y}{\partial\xi\partial\eta}
      + a_{11}\frac{\partial^2y}{\partial\eta^2}=0,
    \end{align}
  \end{subequations}
  where $a_{11}=\|\mbs x_{\xi}(\xi,\eta) \|^2$,
  $a_{12}=\mbs x_{\xi}(\xi,\eta)\cdot\mbs x_{\eta}(\xi,\eta)$,
  and $a_{22}=\|\mbs x_{\eta}(\xi,\eta) \|^2$.
  We represent the free function $Z(\xi,\eta)$ in~\eqref{eq_3}
  in terms of the barycentric coordinates of $\Omega_{st}$ (in Bernstein-Bezier form),
  \begingroup\small\begin{align}\label{eq_11}
    Z(\xi,\eta) = \sum_{\substack{i,j,k\geqslant 0\\i+j+k=p_w}}
    \mathcal C_{ijk}\frac{(p_w+3)!}{(i+1)!(j+1)!(k+1)!}\lambda_1^{i+1}\lambda_2^{j+1}\lambda_3^{k+1},
  \end{align}\endgroup
  where $p_w\geqslant 0$ is a user-specified integer parameter, $\mathcal C_{ijk}\in\mbb R^2$ are
  the coefficients to be determined, and the barycentric
  coordinates are given by $\lambda_1=\phi_1(\xi)$,
  $\lambda_2=\phi_1(\eta)$ and $\lambda_3=\phi_0(\xi)-\phi_1(\eta)$.
  Noting that $Z(\xi,\eta)$ given by~\eqref{eq_11} satisfies
  $Z|_{\partial\Omega_{st}}= 0$ and $PZ(\xi,\eta)\equiv 0$, the
  map~\eqref{eq_3} is reduced to
  \begingroup\small\begin{align}\label{eq_12}
    & \mbs x(\xi,\eta) = Z(\xi,\eta) + PX(\xi,\eta)
    = \sum_{\substack{i,j,k\geqslant 0\\ i+j+k=p_w}}
    \mathcal C_{ijk}\frac{(p_w+3)!}{(i+1)!(j+1)!(k+1)!}\lambda_1^{i+1}\lambda_2^{j+1}\lambda_3^{k+1}
    + PX(\xi,\eta),
  \end{align}\endgroup
  which satisfies the boundary conditions in~\eqref{eq_2} unconditionally.
  We determine $\mathcal C_{ijk}$ by enforcing the equations in~\eqref{eq_10} on
  a set of $2Q_w$ interior (random) collocation points of $\Omega_{st}$ and solving the
  resultant nonlinear algebraic system by the nonlinear least squares (Gauss-Newton)
  method~\cite{Bjorck1996}. This produces a more robust map, 
  given by~\eqref{eq_12} and hereafter referred to as a Winslow map,
  between $\Omega_{st}$ and the general triangular domain.
  When generating the interior collocation points over $\Omega_{st}$,
  we observe that enforcing a symmetry of these points with respect
  to the line $\eta=\xi$ produces better Winslow maps. In the implementation
  we generate $Q_w$ random interior points first, $(\xi_i,\eta_i)$ for $1\leqslant i\leqslant Q_w$.
  The set of $2Q_w$ collocation points for computing the Winslow map then
  consists of $\{(\xi_i,\eta_i), (\eta_i,\xi_i), 1\leqslant i\leqslant Q_w \}$.
  
\end{remark}

\subsection{Enforcing Exactly Dirichlet Boundary Conditions (DBC)}
\label{sec_dbc}

Consider the general triangular domain $\Omega$ in Figure~\ref{fg_1}(a)
and a scalar function $u(\mbs x)$ ($\mbs x=(x,y)\in\Omega$) that
satisfies the Dirichlet boundary conditions on $\partial\Omega$,
  \begin{align}\label{eq_13}
    & u|_{\mbs x\in\overline{AB}} = u_{AB}(\mbs x), \quad 
    u|_{\mbs x\in\overline{BC}} = u_{BC}(\mbs x), \quad 
    u|_{\mbs x\in\overline{AC}} = u_{AC}(\mbs x).
  \end{align}
Here $u_{AB}(\mbs x)$, $u_{BC}(\mbs x)$ and $u_{AC}(\mbs x)$ are the
prescribed Dirichlet data that must be mutually compatible,
\begin{align}
  & u_{AB}(\mbs x_A) = u_{AC}(\mbs x_A) =: u_{A}, \quad
  u_{AB}(\mbs x_B) = u_{BC}(\mbs x_B) =: u_{B}, \quad
  u_{AC}(\mbs x_C) = u_{BC}(\mbs x_C) =: u_{C}.
\end{align}
Our objective is to formulate the general form of $u(\mbs x)$  that satisfies
the Dirichlet conditions~\eqref{eq_13} exactly.

We leverage the map $\mbs x(\xi,\eta)$ between $\Omega$ and $\Omega_{st}$
from Section~\ref{sec_21} and formulate $u(\mbs x)$ with respect to
the standard domain $\Omega_{st}$, i.e.~in parametric forms.
In light of this map, $u(\mbs x)$ and the boundary data are transformed into,
\begin{subequations}\label{eq_15}\small
  \begin{align}
    & u(\mbs x) = u(\mbs x(\xi,\eta)) =: V(\xi,\eta), \quad (\xi,\eta)\in\Omega_{st};
    \label{eq_15a} \\
    & u_{AB}(\mbs x) = u_{AB}(\mbs x(\xi,-1)) =: F(\xi,-1), \quad \xi\in[-1,1];
    \label{eq_15b} \\
    & u_{AC}(\mbs x) = u_{AC}(\mbs x(-1,\eta)) =: F(-1,\eta), \quad \eta\in[-1,1];
    \label{eq_15c}  \\
    & u_{BC}(\mbs x) = u_{BC}(\mbs x(-\eta,\eta)) =: U_{BC}(\eta),
    \quad \eta\in[-1,1]; \label{eq_15d} \\
    & u_{BC}(\mbs x) = u_{BC}(\mbs x(\xi,-\xi)) = U_{BC}(-\xi),
    \quad \xi\in[-1,1]. \label{eq_15e}
  \end{align}
\end{subequations}
Therefore, the formulation for $u(\mbs x)$ is reduced to:
find $V(\xi,\eta)$, $(\xi,\eta)\in\Omega_{st}$, such that
\begin{subequations}\label{eq_16}\small
  \begin{align}
     V(\xi,-1) &= F(\xi,-1), \label{eq_16a} \\
     V(-1,\eta) &= F(-1,\eta), \label{eq_16b} \\
     V(-\eta,\eta) &= U_{BC}(\eta) =: F(-\eta,\eta), \label{eq_16c} \quad
     \text{or}\ V(\xi,-\xi) = U_{BC}(-\xi) = F(\xi,-\xi).
  \end{align}
\end{subequations}

Recognizing the similarity between~\eqref{eq_16}
and~\eqref{eq_2}, we can construct $V(\xi,\eta)$ analogously. Let
\begingroup\small\begin{align}\label{eq_17}
  V(\xi,\eta) = g(\xi,\eta) - Pg(\xi,\eta) + PF(\xi,\eta),
\end{align}\endgroup
where $g(\xi,\eta)$ is a free (arbitrary) function.
The transfinite interpolations 
$Pg$ and $PF$ here are defined accordingly based on $PX$ in~\eqref{eq_4},
specifically given by,
\begin{subequations}\small
  \begin{align}
    Pg(\xi,\eta) =&\
    \big[g(\xi,-1)+g(-1,\eta)-g(-1,-1) \big] + 
    \phi_1(\eta)\big[g(\xi,-\xi)-g(\xi,-1) - g(-1,-\xi) \notag \\
      &\ + g(-1,-1) \big] +
    \phi_1(\xi)\big[g(-\eta,\eta) - g(-\eta,-1) - g(-1,\eta) + g(-1,-1) \big],
    \label{eq_e27a} \\
    PF(\xi,\eta) =&\
    \big[F(\xi,-1)+F(-1,\eta)-F(-1,-1) \big] + 
    \phi_1(\eta)\big[F(\xi,-\xi)-F(\xi,-1) - F(-1,-\xi) \notag \\
      &\ + F(-1,-1) \big] +
    \phi_1(\xi)\big[F(-\eta,\eta) - F(-\eta,-1) - F(-1,\eta) + F(-1,-1) \big].
    \label{eq_e27b}
  \end{align}
\end{subequations}
A straightforward verification establishes the following property
for $V(\xi,\eta)$.
\begin{theorem}\label{thm_21}
  $V(\xi,\eta)$ given by~\eqref{eq_17} satisfies the Dirichlet conditions in~\eqref{eq_16},
  for any $g(\xi,\eta)$ therein.
\end{theorem}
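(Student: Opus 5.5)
The plan is to show that the transfinite operator $P$ in \eqref{eq_4} reproduces boundary traces for any function $h$. That is, for every $h(\xi,\eta)$ defined on $\Omega_{st}$,
\begin{equation*}
Ph(\xi,-1)=h(\xi,-1),\quad Ph(-1,\eta)=h(-1,\eta),\quad Ph(-\eta,\eta)=h(-\eta,\eta),\quad Ph(\xi,-\xi)=h(\xi,-\xi).
\end{equation*}
This is the property already stated for $PX$ after \eqref{eq_6}. Since $P$ is linear and its formula uses only the boundary values of its argument, the same identities hold with $X$ replaced by $g$ or $F$.

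Granting these identities, the theorem follows from linearity. On each of the three edges, $V=g-Pg+PF$ restricts to $g-g+F=F$. For example, at $\eta=-1$ we get $V(\xi,-1)=g(\xi,-1)-g(\xi,-1)+F(\xi,-1)=F(\xi,-1)$, which is \eqref{eq_16a}. The same computation gives \eqref{eq_16b} and \eqref{eq_16c}, where we use $F(-\eta,\eta)=U_{BC}(\eta)$. None of this depends on the choice of $g$.

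It remains to check the trace identities directly from \eqref{eq_e27a}. We use $\phi_1(-1)=0$, $\phi_1(1)=1$ and $\phi_1(\xi)+\phi_1(\eta)=1$ on $\xi+\eta=0$.
\begin{itemize}
\item \emph{Edge $\eta=-1$.} Here $\phi_1(\eta)=0$. The bracket multiplying $\phi_1(\xi)$ becomes $h(-\eta,\eta)-h(-\eta,-1)-h(-1,\eta)+h(-1,-1)$ evaluated at $\eta=-1$, which is $h(1,-1)-h(1,-1)-h(-1,-1)+h(-1,-1)=0$. The leading term reduces to $h(\xi,-1)$.
\item \emph{Edge $\xi=-1$.} This is symmetric: $\phi_1(\xi)=0$, and the $\phi_1(\eta)$ bracket vanishes at $\xi=-1$.
\end{itemize}

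The main (though still routine) step is the hypotenuse. Set $\xi=-\eta$, so that $\phi_1(\xi)=1-\phi_1(\eta)$. The two brackets then become
\begin{equation*}
B_1=h(-\eta,\eta)-h(-\eta,-1)-h(-1,\eta)+h(-1,-1)
\end{equation*}
and, after substituting $\xi=-\eta$ into the first, the identical expression $B_1$. Hence
\begin{equation*}
Ph(-\eta,\eta)=\big[h(-\eta,-1)+h(-1,\eta)-h(-1,-1)\big]+\big(\phi_1(\eta)+\phi_1(-\eta)\big)B_1 .
\end{equation*}
Since $\phi_1(\eta)+\phi_1(-\eta)=1$, the leg terms cancel and we are left with $Ph(-\eta,\eta)=h(-\eta,\eta)$. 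The form $Ph(\xi,-\xi)=h(\xi,-\xi)$ is the same statement reparametrized, which completes the verification.
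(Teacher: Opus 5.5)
Your proposal is correct and follows the same route as the paper: you verify that $P$ reproduces the traces on all three edges (as the paper asserts for $PX$ after \eqref{eq_6}), so on $\partial\Omega_{st}$ the term $g-Pg$ vanishes and $PF$ returns the data. One point to state explicitly is that applying your leg computations to $F$ needs the vertex compatibility of the Dirichlet data, for example $U_{BC}(-1)=u_{BC}(\mbs x_B)=u_{AB}(\mbs x_B)=F(1,-1)$ at $B$ and the analogous identities at $A$ and $C$; this is what makes $F$ a single-valued function on $\partial\Omega_{st}$.
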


Suppose additional conditions about $u(\mbs x)$
are supplied (apart from the Dirichlet boundary conditions),
for instance, the function data or the equations that $u(\mbs x)$ must satisfy.
These conditions can be used to determine the free function $g(\xi,\eta)$ in~\eqref{eq_17}.
In a later section (Section~\ref{sec_25}) we consider $u(\mbs x)$
as PDE solutions and use artificial neural networks to represent and approximate
the free function $g$ and in turn the PDE solutions.

\subsection{Enforcing Exactly Neumann Boundary Conditions (NBC)}
\label{sec_nbc}

Exact enforcement of
derivative-type conditions (Neumann or Robin) 
on general triangular boundaries is significantly more challenging.
Four strategies are crucial in our constructions to achieve this goal:
\begin{itemize}
\item
  We follow the four-step procedure
  of~\cite{DongZ2026} when developing the formulations for general triangular domains.

\item
  We adopt the Barnhill strategy to
  construct transfinite interpolations:
  (i) formulate the transfinite operator for legs of the standard triangle
  by the boolean sum of Taylor approximation operators; (ii) construct
  the hypotenuse operator  by blending Taylor approximations
  based on Barnhill polynomials.
  
\item
  Whenever an acute vertex of the standard triangle belongs to a Neumann or Robin
  boundary, all conditional compatibility constraints associated with that vertex
  should be transformed into unconditional forms.

\item
  Whenever the hypotenuse of the standard triangle corresponds to a Neumann
  or Robin boundary, a separate TFC-style auxiliary edge function and an
  associated linear solve will be required on the hypotenuse.

\end{itemize}
The last two strategies are critical tricks that make the formulations successful.

The developments in the subsequent sections follow
three well-defined stages: (i) Transformation of the imposed
boundary conditions from the general triangular domain
to the standard triangular domain; (ii) Analysis of the compatibility
constraints induced by the imposed boundary conditions;
(iii) Formulation of the general form of functions that
satisfies the imposed boundary conditions and the
induced compatibility constraints. The formulation stage in turn
consists of four steps (i.e.~four-step procedure):
(a) Identify the type of constraints involved in
and transform the point constraints into constraints on the
edge profiles; (b) Construct the transfinite interpolation
for the identified constraints; (c) Use the constructed
transfinite interpolation to form TFC
expression as a preliminary general form for the function;
(d) Update the placeholder terms in the transfinite interpolant
by their compatible data, thus
leading to the final formulation.


In this section we focus on a combination of Neumann and
Dirichlet boundary conditions  on the triangle boundaries.
Our objective is to construct the trial ansatz
to satisfy these conditions exactly.
We assume that the triangular domain contains at least one Dirichlet boundary
and one Neumann boundary. 
We distinguish two configurations: (i) NBC is imposed on a single triangle
boundary, with the rest being of Dirichlet type; (ii) NBCs are
imposed on two triangle boundaries, with the third one being of
Dirichlet type.
These are treated individually in the following subsections.

\subsubsection{NBC Imposed on a Single Triangle Boundary}
\label{sec_2311}

When the triangular domain involves a single Neumann boundary, with the rest being of
Dirichlet type,
this boundary can be mapped to
any of the three edges of the standard triangle $\Omega_{st}$ by
a proper re-ordering of the vertices. Therefore, devising a
formulation to enforce the NBC on one of the legs
of $\Omega_{st}$ ($\overline{AB}$ or $\overline{AC}$),
which is conceptually simpler, would suffice.
Without loss of generality,
we only present the formulation for the NBC imposed on
 $\overline{AC}$ in what follows,  with  the reciprocal 
formulation for the NBC on $\overline{AB}$ relegated to a remark
at the end of this section.
When the Neumann boundary corresponds to  the hypotenuse of $\Omega_{st}$,
the formulation is structurally more involved and its presentation will be deferred to
an Appendix (Section \ref{sec_2312}).
The notations below follow those of the previous sections.

Consider the general triangular domain $\Omega=\overline{ABC}$
from Figure~\ref{fg_1}(a) and a scalar function $u(\mbs x)$
($\mbs x\in\Omega$) satisfying the following boundary
conditions,
  \begingroup\small\begin{align}\label{eq_20}
    & u|_{\mbs x\in\overline{AB}} = u_{AB}(\mbs x),  \quad
     u|_{\mbs x\in\overline{BC}} = u_{BC}(\mbs x),  \quad
     \mbs n\cdot\nabla u|_{\mbs x\in\overline{AC}} = u_{nAC}(\mbs x).
  \end{align}\endgroup
Here $\mbs n$ is the outward-pointing unit normal vector
and $u_{nAC}$ denotes the prescribed Neumann boundary data on $\overline{AC}$.
The prescribed Dirichlet data ($u_{AB}$, $u_{BC}$) and
the Neumann data must be compatible on the shared vertices,
which will be discussed below.

Leveraging the map $\mbs x(\xi,\eta)$ from
Section~\ref{sec_21}, we transform $u(\mbs x)$ into $V(\xi,\eta)$
based on~\eqref{eq_15a}. Accordingly,
the DBCs in~\eqref{eq_20} are transformed into~\eqref{eq_16a}
and~\eqref{eq_16c}, in light of~\eqref{eq_15b}
and~\eqref{eq_15d}--\eqref{eq_15e}.
The Neumann condition on $\overline{AC}$ is transformed into
\begin{subequations}\label{eq_21}\small
\begin{align}
  & V_{\xi}(-1,\eta) + S_{AC}(\eta)V_{\eta}(-1,\eta) = T_{AC}(\eta), \label{eq_21a}  \\
  \text{or}\ & V_{\xi}(-1,\eta) = T_{AC}(\eta) - S_{AC}(\eta)V_{\eta}(-1,\eta)
  =: F_{\xi}(-1,\eta).
  \label{eq_21b}
\end{align}
\end{subequations}
Here the symbols  are defined by: ($\mbs J$ denoting the Jacobian matrix)
\begin{align}\label{eq_22}\small
  \left\{
    \begin{array}{l}
     S_{AC}(\eta) = K_{yAC}(\eta)W_{AC}(\eta)
    = -\frac{\mbs x_{\xi}(-1,\eta)\cdot\mbs x_{\eta}(-1,\eta)}{\|\mbs x_{\eta}(-1,\eta) \|^2},
    \quad W_{AC}(\eta) = \frac{1}{K_{xAC}(\eta)}
    = -\frac{\text{det}\mbs J(-1,\eta)}{\|\mbs x_{\eta}(-1,\eta) \|}, \\
     T_{AC}(\eta) = u_{nAC}(\mbs x(-1,\eta))W_{AC}(\eta), \quad
    \mbs J(\xi,\eta) = \frac{\partial(x,y)}{\partial (\xi,\eta)}
    =\begin{bmatrix}x_{\xi} & x_{\eta} \\ y_{\xi} & y_{\eta}  \end{bmatrix}, \quad
    \mbs n_{AC}=
    \begin{bmatrix}-\frac{y_{\eta}(-1,\eta)}{\|\mbs x_{\eta}(-1,\eta) \|}
      \\ \frac{x_{\eta}(-1,\eta)}{\|\mbs x_{\eta}(-1,\eta) \|}  \end{bmatrix},
    \\
     \begin{bmatrix}K_{xAC} \\ K_{yAC} \end{bmatrix}
    = \mbs J^{-1}(-1,\eta)\mbs n_{AC}(\eta)
    = -\frac{1}{\text{det}\mbs J(-1,\eta)}
    \begin{bmatrix}\|\mbs x_{\eta}(-1,\eta) \| \\
      -\frac{\mbs x_{\xi}(-1,\eta)\cdot\mbs x_{\eta}(-1,\eta)}{\|\mbs x_{\eta}(-1,\eta) \|}
    \end{bmatrix}.
    \end{array}
    \right.
\end{align}
In light of the above definition for $S_{AC}(\eta)$, and noting that
$\mbs x_{\xi}(-1,1)=[\mbs x_{AC}'(1)-\mbs x_{BC}'(1)]$ and $\mbs x_{\eta}(-1,1)=\mbs x_{AC}'(1)$
based on equations~\eqref{eq_3}--\eqref{eq_4}, we conclude that
\begin{align}\label{eq_27}\small
\left\{
  \begin{array}{l}
  S_{AC}(1) = -1 \quad \Longleftrightarrow \quad
  \mbs x_{AC}'(1)\cdot\mbs x_{BC}'(1)=0, \ \text{i.e.}\
  \overline{AC}\perp\overline{BC}\ \text{at}\ C; \\
  S_{AC}(-1)=0 \quad \Longleftrightarrow \quad
  \overline{AB}\perp\overline{AC}\ \text{at}\ A.
\end{array}\right.
\end{align}

Let us next analyze the compatibility constraints induced by
the Dirichlet/Neumann conditions~\eqref{eq_16a}, \eqref{eq_16c}, and~\eqref{eq_21a}.
These induced constraints must be honored when formulating $V(\xi,\eta)$.
At the three vertices, the continuity of $V(\xi,\eta)$ leads to
\begin{subequations}\small
  \begin{align}
    & V(-1,-1) = F(-1,-1)= \lim_{\xi\rightarrow -1}F(\xi,-1) = u_A, \label{eq_23a} \\
    & V(1,-1) = F(1,-1) = U_{BC}(-1)=\lim_{\xi\rightarrow 1}F(\xi,-1)
    = \lim_{\eta\rightarrow -1}U_{BC}(\eta) = u_B, \label{eq_23b} \\
    & V(-1,1) = F(-1,1) = U_{BC}(1) = \lim_{\eta\rightarrow 1}U_{BC}(\eta)=u_C. \label{eq_23c}
  \end{align}
\end{subequations}

\vspace{-8pt}
\noindent Differentiating \eqref{eq_16a} with respect to (w.r.t.)
$\xi$ and evaluating it
at $\xi=-1$, combined with~\eqref{eq_21a} evaluated at $\eta=-1$,
lead to
\begin{subequations}\label{eq_24}
  \begin{align}
    & T_{AC}(-1) - F_{\xi}(-1,-1) = 0, \quad
    \text{if}\ \overline{AB}\perp\overline{AC}\ \text{at}\ A, \label{eq_24a} \\
    & V_{\eta}(-1,-1)= F_{\eta}^a(-1,-1) = \frac{T_{AC}(-1)-F_{\xi}(-1,-1)}{S_{AC}(-1)}
    := F_{\eta}(-1,-1), \quad
    \text{if}\ \overline{AB}\not\perp\overline{AC}\ \text{at}\ A. \label{eq_24b}
  \end{align}
\end{subequations}
Equation~\eqref{eq_24a} is a compatibility condition
on the prescribed boundary data, which we
assume will always be satisfied, and~\eqref{eq_24b} is a constraint involving
the unknown $V_{\eta}(-1,-1)$ at vertex $A$.

Differentiating equation~\eqref{eq_16c} w.r.t.~$\eta$ and evaluating it at $\eta=1$
lead to
\begin{align}\label{eq_25}
  & -V_{\xi}(-1,1) + V_{\eta}(-1,1) = U_{BC}'(1).
\end{align}
Evaluating~\eqref{eq_21a} at $\eta=1$ leads to
\begin{align}\label{eq_26}
  & V_{\xi}(-1,1) + S_{AC}(1)V_{\eta}(-1,1) = T_{AC}(1).
\end{align}
A combination of~\eqref{eq_25} and~\eqref{eq_26} gives rise to
\begin{subequations}\label{eq_28}\small
  \begin{align}
    & T_{AC}(1) + U_{BC}'(1) = 0, \quad
    \text{if}\ \overline{AC}\perp\overline{BC}\ \text{at}\ C; \label{eq_28a} \\
    & V_{\eta}(-1,1)=F_{\eta}^a(-1,1)=\frac{T_{AC}(1)+U_{BC}'(1)}{1+S_{AC}(1)},
     \quad
    \text{if}\ \overline{AC}\not\perp\overline{BC}\ \text{at}\ C. \label{eq_28b}
  \end{align}
\end{subequations}
Equation~\eqref{eq_28a} is a compatibility condition on the prescribed boundary
data, which is assumed to be always true,
and~\eqref{eq_28b} is a constraint involving the unknown $V_{\eta}(-1,1)$
at vertex $C$.

In light of these compatibility constraints,
the formulation for $u(\mbs x)$ satisfying~\eqref{eq_20}
is reduced to: Find $V(\xi,\eta)$ such that the
conditions in~\eqref{eq_16a}, \eqref{eq_16c}, \eqref{eq_21b},
\eqref{eq_23a}, \eqref{eq_23c}, \eqref{eq_24b}, and~\eqref{eq_28b} are satisfied.
To facilitate subsequent discussions,
we introduce three constants to flag whether the boundaries are orthogonal to each other
at the three vertices of the triangular domain,
\begingroup\small\begin{align}\label{eq_33}
  \lambda_A=\left\{\begin{array}{ll} 0, & \overline{AB}\perp\overline{AC}\ \text{at}\ A, \\
  1, & \text{otherwise};
  \end{array}  \right.
  \quad
  \lambda_B=\left\{\begin{array}{ll} 0, & \overline{AB}\perp\overline{BC}\ \text{at}\ B, \\
  1, & \text{otherwise};
  \end{array}  \right.
  \quad
  \lambda_C=\left\{\begin{array}{ll} 0, & \overline{AC}\perp\overline{BC}\ \text{at}\ C, \\
  1, & \text{otherwise}.
  \end{array}  \right.
\end{align}\endgroup

At this point we would like to further process the compatibility constraints
associated with vertex $C$, which corresponds to an acute corner of $\Omega_{st}$
and also lies on the Neumann boundary $\overline{AC}$.
Equation~\eqref{eq_28b} is a conditional constraint on the unknown $V_{\eta}(-1,1)$,
dictating that $V_{\eta}(-1,1)$ is constrained to be $F_{\eta}^a(-1,1)$
if $\overline{AC}\not\perp\overline{BC}$ at $C$ and is free otherwise.
We re-write this conditional constraint into an equivalent form,
\begingroup\small\begin{align}\label{eq_34}
  & V_{\eta}(-1,1) = (1-\lambda_C)\Gamma + \lambda_C F_{\eta}^a(-1,1)
  = \left\{\begin{array}{ll} \Gamma, & \overline{AC}\perp\overline{BC}\ \text{at}\ C, \\
  F_{\eta}^a(-1,1), & \overline{AC}\not\perp\overline{BC}\ \text{at}\ C
  \end{array}\right\}
  =: F_{\eta}(-1,1),
\end{align}\endgroup
where $\Gamma$ is a free scalar parameter (undetermined) and we have used
the constant flag $\lambda_C$.
Note that equation~\eqref{eq_34} represents an unconditional constraint on
$V_{\eta}(-1,1)$, with its  data being an undetermined value $\Gamma$
if $\overline{AC}\perp\overline{BC}$
at $C$, or given by $F_{\eta}^a(-1,1)$ otherwise.
In contrast, equation~\eqref{eq_28b} is a conditional constraint
on $V_{\eta}(-1,1)$, imposed only if $\overline{AC}\not\perp\overline{BC}$ at $C$.
We will use the unconditional form~\eqref{eq_34} subsequently
when formulating $V(\xi,\eta)$.
We emphasize that this treatment, by transforming a conditional
constraint into an unconditional form at the acute vertex (i.e.~$C$ or $B$)
when it belongs to a Neumann (or Robin) boundary,
is critical for the construction of $V(\xi,\eta)$ to
satisfy the imposed Neumann (or Robin) conditions.

We follow the four-step procedure of~\cite{DongZ2026}
to construct the general form of $V(\xi,\eta)$
that satisfies the imposed boundary conditions and the induced compatibility constraints.
\\[-5pt]

\noindent{\bf Identify the type of constraints involved (step \#1).}\ \
The constraints for the Dirichlet conditions~\eqref{eq_16a}
and~\eqref{eq_16c}
are self-evident. For the Neumann condition we will employ~\eqref{eq_21b},
treating this as a constraint on $V_{\xi}(-1,\eta)$ with the data $F_{\xi}(-1,\eta)$
being dependent on the unknown $V_{\eta}(-1,\eta)$.
The conditions~\eqref{eq_23a}, \eqref{eq_23c}, \eqref{eq_24b} and~\eqref{eq_34}
are point constraints on the vertices $A$ and $C$.
The strategy is to treat them as conditions on the edge profile $V(-1,\eta)$
of the Neumann boundary.
In light of the $C^1$ blending functions, let
\begingroup\small\begin{align}\label{eq_35}
  V(-1,\eta) =&\ F(-1,-1)\varphi_0(\eta) + F(-1,1)\varphi_1(\eta)
  + \lambda_A F_{\eta}(-1,-1)\psi_0(\eta) + F_{\eta}(-1,1)\psi_1(\eta) 
  =: F(-1,\eta),
\end{align}\endgroup
where $F(-1,1)$, $F_{\eta}(-1,-1)$ and $F_{\eta}(-1,1)$ are defined in~\eqref{eq_23c},
\eqref{eq_24b} and~\eqref{eq_34}, respectively, and $F(-1,\eta)$ denotes
the synthesized  profile for $V(-1,\eta)$. 
It can be verified that the edge profile~\eqref{eq_35} satisfies
the point constraints~\eqref{eq_23a}, \eqref{eq_23c}, \eqref{eq_24b} and~\eqref{eq_34}.
Note that~\eqref{eq_34} is imposed unconditionally at $C$ and that
$\lambda_A$ in~\eqref{eq_35} ensures that the condition~\eqref{eq_24b}
is imposed only when $\overline{AB}\not\perp\overline{AC}$ at $A$.

As a result, the formulation problem is reduced to the following:
find the general form of $V(\xi,\eta)$ on $\Omega_{st}$
such that the conditions~\eqref{eq_16a},
\eqref{eq_16c}, \eqref{eq_21b}, and~\eqref{eq_35} are satisfied.
\\[-5pt]

\noindent{\bf Construct transfinite interpolation operator for identified
   constraints (step \#2).}\ \
We adopt the Barnhill strategy to construct the transfinite interpolation operator  for
the constraints in~\eqref{eq_16a}, \eqref{eq_16c},
\eqref{eq_35}, and~\eqref{eq_21b}.

Let $T_{\xi}$ and $T_{\eta}$ denote the Taylor approximation operators along the $\xi$
and $\eta$ directions, respectively, with $T_{\xi}$ being of first order about $\xi=-1$
and $T_{\eta}$ being of zeroth order about $\eta=-1$, specifically defined by
\begingroup\small\begin{align}
    & T_{\xi}F(\xi,\eta) = F(-1,\eta) + (\xi+1)F_{\xi}(-1,\eta), \quad
    T_{\eta}F(\xi,\eta) = F(\xi,-1),
\end{align}\endgroup
where $F(-1,\eta)$ is defined in~\eqref{eq_35} and
$F_{\xi}(-1,\eta)$ is defined in~\eqref{eq_21b}. Let
\begin{subequations}\small
\begin{align}
  P_2F(\xi,\eta) &= (T_{\xi}\oplus T_{\eta})F(\xi,\eta)
  = T_{\xi}F(\xi,\eta) + T_{\eta}F(\xi,\eta) - T_{\xi}T_{\eta}F(\xi,\eta) \nonumber \\
  &= F(-1,\eta) + F(\xi,-1) - F(-1,-1) + (\xi+1)[F_{\xi}(-1,\eta)-F_{\xi}(-1,-1)], \\
  P_1F(\xi,\eta) &= \alpha(\xi,\eta)F(\xi,-\xi) + \beta(\xi,\eta)F(-\eta,\eta),
\end{align}
\end{subequations}
where $\alpha(\xi,\eta)$ and $\beta(\xi,\eta)$ denote the $C^1$ Barnhill polynomials
defined in~\eqref{eq_31}.
Note that $F(-\eta,\eta)$ and $F(\xi,-\xi)$ are zeroth order Taylor approximations
of $F(\xi,\eta)$ along the $\xi$ and $\eta$ directions about
points $(-\eta,\eta)$ and $(\xi,-\xi)$ on $\overline{BC}$ (see Figure \ref{fg_1}(b)). So
$P_1F$ blends these Taylor approximations using Barnhill polynomials.

We define the transfinite interpolation operator $P$ by
\begingroup\small\begin{align}\label{eq_38}
  PF(\xi,\eta) &= (P_1\oplus P_2)F(\xi,\eta)
  = P_1F(\xi,\eta) + P_2F(\xi,\eta) - P_1P_2F(\xi,\eta) \notag \\
  &= A_F(\xi,\eta) + \alpha(\xi,\eta)\left[F(\xi,-\xi) - A_F(\xi,-\xi) \right]
  + \beta(\xi,\eta)\left[F(-\eta,\eta) - A_F(-\eta,\eta) \right].
\end{align}\endgroup
In this equation $A_{F}$ is given by
\begin{equation}\small\label{eq_39}
  \begin{split}
    A_F(\xi,\eta)= P_2F(\xi,\eta)
    &= F(\xi,-1)+(\xi+1)\left[F_{\xi}(-1,\eta)-F_{\xi}(-1,-1) \right] - F(-1,-1) \\
    &+ \left[F(-1,-1)\varphi_0(\eta)+F(-1,1)\varphi_1(\eta)+\lambda_AF_{\eta}(-1,-1)\psi_0(\eta)
      +F_{\eta}(-1,1)\psi_1(\eta)\right],
  \end{split}
\end{equation}
where the expression~\eqref{eq_35} for $F(-1,\eta)$ has been used.
We would like to point out that in~\eqref{eq_38} the $F$ terms
should be treated as placeholders at
this stage, which
in the final step of formulation will be replaced by the Dirichlet data
or boundary-/corner-compatible data.
$PF$ has the following properties.
\begin{property}\label{lem_23}
  $PF(\xi,\eta)$ given by~\eqref{eq_38} satisfies
  \begin{equation}\small\left\{
    \begin{split}
    &PF(\xi,-1)=F(\xi,-1), \quad
    PF(-\eta,\eta) = F(-\eta,\eta), \quad PF(\xi,-\xi)=F(\xi,-\xi), \\
    &PF_{\xi}(-1,\eta) = F_{\xi}(-1,\eta), \quad PF(-1,-1)=F(-1,-1), \quad
    PF(-1,1)=F(-1,1), \\
    & PF_{\eta}(-1,-1)=\lambda_AF_{\eta}(-1,-1), \quad PF_{\eta}(-1,1)=F_{\eta}(-1,1), \\
    & PF(-1,\eta) = F(-1,-1)\varphi_0(\eta) + F(-1,1)\varphi_1(\eta)
    + \lambda_AF_{\eta}(-1,-1)\psi_0(\eta) + F_{\eta}(-1,1)\psi_1(\eta).
    \end{split}\right.
  \end{equation}
\vspace{-10pt}
\end{property}
\begin{proof} By verification and by using the properties in~\eqref{eq_32} and \eqref{eq_30}.
\end{proof}

\vspace{5pt}
\noindent{\bf Formulate preliminary general form for $V(\xi,\eta)$ (step \#3).}\ \
Given  $PF$ in~\eqref{eq_38},
we formulate a preliminary general
form for $V(\xi,\eta)$ by the TFC expression,
\begingroup\small\begin{equation}\label{eq_41}
  V(\xi,\eta) = g(\xi,\eta) - Pg(\xi,\eta) + PF(\xi,\eta),
\end{equation}\endgroup
where $g(\xi,\eta)$ is a free (arbitrary) function that is sufficiently differentiable.
$Pg$ is defined according to \eqref{eq_38}, specifically given by
\begin{subequations}\label{eq_42}\small
  \begin{align}
    Pg(\xi,\eta) =&\ A_g(\xi,\eta) + \alpha(\xi,\eta)\left[g(\xi,-\xi) - A_g(\xi,-\xi) \right]
    + \beta(\xi,\eta)\left[g(-\eta,\eta) - A_g(-\eta,\eta) \right], \label{eq_42a}
    \\
     A_g(\xi,\eta)
     =&\ g(\xi,-1)+(\xi+1)\left[g_{\xi}(-1,\eta)-g_{\xi}(-1,-1) \right] - g(-1,-1) \notag \\
     &+ \left[g(-1,-1)\varphi_0(\eta)+g(-1,1)\varphi_1(\eta)+\lambda_Ag_{\eta}(-1,-1)\psi_0(\eta)
       +g_{\eta}(-1,1)\psi_1(\eta)\right]. \label{eq_42b}
  \end{align}
\end{subequations}

\vspace{5pt}
\noindent{\bf Substitute placeholder terms in $PF$ with
  compatible data (step \#4).}\ \
In $PF(\xi,\eta)$ (equation~\eqref{eq_38}), the terms
$F(\xi,-1)$, $F(-1,-1)$, $F_{\xi}(-1,-1)$, $F(\xi,-\xi)$, $F(-\eta,\eta)$, and
$F(-1,1)$ are given by or computed from the prescribed Dirichlet data,
while $F_{\xi}(-1,\eta)$, $F_{\eta}(-1,-1)$ and $F_{\eta}(-1,1)$ are
placeholders for the boundary data or corner-compatible data in
equations~\eqref{eq_21b}, \eqref{eq_24b} and \eqref{eq_34}.
In the final step we substitute these compatible data into $PF$ to attain
an updated transfinite interpolant $PF^g(\xi,\eta)$, where the superscript ``$g$''
accentuates the fact that it explicitly depends on the free function $g(\xi,\eta)$
due to the unknown $V(\xi,\eta)$ involved in the data.
$PF^g$ is specifically given by
\begingroup\small\begin{align}\label{eq_44}
  & PF^g(\xi,\eta) = A_F^g(\xi,\eta)
    + \alpha(\xi,\eta)\left[U_{BC}(-\xi) - A_F^g(\xi,-\xi) \right]
    + \beta(\xi,\eta)\left[U_{BC}(\eta)  - A_F^g(-\eta,\eta) \right].
\end{align}\endgroup
The symbols in this equation are defined by
\begin{subequations}\small
  \begin{align}
    A_F^g(\xi,\eta) =&\ F(\xi,-1)+(\xi+1)\left[F_{\xi}^g(-1,\eta)-F_{\xi}(-1,-1) \right]
    - F(-1,-1) \notag \\
    &+ \left[F(-1,-1)\varphi_0(\eta)+U_{BC}(1)\varphi_1(\eta)
      +\lambda_AF_{\eta}^a(-1,-1)\psi_0(\eta)
      +F_{\eta}^g(-1,1)\psi_1(\eta)\right]; \label{eq_45a} \\
    F_{\eta}^g(-1,1) =&\ (1-\lambda_C)\Gamma + \lambda_CF_{\eta}^a(-1,1)
    = (1-\lambda_C)g_{\eta}(-1,1) + \lambda_CF_{\eta}^a(-1,1); \label{eq_45b} \\
    F_{\xi}^g(-1,\eta) =&\ T_{AC}(\eta) - S_{AC}(\eta)V_{\eta}^g(-1,\eta); \label{eq_45c} \\
    V_{\eta}^g(-1,\eta) =&\ g_{\eta}(-1,\eta)- [g(-1,-1)-F(-1,-1)]\varphi_0'(\eta)
     - [g(-1,1)-U_{BC}(1)]\varphi_1'(\eta) \notag \\
     & - \lambda_A[g_{\eta}(-1,-1)-F_{\eta}^a(-1,-1)]\psi_0'(\eta)
     - [g_{\eta}(-1,1)-F_{\eta}^g(-1,1)]\psi_1'(\eta).
  \end{align}
\end{subequations}

\begin{remark}\label{rem_2}
  $F_{\eta}(-1,1)$ in $PF$ is a placeholder for the expression~\eqref{eq_34},
  which contains an unknown (free) parameter $\Gamma$,
  representing the value $V_{\eta}(-1,1)$ when
  $\overline{AC}\perp\overline{BC}$ at vertex $C$.
  When attaining the updated $F_{\eta}^g(-1,1)$ in equation~\eqref{eq_45b},
  we have chosen $\Gamma=g_{\eta}(-1,1)$,
  since both are unknown and to be determined. This choice eliminates $\Gamma$
  from the formulation and effectively imposes
  the following constraint at vertex $C$, 
  \begingroup\small\begin{equation}
    V_{\eta}(-1,1) = (1-\lambda_C)g_{\eta}(-1,1) + \lambda_C F_{\eta}^a(-1,1)
    = \left\{\begin{array}{ll} g_{\eta}(-1,1), &
    \overline{AC}\perp\overline{BC}\ \text{at}\ C, \\
    F_{\eta}^a(-1,1), & \overline{AC}\not\perp\overline{BC}\ \text{at}\ C.
    \end{array}\right.
  \end{equation}\endgroup
\end{remark}

\begin{property}\label{lem_25}
  $F_{\xi}^g(-1,\eta)$ and $F_{\eta}^g(-1,1)$
  given in~\eqref{eq_45b}--\eqref{eq_45c}, 
  for any $g(\xi,\eta)$ therein
  that is sufficiently differentiable and for
  any combination of $(\lambda_A,\lambda_C)=(0/1,0/1)$, satisfy
  \begin{equation}
    \begin{split}
    & F_{\xi}^g(-1,-1) = F_{\xi}(-1,-1), \quad
    -F_{\xi}^g(-1,1) + F_{\eta}^g(-1,1) = U_{BC}'(1).
    \end{split}
  \end{equation}
\end{property}
\begin{proof} By verification and by using equations~\eqref{eq_24a}--\eqref{eq_24b},
  \eqref{eq_27}, and
  \eqref{eq_28a}--\eqref{eq_28b}.
\end{proof}

\begin{property}\label{lem_26}
  $PF^{g}(\xi,\eta)$ given by~\eqref{eq_44}, for any $g(\xi,\eta)$ therein that
  is sufficiently differentiable, satisfies
  \begingroup\small\begin{equation}\left\{
    \begin{split}
    &PF^g(\xi,-1)=F(\xi,-1), \
    PF^g(-\eta,\eta) = U_{BC}(\eta), \ PF^g(\xi,-\xi)=U_{BC}(-\xi), \ 
    (PF^g)_{\xi}(-1,\eta) = F_{\xi}^g(-1,\eta), \\
    & PF^g(-1,\eta) = F(-1,-1)\varphi_0(\eta) + U_{BC}(1)\varphi_1(\eta)
    + \lambda_AF_{\eta}^a(-1,-1)\psi_0(\eta) + F_{\eta}^g(-1,1)\psi_1(\eta).
    \end{split}\right.
  \end{equation}\endgroup
\end{property}
\begin{proof}By verification and by
  using Property~\ref{lem_25} and the results in~\eqref{eq_32} and \eqref{eq_30}.
\end{proof}

Putting the above components together, we attain the final general form
for $V$,
\begingroup\small\begin{equation}\label{eq_49}
  V(\xi,\eta) = g(\xi,\eta) - Pg(\xi,\eta) + PF^g(\xi,\eta),
\end{equation}\endgroup
where $g(\xi,\eta)$ is a free (arbitrary) function that is sufficiently differentiable,
and $Pg$ and $PF^g$ are defined in~\eqref{eq_42} and \eqref{eq_44}, respectively.
It has the following properties.
\begin{theorem}
  $V(\xi,\eta)$ given by~\eqref{eq_49} satisfies the Dirichlet conditions~\eqref{eq_16a} and
  \eqref{eq_16c} and the Neumann condition~\eqref{eq_21a},
  for any $g(\xi,\eta)$ therein that is
  sufficiently differentiable.
\end{theorem}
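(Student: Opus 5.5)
The plan is to exploit the TFC structure of \eqref{eq_49}: $V = g - Pg + PF^g$. The Dirichlet conditions hold because $g - Pg$ vanishes on the Dirichlet edges while $PF^g$ carries the data. The Neumann condition hinges on the fact that $V_\eta(-1,\eta)$ coincides with the quantity $V_\eta^g(-1,\eta)$ used to build $F_\xi^g$.

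\textbf{Dirichlet conditions.} First, Property~\ref{lem_23} is a statement about the operator $P$ applied to an arbitrary sufficiently smooth function, with the placeholders read off from that function. Applied to $g$, it gives
\[
Pg(\xi,-1)=g(\xi,-1),\quad Pg(-\eta,\eta)=g(-\eta,\eta),\quad Pg(\xi,-\xi)=g(\xi,-\xi),\quad (Pg)_\xi(-1,\eta)=g_\xi(-1,\eta),
\]
and
\[
Pg(-1,\eta)=g(-1,-1)\varphi_0(\eta)+g(-1,1)\varphi_1(\eta)+\lambda_A g_\eta(-1,-1)\psi_0(\eta)+g_\eta(-1,1)\psi_1(\eta).
\]
Hence $g-Pg$ vanishes on $\eta=-1$ and on the hypotenuse. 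Combining this with Property~\ref{lem_26}, we get $V(\xi,-1)=F(\xi,-1)$ and $V(-\eta,\eta)=U_{BC}(\eta)$, which are \eqref{eq_16a} and \eqref{eq_16c}.

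\textbf{Neumann condition.} Next, on $\xi=-1$ the same two results give
\[
V_\xi(-1,\eta)=g_\xi(-1,\eta)-g_\xi(-1,\eta)+F^g_\xi(-1,\eta)=T_{AC}(\eta)-S_{AC}(\eta)V^g_\eta(-1,\eta).
\]
It therefore remains to show $V_\eta(-1,\eta)=V^g_\eta(-1,\eta)$. For this, I will write out the trace
\[
V(-1,\eta)=g(-1,\eta)-Pg(-1,\eta)+PF^g(-1,\eta),
\]
using the explicit edge profiles of $Pg$ and of $PF^g$ from Property~\ref{lem_26}, and differentiate in $\eta$. Grouping terms gives
\[
g_\eta(-1,\eta)-[g(-1,-1)-F(-1,-1)]\varphi_0'-[g(-1,1)-U_{BC}(1)]\varphi_1'-\lambda_A[g_\eta(-1,-1)-F^a_\eta(-1,-1)]\psi_0'-[g_\eta(-1,1)-F^g_\eta(-1,1)]\psi_1',
\]
which is exactly $V^g_\eta(-1,\eta)$. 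Substituting back yields $V_\xi(-1,\eta)+S_{AC}(\eta)V_\eta(-1,\eta)=T_{AC}(\eta)$, i.e. \eqref{eq_21a}.

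\textbf{Main obstacle.} The delicate part is not this final assembly but the validity of Property~\ref{lem_26} itself, namely that $PF^g$ still interpolates after the placeholders are replaced by $g$-dependent data. The hypotenuse terms of $PF^g$ must remain consistent with the leg data at the corners $A$ and $C$. Corner $C$ requires the relation $-F^g_\xi(-1,1)+F^g_\eta(-1,1)=U'_{BC}(1)$ from Property~\ref{lem_25}, which in turn uses the unconditional form \eqref{eq_34} with $\Gamma=g_\eta(-1,1)$. Corner $A$ requires $F^g_\xi(-1,-1)=F_\xi(-1,-1)$, so that the $(\xi+1)$ term in $A^g_F$ vanishes on $\eta=-1$. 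Since both properties are given, I would simply invoke them. I would also double-check that $(PF^g)_\xi(-1,\eta)$ picks up no extra contribution from the $\alpha,\beta$ terms; this follows from $\alpha(-1,\eta)=\varphi_1(\eta)$, $\beta(-1,\eta)=0$, $\beta_\xi(-1,\eta)=0$ in \eqref{eq_32}, together with the corner compatibility just mentioned.
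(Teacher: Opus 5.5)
Your proposal is correct and follows the paper's proof essentially verbatim. The Dirichlet conditions come from Property~\ref{lem_23} applied to $g$ together with Property~\ref{lem_26}, and the Neumann condition follows from $V_\xi(-1,\eta)=F_\xi^g(-1,\eta)$, $V_\eta(-1,\eta)=V_\eta^g(-1,\eta)$ and \eqref{eq_45c}; your discussion of the corner relations in Property~\ref{lem_25} at $A$ and $C$ correctly spells out the verification that the paper leaves implicit.
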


\begin{proof} The Dirichlet conditions can be verified by using Properties~\ref{lem_23}
  and \ref{lem_26}. The Neumann condition can be verified by noting that
  \begingroup\small\begin{equation}
    \left\{
    \begin{split}
    &V_{\xi}(-1,\eta) = F_{\xi}^g(-1,\eta), \\
    & V_{\eta}(-1,\eta)=V_{\eta}^g(-1,\eta) = g_{\eta}(-1,\eta)- [g(-1,-1)-F(-1,-1)]\varphi_0'(\eta)
     - [g(-1,1)-U_{BC}(1)]\varphi_1'(\eta) \\
     &\qquad\qquad\qquad\qquad\qquad\ \ - \lambda_A[g_{\eta}(-1,-1)-F_{\eta}(-1,-1)]\psi_0'(\eta)
     - [g_{\eta}(-1,1)-F_{\eta}^g(-1,1)]\psi_1'(\eta),
    \end{split}\right.
  \end{equation}\endgroup
  and by using equation~\eqref{eq_45c}.
\end{proof}


\begin{remark}
  Vertices $A$ and $C$ are the junctions between a Neumann boundary and
  a Dirichlet boundary. If the two boundary curves meet smoothly at
  either vertex (i.e.~common tangent, tangent-continuous), they
  will be parallel there and the Jacobian
  matrix will become singular. The formulation presented in this section,
  with no change, equally applies to such configurations. Suppose both $A$
  and $C$ are smooth vertices, leading to $\lambda_A=1$ and $\lambda_C=1$. Using
  equations~\eqref{eq_22}, \eqref{eq_24b} and~\eqref{eq_28b}, we have
  \begingroup\small\begin{equation}
    \left\{\begin{split}
    & \textnormal{det}\mbs J(-1,\pm 1)=0, \quad W_{AC}(\pm 1)=0, \quad T_{AC}(\pm 1)=0, \quad
    S_{AC}(-1)   \neq 0, \quad S_{AC}(1) \neq -1, \\
    &F_{\eta}^a(-1,-1) = -\frac{F_{\xi}(-1,-1)}{S_{AC}(-1)}, \quad
    F_{\eta}^a(-1,1) = \frac{U_{BC}'(1)}{1+S_{AC}(-1)}.
    \quad 
    \end{split}\right.
  \end{equation}\endgroup
Therefore all the quantities in the formulation are well defined, and no
special treatment for smooth vertices is needed. If the solution gradient at
the smooth vertices (i.e.~$\nabla u(\mbs x_A)$, $\nabla u(\mbs x_C)$) are required,
they cannot be computed through the Jacobian matrix (due to singularity).
They can be determined, however, using the idea as
discussed in~\cite{DongZ2026} (Remark 2.3 therein),
by a combination of the chain rule, the Neumann boundary condition, and
the existence of a common tangent at the smooth vertex. The results are given by
\begin{subequations}\small
\begin{align}
  & \nabla u(\mbs x_A)= \begin{bmatrix} -\frac{F_{\xi}(-1,-1)}{\|\mbs x_{AB}'(-1) \|^2} &
    u_{nAC}(\mbs x_A) \end{bmatrix}
  \begin{bmatrix}\tau_{xAC}(-1) & \tau_{yAC}(-1) \\
    -\tau_{yAC}(-1) & \tau_{xAC}(-1) \end{bmatrix}, \\
  & \nabla u(\mbs x_C)= \begin{bmatrix} -\frac{U_{BC}'(1)}{\|\mbs x_{BC}'(1) \|^2} &
    u_{nAC}(\mbs x_C) \end{bmatrix}
  \begin{bmatrix}\tau_{xAC}(1) & \tau_{yAC}(1) \\
    -\tau_{yAC}(1) & \tau_{xAC}(1) \end{bmatrix},
\end{align}
\end{subequations}
where $\bm\tau_{AC}(\eta)=(\tau_{xAC},\tau_{yAC})
=\frac{\mbs x_{AC}'(\eta)}{\|\mbs x_{AC}'(\eta) \|}$ is the tangent vector on $\overline{AC}$.

\end{remark}


\begin{remark}
  If the sole Neumann condition is imposed on $\overline{AB}$ (instead of $\overline{AC}$),
  with the other
  boundaries being of Dirichlet type, a reciprocal formulation
  can be attained analogously to exactly satisfy these boundary conditions
  by following the four-step procedure and using
  the Barnhill polynomials $\alpha(\xi,\eta)$ and $\beta(\xi,\eta)$.
  
\end{remark}


\subsubsection{NBC Imposed on Two Triangle Boundaries}
\label{sec_232}

When two Neumann boundaries are present
(with the third being of Dirichlet
type), they can always be mapped to the two legs ($\overline{AB}$, $\overline{AC}$)
of  $\Omega_{st}$ by a reordering of the vertices.
In this section
we focus on the formulation with the two Neumann boundaries
corresponding to $\overline{AB}$ and $\overline{AC}$,
without loss of generality.
If one of the Neumann boundaries corresponds to the hypotenuse,
the formulation is more involved and will be
presented in an Appendix (Section \ref{sec_n_ACBC}).

Consider a scalar field $u(\mbs x)$ defined on
the general triangular domain $\Omega=\overline{ABC}$ from Figure~\ref{fg_1}(a),
which satisfies the following boundary conditions,
\begin{subequations}\label{eq_81}
  \begingroup\small
\begin{align}
  &\mbs n\cdot\nabla u|_{\mbs x\in\overline{AB}} = u_{nAB}(\mbs x), \label{eq_81a} \\
  & u|_{\mbs x\in\overline{BC}} = u_{BC}(\mbs x), \label{eq_81b}  \\
  & \mbs n\cdot\nabla u|_{\mbs x\in\overline{AC}} = u_{nAC}(\mbs x), \label{eq_81c}
\end{align}\endgroup
\end{subequations}
where the right hand sides (RHS) are the prescribed Dirichlet or Neumann data.
In light of the domain map from Section~\ref{sec_21},
equations~\eqref{eq_81b} and \eqref{eq_81c} are transformed into \eqref{eq_16c}
and \eqref{eq_21}, respectively. The Neumann condition~\eqref{eq_81a} is transformed into
\begin{subequations}\label{eq_82}
  \begingroup\small
  \begin{align}
    & V_{\eta}(\xi,-1) + S_{AB}(\xi)V_{\xi}(\xi,-1) = T_{AB}(\xi), \label{eq_82a} \\
    \text{or}\ & V_{\eta}(\xi,-1) = T_{AB}(\xi) - S_{AB}(\xi)V_{\xi}(\xi,-1)
    =: F_{\eta}(\xi,-1). \label{eq_82b}
  \end{align}\endgroup
\end{subequations}
The symbols  in this equation are defined by ($\mbs J$ again denoting the Jacobian matrix),
\begingroup\small\begin{align}\label{eq_83}
    \left\{\begin{array}{l}
    S_{AB}(\xi) = K_{xAB}(\xi)W_{AB}(\xi)
    = -\frac{\mbs x_{\xi}(\xi,-1)\cdot\mbs x_{\eta}(\xi,-1)}{\|\mbs x_{\xi}(\xi,-1) \|^2},
    \quad W_{AB}(\xi) = \frac{1}{K_{yAB}(\xi)}
    = -\frac{\text{det}\mbs J(\xi,-1)}{\|\mbs x_{\xi}(\xi,-1) \|}, \\
    T_{AB}(\xi) = u_{nAB}(\mbs x(\xi,-1))W_{AB}(\xi), \quad
    \mbs n_{AB}=
    \begin{bmatrix} \frac{y_{\xi}(\xi,-1)}{\|\mbs x_{\xi}(\xi,-1) \|}
      \\ -\frac{x_{\xi}(\xi,-1)}{\|\mbs x_{\xi}(\xi,-1) \|}  \end{bmatrix},
    \\
    \begin{bmatrix}K_{xAB}(\xi) \\ K_{yAB}(\xi) \end{bmatrix}
    = \mbs J^{-1}(\xi,-1)\mbs n_{AB}(\xi)
    = \frac{1}{\text{det}\mbs J(\xi,-1)}
    \begin{bmatrix}
      \frac{\mbs x_{\xi}(\xi,-1)\cdot\mbs x_{\eta}(\xi,-1)}{\|\mbs x_{\xi}(\xi,-1) \|} \\
      -\|\mbs x_{\xi}(\xi,-1) \|
    \end{bmatrix}.
    \end{array}\right.
  \end{align}\endgroup
The transformed boundary conditions consist of~\eqref{eq_16c},
\eqref{eq_21}, and \eqref{eq_82}.
Note that
\begingroup\small\begin{align}\label{eq_n61}
  \left\{\begin{array}{l}
  \overline{AB}\perp\overline{BC}\ \text{at}\ B \quad \Longleftrightarrow \quad
  S_{AB}(1)=-1; \\
  \overline{AB}\perp\overline{AC}\ \text{at}\ A \quad \Longleftrightarrow \quad
  S_{AB}(-1)= S_{AC}(-1)=0.
  \end{array}\right.
\end{align}\endgroup


These boundary conditions must be compatible on the shared vertices, and we next
analyze these compatibility constraints.
The continuity of $V(\xi,\eta)$ at vertices $B$ and $C$ results in
\begin{subequations}\label{eq_85}
  \begingroup\small\begin{align}
    & V(1,-1) = U_{BC}(-1) =: F(1,-1), \label{eq_85a} \\
    & V(-1,1) = U_{BC}(1) =: F(-1,1). \label{eq_85b}
  \end{align}\endgroup
\end{subequations}
Differentiating equation~\eqref{eq_16c} w.r.t.~$\eta$ and valuating it at $\eta=-1$,
combined with \eqref{eq_82a} evaluated at $\xi=1$, leads to
  \begingroup\small
\begin{subequations}\label{eq_86}
  \begin{align}
    & T_{AB}(1) - U_{BC}'(-1) = 0, \quad \text{if}\ \overline{AB}\perp\overline{BC}\ \text{at}\ B;
    \label{eq_86a}\\
    & V_{\xi}(1,-1) = \frac{T_{AB}(1)-U_{BC}'(-1)}{1+S_{AB}(1)} =: F_{\xi}^a(1,-1), \quad
    \text{if}\ \overline{AB}\not\perp\overline{BC}\ \text{at}\ B. \label{eq_86b}
  \end{align}
\end{subequations}\endgroup
Differentiating equation~\eqref{eq_16c} w.r.t.~$\eta$ and valuating it at $\eta=1$,
combined with \eqref{eq_21a} evaluated at $\eta=1$, leads to
\begin{subequations}\label{eq_87}\small
  \begin{align}
    & T_{AC}(1) + U_{BC}'(1) = 0, \quad \text{if}\ \overline{AC}\perp\overline{BC}\ \text{at}\ C;
    \label{eq_87a}\\
    & V_{\eta}(-1,1) = \frac{T_{AC}(1)+U_{BC}'(1)}{1+S_{AC}(1)} =: F_{\eta}^a(-1,1), \quad
    \text{if}\ \overline{AC}\not\perp\overline{BC}\ \text{at}\ C. \label{eq_87b}
  \end{align}
\end{subequations}
We then transform the conditional constraints in~\eqref{eq_86b} and~\eqref{eq_87b} into
equivalent unconditional forms,
\begin{subequations}\label{eq_88}\small
  \begin{align}
    & V_{\xi}(1,-1) = (1-\lambda_B)\Gamma_B + \lambda_B F_{\xi}^a(1,-1)
    =\left\{\begin{array}{ll}\Gamma_B, & \overline{AB}\perp\overline{BC}\ \text{at}\ B, \\
    F_{\xi}^a(1,-1), & \text{otherwise}
    \end{array}\right\} =: F_{\xi}(1,-1); \label{eq_88a} \\
    & V_{\eta}(-1,1) = (1-\lambda_C)\Gamma_C + \lambda_C F_{\eta}^a(-1,1)
    = \left\{\begin{array}{ll}\Gamma_C, & \overline{AC}\perp\overline{BC}\ \text{at}\ C, \\
    F_{\eta}^a(-1,1), & \text{otherwise}
    \end{array}\right\} =: F_{\eta}(-1,1), \label{eq_88b}
  \end{align}
\end{subequations}
where $\Gamma_B$ and $\Gamma_C$ are undetermined free  parameters,
and $\lambda_B$ and $\lambda_C$ are defined in \eqref{eq_33}.

Compatibility constraints at vertex $A$ (double-Neumann)
consist of the following.
The combination of equation~\eqref{eq_21a} evaluated at $\eta=-1$ and
equation~\eqref{eq_82a} evaluated at $\xi=-1$ results in,
\begingroup\small\begin{align}\label{eq_89}
  & \begin{bmatrix}V_{\xi}(-1,-1)\\ V_{\eta}(-1,-1)  \end{bmatrix}
  = \begin{bmatrix}F_{\xi}^a(-1,-1)\\ F_{\eta}^a(-1,-1)  \end{bmatrix}
  = \begin{bmatrix}
    \frac{T_{AC}(-1) - S_{AC}(-1)T_{AB}(-1)}{1-S_{AC}(-1)S_{AB}(-1)} \\ 
    \frac{T_{AB}(-1) - S_{AB}(-1)T_{AC}(-1)}{1-S_{AC}(-1)S_{AB}(-1)}
  \end{bmatrix}
  =: \begin{bmatrix}F_{\xi}(-1,-1)\\ F_{\eta}(-1,-1)  \end{bmatrix}.
\end{align}\endgroup
Notice that $S_{AB}(-1)S_{AC}(-1)\leqslant 1$ based on the Cauchy-Schwarz inequality,
where equality holds only if $\overline{AB}\parallel\overline{AC}$ at $A$.
In this paper we require that the incident boundaries not be parallel at
a double-Neumann (or double-Robin) vertex of the triangular domain.
In other words, the double-Neumann
(or double-Robin) vertex should not be degenerate (smooth or a cusp).
The derivative of equation~\eqref{eq_21a} evaluated at $\eta=-1$,
combined with the derivative of equation~\eqref{eq_82a} evaluated at $\xi=-1$,
leads to
\begin{subequations}\small
  \begin{align}
    & [T_{AB}'(-1)-S_{AB}'(-1)F_{\xi}^a(-1,-1)] =
    [T_{AC}'(-1)-S_{AC}'(-1)F_{\eta}^a(-1,-1)], \quad
    \text{if}\ \overline{AB}\perp\overline{AC}\ \text{at}\ A, \label{eq_90a} \\
    & V_{\xi\xi}(-1,-1) = \frac{S_{AC}(-1)}{S_{AB}(-1)}V_{\eta\eta}(-1,-1)
    + \frac{Q^a(-1,-1)}{S_{AB}(-1)} =: F_{\xi\xi}(-1,-1), \quad
    \text{if}\ \overline{AB}\not\perp\overline{AC}\ \text{at}\ A, \label{eq_90b}
  \end{align}
\end{subequations}
where
$Q^a(-1,-1)=[T_{AB}'(-1)-S_{AB}'(-1)F_{\xi}^a(-1,-1)] -
[T_{AC}'(-1)-S_{AC}'(-1)F_{\eta}^a(-1,-1)]$,
and
\begingroup\small\begin{align}\label{eq_91}
  V_{\xi\eta}(-1,-1) = - S_{AC}(-1)V_{\eta\eta}(-1,-1)+ [T_{AC}'(-1)-S_{AC}'(-1)F_{\eta}^a(-1,-1)] 
  =: F_{\xi\eta}(-1,-1).
\end{align}\endgroup

The formulation for $u(\mbs x)$ that satisfies the conditions in~\eqref{eq_81}
is thus reduced to: Find the general form of $V(\xi,\eta)$
such that it satisfies conditions in~\eqref{eq_16c}, \eqref{eq_21},
\eqref{eq_82}, \eqref{eq_85a}--\eqref{eq_85b}, \eqref{eq_88a}--\eqref{eq_88b},
\eqref{eq_89}, \eqref{eq_90b}, and \eqref{eq_91}.


We next transform the point constraints on $V$ into conditions about
its edge profiles. Define
\begingroup\small\begin{align}
  & F(-1,\eta) := F(-1,1)\rho_1(\eta)
  + F_{\eta}(-1,1)\upsilon_1(\eta)+ F_{\eta}(-1,-1)\upsilon_0(\eta), \label{eq_95} \\
  & F(\xi,-1) := F(1,-1)\rho_1(\xi) + F_{\xi}(1,-1)\upsilon_1(\xi)
  + F_{\xi}(-1,-1)\upsilon_0(\xi) + \lambda_AF_{\xi\xi}(-1,-1)\omega_0(\xi), \label{eq_96}
\end{align}\endgroup
where $F(-1,1)$ and $F(1,-1)$ are defined in~\eqref{eq_85},
$F_{\xi}(1,-1)$ and $F_{\eta}(-1,1)$ are defined in \eqref{eq_88},
$F_{\xi}(-1,-1)$ and $F_{\eta}(-1,-1)$ are defined in \eqref{eq_89},
$F_{\xi\xi}(-1,-1)$ is defined in \eqref{eq_90b}, and $\lambda_A$ is defined
in \eqref{eq_33}. $\rho_i$, $\upsilon_i$ and $\omega_i$ ($i=0,1$) are the
$C^2$ blending functions defined in \eqref{eq_n4}.
Note that
\begingroup\small\begin{align} \label{eq_97}
& V(-1,\eta)=F(-1,\eta)
\end{align}\endgroup
satisfies
the point constraints in \eqref{eq_85b} and \eqref{eq_88b},
and the second equation in~\eqref{eq_89}. Similarly,
\begingroup\small\begin{align}\label{eq_98}
&V(\xi,-1)=F(\xi,-1)
\end{align}\endgroup
satisfies \eqref{eq_85a}, \eqref{eq_88a}, the first equation in \eqref{eq_89},
and \eqref{eq_90b}.

The final transformed formulation problem becomes:
Find the general form of $V(\xi,\eta)$ such that
it satisfies~\eqref{eq_16c},
\eqref{eq_97}, \eqref{eq_98}, \eqref{eq_21b}, \eqref{eq_82b}, and \eqref{eq_91}.
Note that $F(-\eta,\eta)$, $F(-1,\eta)$, $F(\xi,-1)$, $F_{\xi}(-1,\eta)$,
$F_{\eta}(\xi,-1)$, $F_{\xi\eta}(-1,-1)$ in these equations are placeholders,
and that the data for these terms contain the
free parameters $\Gamma_B$ and $\Gamma_C$ or
the unknown function $V$ to be formulated.

Next we devise a transfinite interpolation operator for these conditions.
Let
\begingroup\small\begin{align}\label{eq_99}
  PF(\xi,\eta) =&\ (P_1\oplus P_2)F(\xi,\eta)
  = P_1F(\xi,\eta) + P_2F(\xi,\eta) - P_1P_2F(\xi,\eta) \notag \\
  =&\ A_F(\xi,\eta) + \alpha(\xi,\eta)\big[F(\xi,-\xi) - A_F(\xi,-\xi) \big]
  + \beta(\xi,\eta)\big[F(-\eta,\eta) - A_F(-\eta,\eta) \big],
\end{align}\endgroup
where
\begin{subequations}\label{eq_100}
  \small
  \begin{align}
    &P_1F(\xi,\eta) = \alpha(\xi,\eta)F(\xi,-\xi) + \beta(\xi,\eta)F(-\eta,\eta);
    \label{eq_100a}\\
    &A_F(\xi,\eta) = P_2F(\xi,\eta) := (T_{\xi}\oplus T_{\eta})F(\xi,\eta)
    = T_{\xi}F(\xi,\eta) + T_{\eta}F(\xi,\eta) - T_{\xi}T_{\eta}F(\xi,\eta) \notag \\
    &\qquad\quad\qquad\qquad\qquad\
    = F(-1,\eta)+ F(\xi,-1) + (\xi+1)\big[F_{\xi}(-1,\eta)-F_{\xi}(-1,-1) \big]
     \notag \\
     &\qquad\qquad\qquad\qquad\quad\quad\
     + (\eta+1)\big[F_{\eta}(\xi,-1)-F_{\eta}(-1,-1) \big]
     - (\xi+1)(\eta+1) F_{\xi\eta}(-1,-1);
     \label{eq_100b}\\
     &T_{\xi}F(\xi,\eta) = F(-1,\eta) + (\xi+1)F_{\xi}(-1,\eta); \quad
     T_{\eta}F(\xi,\eta) = F(\xi,-1) + (\eta+1)F_{\eta}(\xi,-1). \label{eq_100d}
  \end{align}
\end{subequations}
In the above equations $F(-1,\eta)$ and $F(\xi,-1)$ are defined
by~\eqref{eq_95} and \eqref{eq_96}, respectively.
$T_{\xi}F$ and $T_{\eta}F$ are 1st-order Taylor approximations of
$F(\xi,\eta)$ along the $\xi$ and $\eta$ directions, about $\xi=-1$ and $\eta=-1$,
respectively. By using the relations in~\eqref{eq_93} and \eqref{eq_32},
we can establish the following result.
\begin{property}\label{lem_214}
  $PF(\xi,\eta)$ given by~\eqref{eq_99} satisfies:
  \begingroup\small\begin{align}
    \left\{\begin{array}{l}
    PF(-1,\eta) = F(-1,\eta)
    = F(-1,1)\rho_1(\eta)
    + F_{\eta}(-1,1)\upsilon_1(\eta)+ F_{\eta}(-1,-1)\upsilon_0(\eta),
    \\
    PF(\xi,-1)=F(\xi,-1) 
    = F(1,-1)\rho_1(\xi) + F_{\xi}(1,-1)\upsilon_1(\xi)
    + F_{\xi}(-1,-1)\upsilon_0(\xi) + \lambda_AF_{\xi\xi}(-1,-1)\omega_0(\xi),
    \\
    PF(-\eta,\eta) = F(-\eta,\eta),  \quad
    (PF)_{\xi}(-1,\eta) = F_{\xi}(-1,\eta), \quad
    (PF)_{\eta}(\xi,-1) = F_{\eta}(\xi,-1), \\
    (PF)_{\xi\xi}(-1,-1) = \lambda_A F_{\xi\xi}(-1,-1), \quad
    (PF)_{\xi\eta}(-1,-1) = F_{\xi\eta}(-1,-1).
    \end{array}\right.
  \end{align}\endgroup
\end{property}

Then we employ $PF$ to construct a preliminary general form for $V$,
\begingroup\small\begin{equation}\label{eq_102}
  V(\xi,\eta) = g(\xi,\eta) - Pg(\xi,\eta) + PF(\xi,\eta),
\end{equation}\endgroup
where $g(\xi,\eta)$ is a sufficiently-differentiable free function,
and $Pg$ is defined according to \eqref{eq_99} and given by
\begin{subequations}\label{eq_103}
  \small
  \begin{align}
    & Pg(\xi,\eta) = A_g(\xi,\eta) + \alpha(\xi,\eta)\big[g(\xi,-\xi) - A_g(\xi,-\xi) \big]
    + \beta(\xi,\eta)\big[g(-\eta,\eta) - A_g(-\eta,\eta) \big],
    \label{eq_103a} \\
    &A_g(\xi,\eta) = \big[g(-1,1)\rho_1(\eta) + g_{\eta}(-1,1)\upsilon_1(\eta)
      + g_{\eta}(-1,-1)\upsilon_0(\eta)  \big]
    + \big[g(1,-1)\rho_1(\xi) + g_{\xi}(1,-1)\upsilon_1(\xi) 
    \notag \\
    &\qquad\qquad\ \
    + g_{\xi}(-1,-1)\upsilon_0(\xi)
    + \lambda_Ag_{\xi\xi}(-1,-1)\omega_0(\xi) \big]
    + (\xi+1)\big[g_{\xi}(-1,\eta)-g_{\xi}(-1,-1) \big]
    \notag \\
    &\qquad\qquad\ \
     + (\eta+1)\big[g_{\eta}(\xi,-1)-g_{\eta}(-1,-1) \big]
     - (\xi+1)(\eta+1) g_{\xi\eta}(-1,-1). \label{eq_103b}
  \end{align}
\end{subequations}

Lastly, we replace the placeholder terms in $PF(\xi,\eta)$ by their compatible
data to attain an updated $PF^g(\xi,\eta)$,
which will lead to the final
form for $V$. The updated transfinite interpolation is
\begingroup\small\begin{align}\label{eq_105}
  PF^g(\xi,\eta) =&\ A_F^g(\xi,\eta) + \alpha(\xi,\eta)\big[U_{BC}(-\xi) - A_F^g(\xi,-\xi) \big]
  + \beta(\xi,\eta)\big[U_{BC}(\eta) - A_F^g(-\eta,\eta) \big].
\end{align}\endgroup
Here $A_{F}^g$ is defined by
\begin{subequations}\small
  \begin{align}
    A_F^g(\xi,\eta) =&\ \big[U_{BC}(1)\rho_1(\eta) + F_{\eta}^g(-1,1)\upsilon_1(\eta)
      + F_{\eta}^a(-1,-1)\upsilon_0(\eta)  \big] \notag \\
    &
    + \big[U_{BC}(-1)\rho_1(\xi) + F_{\xi}^g(1,-1)\upsilon_1(\xi) + F_{\xi}^a(-1,-1)\upsilon_0(\xi)
      + \lambda_AF_{\xi\xi}^g(-1,-1)\omega_0(\xi) \big] \notag \\
    &
    + (\xi+1)\big[F_{\xi}^{gg}(-1,\eta)-F_{\xi}^a(-1,-1) \big]
     + (\eta+1)\big[F_{\eta}^{gg}(\xi,-1)-F_{\eta}^a(-1,-1) \big]
     \notag \\
     &
     - (\xi+1)(\eta+1) F_{\xi\eta}^g(-1,-1); \\
     F_{\eta}^g(-1,1) =&\ (1-\lambda_C)\Gamma_C + \lambda_C F_{\eta}^a(-1,1)
     = (1-\lambda_C)g_{\eta}(-1,1)+\lambda_C F_{\eta}^a(-1,1); 
     \label{eq_106b} \\
     F_{\xi}^g(1,-1) =&\ (1-\lambda_B)\Gamma_B + \lambda_B F_{\xi}^a(1,-1)
     = (1-\lambda_B)g_{\xi}(1,-1) + \lambda_B F_{\xi}^a(1,-1);
     \label{eq_106c} \\
     F_{\xi\xi}^g(-1,-1) =&\  \frac{S_{AC}(-1)}{S_{AB}(-1)}g_{\eta\eta}(-1,-1)
     + \frac{Q^a(-1,-1)}{S_{AB}(-1)}; \\
     F_{\xi\eta}^g(-1,-1) =&\ T_{AC}'(-1) - S_{AC}'(-1)F_{\eta}^a(-1,-1)
     - S_{AC}(-1)g_{\eta\eta}(-1,-1); 
     \\
     F_{\xi}^{gg}(-1,\eta) =&\ T_{AC}(\eta) - S_{AC}(\eta)\big[
       g_{\eta}(-1,\eta) - \big(g(-1,1)-U_{BC}(1) \big)\rho_1'(\eta) \notag \\
       &
       - \big(g_{\eta}(-1,1)-F_{\eta}^g(-1,1) \big)\upsilon_1'(\eta)
       - \big(g_{\eta}(-1,-1)-F_{\eta}^a(-1,-1) \big)\upsilon_0'(\eta)
       \big]; \label{eq_106f} \\
     F_{\eta}^{gg}(\xi,-1) =&\ T_{AB}(\xi) - S_{AB}(\xi)\big[
       g_{\xi}(\xi,-1) - \big(g(1,-1)-U_{BC}(-1) \big)\rho_1'(\xi)
       - \big(g_{\xi}(1,-1)-F_{\xi}^g(1,-1) \big)\upsilon_1'(\xi)
       \notag \\
       &
       - \big(g_{\xi}(-1,-1)-F_{\xi}^a(-1,-1) \big)\upsilon_0'(\xi)
       -\lambda_A\big(g_{\xi\xi}(-1,-1) - F_{\xi\xi}^g(-1,-1) \big)\omega_0'(\xi)
       \big]. \label{eq_106g}
  \end{align}
\end{subequations}
In equations \eqref{eq_106b} and \eqref{eq_106c} we have chosen the free parameters
$\Gamma_B$ and $\Gamma_C$ to be
$\Gamma_B=g_{\xi}(1,-1)$ and $\Gamma_C=g_{\eta}(-1,1)$, thus 
leaving $g(\xi,\eta)$ as the only unknown in the formulation.
Using equations~\eqref{eq_93}, \eqref{eq_90a}, \eqref{eq_86a},
\eqref{eq_28a}, and \eqref{eq_32}, one can
establish the following relations and properties.
\begin{property}\label{lem_216}
  The following relations hold, for any combination
  of $(\lambda_A,\lambda_B,\lambda_C)=(0/1,0/1,0/1)$,
  \begingroup\small\begin{equation}
    \left\{\begin{split}
      &F_{\eta}^{gg}(-1,-1)=\lim_{\xi\rightarrow -1}F_{\eta}^{gg}(\xi,-1)=F_{\eta}^a(-1,-1); \quad
      F_{\xi}^{gg}(-1,-1)=\lim_{\eta\rightarrow -1}F_{\xi}^{gg}(-1,\eta)=F_{\xi}^a(-1,-1); \\
    &\big(F_{\eta}^{gg}\big)_{\xi}(-1,-1) = F_{\xi\eta}^g(-1,-1); \qquad
    \big(F_{\xi}^{gg} \big)_{\eta}(-1,-1) = F_{\xi\eta}^g(-1,-1); \\
    & -F_{\xi}^{gg}(-1,1) + F_{\eta}^g(-1,1) = U_{BC}'(1); \qquad
    -F_{\xi}^g(1,-1) + F_{\eta}^{gg}(1,-1) = U_{BC}'(-1).
  \end{split}\right.\end{equation}\endgroup
\end{property}
\begin{property}\label{lem_217}
  $PF^g(\xi,\eta)$ given by \eqref{eq_105} satisfies,
  for any $g(\xi,\eta)$ therein that is
  sufficiently differentiable and for any combination
  of $(\lambda_A,\lambda_B,\lambda_C)=(0/1,0/1,0/1)$,
  \begingroup\small\begin{align}
    \left\{\begin{array}{l}
    PF^g(-1,\eta) = U_{BC}(1) \rho_1(\eta) + F_{\eta}^g(-1,1)\upsilon_1(\eta)
    + F_{\eta}^a(-1,-1)\upsilon_0(\eta); \\
    PF^g(\xi,-1) = U_{BC}(-1)\rho_1(\xi) + F_{\xi}^g(1,-1)\upsilon_1(\xi)
    + F_{\xi}^a(-1,-1)\upsilon_0(\xi) + \lambda_AF_{\xi\xi}^g(-1,-1)\omega_0(\xi); \\
    PF^g(-\eta,\eta) = U_{BC}(\eta); \quad
    \big(PF^g \big)_{\xi}(-1,\eta) = F_{\xi}^{gg}(-1,\eta); \quad
    \big(PF^g \big)_{\eta}(\xi,-1) = F_{\eta}^{gg}(\xi,-1); \\
    \big(PF^g \big)_{\xi\xi}(-1,-1) = \lambda_A F_{\xi\xi}^g(-1,-1); \qquad
    \big(PF^g \big)_{\xi\eta}(-1,-1) = F_{\xi\eta}^g(-1,-1).
    \end{array}\right.
  \end{align}\endgroup
\end{property}

We use the updated $PF^g$ to attain the final form for $V(\xi,\eta)$,
\begingroup\small\begin{equation}\label{eq_109}
  V(\xi,\eta) = g(\xi,\eta) - Pg(\xi,\eta) + PF^g(\xi,\eta),
\end{equation}\endgroup
where $Pg$ is defined in \eqref{eq_103} and $PF^g$ is given by \eqref{eq_105}.
This leads to the following result.
\begin{theorem}
  $V(\xi,\eta)$ given by \eqref{eq_109} satisfies the Dirichlet
  condition \eqref{eq_16c} and the Neumann conditions
  \eqref{eq_21a} and \eqref{eq_82a}, for any $g(\xi,\eta)$ therein
  that is sufficiently differentiable and for any combination
  of $(\lambda_A,\lambda_B,\lambda_C)=(0/1,0/1,0/1)$.
\end{theorem}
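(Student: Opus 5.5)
The plan is to mirror the proof of the single-Neumann theorem. We show that $g-Pg$ contributes nothing to the traces entering \eqref{eq_16c}, \eqref{eq_21a} and \eqref{eq_82a}, so that the boundary behavior of $V$ is that of $PF^g$. We then use Property~\ref{lem_217} to read off these traces and close the Neumann conditions with the explicit forms \eqref{eq_106f}--\eqref{eq_106g}.

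\textbf{Step 1: the traces of $g-Pg$.} The key observation is that $Pg$ in \eqref{eq_103} is $PF$ of \eqref{eq_99} with every placeholder replaced by the corresponding trace of $g$:
\begin{itemize}
\item $F(-1,1)\to g(-1,1)$, $F_\eta(-1,\pm1)\to g_\eta(-1,\pm1)$;
\item $F(1,-1)\to g(1,-1)$, $F_\xi(\pm1,-1)\to g_\xi(\pm1,-1)$, $F_{\xi\xi}(-1,-1)\to g_{\xi\xi}(-1,-1)$;
\item $F(\xi,-\xi)\to g(\xi,-\xi)$, $F_\xi(-1,\eta)\to g_\xi(-1,\eta)$, $F_\eta(\xi,-1)\to g_\eta(\xi,-1)$, $F_{\xi\eta}(-1,-1)\to g_{\xi\eta}(-1,-1)$.
\end{itemize}
Property~\ref{lem_214}, applied with $F=g$, then gives
\begin{itemize}
\item $Pg(-\eta,\eta)=g(-\eta,\eta)$, $(Pg)_\xi(-1,\eta)=g_\xi(-1,\eta)$ and $(Pg)_\eta(\xi,-1)=g_\eta(\xi,-1)$;
\item $Pg(-1,\eta)=g(-1,1)\rho_1+g_\eta(-1,1)\upsilon_1+g_\eta(-1,-1)\upsilon_0$, with the analogous formula for $Pg(\xi,-1)$.
\end{itemize}
Property~\ref{lem_214} is stated for generic placeholder data, and the data here satisfy the consistency it implicitly needs: each is a trace of a single smooth $g$, so corner values agree automatically. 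Consequently $(g-Pg)(-\eta,\eta)=0$ and $(g-Pg)_\xi(-1,\eta)=(g-Pg)_\eta(\xi,-1)=0$. Since $V=g-Pg+PF^g$, it follows that
\[
V(-\eta,\eta)=PF^g(-\eta,\eta)=U_{BC}(\eta),\qquad V_\xi(-1,\eta)=F_\xi^{gg}(-1,\eta),\qquad V_\eta(\xi,-1)=F_\eta^{gg}(\xi,-1).
\]
The first identity is the Dirichlet condition \eqref{eq_16c}.

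\textbf{Step 2: the tangential derivatives.} Differentiating the edge profiles in $\eta$ (respectively $\xi$) gives
\[
V_\eta(-1,\eta)=g_\eta(-1,\eta)-\big[g(-1,1)-U_{BC}(1)\big]\rho_1'(\eta)-\big[g_\eta(-1,1)-F_\eta^g(-1,1)\big]\upsilon_1'(\eta)-\big[g_\eta(-1,-1)-F_\eta^a(-1,-1)\big]\upsilon_0'(\eta).
\]
This is exactly the bracket in \eqref{eq_106f}. Likewise $V_\xi(\xi,-1)$, including the $\lambda_A\omega_0'$ term, equals the bracket in \eqref{eq_106g}. Substituting into \eqref{eq_106f} yields $V_\xi(-1,\eta)=T_{AC}(\eta)-S_{AC}(\eta)V_\eta(-1,\eta)$, which is \eqref{eq_21a}. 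Substituting into \eqref{eq_106g} yields $V_\eta(\xi,-1)=T_{AB}(\xi)-S_{AB}(\xi)V_\xi(\xi,-1)$, which is \eqref{eq_82a}.

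\textbf{Main obstacle.} The hard part is justifying Property~\ref{lem_217} for every flag combination $(\lambda_A,\lambda_B,\lambda_C)$, since the proof relies on it. The issue is that the hypotenuse blending $\alpha[\,U_{BC}(-\xi)-A_F^g(\xi,-\xi)]+\beta[\,U_{BC}(\eta)-A_F^g(-\eta,\eta)]$ could spoil the leg traces and normal derivatives near $B$ and $C$. By \eqref{eq_32}, $\alpha$ and $\alpha_\xi,\alpha_\eta$ vanish on $\eta=-1$, and $\beta,\beta_\xi,\beta_\eta$ vanish on $\xi=-1$, so the danger is confined to the $\beta$-term on $\eta=-1$ and the $\alpha$-term on $\xi=-1$. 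These terms are harmless if the hypotenuse mismatch $U_{BC}-A_F^g$ and its first derivatives vanish at the acute vertices $B$ and $C$. Those vanishings are precisely the last two relations of Property~\ref{lem_216}, combined with the compatibility assumptions \eqref{eq_86a} and \eqref{eq_28a} when $\lambda_B=0$ or $\lambda_C=0$. The first relations of Property~\ref{lem_216} ensure that the mixed-derivative and corner-$A$ terms of $A_F^g$ are consistent, so that $(A_F^g)_\xi(-1,\eta)=F_\xi^{gg}(-1,\eta)$ holds exactly. I would check these vertex cancellations case by case in $\lambda_B$ and $\lambda_C$; everything else is routine substitution.
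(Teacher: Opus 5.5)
Your proposal is correct and is essentially the paper's proof. Property~\ref{lem_214} applied to $Pg$ and Property~\ref{lem_217} applied to $PF^g$ give $V(-\eta,\eta)=U_{BC}(\eta)$, $V_\xi(-1,\eta)=F_\xi^{gg}(-1,\eta)$ and $V_\eta(\xi,-1)=F_\eta^{gg}(\xi,-1)$, and the tangential derivatives $V_\eta(-1,\eta)$, $V_\xi(\xi,-1)$ coincide with the brackets in \eqref{eq_106f}--\eqref{eq_106g}, which closes both Neumann conditions. Your extra checks are accurate elaborations of what the paper leaves to verification: that the $g$-traces automatically satisfy the consistency Property~\ref{lem_214} needs, and that Property~\ref{lem_217} reduces to the hypotenuse mismatch and its derivative vanishing at $B$ and $C$ via Property~\ref{lem_216} together with \eqref{eq_86a}, \eqref{eq_28a}, and also \eqref{eq_90a} for the mixed-derivative relations at $A$ when $\lambda_A=0$.
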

\begin{proof}By using Properties \ref{lem_214} and \ref{lem_217}
  and equations \eqref{eq_106f}--\eqref{eq_106g}, and by noting that
  \begingroup\small\begin{align*}
    V_{\xi}(-1,\eta) =&\ F_{\xi}^{gg}(-1,\eta), \quad
    V_{\eta}(\xi,-1) =\ F_{\eta}^{gg}(\xi,-1), \\    
    V_{\eta}(-1,\eta) =&\ g_{\eta}(-1,\eta) - \big[g(-1,1)-U_{BC}(1) \big]\rho_1'(\eta)
    -\big[g_{\eta}(-1,1)-F_{\eta}^g(-1,1) \big]\upsilon_1'(\eta) \\
    & - \big[g_{\eta}(-1,-1)-F_{\eta}^a(-1,-1) \big]\upsilon_0'(\eta), \\
    V_{\xi}(\xi,-1) =&\ g_{\xi}(\xi,-1) - \big[g(1,-1)-U_{BC}(-1) \big]\rho_1'(\xi)
    - \big[g_{\xi}(1,-1) - F_{\xi}^g(1,-1) \big]\upsilon_1'(\xi) \\
    & - \big[g_{\xi}(-1,-1) - F_{\xi}^a(-1,-1) \big]\upsilon_0(\xi)
    - \lambda_A\big[g_{\xi\xi}(-1,-1) - F_{\xi\xi}^g(-1,-1) \big]\omega_0(\xi).
  \end{align*}\endgroup
\end{proof}

\begin{remark}
  With Neumann conditions imposed on $\overline{AB}$ and $\overline{AC}$,
  vertices $B$ and $C$ are junctions between a Dirichlet boundary and
  a Neumann boundary. The above formulation, with no change,
  applies to configurations where the boundary is smooth at $B$ or $C$ or both.
  Note that we require that the double-Neumann vertex $A$
  should not be smooth or a cusp.
\end{remark}


\subsection{Enforcing Exactly Robin Boundary Conditions (RBC)}
\label{sec_rbc}

The Neumann formulations in
Section \ref{sec_nbc}  can largely carry over
to Robin boundaries, with some modification
to account for  the extra Robin  term.
In this section we consider a combination of Robin and Dirichlet
conditions on the general triangular domain, with at least
one Robin boundary and one Dirichlet boundary.

\subsubsection{RBC Imposed on a Single Triangle Boundary}

If the domain involves a single Robin boundary,
without loss of generality, we focus on the configuration with the RBC imposed
on $\overline{AC}$, mirroring Section \ref{sec_2311} for a
single Neumann boundary. We leave the formulation 
for the sole RBC imposed on the hypotenuse
to an Appendix (Section \ref{sec_r_BC}).


Consider the general triangular domain $\Omega$ in Figure~\ref{fg_1}(a)
and a scalar function $u(\mbs x)$ defined on $\Omega$ that satisfies the conditions,
\begingroup\small\begin{equation}
  u|_{\mbs x\in\overline{AB}} = u_{AB}(\mbs x),  \quad
     u|_{\mbs x\in\overline{BC}} = u_{BC}(\mbs x),  \quad
     \mbs n\cdot\nabla u|_{\mbs x\in\overline{AC}} + \alpha_{AC}u|_{\mbs x\in\overline{AC}} = u_{rAC}(\mbs x),
\end{equation}\endgroup
where $\alpha_{AC}$ is the prescribed Robin coefficient and $u_{rAC}$ denotes
the prescribed data on the Robin boundary.
Under the map from Section \ref{sec_21}, the Dirichlet conditions
are transformed into equations~\eqref{eq_16a} and \eqref{eq_16c}.
The Robin condition is transformed into
\begin{subequations}\label{eq_111}
  \small
  \begin{align}
    & V_{\xi}(-1,\eta) + S_{AC}(\eta)V_{\eta}(-1,\eta) + \alpha_{AC}W_{AC}(\eta)V(-1,\eta)
    = T_{rAC}(\eta), \label{eq_111a}  \\
    \text{or}\ & V_{\xi}(-1,\eta) = T_{rAC}(\eta) - S_{AC}(\eta)V_{\eta}(-1,\eta)
    - \alpha_{AC}W_{AC}(\eta)V(-1,\eta)
    =: F_{\xi}(-1,\eta),
  \label{eq_111b}
\end{align}
\end{subequations}
where $S_{AC}(\eta)$ and $W_{AC}(\eta)$ are defined in \eqref{eq_22}, and
$T_{rAC}(\eta)=u_{rAC}(\mbs x(-1,\eta))W_{AC}(\eta)$.

An analysis similar to in Section \ref{sec_2311} produces
the following compatibility constraints. At vertex $A$,
\begin{subequations}\label{eq_112}
  \small
  \begin{align}
    & T_{rAC}(-1) - F_{\xi}(-1,-1) - \alpha_{AC}W_{AC}(-1)F(-1,-1) = 0, \quad
    \text{if}\ \overline{AB}\perp\overline{AC}\ \text{at}\ A; \label{eq_112a} \\
    & V_{\eta}(-1,-1) = F_{\eta}^a(-1,-1)
    = \frac{1}{S_{AC}(-1)}\big[T_{rAC}(-1)-F_{\xi}(-1,-1) - \alpha_{AC}W_{AC}(-1)F(-1,-1) \big]
    \notag \\
    &\qquad\qquad\qquad\qquad\qquad\quad\
    =: F_{\eta}(-1,-1), \quad
    \text{if}\ \overline{AB}\not\perp\overline{AC}\ \text{at}\ A. \label{eq_112b}
  \end{align}
\end{subequations}
At vertex $C$, we have
\begin{subequations}\label{eq_113}
  \small
  \begin{align}
    & T_{rAC}(1) + U_{BC}'(1) - \alpha_{AC}W_{AC}(1)U_{BC}(1) = 0, \quad
    \text{if}\ \overline{AC}\perp\overline{BC}\ \text{at}\ C; \\
    & V_{\eta}(-1,1) 
    = \frac{T_{rAC}(1)+U_{BC}'(1) - \alpha_{AC}W_{AC}(1)U_{BC}(1)}{1+S_{AC}(1)} := F_{\eta}^a(-1,1),
    \quad
    \text{if}\ \overline{AC}\not\perp\overline{BC}\ \text{at}\ C. \label{eq_113b}
  \end{align}
\end{subequations}
We transform the conditional constraint in~\eqref{eq_113b} into an equivalent
unconditional form, as given by~\eqref{eq_34}, in which $F_{\eta}^a(-1,1)$ is now
defined by \eqref{eq_113b}.

The objective here is to find the general form of $V(\xi,\eta)$ that satisfies
\eqref{eq_16a}, \eqref{eq_16c}, \eqref{eq_111b}, \eqref{eq_23a}, \eqref{eq_23c},
\eqref{eq_112b}, and \eqref{eq_34}.
We replace the point constraints in \eqref{eq_23a}, \eqref{eq_23c},
\eqref{eq_112b}, and \eqref{eq_34} by the
$V$-profile condition \eqref{eq_35},
in which $F_{\eta}(-1,-1)$ and $F_\eta(-1,1)$ are now defined by
\eqref{eq_112b} and \eqref{eq_34}, respectively.
Therefore, the final transformed constraints consist of 
\eqref{eq_16a}, \eqref{eq_16c}, \eqref{eq_111b}, and \eqref{eq_35}.

Since the types of constraints here are identical to those of Section \ref{sec_2311},
the transfinite interpolation $PF(\xi,\eta)$  given in
 \eqref{eq_38}--\eqref{eq_39} and
the preliminary $V(\xi,\eta)$ given by \eqref{eq_41}--\eqref{eq_42b}
both carry over here. The updated transfinite interpolation $PF^g(\xi,\eta)$
is similarly given by \eqref{eq_44}, in which $A_F^g$ and $F_{\eta}^g(-1,1)$ are given by
\eqref{eq_45a}--\eqref{eq_45b}. However, $F_{\xi}^g(-1,\eta)$ in the $A_F^g(\xi,\eta)$
expression is now given by
\begin{subequations}\label{eq_114}
  \small
  \begin{align}
    & F_{\xi}^g(-1,\eta) = T_{rAC}(\eta) - S_{AC}(\eta)V_{\eta}^g(-1,\eta)
    -\alpha_{AC}W_{AC}(\eta)V^g(-1,\eta), \label{eq_114a} \\
    & V^g(-1,\eta) = g(-1,\eta)- [g(-1,-1)-F(-1,-1)]\varphi_0(\eta)
     - [g(-1,1)-U_{BC}(1)]\varphi_1(\eta) \notag \\
     &\qquad\qquad\quad\ - \lambda_A[g_{\eta}(-1,-1)-F_{\eta}^a(-1,-1)]\psi_0(\eta)
     - [g_{\eta}(-1,1)-F_{\eta}^g(-1,1)]\psi_1(\eta), \label{eq_114b} \\
    & V_{\eta}^g(-1,\eta) = g_{\eta}(-1,\eta)- [g(-1,-1)-F(-1,-1)]\varphi_0'(\eta)
     - [g(-1,1)-U_{BC}(1)]\varphi_1'(\eta) \notag \\
     &\qquad\qquad\quad\
     - \lambda_A[g_{\eta}(-1,-1)-F_{\eta}^a(-1,-1)]\psi_0'(\eta)
     - [g_{\eta}(-1,1)-F_{\eta}^g(-1,1)]\psi_1'(\eta), \label{eq_114c}
  \end{align}
\end{subequations}
where $F_{\eta}^a(-1,-1)$ and  $F_{\eta}^g(-1,1)$ are
given by \eqref{eq_112b} and \eqref{eq_45b},
respectively.
The final form of $V(\xi,\eta)$ is then given by
\begingroup\small\begin{equation}\label{eq_115}
  V(\xi,\eta) = g(\xi,\eta) - Pg(\xi,\eta) + PF^g(\xi,\eta),
\end{equation}\endgroup
where $g$ is a free function, $Pg$ is defined by \eqref{eq_42}, and
$PF^g$ is defined by \eqref{eq_44}--\eqref{eq_45b} and
\eqref{eq_114a}--\eqref{eq_114c}.
\begin{theorem}
  $V(\xi,\eta)$ given by \eqref{eq_115} satisfies the Dirichlet conditions \eqref{eq_16a}
  and \eqref{eq_16c} and the Robin condition \eqref{eq_111a},
  for any $g(\xi,\eta)$ therein that is sufficiently differentiable.
\end{theorem}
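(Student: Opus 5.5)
The proof will mirror the argument for the single-Neumann case, since $PF$, $Pg$ and the preliminary form \eqref{eq_41} are structurally unchanged. Only the data $F_{\xi}^g(-1,\eta)$ differ: they now carry the extra Robin term through \eqref{eq_114a}. The plan has three steps: establish the edge identities for $PF^g$ (the analogue of Property~\ref{lem_26}), deduce the Dirichlet conditions, and then compute $V$, $V_{\xi}$ and $V_{\eta}$ on $\xi=-1$ to obtain the Robin condition.

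\textbf{Step 1: the analogue of Property~\ref{lem_25}.} I would first check that the Robin-modified data satisfy
\begin{equation*}
F_{\xi}^g(-1,-1)=F_{\xi}(-1,-1), \qquad -F_{\xi}^g(-1,1)+F_{\eta}^g(-1,1)=U_{BC}'(1).
\end{equation*}
This step is the main obstacle, because the compatibility data at the corners must absorb the Robin term $\alpha_{AC}W_{AC}V$.

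The key observation for the endpoint values is that \eqref{eq_114b} gives $V^g(-1,\pm1)=F(-1,-1)$ and $U_{BC}(1)$, respectively, for any $g$. At $\eta=-1$, the properties \eqref{eq_30} give $V_{\eta}^g(-1,-1)=(1-\lambda_A)g_{\eta}(-1,-1)+\lambda_AF_{\eta}^a(-1,-1)$. I would then split into cases.
\begin{itemize}
\item If $\lambda_A=1$, substituting \eqref{eq_112b} into \eqref{eq_114a} gives $F_{\xi}(-1,-1)$.
\item If $\lambda_A=0$, then $S_{AC}(-1)=0$ by \eqref{eq_27}, so the $g_{\eta}$ term drops out. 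The compatibility condition \eqref{eq_112a} then yields the claim.
\end{itemize}
At $\eta=1$, we have $V_{\eta}^g(-1,1)=F_{\eta}^g(-1,1)$. Again there are two cases.
\begin{itemize}
\item If $\lambda_C=1$, I would use \eqref{eq_113b} together with $V^g(-1,1)=U_{BC}(1)$.
\item If $\lambda_C=0$, then $S_{AC}(1)=-1$, so $-F_{\xi}^g(-1,1)+F_{\eta}^g(-1,1)=-T_{rAC}(1)+\alpha_{AC}W_{AC}(1)U_{BC}(1)$. This equals $U_{BC}'(1)$ by the first relation in \eqref{eq_113}.
\end{itemize}

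\textbf{Step 2: edge identities and the Dirichlet conditions.} With Step~1 in hand, the proof of Property~\ref{lem_26} carries over verbatim, because it used only those two corner relations together with \eqref{eq_32} and \eqref{eq_30}. This gives
\begin{equation*}
PF^g(\xi,-1)=F(\xi,-1),\qquad PF^g(-\eta,\eta)=U_{BC}(\eta),\qquad (PF^g)_{\xi}(-1,\eta)=F_{\xi}^g(-1,\eta),
\end{equation*}
together with the stated profile for $PF^g(-1,\eta)$. Property~\ref{lem_23}, applied to $Pg$, shows that $g-Pg$ vanishes on $\eta=-1$ and on the hypotenuse. Hence \eqref{eq_16a} and \eqref{eq_16c} hold.

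\textbf{Step 3: the Robin condition.} On $\xi=-1$, Property~\ref{lem_23} applied to $g$ gives $(g-Pg)_{\xi}(-1,\eta)=0$, so $V_{\xi}(-1,\eta)=F_{\xi}^g(-1,\eta)$. Similarly, $V(-1,\eta)=g(-1,\eta)-Pg(-1,\eta)+PF^g(-1,\eta)$. Expanding this with the profile identities reproduces exactly $V^g(-1,\eta)$ in \eqref{eq_114b}, and differentiating in $\eta$ reproduces $V_{\eta}^g(-1,\eta)$ in \eqref{eq_114c}. Substituting these three expressions into \eqref{eq_114a} yields \eqref{eq_111a} identically, for any sufficiently differentiable $g$.
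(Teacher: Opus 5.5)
Your proposal is correct and follows the same route as the paper. The paper reads off $V_{\xi}(-1,\eta)=F_{\xi}^g(-1,\eta)$, $V(-1,\eta)=V^g(-1,\eta)$ and $V_{\eta}(-1,\eta)=V_{\eta}^g(-1,\eta)$ and substitutes them into \eqref{eq_114a}, and it gets the Dirichlet conditions from $\alpha(-\eta,\eta)+\beta(-\eta,\eta)=1$ together with $F_{\xi}^g(-1,-1)=F_{\xi}(-1,-1)$. Your Step 1 also checks the vertex-$C$ relation $-F_{\xi}^g(-1,1)+F_{\eta}^g(-1,1)=U_{BC}'(1)$ via \eqref{eq_113}, which the paper uses only implicitly: because $\alpha(-1,\eta)=\varphi_1(\eta)\neq 0$, the identity $V_{\xi}(-1,\eta)=F_{\xi}^g(-1,\eta)$ depends on it, so making it explicit is a worthwhile addition.
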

\begin{proof} Note that $V_{\xi}(-1,\eta)=F_{\xi}^g(-1,\eta)$, $V(-1,\eta)=V^g(-1,\eta)$,
  and $V_{\eta}(-1,\eta)=V_{\eta}^g(-1,\eta)$. In light of \eqref{eq_114a}, the Robin
  condition \eqref{eq_111a} holds. The Dirichlet conditions can be verified by
  noting that $\alpha(-\eta,\eta)+\beta(-\eta,\eta)=1$ in \eqref{eq_32} and
  that $F_{\xi}^g(-1,-1)=\lim_{\eta\rightarrow -1}F_{\xi}^g(-1,\eta)=F_{\xi}(-1,-1)$.
\end{proof}

\begin{remark}
  When the sole RBC is imposed on $\overline{AB}$ (instead of $\overline{AC}$),
  a reciprocal formulation
  can be constructed analogously.
\end{remark}


\subsubsection{RBC Imposed on Two Triangle Boundaries}

For two Robin boundaries, we focus here on the configuration in which they correspond to
the two legs of the standard triangle, mirroring Section \ref{sec_232}.
The formulation with one of the Robin boundaries corresponding to
the hypotenuse will be deferred to an Appendix (Section \ref{sec_r_ACBC}).

Consider a scalar function $u(\mbs x)$ defined on
the general triangle $\Omega$ from Figure~\ref{fg_1}(a) satisfying
\begingroup\small\begin{equation}
  \begin{split}
  &\mbs n\cdot\nabla u|_{\mbs x\in\overline{AB}}
  + \alpha_{AB}u|_{\mbs x\in\overline{AB}} = u_{rAB}(\mbs x),\quad
  &u|_{\mbs x\in\overline{BC}} = u_{BC}(\mbs x),  \quad
  &\mbs n\cdot\nabla u|_{\mbs x\in\overline{AC}}
  + \alpha_{AC}u|_{\mbs x\in\overline{AC}} = u_{rAC}(\mbs x),
  \end{split}\end{equation}\endgroup
where $\alpha_{AB}$ and $\alpha_{AC}$ are the Robin coefficients, and
$u_{rAB}$ and $u_{rAC}$ denote the prescribed Robin data.
Leveraging the map $\mbs x(\xi,\eta)$ from Section \ref{sec_21},
these conditions are transformed into \eqref{eq_16c} for the DBC
and \eqref{eq_111} for the RBC on $\overline{AC}$, together with
the following form for the RBC on $\overline{AB}$,
\begin{subequations}\label{eq_124}
  \small
  \begin{align}
    & V_{\eta}(\xi,-1) + S_{AB}(\xi)V_{\xi}(\xi,-1) + \alpha_{AB}W_{AB}(\xi)V(\xi,-1) = T_{rAB}(\xi),
    \label{eq_124a} \\
   \text{or}\ & V_{\eta}(\xi,-1) = T_{rAB}(\xi) - S_{AB}(\xi)V_{\xi}(\xi,-1) - \alpha_{AB}W_{AB}(\xi)V(\xi,-1)
    =: F_{\eta}(\xi,-1), \label{eq_124b}
  \end{align}
\end{subequations}
where $S_{AB}(\xi)$ and $W_{AB}(\xi)$ are defined in \eqref{eq_83}, and
$T_{rAB}(\xi)=u_{rAB}(\mbs x(\xi,-1))W_{AB}(\xi)$.
Our objective is to construct the general form of $V(\xi,\eta)$ that satisfies
the conditions \eqref{eq_16c}, \eqref{eq_111} and \eqref{eq_124}.


An analysis analogous to in Section \ref{sec_232} leads to the following compatibility
constraints. At vertex $B$,
\begin{subequations}\small
  \begin{align}
    & T_{AB}(1) - \alpha_{AB}W_{AB}(1)U_{BC}(-1) - U_{BC}'(-1) = 0, \quad
    \text{if}\ \overline{AB}\perp\overline{BC}\ \text{at}\ B;
    \\
    & V_{\xi}(1,-1) = \frac{T_{AB}(1)-\alpha_{AB}W_{AB}(-1)U_{BC}(-1)-U_{BC}'(-1)}{1+S_{AB}(1)}
    =: F_{\xi}^a(1,-1), \quad
    \text{if}\ \overline{AB}\not\perp\overline{BC}\ \text{at}\ B. \label{eq_125b}
  \end{align}
\end{subequations}
At vertex $C$,
\begin{subequations}\small
  \begin{align}
    & T_{AC}(1)-\alpha_{AC}W_{AC}(1)U_{BC}(1) + U_{BC}'(1) = 0, \quad
    \text{if}\ \overline{AC}\perp\overline{BC}\ \text{at}\ C;
    \\
    & V_{\eta}(-1,1) = \frac{T_{AC}(1)-\alpha_{AC}W_{AC}(1)U_{BC}(1)+U_{BC}'(1)}{1+S_{AC}(1)}
    =: F_{\eta}^a(-1,1), \quad
    \text{if}\ \overline{AC}\not\perp\overline{BC}\ \text{at}\ C.  \label{eq_126b}
  \end{align}
\end{subequations}
At vertex $A$,
\begingroup\small\begin{align}\label{eq_127}
  & \begin{bmatrix}V_{\xi}(-1,-1)\\ V_{\eta}(-1,-1)  \end{bmatrix}
  = \begin{bmatrix}F_{\xi}^a(-1,-1)\\ F_{\eta}^a(-1,-1)  \end{bmatrix}
  - \begin{bmatrix}F_{\xi}^b(-1,-1)\\ F_{\eta}^b(-1,-1)  \end{bmatrix}V(-1,-1)
  =: \begin{bmatrix}F_{\xi}(-1,-1)\\ F_{\eta}(-1,-1)  \end{bmatrix},
\end{align}\endgroup
where 
\begingroup\small\begin{equation}
  \begin{bmatrix}F_{\xi}^a(-1,-1)\\ F_{\eta}^a(-1,-1)  \end{bmatrix}
  = \begin{bmatrix}
    \frac{T_{AC}(-1) - S_{AC}(-1)T_{AB}(-1)}{1-S_{AC}(-1)S_{AB}(-1)} \\ 
    \frac{T_{AB}(-1) - S_{AB}(-1)T_{AC}(-1)}{1-S_{AC}(-1)S_{AB}(-1)}
  \end{bmatrix}, \
  \begin{bmatrix}F_{\xi}^b(-1,-1)\\ F_{\eta}^b(-1,-1)  \end{bmatrix}
  = \begin{bmatrix}
    \frac{\alpha_{AC}W_{AC}(-1) - S_{AC}(-1)\alpha_{AB}W_{AB}(-1)}{1-S_{AC}(-1)S_{AB}(-1)} \\ 
    \frac{\alpha_{AB}W_{AB}(-1) - S_{AB}(-1)\alpha_{AC}W_{AC}(-1)}{1-S_{AC}(-1)S_{AB}(-1)}
  \end{bmatrix}.
\end{equation}\endgroup
Furthermore,
\begingroup\small\begin{align}\label{eq_129}
  & V_{\xi\eta}(-1,-1) = -S_{AC}(-1)V_{\eta\eta}(-1,-1) + Q_{AC}^a(-1) + Q_{AC}^b(-1) V(-1,-1)
  =: F_{\xi\eta}(-1,-1),
\end{align}\endgroup
together with
\begin{subequations}\label{eq_130}
  \small
  \begin{align}
    & V_{\xi\xi}(-1,-1)=
    \frac{S_{AC}(-1)V_{\eta\eta}(-1,-1) + S_1V(-1,-1) + S_2}{S_{AB}(-1)}
    =:F_{\xi\xi}(-1,-1),\quad
    \text{if}\ \overline{AB}\not\perp\overline{AC}\ \text{at}\ A, \\
    & S_2=0,
    \qquad\qquad\qquad\qquad\qquad\qquad\qquad\qquad\qquad\qquad
    \text{if}\ \overline{AB}\perp\overline{AC}\ \text{at}\ A\
    \text{and}\ S_1=0, \\
    & V(-1,-1) = F^a(-1,-1) = -S_2/S_1 =: F(-1,-1), \quad
    \text{if}\ \overline{AB}\perp\overline{AC}\ \text{at}\ A\
    \text{and}\ S_1\neq 0,
  \end{align}
\end{subequations}
where
\begingroup\small\begin{equation}
  \left\{
  \begin{split}
    & Q_{AC}^a(-1) = T_{AC}'(-1)-\big[S_{AC}'(-1)+\alpha_{AC}W_{AC}(-1) \big]F_{\eta}^a(-1,-1), \\
    & Q_{AC}^b(-1) = \big[S_{AC}'(-1)+\alpha_{AC}W_{AC}(-1) \big]F_{\eta}^b(-1,-1) - \alpha_{AC}W_{AC}'(-1), \\
    & S_1 = Q_{AB}^b(-1) - Q_{AC}^b(-1), \quad
    S_2 = Q_{AB}^a(-1) - Q_{AC}^a(-1), \\
    & Q_{AB}^a(-1) = T_{AB}'(-1)-\big[S_{AB}'(-1)+\alpha_{AB}W_{AB}(-1) \big]F_{\xi}^a(-1,-1), \\
    & Q_{AB}^b(-1) = \big[S_{AB}'(-1)+\alpha_{AB}W_{AB}(-1) \big]F_{\xi}^b(-1,-1) - \alpha_{AB}W_{AB}'(-1).
  \end{split}
\right.\end{equation}\endgroup
The conditional constraints in \eqref{eq_125b} and \eqref{eq_126b}
can then be transformed into the equivalent unconditional forms given in \eqref{eq_88a}
and \eqref{eq_88b}, in which $\Gamma_B$ and $\Gamma_C$ are free parameters
and $F_{\xi}^a(1,-1)$ and $F_{\eta}^a(-1,1)$ are now defined in \eqref{eq_125b}
and \eqref{eq_126b}.

We next transform the point constraints in equations \eqref{eq_127},
\eqref{eq_88a}--\eqref{eq_88b},
\eqref{eq_129} and \eqref{eq_130} into conditions about
the edge profiles of $V$ on $\overline{AB}$ and $\overline{AC}$,
\begingroup\small\begin{align} 
   V(\xi,-1) &= (1-\lambda_A)\gamma_A F(-1,-1)\rho_0(\xi) + F(1,-1)\rho_1(\xi)
  + F_{\xi}(-1,-1)\upsilon_0(\xi) + F_{\xi}(1,-1)\upsilon_1(\xi) \notag \\
  &\ \ \ + \lambda_A F_{\xi\xi}(-1,-1)\omega_0(\xi) =: F(\xi,-1), \label{eq_132} \\
  V(-1,\eta) &= (1-\lambda_A)\gamma_A F(-1,-1)\rho_0(\eta) + F(-1,1)\rho_1(\eta)
  + F_{\eta}(-1,-1)\upsilon_0(\eta) + F_{\eta}(-1,1)\upsilon_1(\eta)
  \notag \\
  &=: F(-1,\eta), \label{eq_133}
\end{align}\endgroup
where the constant $\gamma_A$ is defined by
\begingroup\small\begin{equation}
\gamma_A=\text{sign}(|S_1|)=
\left\{\begin{array}{ll}0, & \text{if}\ S_1=0; \\ 1, & \text{otherwise}. \end{array}\right.
\end{equation}\endgroup

So the formulation problem becomes:
Find the general form of $V(\xi,\eta)$ that satisfies \eqref{eq_16c},
\eqref{eq_111b}, \eqref{eq_124b}, \eqref{eq_132} and \eqref{eq_133},
together with \eqref{eq_129}.
The transfinite interpolation $PF(\xi,\eta)$ that
satisfies these constraints can be constructed
in a way analogous to in Section \ref{sec_232} and is given by
\begin{subequations}\label{eq_135}
  \small
\begin{align}
  PF(\xi,\eta) =&\ A_F(\xi,\eta) + \alpha(\xi,\eta)\big[F(\xi,-\xi) - A_F(\xi,-\xi) \big]
  + \beta(\xi,\eta)\big[F(-\eta,\eta) - A_F(-\eta,\eta) \big], \\
  A_F(\xi,\eta) =&\ F(-1,\eta) + F(\xi,-1) - (1-\lambda_A)\gamma_A F(-1,-1)
  + (\xi+1)\big[F_{\xi}(-1,\eta) - F_{\xi}(-1,-1) \big] \notag \\
  &+ (\eta+1)\big[F_{\eta}(\xi,-1) - F_{\eta}(-1,-1) \big]
  - (\xi+1)(\eta+1)F_{\xi\eta}(-1,-1),
\end{align}
\end{subequations}
where $F(-1,\eta)$ and $F(\xi,-1)$ are defined in \eqref{eq_132}--\eqref{eq_133}.
The preliminary general form for $V$ is
\begingroup\small\begin{equation}
  V(\xi,\eta) = g(\xi,\eta) - Pg(\xi,\eta) + PF(\xi,\eta),
\end{equation}\endgroup
where $g$ is a free function, $PF$ is given by \eqref{eq_135}, and
$Pg$ is defined analogously and specifically given by
\begin{subequations}\label{eq_137}
  \small
\begin{align}
  Pg(\xi,\eta) =&\ A_g(\xi,\eta) + \alpha(\xi,\eta)\big[g(\xi,-\xi) - A_g(\xi,-\xi) \big]
  + \beta(\xi,\eta)\big[g(-\eta,\eta) - A_g(-\eta,\eta) \big], \\
  A_g(\xi,\eta) =&\ \big[ (1-\lambda_A)\gamma_A g(-1,-1)\rho_0(\xi) + g(1,-1)\rho_1(\xi)
    + g_{\xi}(-1,-1)\upsilon_0(\xi) + g_{\xi}(1,-1)\upsilon_1(\xi) \notag \\
    &+ \lambda_Ag_{\xi\xi}(-1,-1)\omega_0(\xi) \big] 
  + \big[(1-\lambda_A)\gamma_A g(-1,-1)\rho_0(\eta) + g(-1,1)\rho_1(\eta)
    + g_{\eta}(-1,-1)\upsilon_0(\eta) \notag \\
    &+ g_{\eta}(-1,1)\upsilon_1(\eta)\big] 
  - (1-\lambda_A)\gamma_A g(-1,-1)
  + (\xi+1)\big[g_{\xi}(-1,\eta) - g_{\xi}(-1,-1) \big] \notag \\
  &+ (\eta+1)\big[g_{\eta}(\xi,-1) - g_{\eta}(-1,-1) \big]
  - (\xi+1)(\eta+1)g_{\xi\eta}(-1,-1).
\end{align}
\end{subequations}

Updated with  compatible data, the transfinite interpolation becomes
\begingroup\small\begin{equation}\label{eq_138}
  PF^g(\xi,\eta) = A_F^g(\xi,\eta) + \alpha(\xi,\eta)\big[U_{BC}(-\xi) - A_F^g(\xi,-\xi) \big]
  + \beta(\xi,\eta)\big[U_{BC}(\eta) - A_F^g(-\eta,\eta) \big],
\end{equation}\endgroup
where
\begingroup\small\begin{subequations}\label{eq_139}
  \begin{align}
    A_F^g(\xi,\eta) =&\ F^g(-1,\eta) + F^g(\xi,-1) - (1-\lambda_A)\gamma_A F^a(-1,-1)
    + (\xi+1)\big[F_{\xi}^{gg}(-1,\eta)-F_{\xi}^g(-1,-1) \big] \notag \\
    &+ (\eta+1)\big[F_{\eta}^{gg}(\xi,-1)-F_{\eta}^g(-1,-1) \big]
    - (\xi+1)(\eta+1) F_{\xi\eta}^g(-1,-1); \\
    F^g(-1,\eta) =&\ (1-\lambda_A)\gamma_A F^a(-1,-1)\rho_0(\eta) + U_{BC}(1)\rho_1(\eta)
    + F_{\eta}^g(-1,-1)\upsilon_0(\eta) + F_{\eta}^g(-1,1)\upsilon_1(\eta), \\
    F^g(\xi,-1) =&\ (1-\lambda_A)\gamma_A F^a(-1,-1)\rho_0(\xi) + U_{BC}(-1)\rho_1(\xi)
    + F_{\xi}^g(-1,-1)\upsilon_0(\xi) + F_{\xi}^g(1,-1)\upsilon_1(\xi) \notag \\
    & + \lambda_A F_{\xi\xi}^g(-1,-1)\omega_0(\xi), \\
    F^g_{\xi}(-1,-1) =&\ F_{\xi}^a(-1,-1) - F_{\xi}^b(-1,-1)\left[
      \big(1-(1-\lambda_A)\gamma_A \big)g(-1,-1) + (1-\lambda_A)\gamma_A F^a(-1,-1)
      \right], \\
    F^g_{\eta}(-1,-1) =&\ F_{\eta}^a(-1,-1) - F_{\eta}^b(-1,-1)\left[
      \big(1-(1-\lambda_A)\gamma_A \big)g(-1,-1) + (1-\lambda_A)\gamma_A F^a(-1,-1)
      \right], \\
    F_{\xi}^g(1,-1) =&\ (1-\lambda_B)\Gamma_B + \lambda_B F_{\xi}^a(1,-1)
    = (1-\lambda_B)g_{\xi}(1,-1) + \lambda_B F_{\xi}^a(1,-1),  \label{eq_139f} \\
    F_{\eta}^g(-1,1) =&\ (1-\lambda_C)\Gamma_C + \lambda_C F_{\eta}^a(-1,1)
    = (1-\lambda_C)g_{\eta}(-1,1) + \lambda_C F_{\eta}^a(-1,1), \label{eq_139g} \\
    F_{\xi\eta}^g(-1,-1) =&\ Q_{AC}^a(-1) - S_{AC}(-1)g_{\eta\eta}(-1,-1) \notag \\
    &+ Q_{AC}^b(-1)\left[
      \big(1-(1-\lambda_A)\gamma_A \big)g(-1,-1) + (1-\lambda_A)\gamma_A F^a(-1,-1)
      \right], \\
    F_{\xi\xi}^g(-1,-1) =&\ \frac{S_{AC}(-1)}{S_{AB}(-1)}g_{\eta\eta}(-1,-1)
    + \frac{S_2}{S_{AB}(-1)}
    \notag \\
    &+\frac{1}{S_{AB}(-1)}\left[
      \big(1-(1-\lambda_A)\gamma_A \big)g(-1,-1) + (1-\lambda_A)\gamma_A F^a(-1,-1)
      \right],
    \\
    F_{\xi}^{gg}(-1,\eta) =&\ T_{AC}(\eta) - S_{AC}(\eta) V_{\eta}^g(-1,\eta)
    - \alpha_{AC}W_{AC}(\eta) V^g(-1,\eta), \\
    F_{\eta}^{gg}(\xi,-1) =&\ T_{AB}(\xi) - S_{AB}(\xi) V^g_{\xi}(\xi,-1)
    - \alpha_{AB}W_{AB}(\xi) V^g(\xi,-1), \\
    V^g(-1,\eta) =&\ g(-1,\eta) - (1-\lambda_A)\gamma_A\big[g(-1,-1) - F^a(-1,-1) \big]\rho_0(\eta)
    - \big[g(-1,1) - U_{BC}(1) \big]\rho_1(\eta) \notag \\
    &- \big[g_{\eta}(-1,-1) - F_{\eta}^g(-1,-1) \big]\upsilon_0(\eta)
    - \big[g_{\eta}(-1,1) - F_{\eta}^g(-1,1) \big]\upsilon_1(\eta),  \\
    V_{\eta}^g(-1,\eta) =&\ g_{\eta}(-1,\eta) - (1-\lambda_A)\gamma_A\big[g(-1,-1)
      - F^a(-1,-1) \big]\rho_0'(\eta)
    - \big[g(-1,1) - U_{BC}(1) \big]\rho_1'(\eta) \notag \\
    &- \big[g_{\eta}(-1,-1) - F_{\eta}^g(-1,-1) \big]\upsilon_0'(\eta)
    - \big[g_{\eta}(-1,1) - F_{\eta}^g(-1,1) \big]\upsilon_1'(\eta), \\
    V^g(\xi,-1) =&\ g(\xi,-1) - (1-\lambda_A)\gamma_A\big[g(-1,-1) - F^a(-1,-1) \big]\rho_0(\xi)
    - \big[g(1,-1) - U_{BC}(-1) \big]\rho_1(\xi) \notag \\
    &- \big[g_{\xi}(-1,-1) - F_{\xi}^g(-1,-1) \big]\upsilon_0(\xi)
    - \big[g_{\xi}(1,-1) - F_{\xi}^g(1,-1) \big]\upsilon_1(\xi) \notag \\
    &-\lambda_A\big[g_{\xi\xi}(-1,-1) - F_{\xi\xi}^g(-1,-1) \big]\omega_0(\xi), \\
    V^g_{\xi}(\xi,-1) =&\ g_{\xi}(\xi,-1) - (1-\lambda_A)\gamma_A\big[g(-1,-1) - F^a(-1,-1) \big]\rho_0'(\xi)
    - \big[g(1,-1) - U_{BC}(-1) \big]\rho_1'(\xi) \notag \\
    &- \big[g_{\xi}(-1,-1) - F_{\xi}^g(-1,-1) \big]\upsilon_0'(\xi)
    - \big[g_{\xi}(1,-1) - F_{\xi}^g(1,-1) \big]\upsilon_1'(\xi) \notag \\
    &-\lambda_A\big[g_{\xi\xi}(-1,-1) - F_{\xi\xi}^g(-1,-1) \big]\omega_0'(\xi).
  \end{align}
\end{subequations}\endgroup
Note that in \eqref{eq_139f}--\eqref{eq_139g} we have chosen the free parameters
to be $\Gamma_B=g_{\xi}(1,-1)$ and $\Gamma_C=g_{\eta}(-1,1)$.
The final form of $V(\xi,\eta)$ is  given by,
\begingroup\small\begin{equation}\label{eq_140}
  V(\xi,\eta) = g(\xi,\eta) - Pg(\xi,\eta) + PF^g(\xi,\eta),
\end{equation}\endgroup
where $Pg$ is defined in \eqref{eq_137} and $PF^g$ is given by \eqref{eq_138}.

\begin{theorem}
  $V(\xi,\eta)$ given by \eqref{eq_140} satisfies the Dirichlet
  condition \eqref{eq_16c} and the Robin conditions \eqref{eq_111a}
  and \eqref{eq_124a}, for any $g(\xi,\eta)$ therein that is sufficiently differentiable.
\end{theorem}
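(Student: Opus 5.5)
The plan follows the Neumann double-leg case (Section~\ref{sec_232}), adapted to the Robin terms. The first step is to establish the analogues of Properties~\ref{lem_214} and \ref{lem_217} for the preliminary $PF$ in \eqref{eq_135} and the updated $PF^g$ in \eqref{eq_138}. Using \eqref{eq_93} for the $C^2$ blending functions and \eqref{eq_32} for the $C^1$ Barnhill polynomials, I will check that on the legs $A_F^g(-1,\eta)$ and $A_F^g(\xi,-1)$ reduce to the profiles $F^g(-1,\eta)$ and $F^g(\xi,-1)$. The two cross terms and the $(1-\lambda_A)\gamma_A F^a(-1,-1)$ corrections telescope only if three corner consistency relations hold:
\begin{equation*}
F^{gg}_{\xi}(-1,-1)=F^g_{\xi}(-1,-1), \qquad F^{gg}_{\eta}(-1,-1)=F^g_{\eta}(-1,-1), \qquad (F^{gg}_{\xi})_{\eta}(-1,-1)=(F^{gg}_{\eta})_{\xi}(-1,-1)=F^g_{\xi\eta}(-1,-1).
\end{equation*}
These play the role of Property~\ref{lem_216}.

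I will verify these relations in the following order. First, $V^g(-1,-1)=V^g(\xi,-1)|_{\xi=-1}$ equals $\big(1-(1-\lambda_A)\gamma_A\big)g(-1,-1)+(1-\lambda_A)\gamma_A F^a(-1,-1)$, because $\rho_0(-1)=1$ and the other blending functions vanish there. Substituting this value into \eqref{eq_127}, which is the $2\times 2$ solve at $A$, gives the first two relations. The mixed-derivative relation follows from differentiating the two Robin conditions at $A$, as in \eqref{eq_129}--\eqref{eq_130}. This step requires a case split on $(\lambda_A,\gamma_A)$:
\begin{itemize}
\item If $\lambda_A=1$, then $V_{\xi\xi}$ is fixed by $F^g_{\xi\xi}$.
\item If $\lambda_A=0$ and $S_1=0$, the data condition $S_2=0$ is used.
\item If $\lambda_A=0$ and $S_1\neq 0$, then $V(-1,-1)$ is pinned to $F^a(-1,-1)$, which makes the two expressions for $V_{\xi\eta}$ agree.
\end{itemize}
At $B$ and $C$, the relations $-F^g_\xi(1,-1)+F^{gg}_\eta(1,-1)=U_{BC}'(-1)$ and its analogue at $C$ follow from \eqref{eq_125b}--\eqref{eq_126b} when $\lambda=1$. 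When $\lambda=0$ they follow from the data compatibility conditions together with $S_{AB}(1)=-1$ or $S_{AC}(1)=-1$; the choice $\Gamma_B=g_\xi(1,-1)$, $\Gamma_C=g_\eta(-1,1)$ makes the free $g$-terms cancel.

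With these relations in hand, the theorem follows from the TFC structure of \eqref{eq_140}. For the Dirichlet condition \eqref{eq_16c}, I use $\alpha+\beta=1$ on the hypotenuse and the leg traces of $A^g_F$, which together give $PF^g(-\eta,\eta)=U_{BC}(\eta)$. Since also $Pg(-\eta,\eta)=g(-\eta,\eta)$, this yields $V(-\eta,\eta)=U_{BC}(\eta)$.

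For the Robin conditions, I use that $\alpha,\beta$ and their first derivatives vanish on the leg where required by \eqref{eq_32}. This gives $V(-1,\eta)=V^g(-1,\eta)$, $V_\eta(-1,\eta)=V^g_\eta(-1,\eta)$, and $V_\xi(-1,\eta)=F^{gg}_\xi(-1,\eta)$, with the analogous identities on $\eta=-1$. The $g$-terms cancel against $Pg$ in \eqref{eq_137}, since $Pg$ has the identical structure with $g$ in place of the data. Inserting the definition of $F^{gg}_\xi$ (respectively $F^{gg}_\eta$) then gives \eqref{eq_111a} and \eqref{eq_124a} identically.

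I expect the main obstacle to be vertex $A$ in the subcase $\lambda_A=0$, $\gamma_A=1$. There the $\rho_0$ terms pin $V(-1,-1)$, and one must confirm that these terms do not spoil the derivative consistency relations. In particular, $\rho_0'(-1)=\rho_0''(-1)=0$ must ensure that $V^g_\xi$ and $V^g_\eta$ at $A$ pick up exactly the pinned value. I would check this step first by direct evaluation at $(-1,-1)$.
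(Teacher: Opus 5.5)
Your proposal is correct in substance and follows the paper's own route. The paper proves this theorem by direct verification, listing exactly the relations you derive: the value and mixed-derivative consistency at $A$, and the two hypotenuse-corner relations $-F^{gg}_\xi(-1,1)+F^g_\eta(-1,1)=U_{BC}'(1)$ and $-F^g_\xi(1,-1)+F^{gg}_\eta(1,-1)=U_{BC}'(-1)$. It then uses the resulting trace identities for $PF^g$. Your case split on $(\lambda_A,\gamma_A)$ at $A$ is the right one.

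Two points need fixing.

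First, your final assembly misattributes how the leg identities arise.
\begin{itemize}
\item The identities \eqref{eq_32} kill $\beta$ and its first derivatives on $\xi=-1$, and $\alpha$ and its first derivatives on $\eta=-1$, but not conversely. In fact $\alpha(-1,\eta)=\varphi_1(\eta)$, $\beta(\xi,-1)=\varphi_1(\xi)$, and $\alpha_\xi(-1,\eta)\neq 0$ in general.
\item On $\xi=-1$ the surviving term is $\alpha(\xi,\eta)\big[U_{BC}(-\xi)-A_F^g(\xi,-\xi)\big]$, whose bracket depends on $\xi$ alone. It vanishes at $\xi=-1$ because $A_F^g(-1,1)=F^g(-1,1)=U_{BC}(1)$.
\item Its derivative also vanishes there. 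Since $A^g_{F,\xi}(-1,1)=F^{gg}_\xi(-1,1)$ and $A^g_{F,\eta}(-1,1)=F^g_\eta(-1,1)$, the derivative equals $-U_{BC}'(1)-F^{gg}_\xi(-1,1)+F^g_\eta(-1,1)$, which is zero by your relation at $C$.
\item The $\beta$-term on $\eta=-1$ is handled the same way using your relation at $B$. For $Pg$ the analogous relations hold automatically by the chain rule.
\end{itemize}
So your $B$ and $C$ relations are exactly what make $V_\xi(-1,\eta)=F^{gg}_\xi(-1,\eta)$ and $V_\eta(\xi,-1)=F^{gg}_\eta(\xi,-1)$ true, but as written you never use them. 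Conversely, the hypotenuse trace $PF^g(-\eta,\eta)=U_{BC}(\eta)$ needs only $\alpha+\beta=1$, not the leg traces of $A_F^g$.

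Second, in the case $\lambda_A=1$ your mixed-derivative check works only if the bracket in $F^g_{\xi\xi}(-1,-1)$ carries the coefficient $S_1/S_{AB}(-1)$, as dictated by \eqref{eq_130}. The expression printed in \eqref{eq_139} has $1/S_{AB}(-1)$ instead. With that version you are left with $(F^{gg}_\eta)_\xi(-1,-1)-F^g_{\xi\eta}(-1,-1)=(S_1-1)\,g(-1,-1)$.

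The $\rho_0$ pinning you flag as the main obstacle is harmless by comparison. Since $\rho_0'(\pm1)=\rho_0''(\pm1)=0$ and $\rho_0(1)=0$, it affects only the value at $A$.
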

\begin{proof}
  By verification and by noting that
  \begingroup\small\begin{equation}
    \left\{\begin{split}
    & F_{\eta}^{gg}(-1,-1) = \lim_{\eta\rightarrow -1} F_{\eta}^{gg}(-1,\eta)=F_{\eta}^g(-1,-1); \quad
    F_{\xi}^{gg}(-1,-1) = \lim_{\xi\rightarrow -1} F_{\xi}^{gg}(\xi,-1) = F_{\xi}^g(-1,-1); \\
    & F_{\eta,\xi}^{gg}(-1,-1) = \left.\frac{d}{d\xi}F_{\eta}^{gg}(\xi,-1) \right|_{\xi=-1}
    = F_{\xi\eta}^g(-1,-1); \quad
    F_{\xi,\eta}^{gg}(-1,-1) = \left.\frac{d}{d\eta}F_{\xi}^{gg}(-1,\eta) \right|_{\xi=-1}
    = F_{\xi\eta}^g(-1,-1); \\
    & -F_{\xi}^{gg}(-1,1) + F_{\eta}^g(-1,1) = U_{BC}'(1); \quad
    -F_{\xi}^{g}(1,-1) + F_{\eta}^{gg}(1,-1) = U_{BC}'(-1); \\
    & (PF^g)_{\xi}(-1,\eta) = F_{\xi}^{gg}(-1,\eta); \quad
    (PF^g)_{\eta}(\xi,-1) = F_{\eta}^{gg}(\xi,-1); \\
    & PF^g(-1,\eta) = F^g(-1,\eta); \quad PF^g(\xi,-1) = F^g(\xi,-1); \quad
    PF^g(-\eta,\eta) = U_{BC}(\eta).
    \end{split}\right.
  \end{equation}\endgroup
\end{proof}


\subsection{Application to Solving PDE Problems}
\label{sec_25}

We combine the constructions from previous sections for enforcing boundary conditions,
together with the extreme learning machine (ELM)
technique, to solve PDE problems.
The approximation process is summarized below, and
we refer to e.g.~\cite{DongL2021,DongY2022rm,NiD2023}
for detailed expositions of the ELM method. Consider the
boundary value problem on the general triangular domain $\Omega$ from Figure~\ref{fg_1}(a),
\begin{subequations}\label{eq_n107}\small
  \begin{align}
    & \mathcal L u(\mbs x) + \mathcal N(u) = f(\mbs x), \label{eq_n107a} \\
    & \left.\mathcal B u(\mbs x)\right|_{\mbs x\in\partial\Omega} = f_b(\mbs x), \label{eq_n107b}
  \end{align}
\end{subequations}
where $u(\mbs x)$ is the unknown field to be solved for, $\mathcal L$ is a 2nd-order
linear differential operator, $\mathcal N$ is a nonlinear operator (of order at most one),
and $f$ and $f_b$ are prescribed source
terms over the domain and on the boundary, respectively.
$\mathcal B$ is a linear boundary operator, with \eqref{eq_n107b}
representing the Dirichlet, Neumann, or Robin conditions on
the triangular domain boundary. We assume that the boundary curves $\mbs x_{AB}(\xi)$,
$\mbs x_{BC}(\xi)$ and $\mbs x_{AC}(\xi)$ are sufficiently smooth to accommodate
the differential operators in the PDE and the boundary conditions.

We leverage the domain map in Section \ref{sec_21} and
the formulations from previous sections that satisfy the
boundary conditions \eqref{eq_n107b} exactly,
\begingroup\small\begin{equation}\label{eq_e108}
  V(\xi,\eta) = g(\xi,\eta) - Pg(\xi,\eta) + PF^g(\xi,\eta)
  = g(\xi,\eta) - Pg(\xi,\eta) + PF^{gb}(\xi,\eta) + PF^{a}(\xi,\eta),
\end{equation}\endgroup
where $PF^g$ has been split into two components: (i) $PF^{gb}$ has a linear dependence on
the free function $g$, and (ii) $PF^a$ does not depend on $g$ (data only).
Note that $PF^{gb}(\xi,\eta)=0$ if the domain involves only Dirichlet boundaries.
Equation \eqref{eq_n107a} is then transformed into
\begingroup\small\begin{align}\label{eq_n109}
  & \mathcal L V^{gb}(\xi,\eta) + \mathcal N(V(\xi,\eta)) = R(\xi,\eta),
\end{align}\endgroup
where 
\begingroup\small\begin{align}
  &V^{gb}(\xi,\eta) = g(\xi,\eta) - Pg(\xi,\eta) + PF^{gb}(\xi,\eta), \label{eq_n110a} \quad
  R(\xi,\eta) = f(\mbs x(\xi,\eta)) - \mathcal L [PF^a(\xi,\eta)].
\end{align}\endgroup

We represent the free function $g(\xi,\eta)$ by an ELM-type randomized feedforward NN
with a single hidden layer, in which the hidden-layer parameters are randomly assigned
and fixed and only the output-layer coefficients are trained.
The NN logic of the output layer leads to the relation,
\begingroup\small\begin{align}\label{eq_n111}
  g(\xi,\eta) = \sum_{j=1}^M \beta_j \varpi_j(\xi,\eta) = \bm\Phi(\xi,\eta)\bm\beta,
\end{align}\endgroup
where $M$ is the hidden-layer width,
$\bm\Phi(\xi,\eta)=(\varpi_1,\dots,\varpi_M)$ denotes the set of output fields of the
hidden layer,
and $\bm\beta=(\beta_1,\dots,\beta_M)^T$ is the vector of the output-layer weights,
which are the trainable parameters of the network.
Note that we have imposed the usual requirement that the output
layer should contain no activation
and have zero bias (see~\cite{DongL2021} for details). 

We determine the trainable parameters $\bm\beta$ by approximating equation~\eqref{eq_n109}
based on the physics informed approach. 
We choose a set of $Q_c$ collocation points from the interior of the standard
triangle, $(\xi_i,\eta_i)\in\Omega_{st}$ for $1\leqslant i\leqslant Q_c$, and
enforce \eqref{eq_n109} on these points. This gives rise to
the following algebraic system,
\begingroup\small\begin{align}\label{eq_e112}
  & \big[\mathcal L\bm\Psi(\xi_i,\eta_i)\big]\bm\beta
  + \mathcal N\big(\bm\Psi(\xi_i,\eta_i)\bm\beta+PF^a(\xi_i,\eta_i)\big)
  = R(\xi_i,\eta_i), \quad 1\leqslant i\leqslant Q_c,
\end{align}\endgroup
where we have used \eqref{eq_n110a} and \eqref{eq_n111}, and
\begingroup\small\begin{align}
& \bm\Psi(\xi,\eta) = \bm\Phi(\xi,\eta) - P\bm\Phi(\xi,\eta) + PF^{\bm\Phi b}(\xi,\eta),
\quad
PF^{gb}(\xi,\eta) = PF^{\bm\Phi b}(\xi,\eta)\bm\beta.
\end{align}\endgroup
Equation \eqref{eq_e112} represents a system of nonlinear algebraic equations about $\bm\beta$,
with $Q_c$ equations and $M$ unknowns. We pursue a least squares solution to
this system, and compute such a solution by the nonlinear least squares (or Gauss-Newton)
method~\cite{Bjorck1996}, specifically using
the NLLSQ-perturb (nonlinear least squares with perturbations) algorithm
from~\cite{DongL2021}. The output-layer coefficients of the ELM network
are then set by this least squares solution, concluding the NN training.
The trained neural network encodes the free function $g(\xi,\eta)$, and
the solution to the PDE system~\eqref{eq_n107} is given by \eqref{eq_e108}.

\begin{remark}
  If the PDE \eqref{eq_n107a} is linear (with $\mathcal N(u)=0$),
  the resultant algebraic system \eqref{eq_e112} will be linear w.r.t.~$\bm\beta$.
  Then we compute the least squares solution to this system by
  the linear least squares method~\cite{Bjorck1996}.
\end{remark}

\begin{algorithm}[tb]\small
  \DontPrintSemicolon
  \SetKwInOut{Input}{input}\SetKwInOut{Output}{output}

  \Input{ $g(\xi,\eta)$, $g(\xi,-\xi)$, $g(-\eta,\eta)$,
    $g(-1,\eta)$, $g(-1,-\xi)$, $g(\xi, -1)$, $g(-\eta,-1)$, $g(- 1, - 1)$;
    $F(\xi,\eta)$, $F(\xi,-\xi)$, $F(-\eta,\eta)$,
    $F(-1,\eta)$, $F(-1,-\xi)$, $F(\xi, -1)$, $F(-\eta,-1)$, $F(- 1, - 1)$;
  }
  \Output{$V(\xi,\eta)$}
  \BlankLine
  calculate $H(\xi,\eta)$, $H(\xi,-\xi)$ and $H(-\eta,\eta)$ based on equation~\eqref{eq_e115}, i.e.
  \vspace{-5pt}\begingroup\small\begin{align*}
    H(\xi,\eta) =& \  \big[g(\xi,\eta)-g(\xi,-1) \big] - \big[g(-1,\eta)-g(-1,-1) \big], \\
    H(\xi,-\xi) =& \ \big[g(\xi,-\xi)-g(\xi,-1) \big] - \big[g(-1,-\xi)-g(-1,-1) \big], \\
    H(-\eta,\eta) =& \ \big[g(-\eta,\eta)-g(-\eta,-1) \big] - \big[g(-1,\eta)-g(-1,-1) \big].
  \end{align*}\endgroup\;
  \vspace{-15pt}calculate $A(\xi,\eta)=g(\xi,\eta)-Pg(\xi,\eta)$ based on equation
  \eqref{eq_e116}, i.e.
  \vspace{-5pt}\begin{align*}
    A(\xi,\eta) =& \ \big[\phi_0(\xi)H(\xi,\eta) - \phi_1(\eta)H(\xi,-\xi) \big]
    + \phi_1(\xi)\big[H(\xi,\eta) - H(-\eta,\eta) \big].
  \end{align*}\;
  \vspace{-15pt}calculate $B(\xi,\eta)=PF(\xi,\eta)$ based on equation~\eqref{eq_e27b}, i.e.
  \vspace{-5pt}\begin{align*}
    B(\xi,\eta) =& \  \big[F(\xi,-1)+F(-1,\eta)-F(-1,-1) \big] + 
    \phi_1(\eta)\big[F(\xi,-\xi)-F(\xi,-1) - F(-1,-\xi) \notag \\
      &\ + F(-1,-1) \big] +
    \phi_1(\xi)\big[F(-\eta,\eta) - F(-\eta,-1) - F(-1,\eta) + F(-1,-1) \big].
  \end{align*}\;
  \vspace{-15pt}calculate $V(\xi,\eta) = A(\xi,\eta) + B(\xi,\eta)$
  
  \caption{Computing the NN solution (DBC on all boundaries)
  }
  \label{alg_1}
\end{algorithm}

\begin{remark}\label{rem_28}
  After the NN is trained, when evaluating the TFC expression
  on boundary test points to compute the solution
  field $V(\xi,\eta)$ (e.g.~in order to assess the boundary-condition errors),
  we observe that ``catastrophic cancellation'' (a.k.a.~cancellation error)
  due to subtraction of nearly equal real numbers in $(g-Pg)$ can be very significant.
  Let's use the TFC expression with all DBCs as an example to illustrate this point;
  see equations \eqref{eq_17}--\eqref{eq_e27b}.
  Numerical experiments show that catastrophic cancellation can lead to prominent numerical
  boundary-condition errors, which, while
  small (e.g.~on the order $10^{-8}$ at isolated points for Dirichlet boundaries),
  are far above the level of
  machine precision. A similar issue has been encountered in~\cite{DongZ2026}.
  The large BC errors are primarily caused by
  those terms involving the free function $g(\xi,\eta)$
  in the TFC expression. For example, the expression $[g(\xi,\eta)-Pg(\xi,\eta)]$
  should be zero mathematically on the test points from Dirichlet boundaries.
  When evaluated directly based on \eqref{eq_17}--\eqref{eq_e27a},
  however, it results in numerical errors on the order
  $\mathcal O(\text{max}_{\substack{(\xi,\eta)\in\Omega_{st}}}|g(\xi,\eta)|\cdot\epsilon)$,
  where $\epsilon$ denotes the unit roundoff error ($\sim 10^{-16}$ in double precision),
  and the numerical magnitude of $g(\xi,\eta)$ can be large when the problem is
  severely ill-conditioned. Fortunately, this cancellation error can be effectively mitigated
  by re-structuring the computations appropriately.
  We observe that computing $(g-Pg)$ using
  the following equivalent forms effectively eliminates the cancellation error,
  leading to numerical BC errors at the machine precision, irrespective
  of the numerical magnitude of $g(\xi,\eta)$.
  Noting that $Pg = (P_1\oplus P_2)g = P_1g + P_2g - P_1P_2g$, where the operators
  $P_1$ and $P_2$ are defined in \eqref{eq_6a}--\eqref{eq_6b}, we have
  \begingroup\small\begin{align}
    & g - Pg = g - P_1g - P_2g + P_1P_2g = (I-P_1)(I-P_2)g,
  \end{align}\endgroup
  where $I$ denotes the identity operator. Let
  \begingroup\small\begin{align}\label{eq_e115}
    H(\xi,\eta) = (I-P_2)g(\xi,\eta) = g(\xi,\eta)-P_2g(\xi,\eta)
    = \big[g(\xi,\eta) - g(\xi,-1) \big] - \big[g(-1,\eta) - g(-1,-1) \big].
  \end{align}\endgroup
  It follows that,
  \begingroup\small\begin{align}
    g(\xi,\eta)-Pg(\xi,\eta) &= (I-P_1)H(\xi,\eta)
    = H(\xi,\eta) - \phi_1(\eta)H(\xi,-\xi) - \phi_1(\xi)H(-\eta,\eta) \notag \\
    &= \big[\phi_0(\xi)+\phi_1(\xi) \big]H(\xi,\eta)
    - \phi_1(\eta)H(\xi,-\xi) - \phi_1(\xi)H(-\eta,\eta) \notag \\
    &= \big[\phi_0(\xi)H(\xi,\eta) - \phi_1(\eta)H(\xi,-\xi) \big]
    + \phi_1(\xi)\big[H(\xi,\eta) - H(-\eta,\eta) \big]. \label{eq_e116}
  \end{align}\endgroup
  The grouped expressions in \eqref{eq_e115} and \eqref{eq_e116} are crucial.
  Algorithm~\ref{alg_1} summarizes the procedure for 
  computing the solution $V(\xi,\eta)$ using the forms \eqref{eq_e115} and \eqref{eq_e116}.
  For other boundary-condition configurations, re-structured expressions
  for computing $V(\xi,\eta)$ can be formulated analogously. 
  
\end{remark}


\section{Numerical Examples}
\label{sec_tests}

We next test the BC enforcement method 
from  the previous section using several
boundary value  or initial/boundary value problems
with complex domain geometries.
The tested domains consist of curved boundaries that are smooth or
piecewise smooth, having corners
or even  cusps. These geometries can be treated as
general triangular domains, by choosing corners or smooth points from the boundaries
as the triangle vertices.
The test problems involve linear and nonlinear stationary PDEs, and also dynamic PDEs
on spatial domains that evolve over time.

We employ ELM-type randomized feed-forward NNs with a single hidden layer in
all the simulations, with an architecture $[2, M, 1]$ and the Gaussian
activation function $\sigma(x)=e^{-x^2}$. The input layer consists of
two nodes representing $(\xi, \eta)$, and the output node represents
the free function $g(\xi,\eta)$.
The weight/bias coefficients in the $M$ hidden nodes are assigned
to random values drawn from a uniform distribution from $[-R_m,R_m]$,
where the scope constant $R_m$ is determined based on the algorithm
from~\cite{DongY2022rm}.
For the NN training we employ $Q_c=\frac12(Q-2)(Q-1)$
($Q$ denoting a prescribed integer) interior barycentric
points from $\Omega_{st}$ specified by
\begingroup\small\begin{equation}
  (\text{training collocation points})\ 
  \left\{\
  \bm\xi_{ijk}=\frac{1}{Q}(i\bm\xi_A+j\bm\xi_B+k\bm\xi_C)\ |\ 
  i+j+k=Q,\ i,j,k\geqslant 1 
  \ \right\},
\end{equation}\endgroup
where $\bm\xi_A=(-1,-1)$, $\bm\xi_B=(1,-1)$ and $\bm\xi_C=(-1,1)$.

We report the maximum and root-mean-squares (rms) errors of the NN solution
over the domain and the maximum/rms boundary-condition errors on
the individual boundaries. The solution errors are defined by
\begingroup\small\begin{align}
  & e_{max}(u) = \max_{\substack{1\leqslant i\leqslant N_v}}\left|u(\mbs x_i)-u_{ex}(\mbs x_i) \right|,
  \quad
  e_{rms}(u) = \sqrt{\frac{1}{N_v}\sum_{i=1}^{N_v}\left| u(\mbs x_i)-u_{ex}(\mbs x_i) \right|^2}
\end{align}\endgroup
where $u(\mbs x)$ and $u_{ex}(\mbs x)$ denote the NN solution and
the exact solution, respectively, and $N_v$ is the number of test points.
The points $\mbs x_i$ ($1\leqslant i\leqslant N_v$)
denote the image of the following barycentric test points from $\Omega_{st}$,
\begingroup\small\begin{equation}
  (\text{domain test points})\ 
  \left\{\
  \bm\xi_{ijk}=\frac{1}{Q_v}(i\bm\xi_A+j\bm\xi_B+k\bm\xi_C)\ |\ 
  i+j+k=Q_v,\ i,j,k\geqslant 0 
  \ \right\},
\end{equation}\endgroup
where $Q_v$ is a prescribed integer parameter and $N_v=\frac12(Q_v+1)(Q_v+2)$.
Unless otherwise specified, we employ $Q_v=100$ for computing the domain solution
errors $e_{max}$ and $e_{rms}$ in the simulations.

The boundary-condition errors are defined by (see equation \eqref{eq_n107b}),
\begingroup\small\begin{equation}
  \varepsilon_{max}(u) = \max_{\substack{1\leqslant i\leqslant Q_b}}
  \left|\mathcal Bu(\mbs x_i)- f_b(\mbs x_i) \right|, \quad
  \varepsilon_{rms}(u) = \sqrt{\frac{1}{Q_b}\sum_{i=1}^{Q_b}\left| \mathcal B u(\mbs x_i)-f_b(\mbs x_i) \right|^2},
\end{equation}\endgroup
where $\mbs x_i$ ($1\leqslant i\leqslant Q_b$) denote the image of $Q_b$ uniform
grid points on the individual edges of $\Omega_{st}$. For Neumann and Robin boundaries,
we exclude the end points of the edge from the set of boundary test points,
due to the fact that in a number of cases the end point corresponds to a smooth vertex
of the triangle, where the Jacobian matrix is singular.
By taking the boundary test points from the individual edges, we can define
$\varepsilon_{max}$ and $\varepsilon_{rms}$ on $\overline{AB}$, $\overline{BC}$
and $\overline{AC}$.
Unless otherwise specified, we employ $Q_b=200$ test points
for computing the boundary-condition errors on each boundary.

Our method is implemented in Python, employing the Tensorflow library
for the NN logic and automatic differentiations.
A fixed random seed value ``0'' has been used for the random number generators
in all the simulations below.
The values for the other simulation parameters, including $R_m$ (scope constant for ELM random
coefficients), $Q$ (training collocation-point parameter), $M$ (number of hidden nodes),
$p_w$ (Winslow parameter) and $Q_w$ (parameter for Winslow collocation points),
will be provided later when presenting individual simulation results.

\subsection{Variable-Coefficient Poisson Equation}
\label{sec_31}

\begin{figure}
  \centerline{
    \includegraphics[width=1.1in]{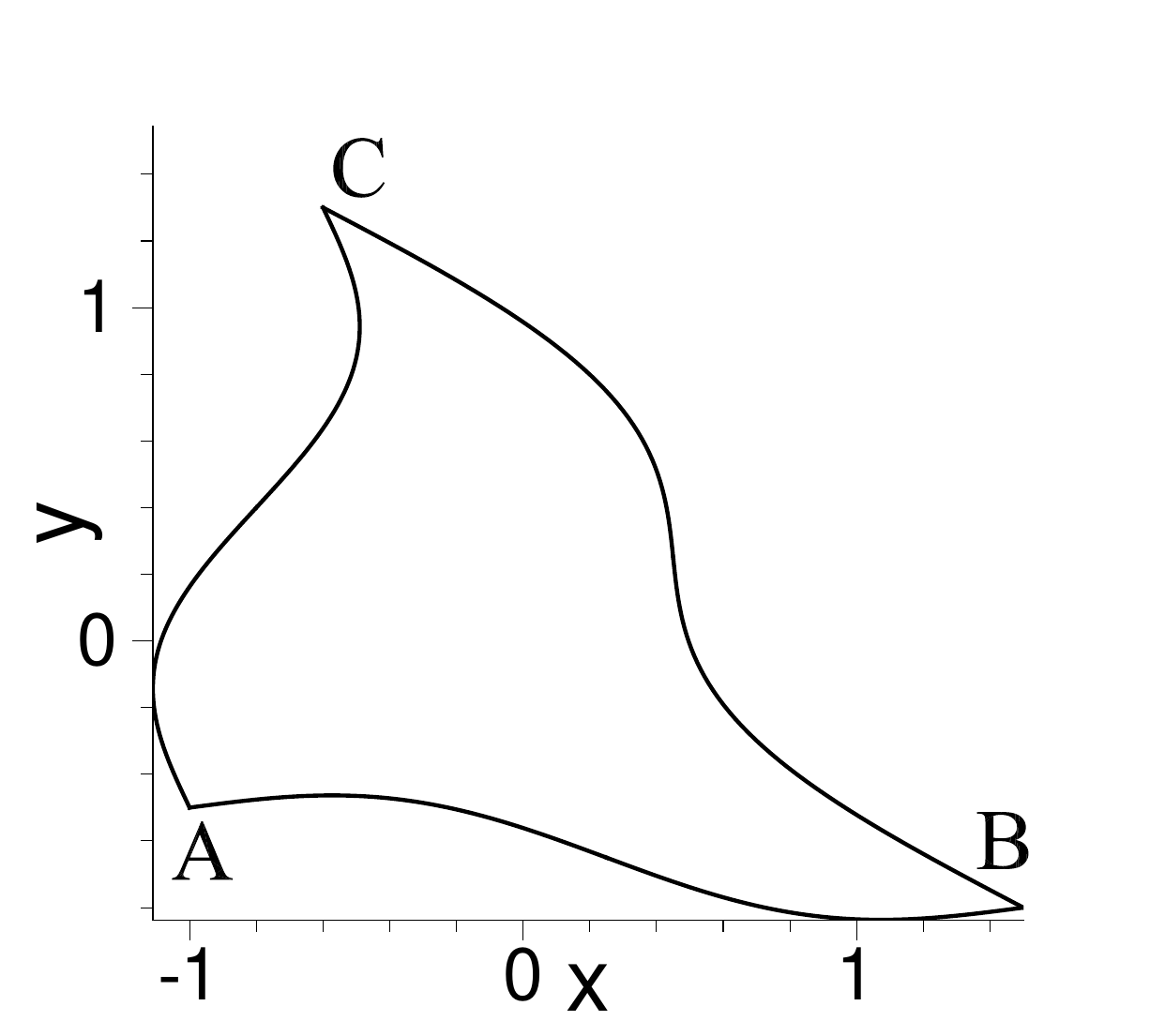}(a)
    \includegraphics[width=1.1in]{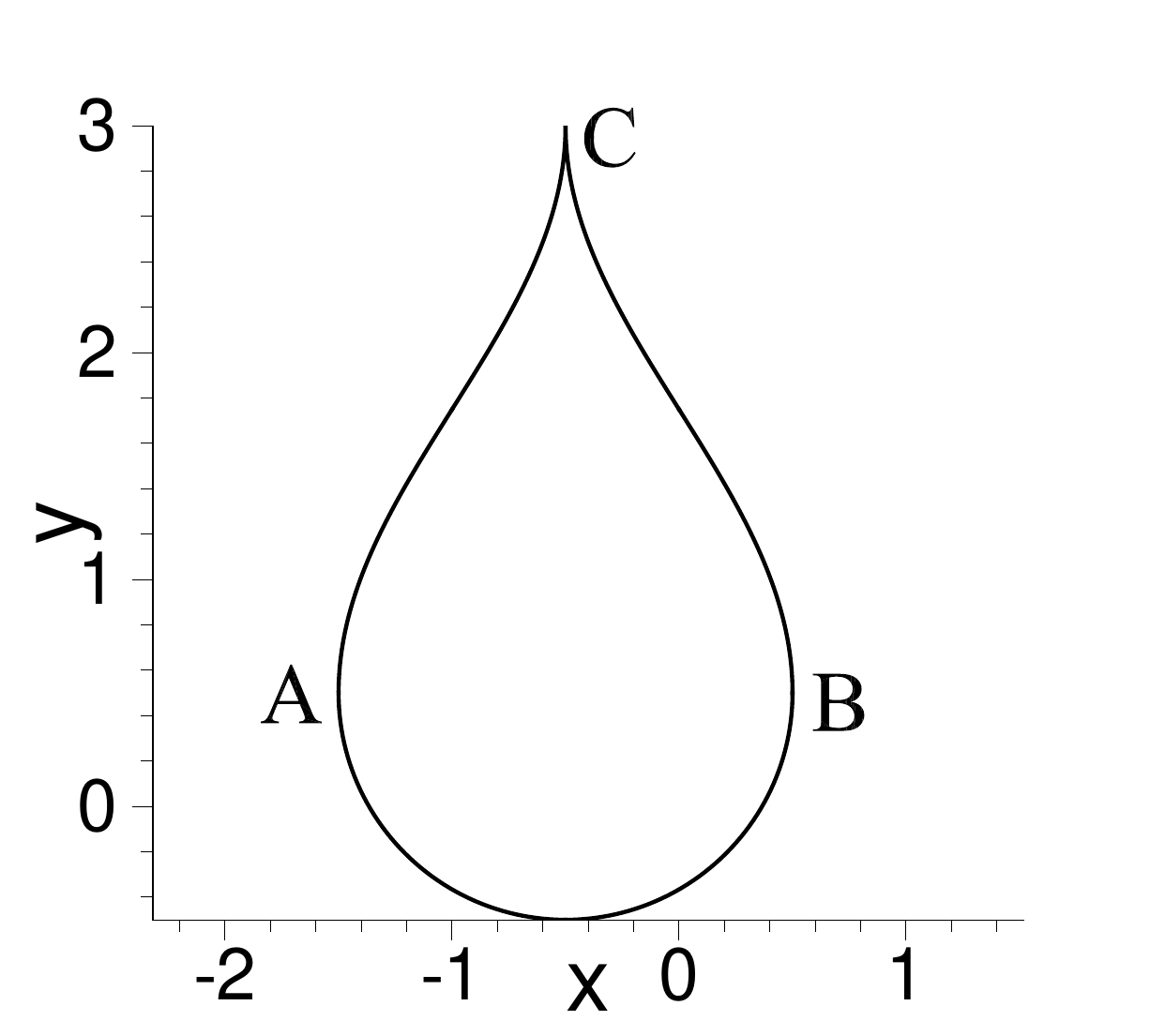}(b)
    \includegraphics[width=1.1in]{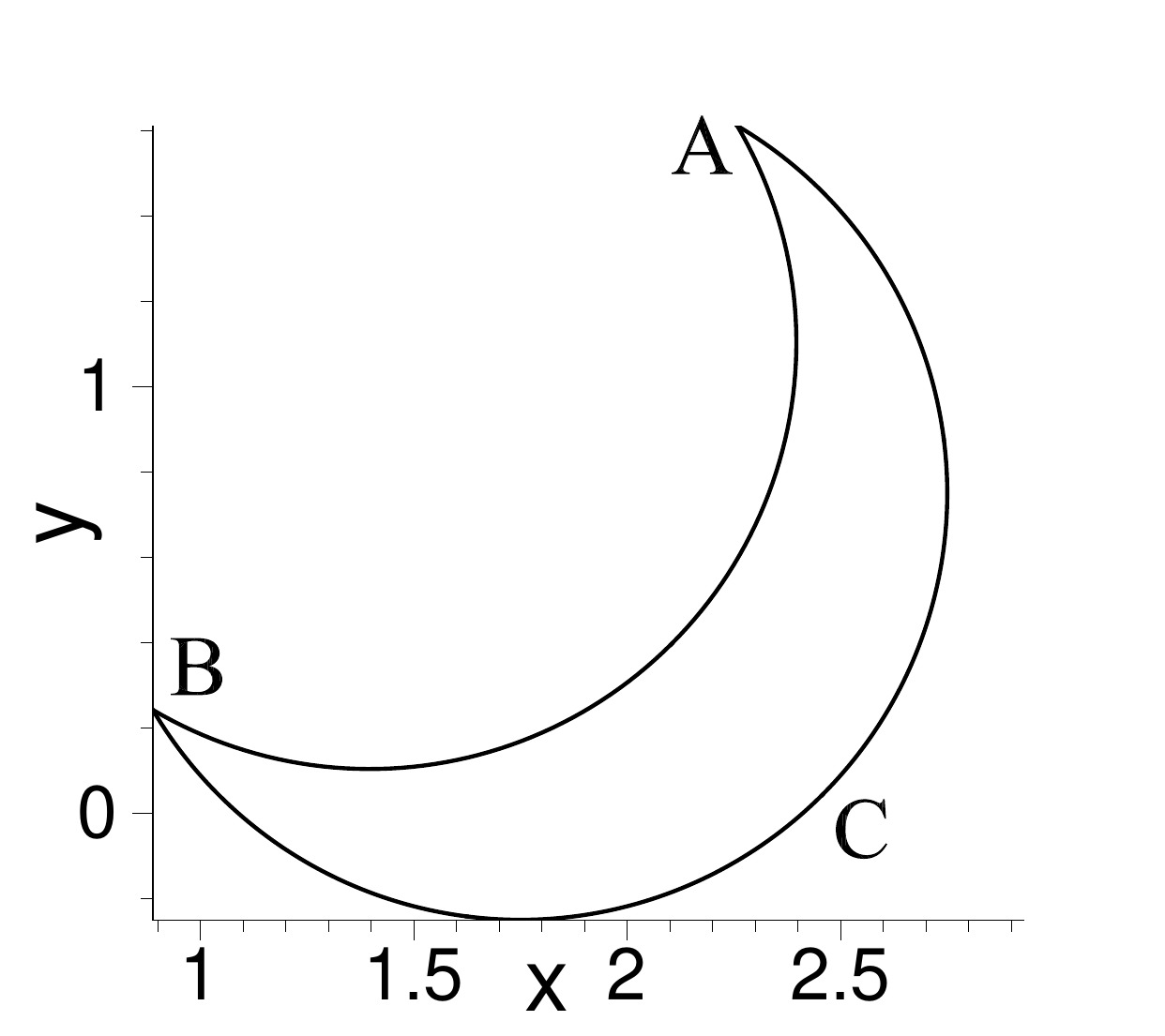}(c)
    \includegraphics[width=1.1in]{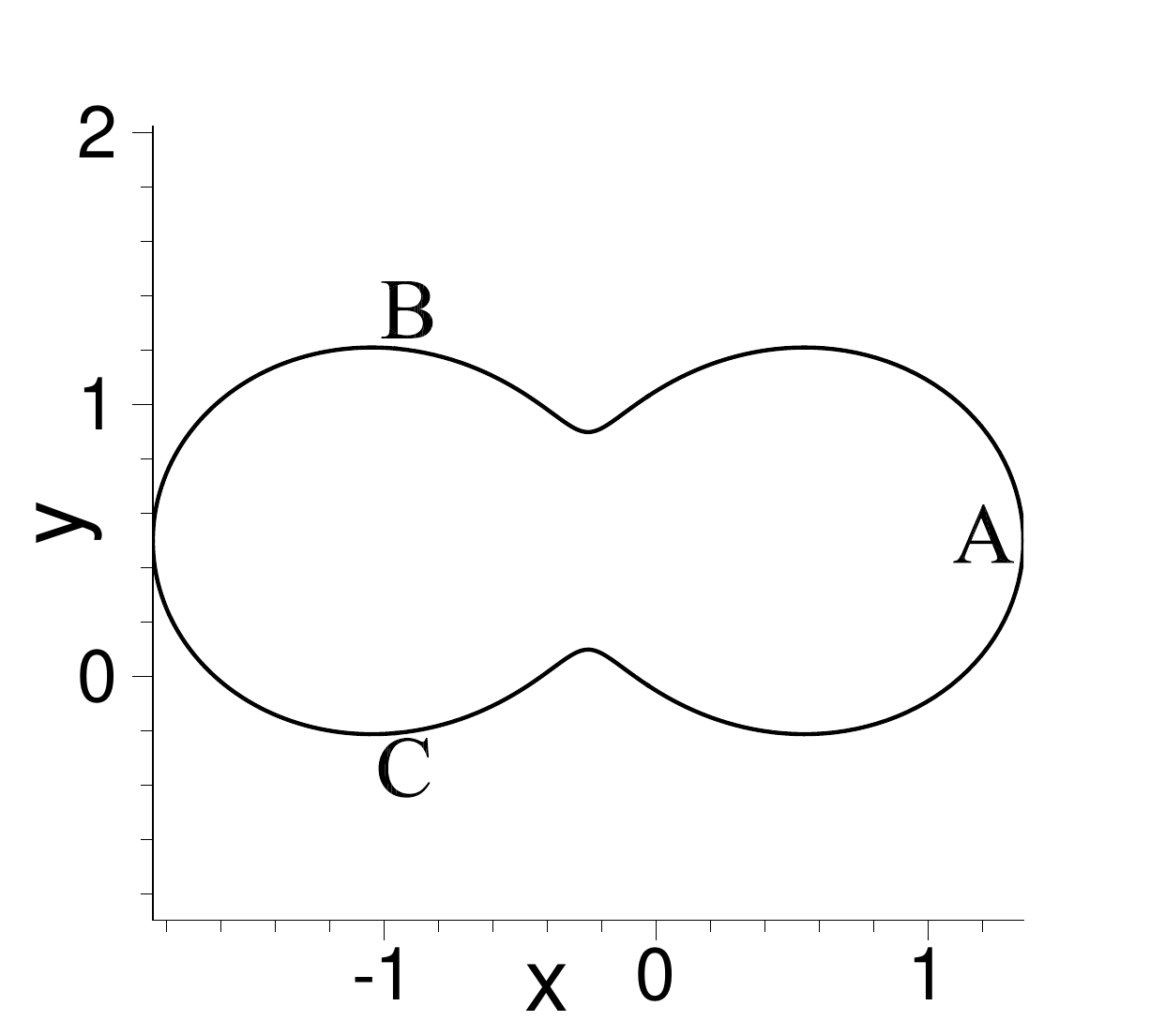}(d)
    \includegraphics[width=1.1in]{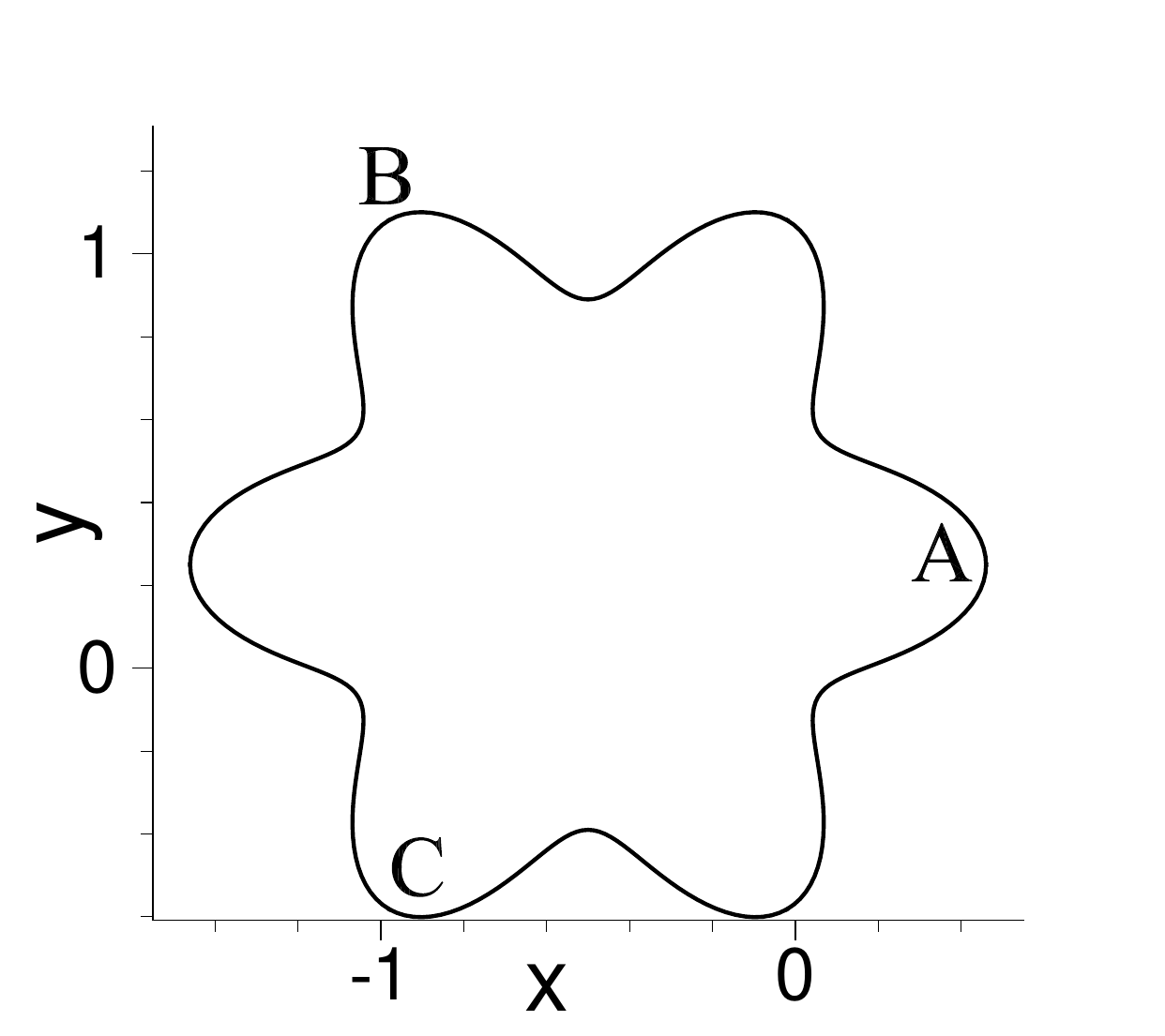}(e)
  }
  \centerline{
    \includegraphics[width=1.1in]{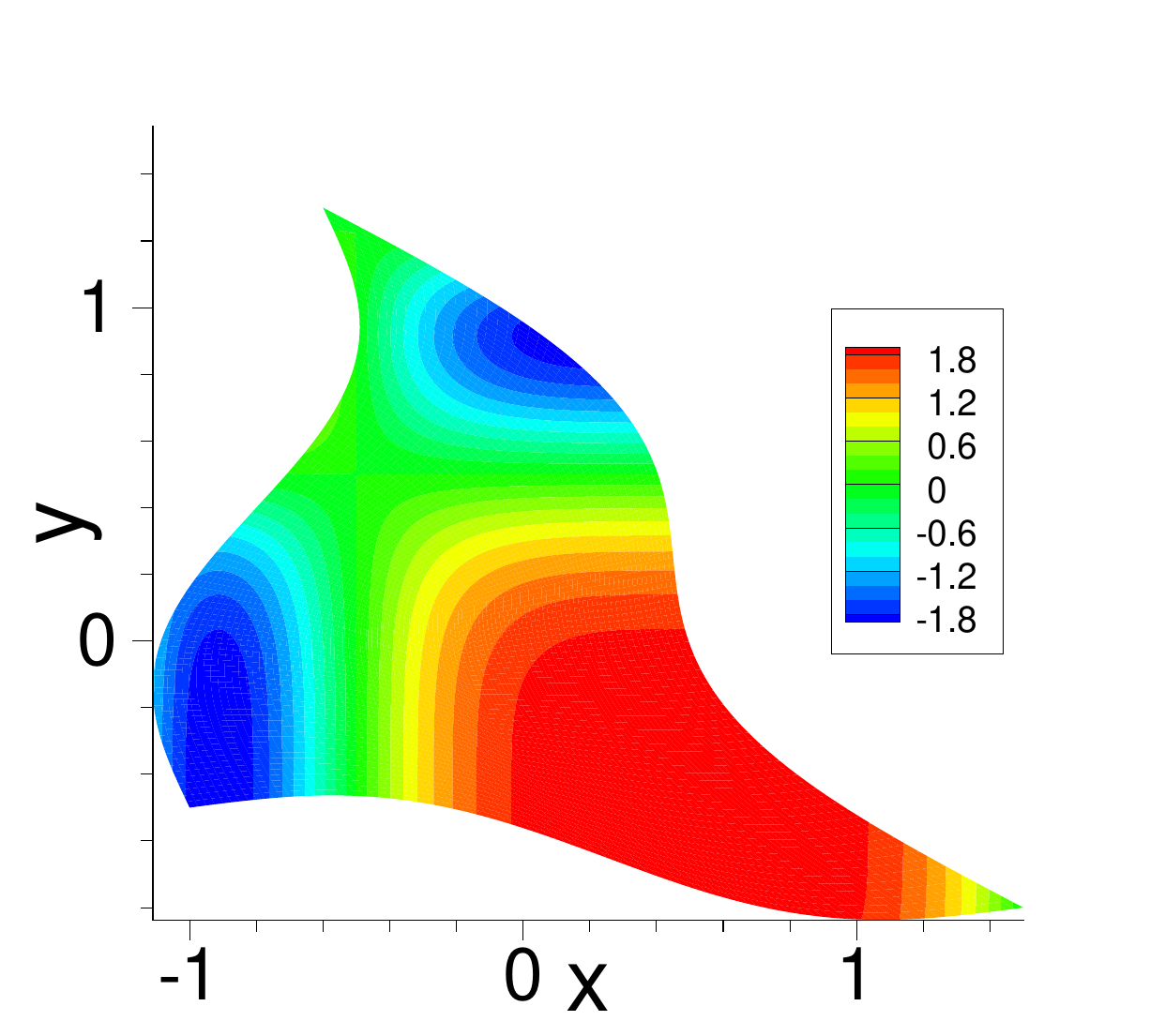}(f)
    \includegraphics[width=1.1in]{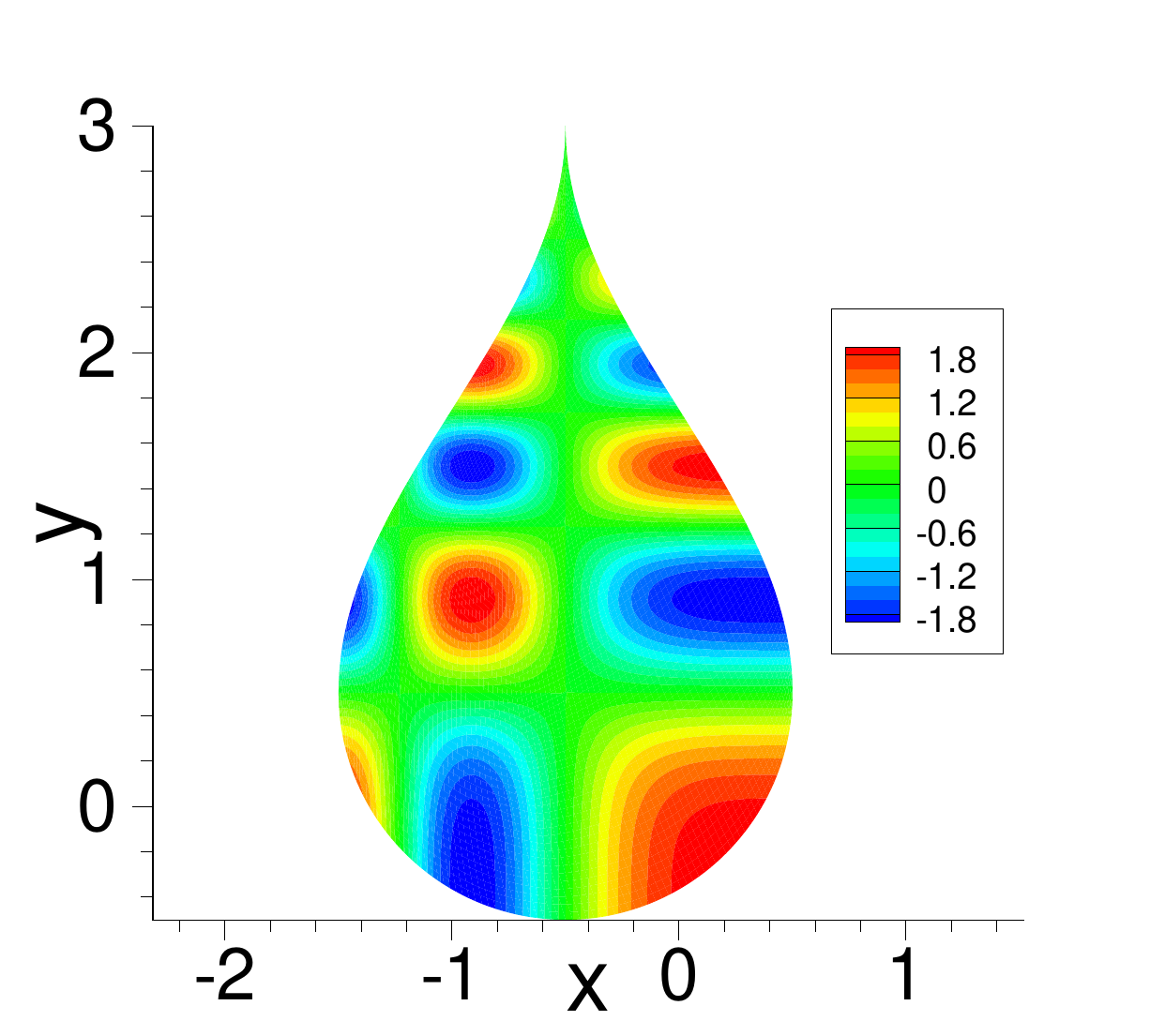}(g)
    \includegraphics[width=1.1in]{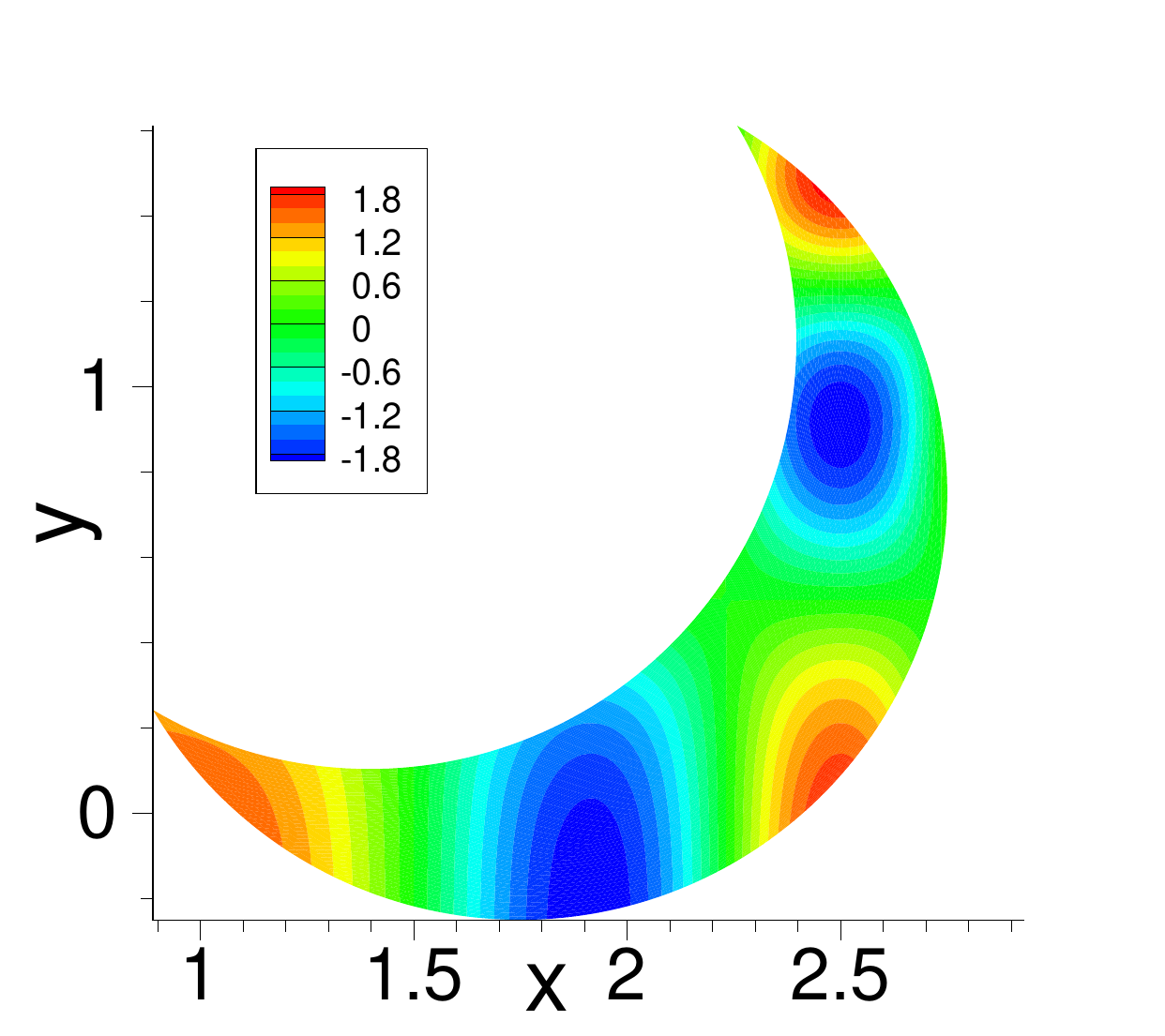}(h)
    \includegraphics[width=1.1in]{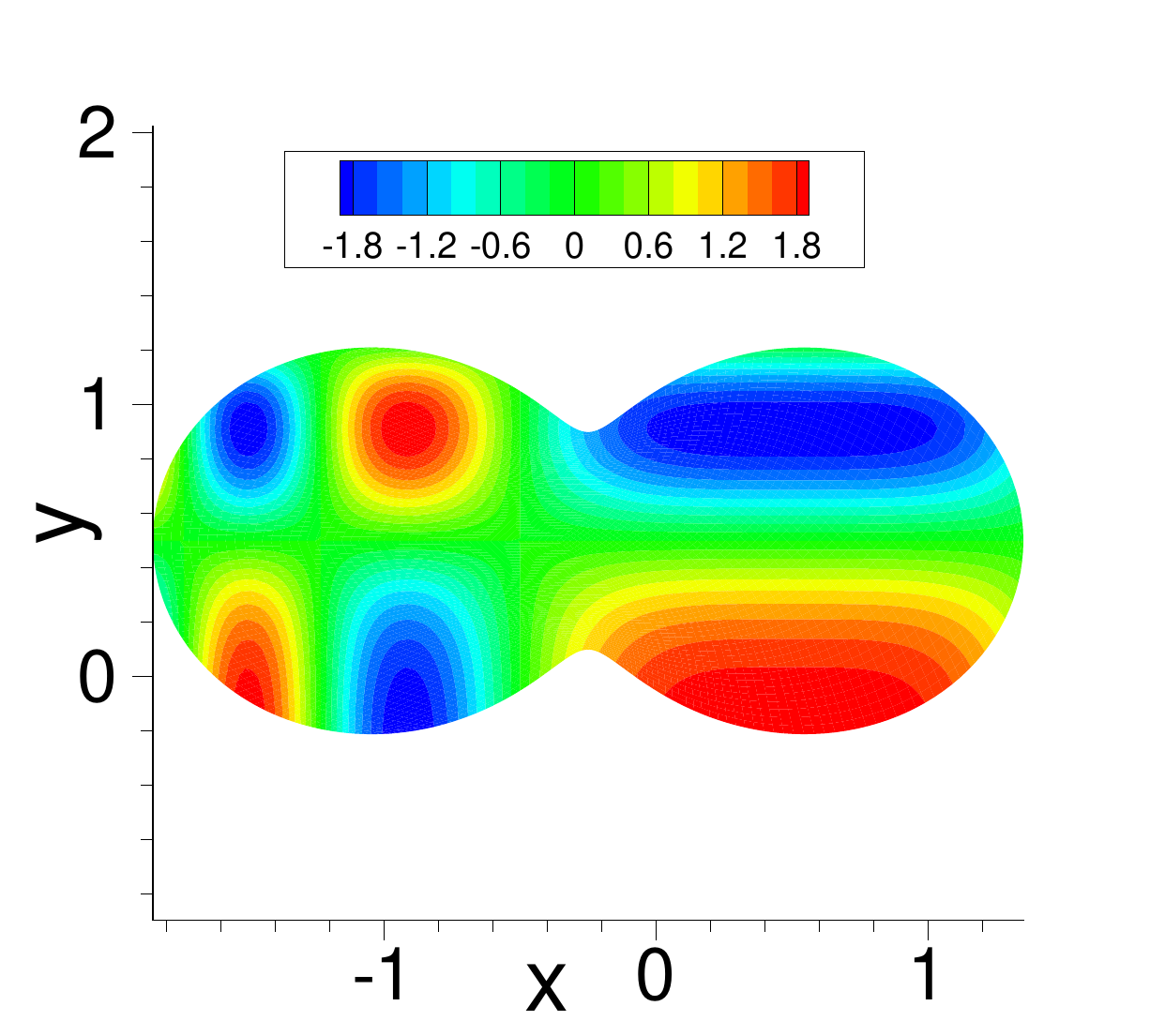}(i)
    \includegraphics[width=1.1in]{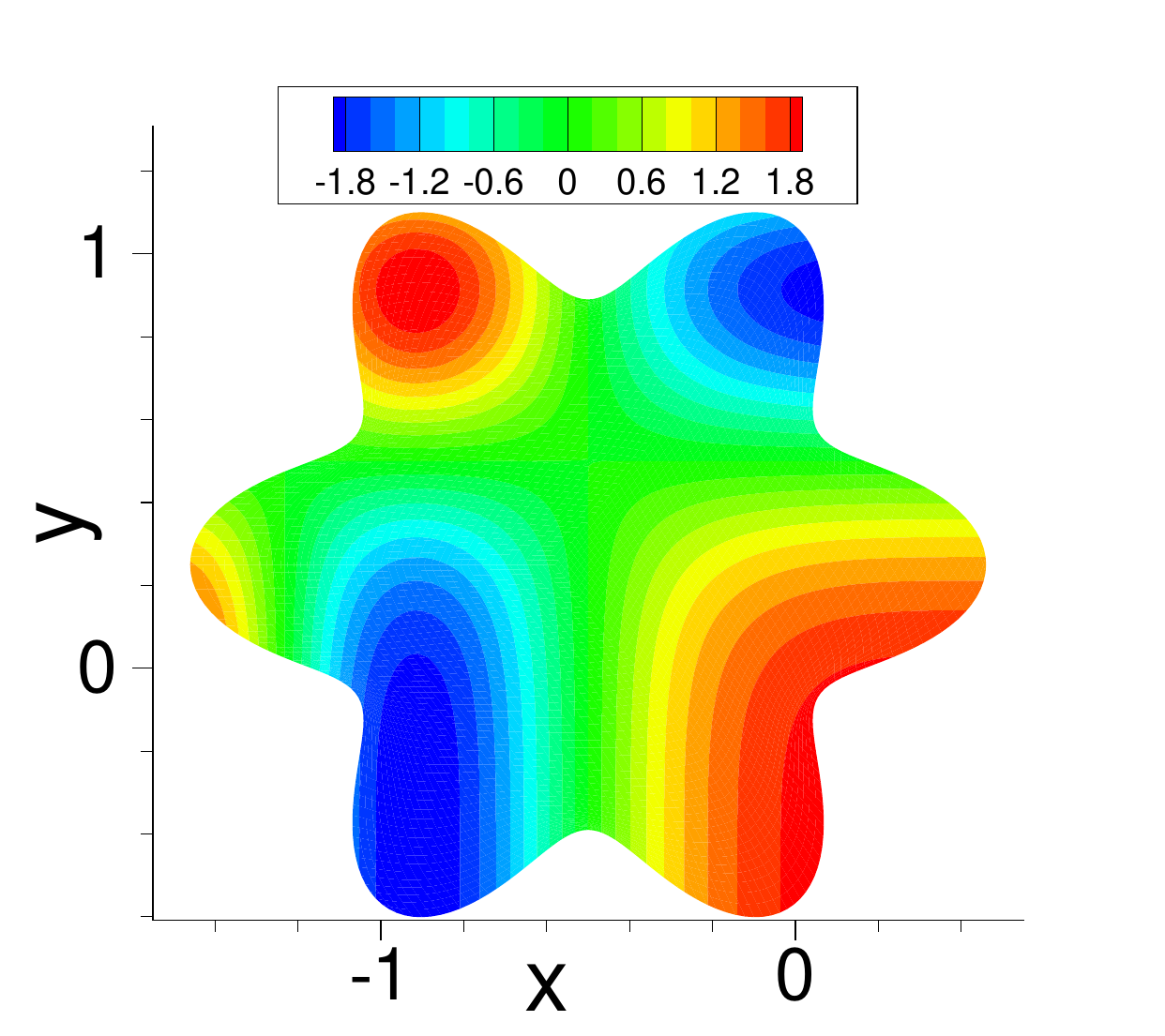}(j)
  }
  \caption{Variable-coefficient Poisson equation: Domain geometries (top row)
    and distributions of the exact solution (bottom row) for five domains:
    (a,f) domain \#1, (b,g) domain \#2, (c,h) domain \#3, (d,i) domain \#4,
    (e,j) domain \#5.
  }
  \label{fg_2}
\end{figure}

We consider the following boundary value problem with the variable-coefficient
Poisson equation on the five domains as shown in Figure~\ref{fg_2} (top row),
\begin{subequations}\small
\begin{align}
  & \nabla\cdot\left[a(x,y)\nabla u \right] = f(x,y), \\
  & \left.\mathcal B u\right|_{(x,y)\in\partial\Omega} = f_b(x,y),
\end{align}
\end{subequations}
where $a(x,y)=2+0.3\sin(x)\cos(y)$, 
$u(x,y)$ is the function to be solved for, $f$ and $f_b$ are prescribed
source terms, and $\mathcal B u$ denotes the boundary conditions of
Dirichlet, Neumann, or Robin types.
The specific geometric parameters of these domains are provided
in an appendix (Section \ref{sec_52}).
We choose the source terms $f$ and $f_b$ appropriately such that
this problem has the following exact solution for different boundary conditions,
\begingroup\small\begin{align}
  & u(x,y) = 2\cos\left[\frac{\pi}{2}\left(x-\frac12 \right)^2 \right]
  \cos\left[\frac{\pi}{2}\left(y+\frac12 \right)^2 \right].
\end{align}\endgroup
Figures \ref{fg_2}(f)--(j) illustrate this solution on the five domains.


\begin{figure}
  \centerline{
    \includegraphics[width=1.1in]{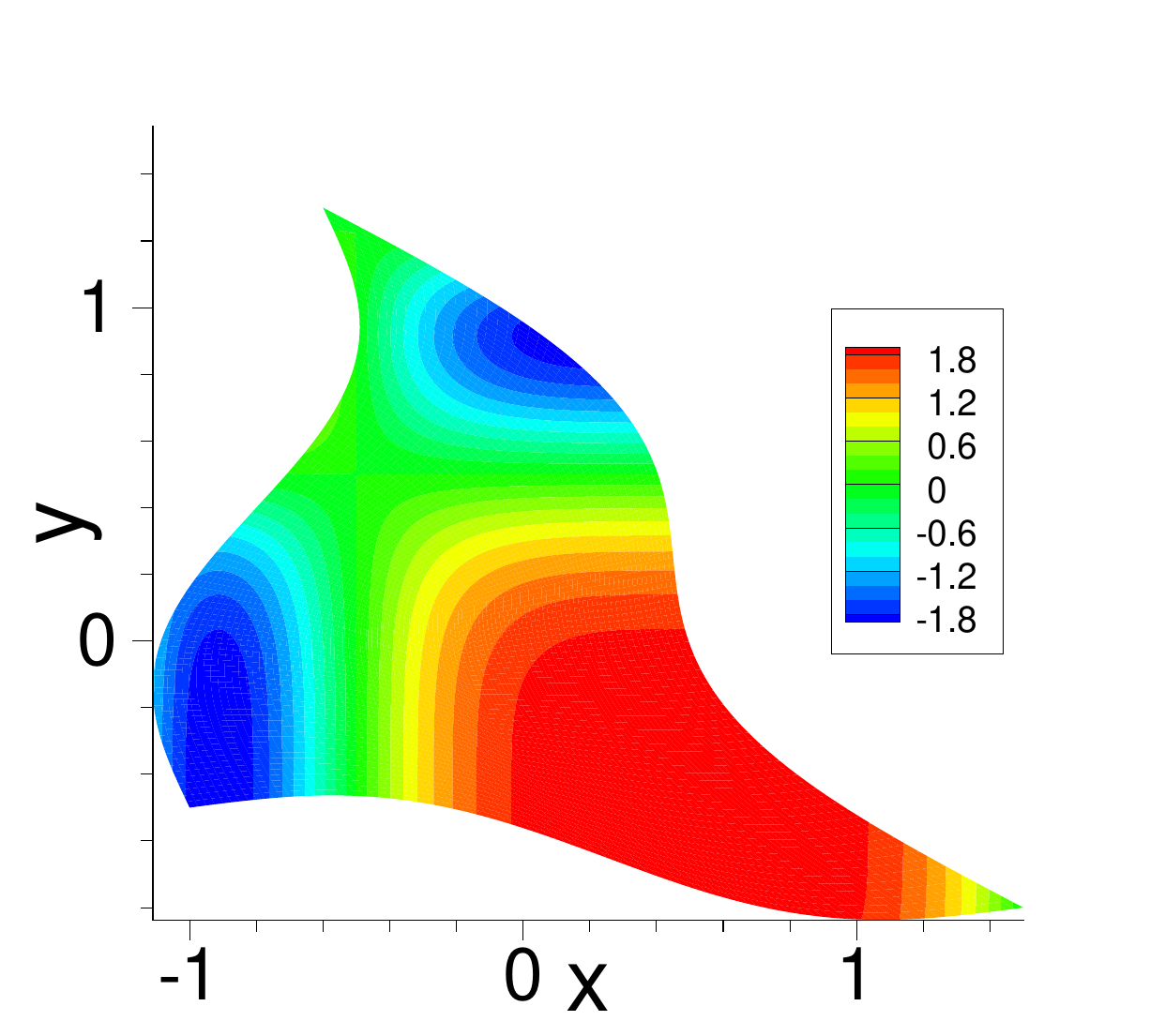}(a)
    \includegraphics[width=1.1in]{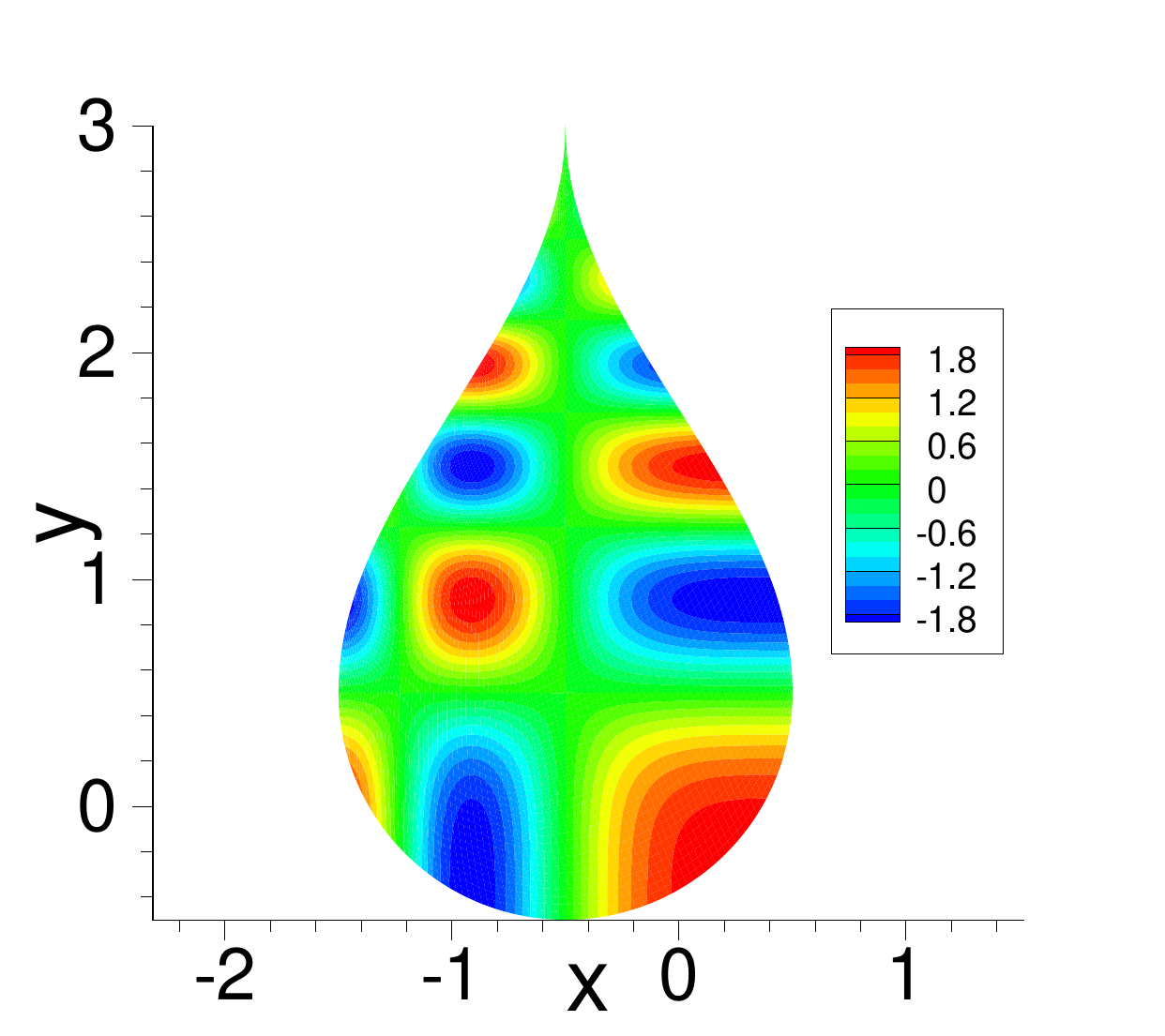}(b)
    \includegraphics[width=1.1in]{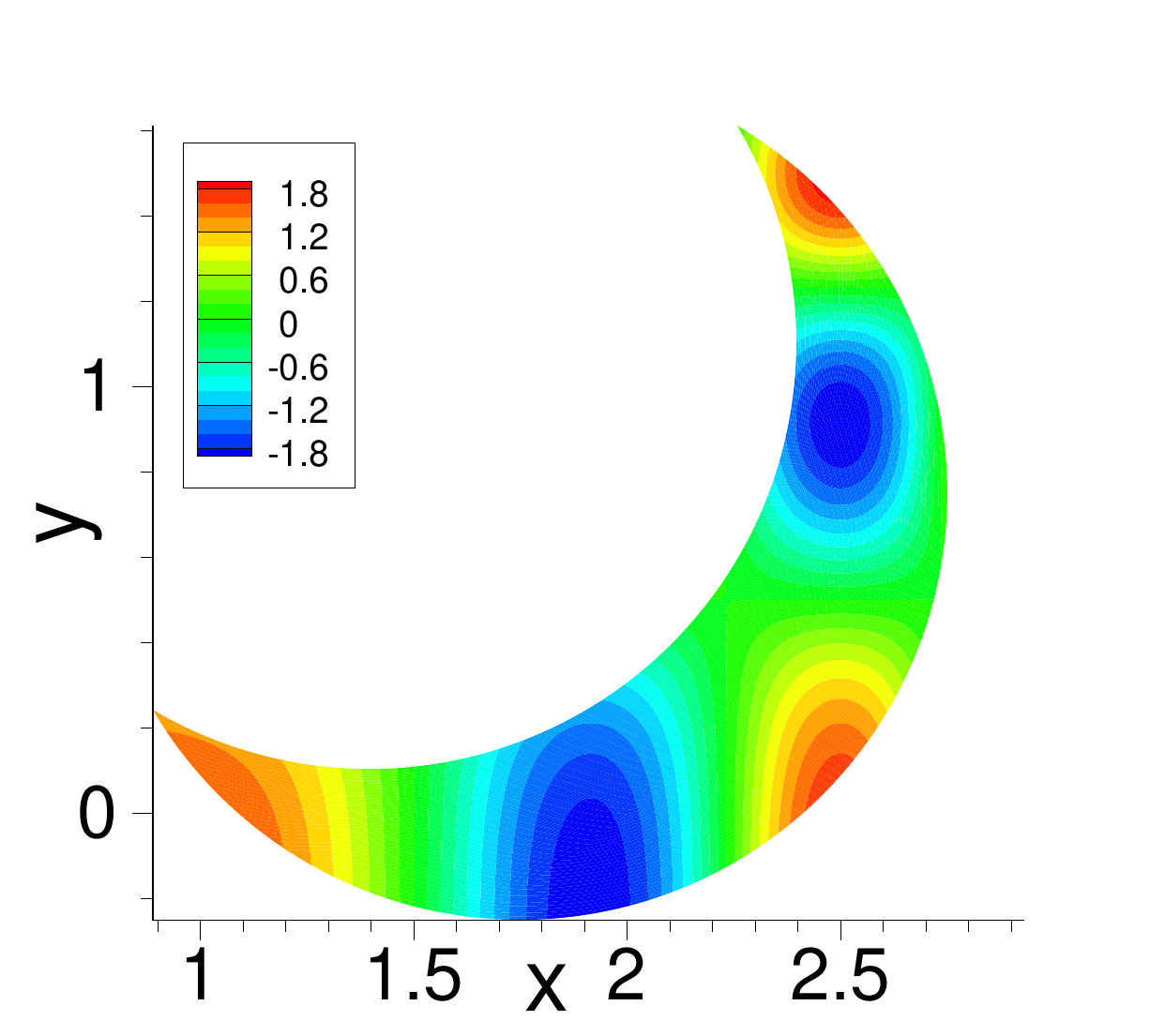}(c)
    \includegraphics[width=1.1in]{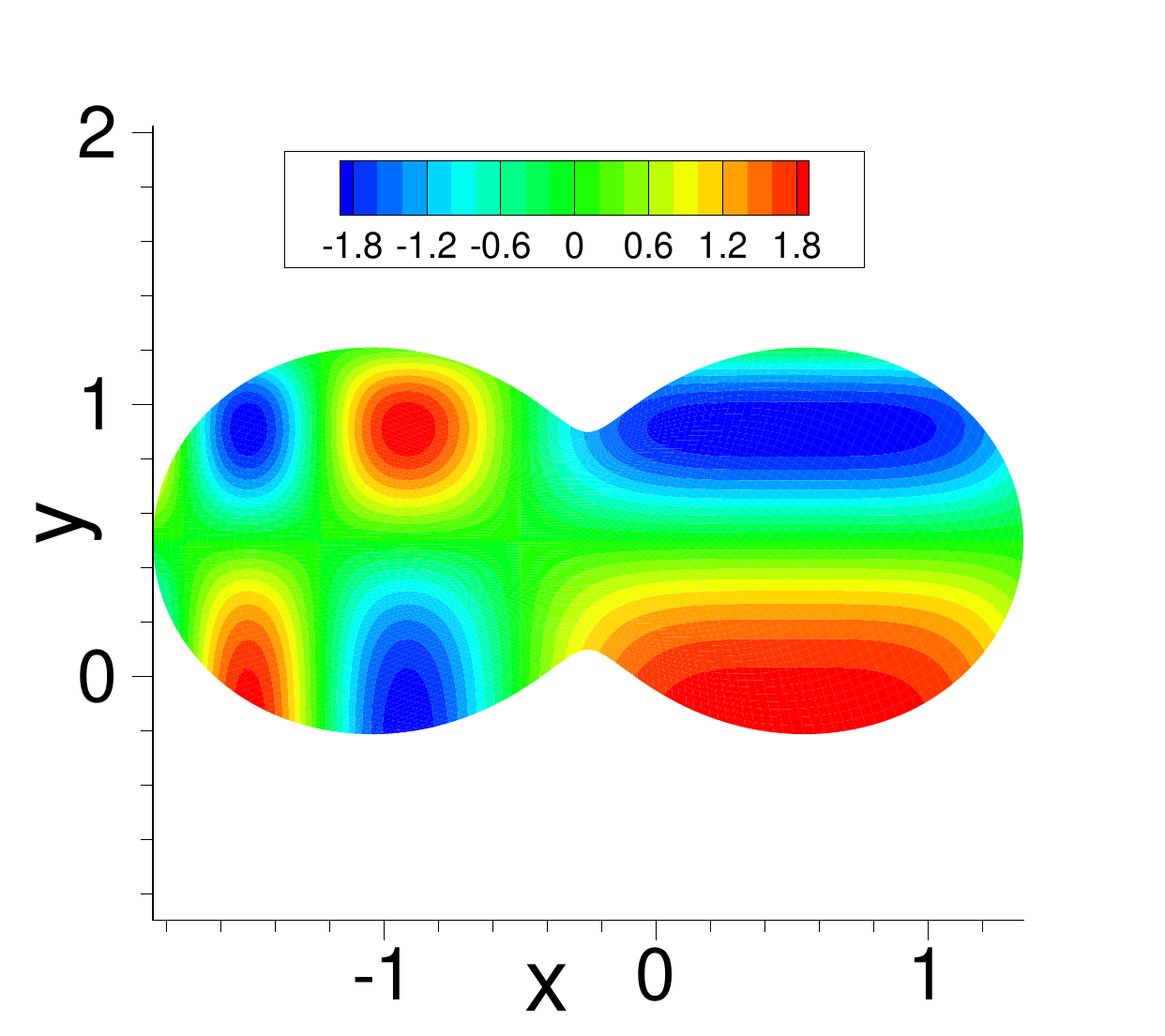}(d)
    \includegraphics[width=1.1in]{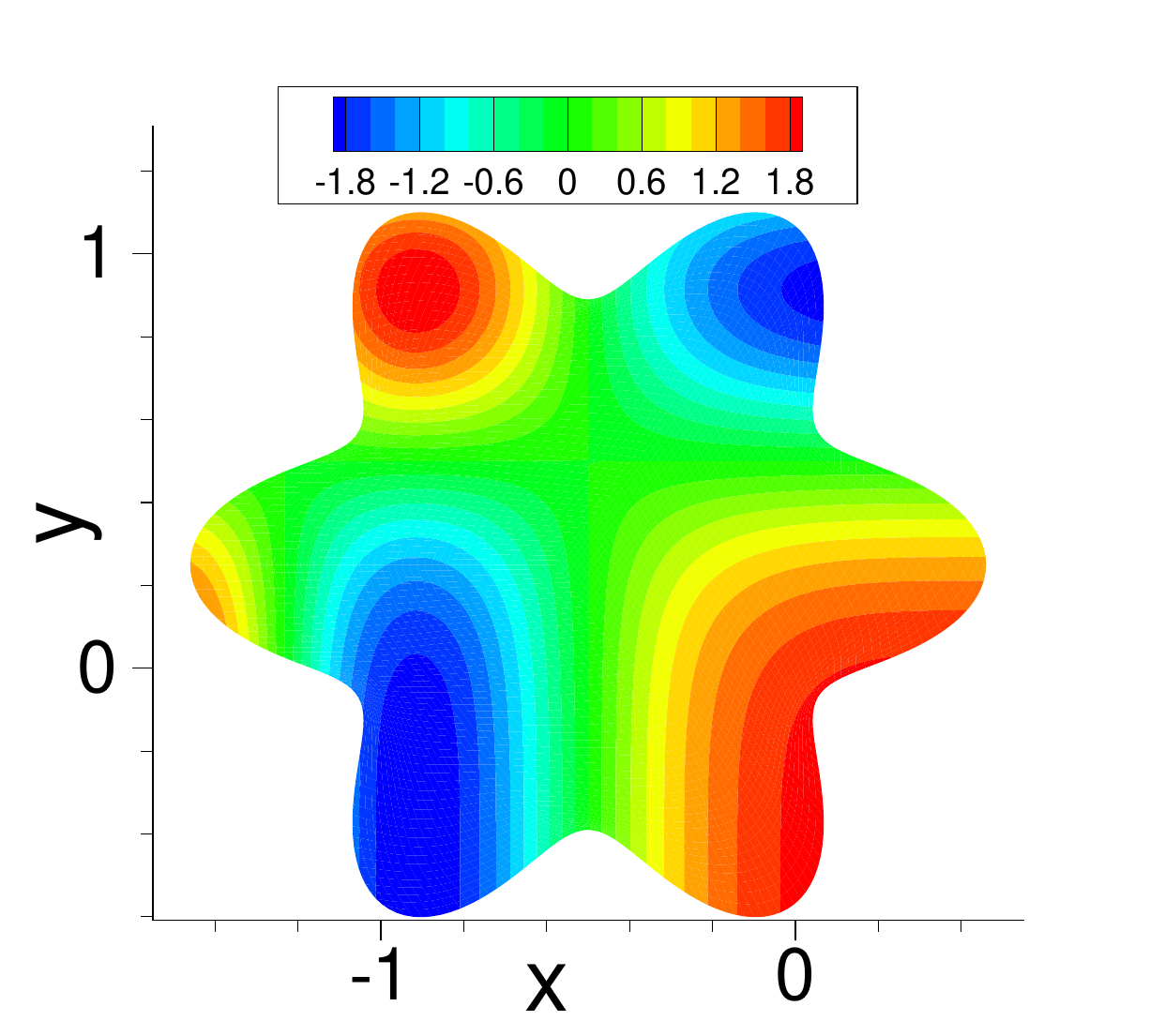}(e)
  }
  \centerline{
    \includegraphics[width=1.1in]{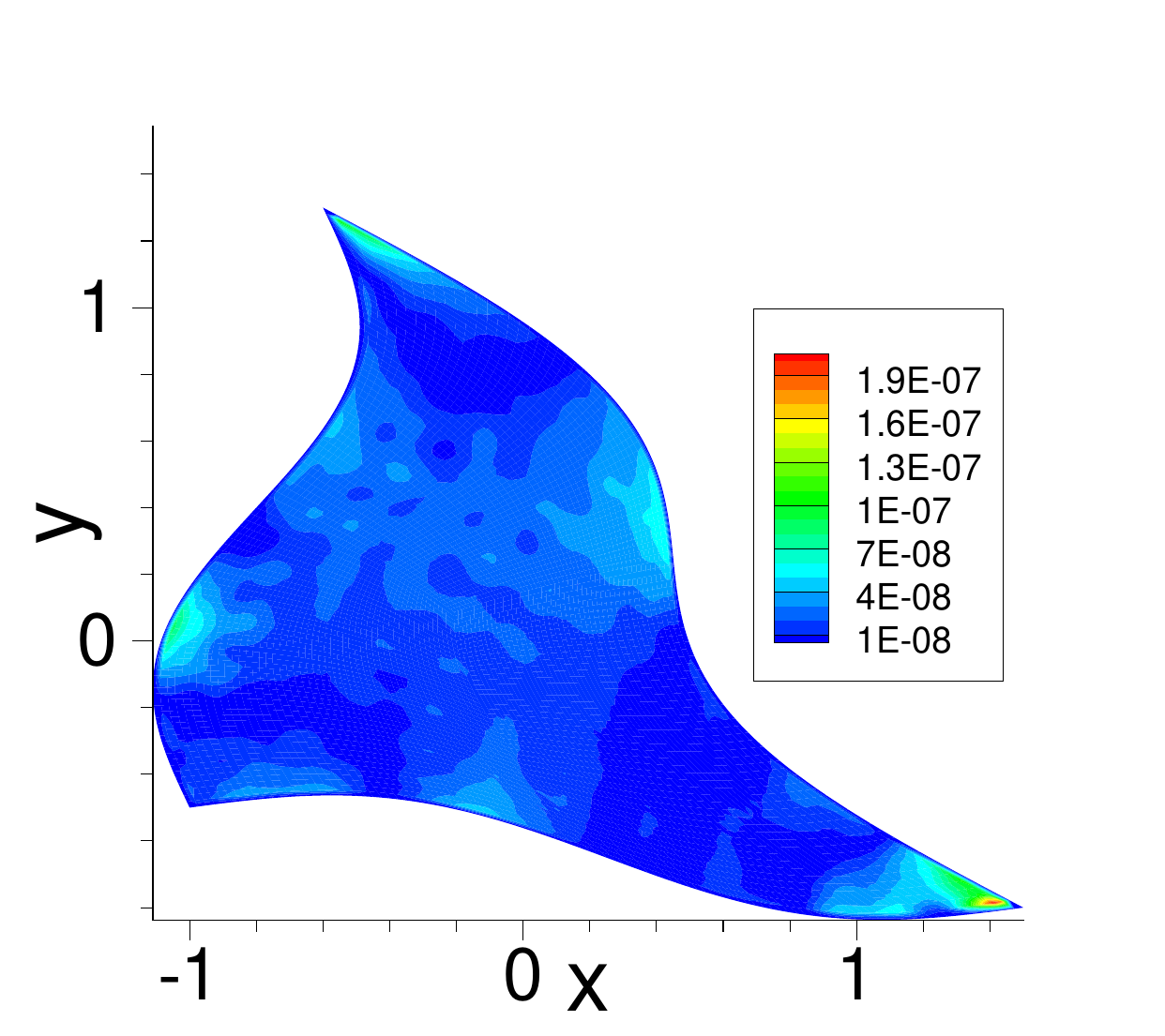}(f)
    \includegraphics[width=1.1in]{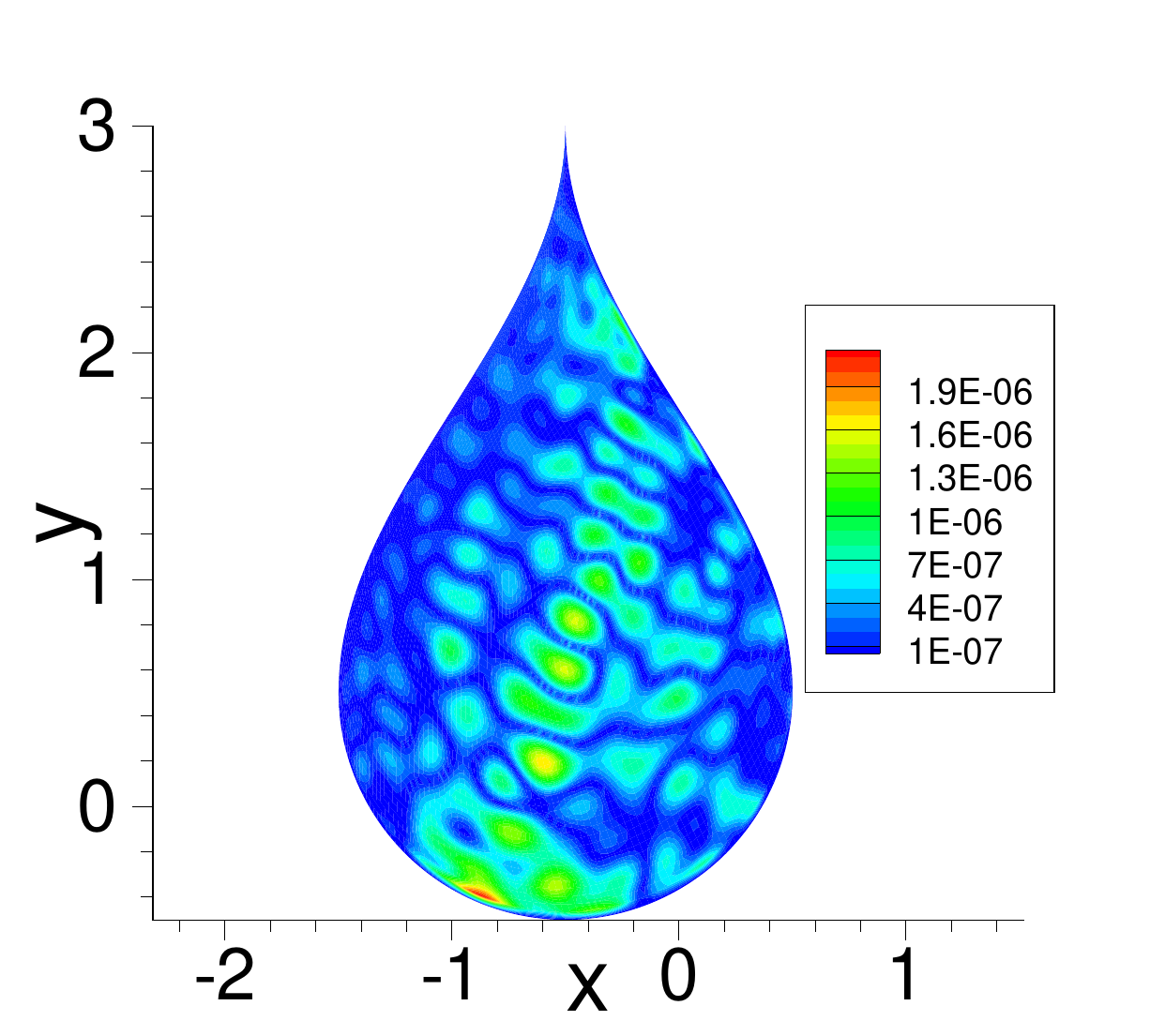}(g)
    \includegraphics[width=1.1in]{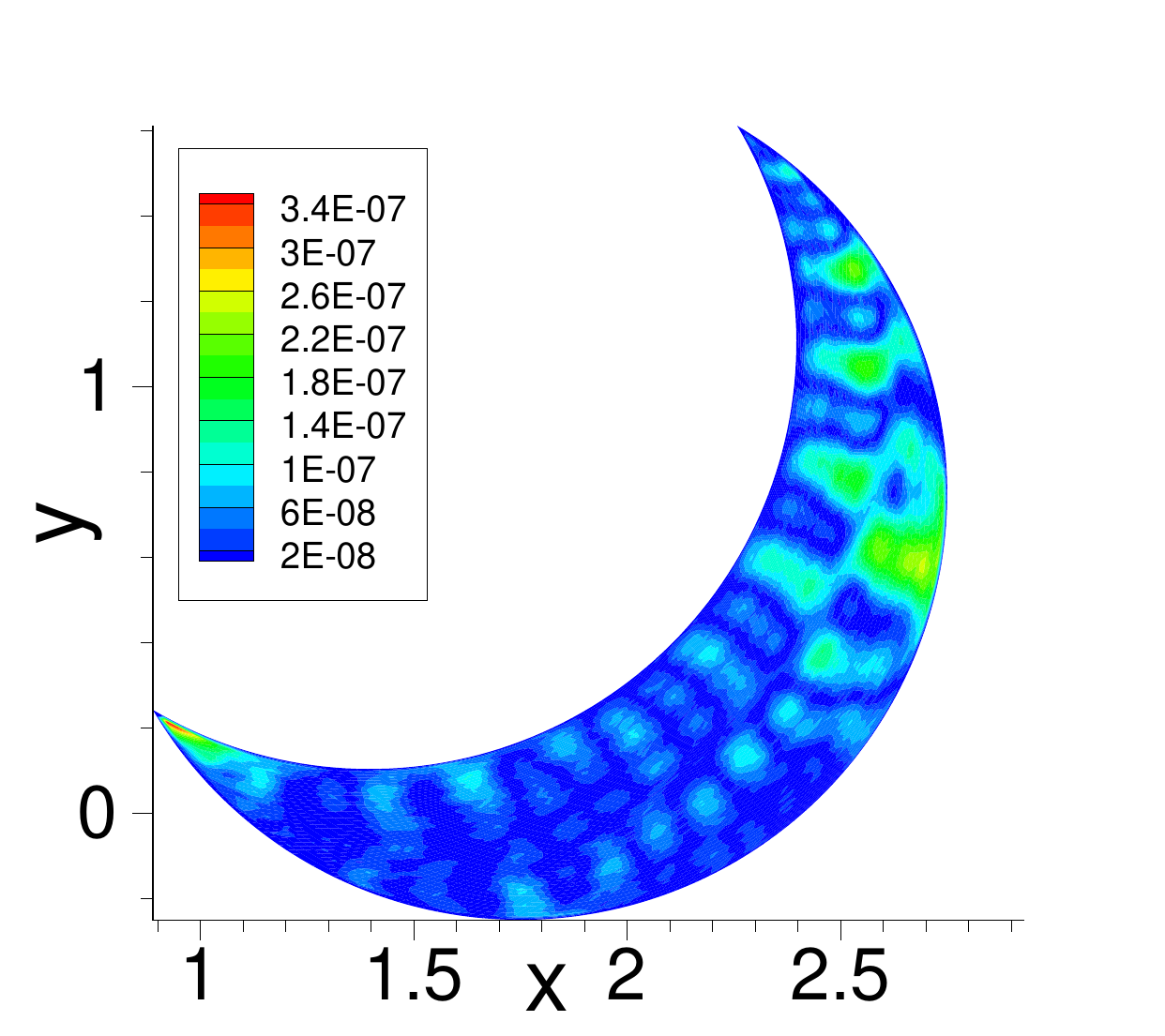}(h)
    \includegraphics[width=1.1in]{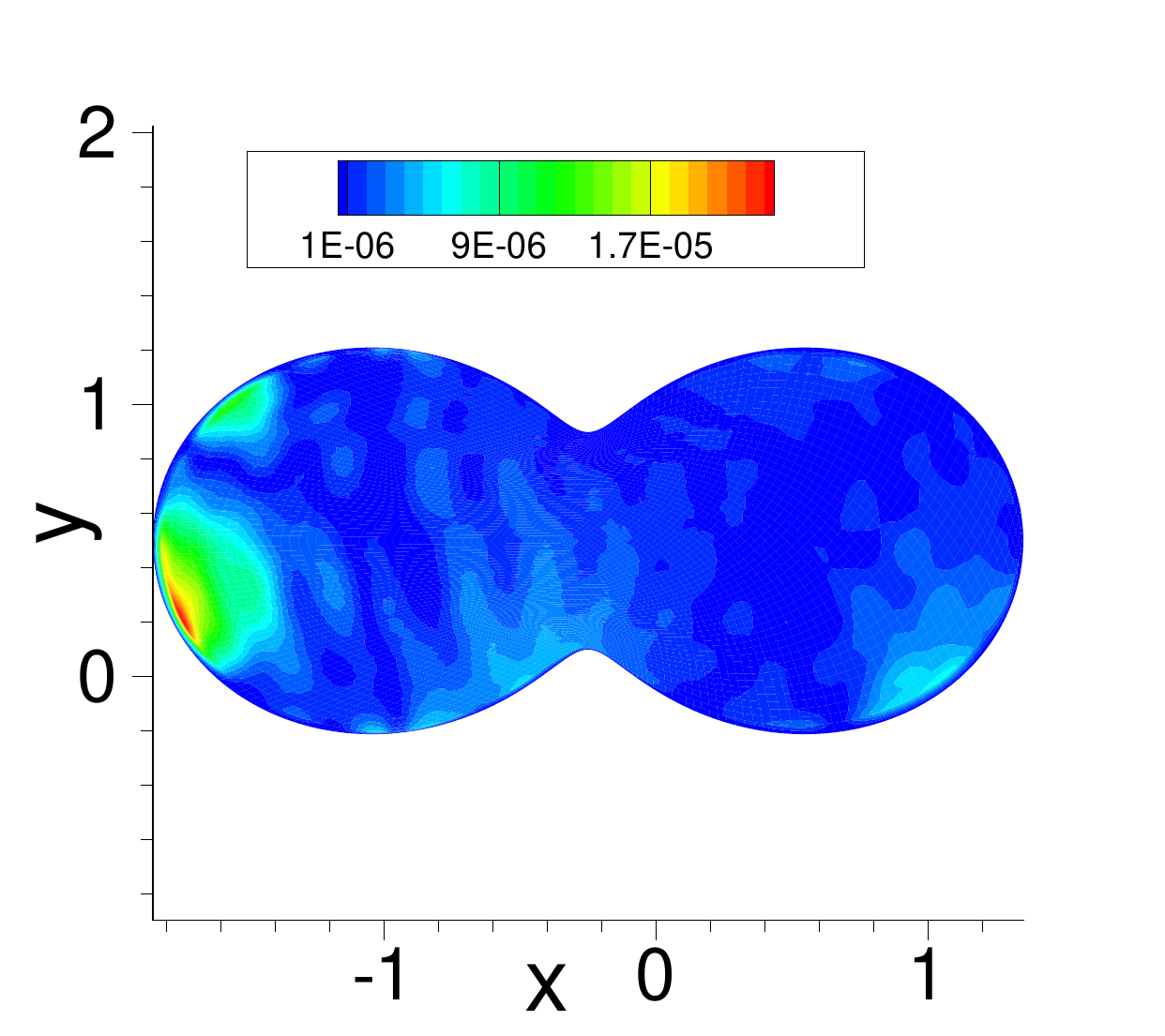}(i)
    \includegraphics[width=1.1in]{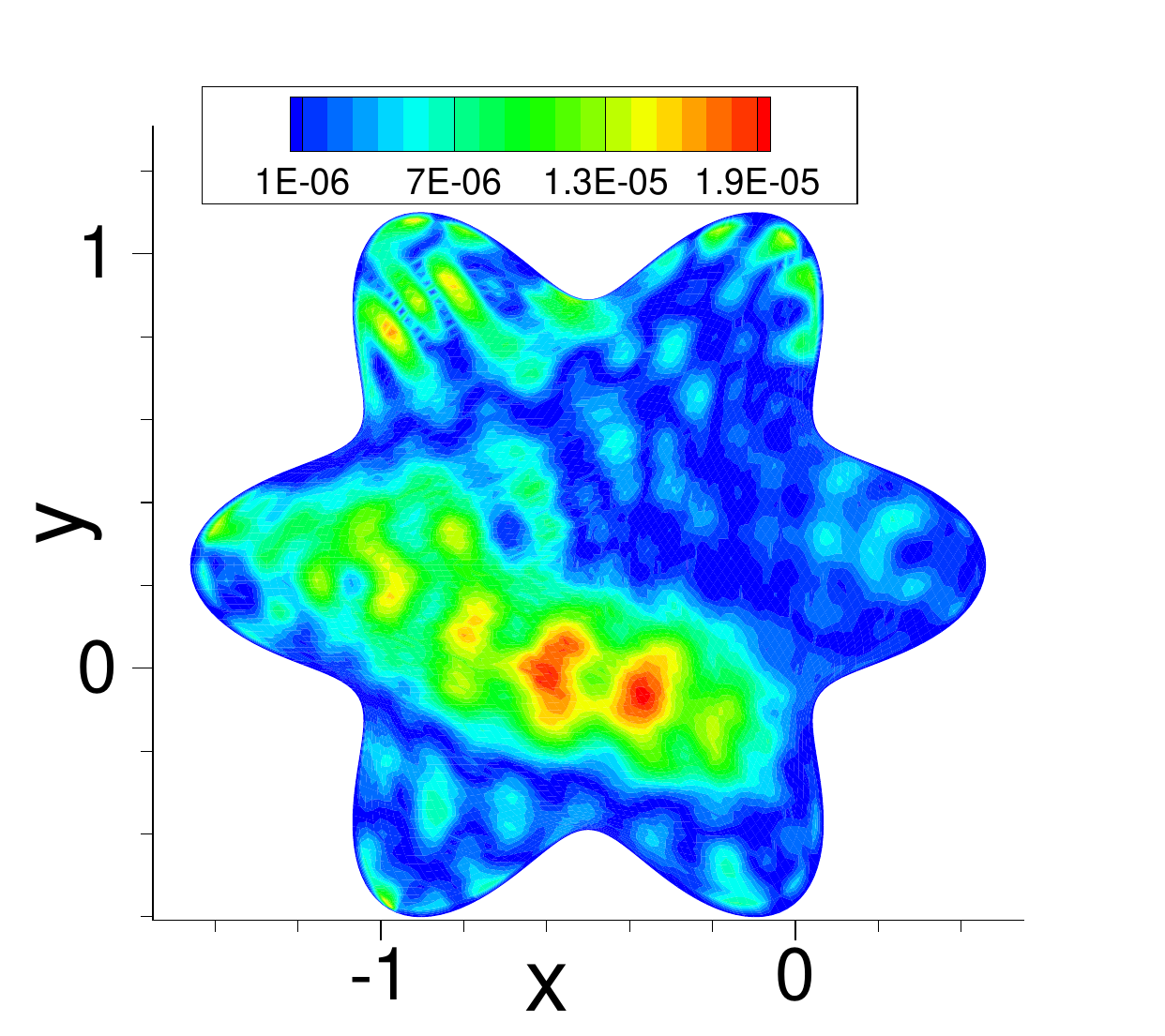}(j)
  }
  \caption{Variable-coefficient Poisson equation (DBC on all boundaries): Distributions of
    the NN solutions (top row) and their point-wise absolute errors (bottom row) on the five domains.
    Simulation parameters: (a,f) $R_m=4.0$, $Q=70$, $M=400$, simple map;
    (b,g) $R_m=3.5$, $Q=100$, $M=500$, simple map;
    (c,h) $R_m=4.0$, $Q=100$, $M=500$, Winslow map ($p_w=0$, $Q_w=200$);
    (d,i) $R_m=4.5$, $Q=100$, $M=700$, Winslow map ($p_w=2$, $Q_w=200$);
    (e,j) $R_m=4.4$, $Q=120$, $M=900$, Winslow map ($p_w=5$, $Q_w=200$).
  }
  \label{fg_3}
\end{figure}


\begin{table}[tb]
  \centering\small
  \begin{tabular}{l|ccccc}
    \hline
    & Domain \#1 & Domain \#2 & Domain \#3 & Domain \#4 & Domain \#5 \\ \hline
    max $u$-error (domain) & $2.027\times 10^{-7}$ & $1.989\times 10^{-6}$ & $3.433\times 10^{-7}$ & $2.185\times 10^{-5}$ & $2.008\times 10^{-5}$  \\
    rms $u$-error (domain) & $2.184\times 10^{-8}$ & $3.830\times 10^{-7}$ & $5.932\times 10^{-8}$ & $2.852\times 10^{-6}$ & $5.593\times 10^{-6}$  \\ \hline
    max DBC-error ($\overline{AB}$) & $2.776\times 10^{-16}$ & $0.0$ & $2.220\times 10^{-16}$ & $2.220\times 10^{-16}$ & $6.661\times 10^{-16}$  \\
    rms DBC-error ($\overline{AB}$) & $1.687\times 10^{-16}$ & $0.0$ & $7.697\times 10^{-17}$ & $3.445\times 10^{-17}$ & $1.750\times 10^{-16}$  \\ \hline
    max DBC-error ($\overline{BC}$) & $8.882\times 10^{-16}$ & $4.441\times 10^{-16}$ & $6.661\times 10^{-16}$ & $6.661\times 10^{-16}$ & $4.441\times 10^{-16}$  \\
    rms DBC-error ($\overline{BC}$) & $2.351\times 10^{-16}$ & $1.243\times 10^{-16}$ & $1.519\times 10^{-16}$ & $1.057\times 10^{-16}$ & $1.472\times 10^{-16}$  \\ \hline
    max DBC-error ($\overline{AC}$) & $4.441\times 10^{-16}$ & $0.0$ & $3.331\times 10^{-16}$ & $2.220\times 10^{-16}$ & $2.220\times 10^{-16}$  \\
    rms DBC-error ($\overline{AC}$) & $1.486\times 10^{-16}$ & $0.0$ & $9.853\times 10^{-17}$ & $8.250\times 10^{-17}$ & $1.420\times 10^{-16}$ \\
    \hline
  \end{tabular}
  \caption{Variable-coefficient Poisson equation (DBC on all boundaries):
    Maximum and rms errors of the NN solution over the domain
    and the maximum and rms DBC errors on the three boundaries. Simulation parameters
    follow those of Figure \ref{fg_3}.
  }
  \label{tab_1}
\end{table}


Figure \ref{fg_3} and Table \ref{tab_1} illustrate the NN solutions
on these five domains, where Dirichlet conditions are imposed on all
 boundaries. Figure~\ref{fg_3} shows distributions (contours) of the NN solution
$u(\mbs x)$ (plots (a)--(e)) and its point-wise absolute error $|u(\mbs x)-u_{ex}(\mbs x)|$
(plots (f)--(j)) on these domains. The NN/simulation parameters, including
$R_m$, $M$ and $Q$ are provided in the figure caption. The simple domain map \eqref{eq_8}
is used for domains \#1 and \#2. For the other three domains, the simple map
is inadequate for univalency, and we have employed the Winslow map \eqref{eq_12}
(see Remark \ref{rem_1}) for these problems.
The figure caption also includes the parameters for the Winslow map
($p_w$ and $Q_w$) with domains \#3, \#4 and \#5.
Evidently the NN method has captured the solution accurately, with
the maximum domain errors on the order of $10^{-5}$ to $10^{-7}$ for different
geometries. It should be noted that domain \#2 has a cusp at vertex $C$ (zero angle),
and the maximum domain error of the NN solution is on the order of $10^{-6}$
for this domain.

Table \ref{tab_1} exemplifies the accuracy of the current method for enforcing
Dirichlet conditions on these domains.
The maximum and rms errors of the NN solution over the domain, together
with the maximum/rms DBC errors on the three boundaries,
are listed in this table. The maximum DBC error has an order of
magnitude of $10^{-16}$ identically for all five domains.
This signifies that our method has enforced
the boundary conditions to the machine precision on these curved domain boundaries.


\begin{figure}
  \centerline{
    \includegraphics[width=1.2in]{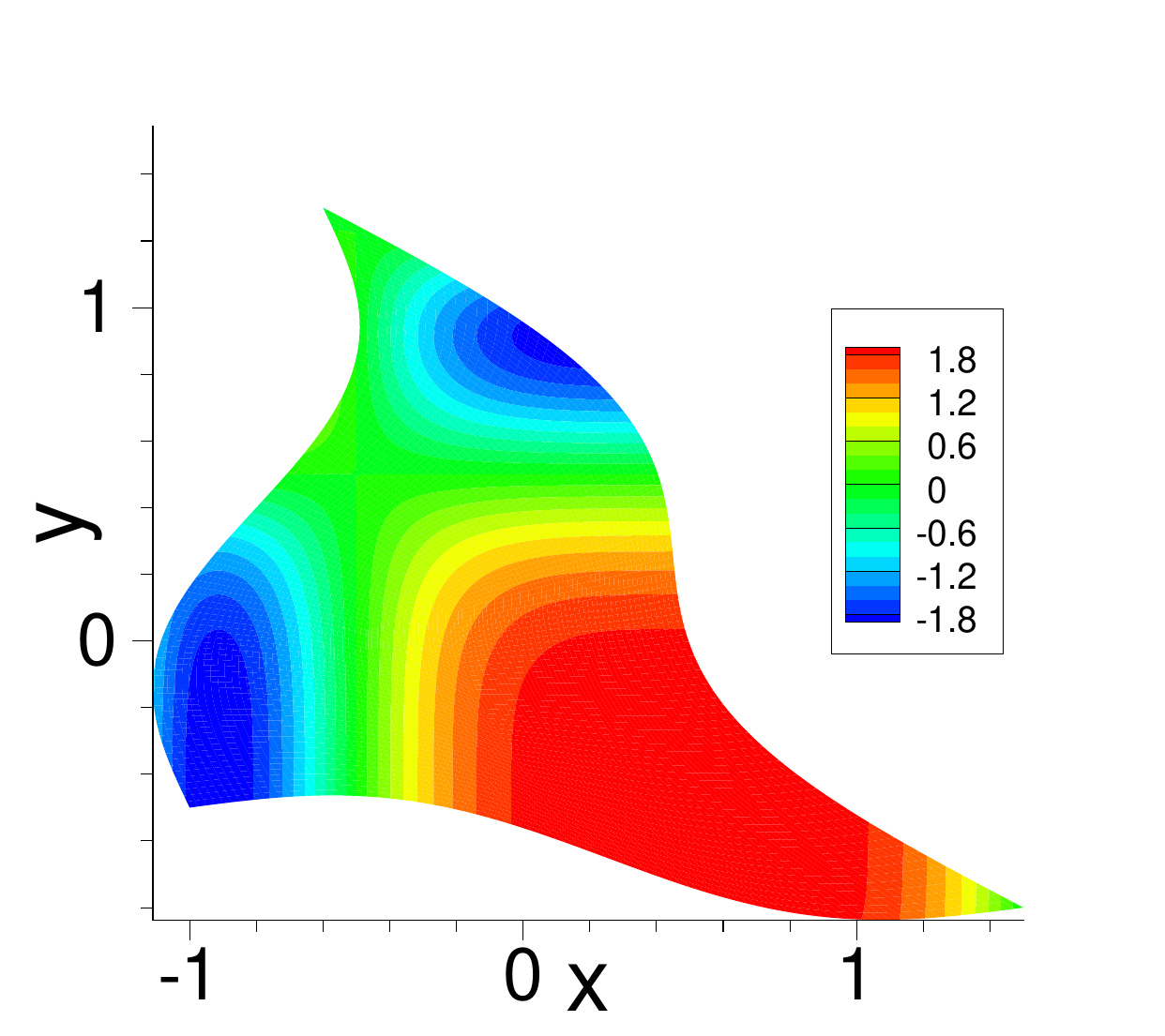}(a)
    \includegraphics[width=1.2in]{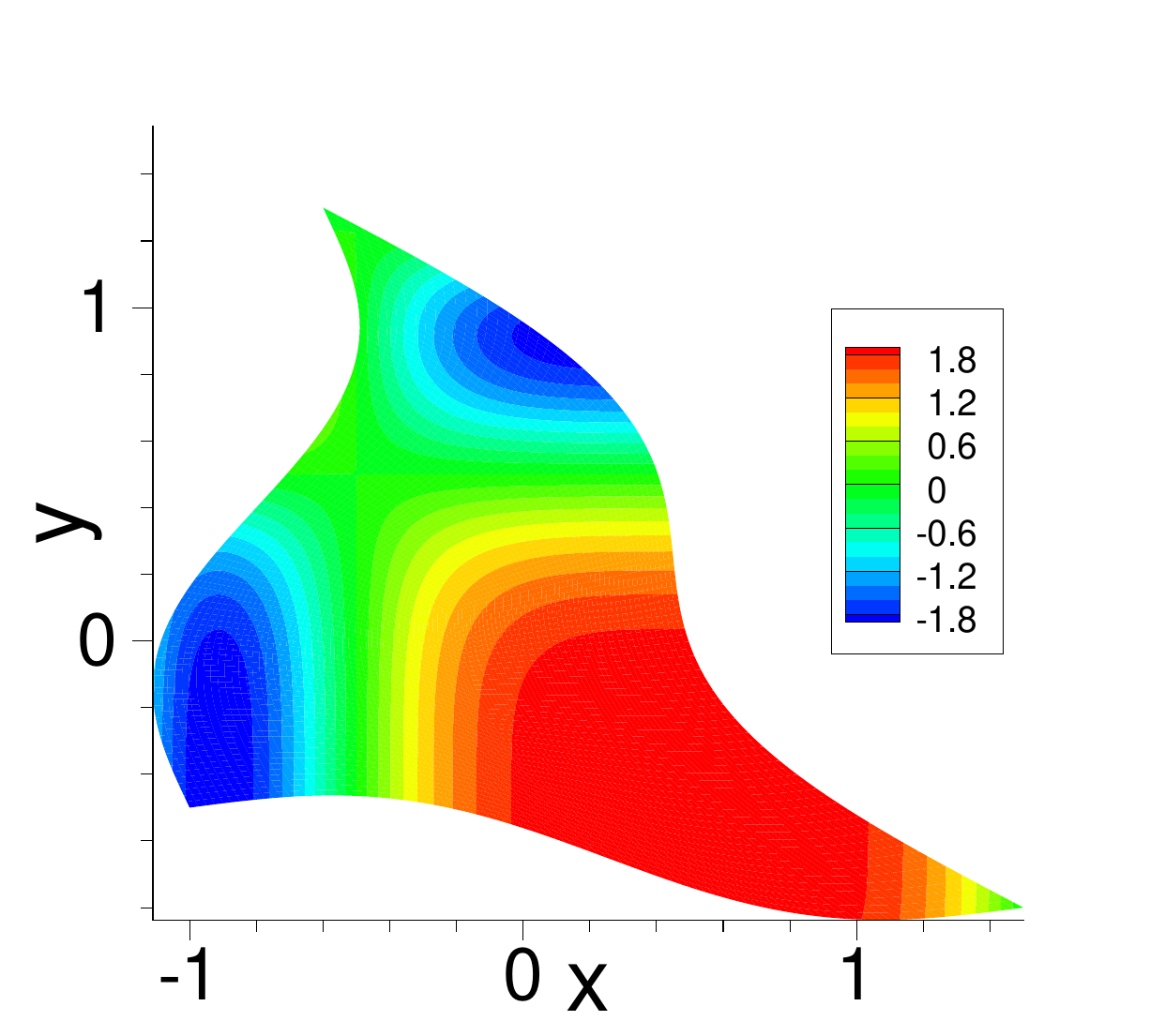}(b)
    \includegraphics[width=1.2in]{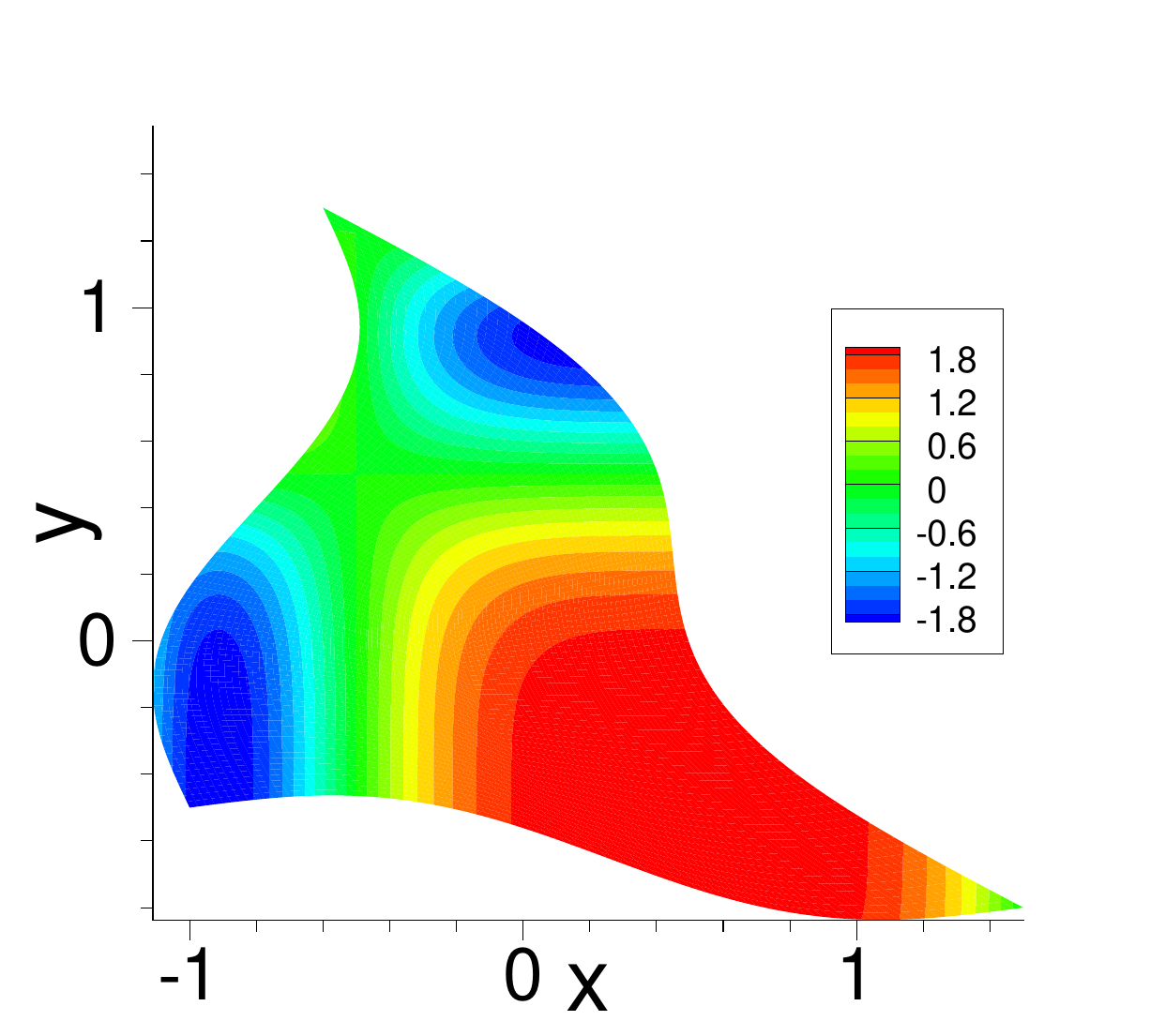}(c)
    \includegraphics[width=1.2in]{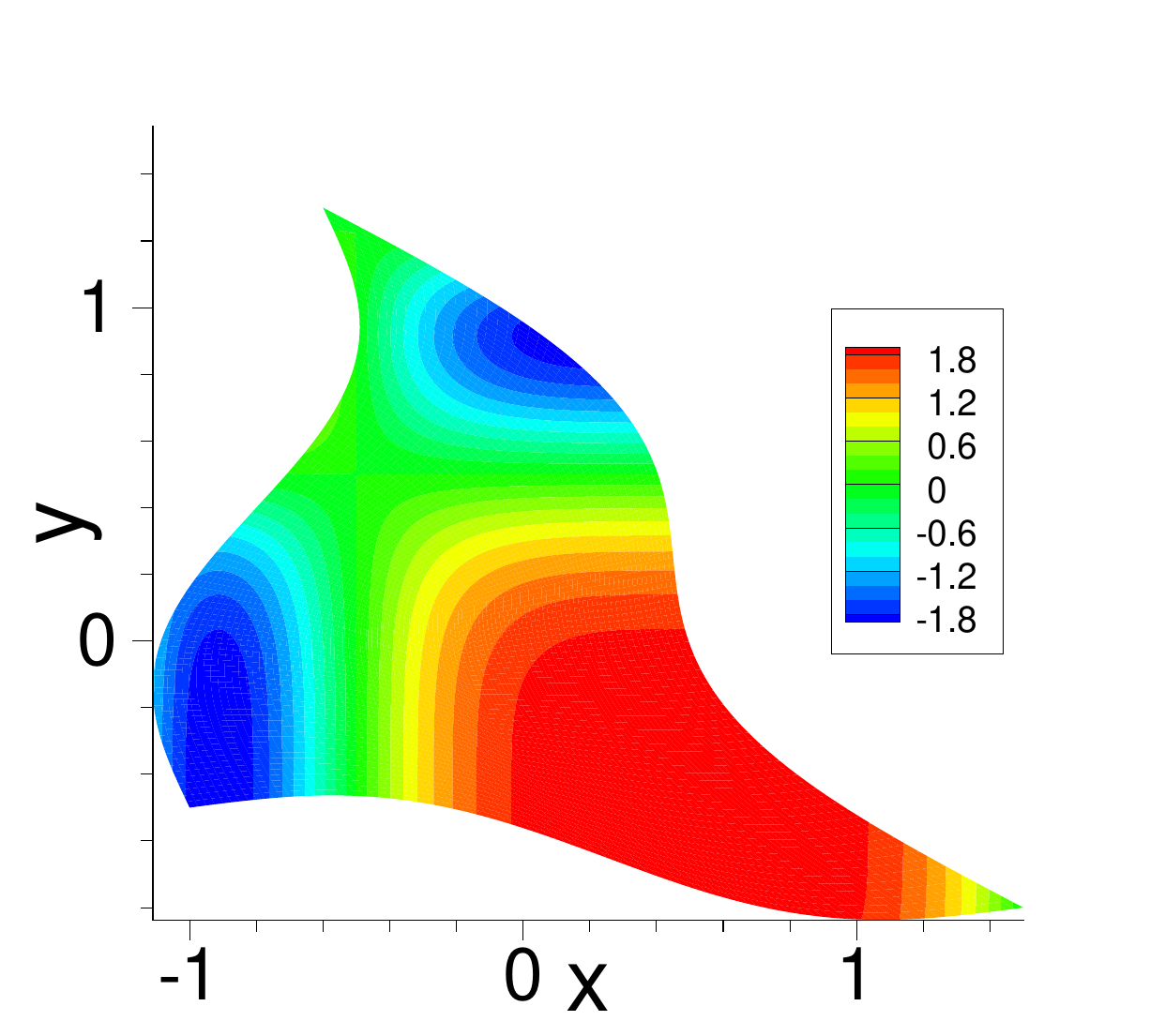}(d)
  }
  \centerline{
    \includegraphics[width=1.2in]{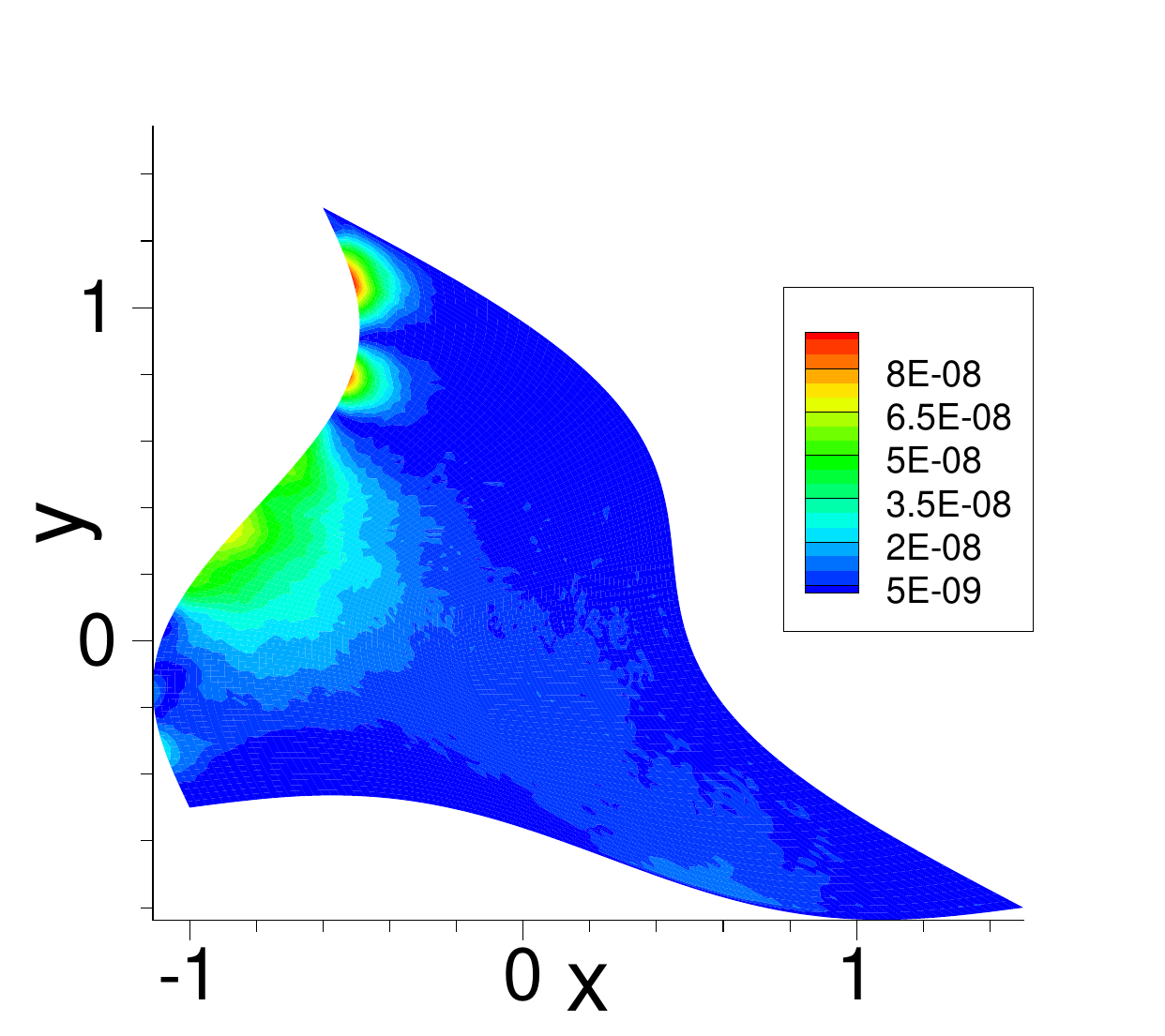}(e)
    \includegraphics[width=1.2in]{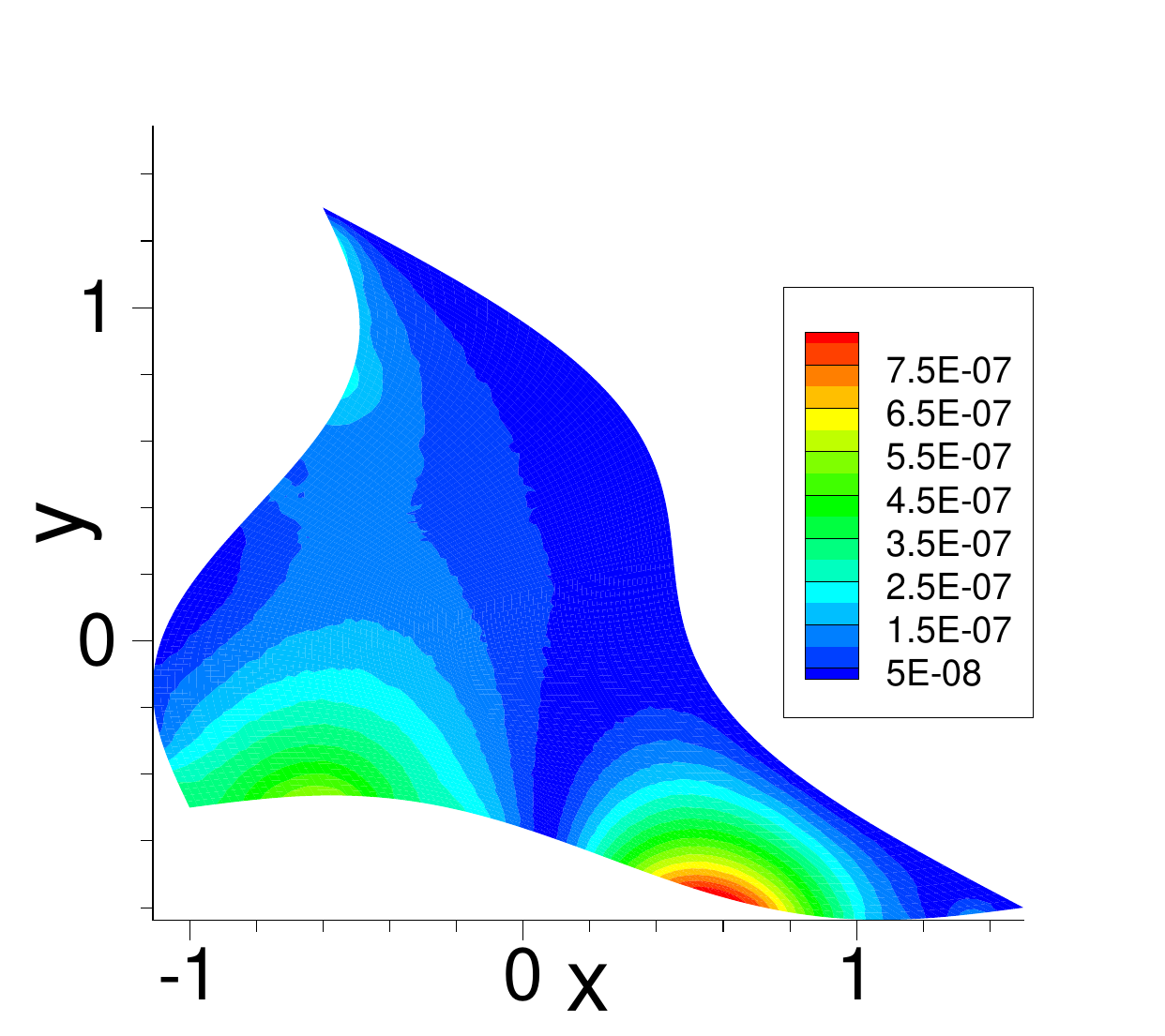}(f)
    \includegraphics[width=1.2in]{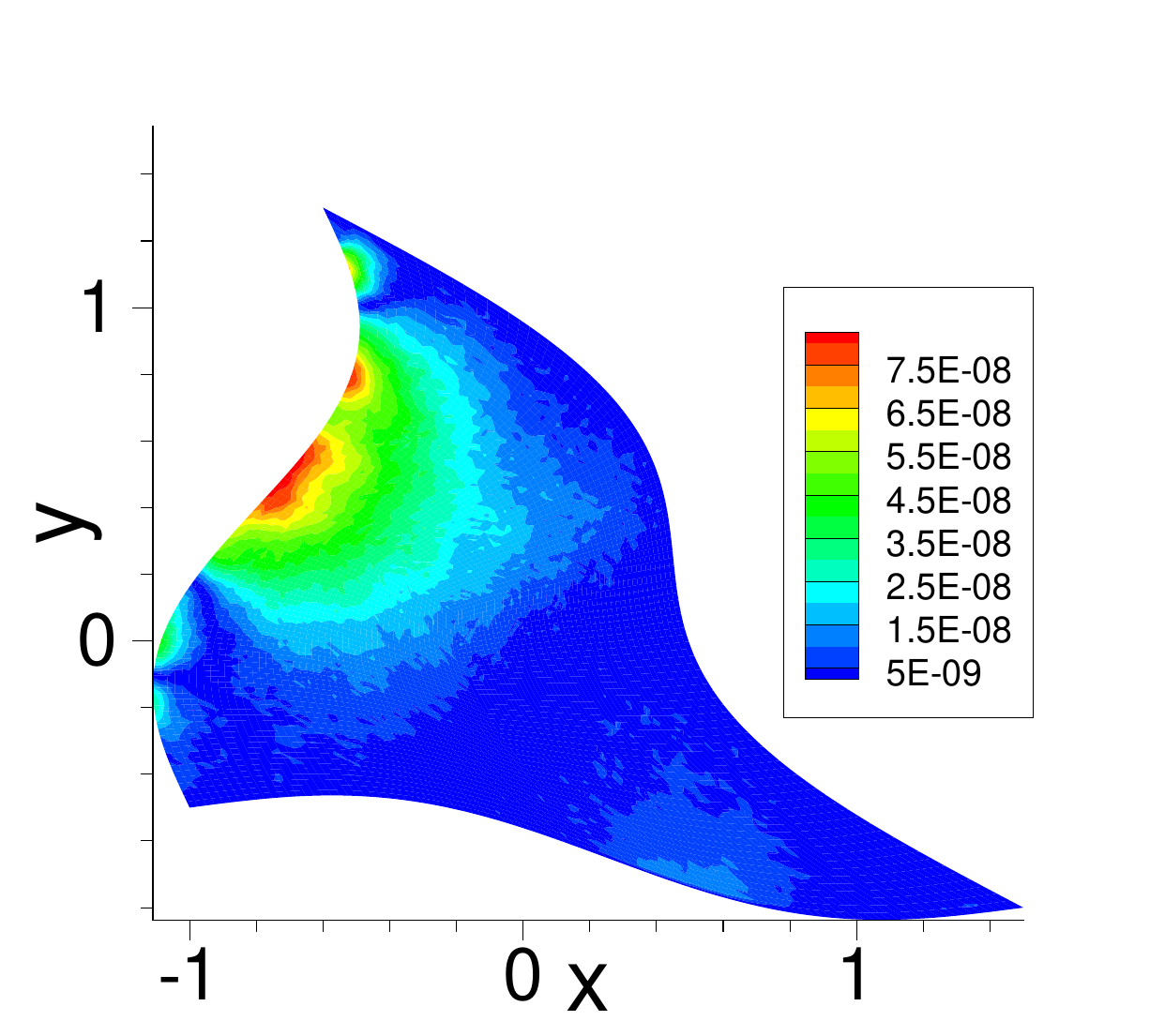}(g)
    \includegraphics[width=1.2in]{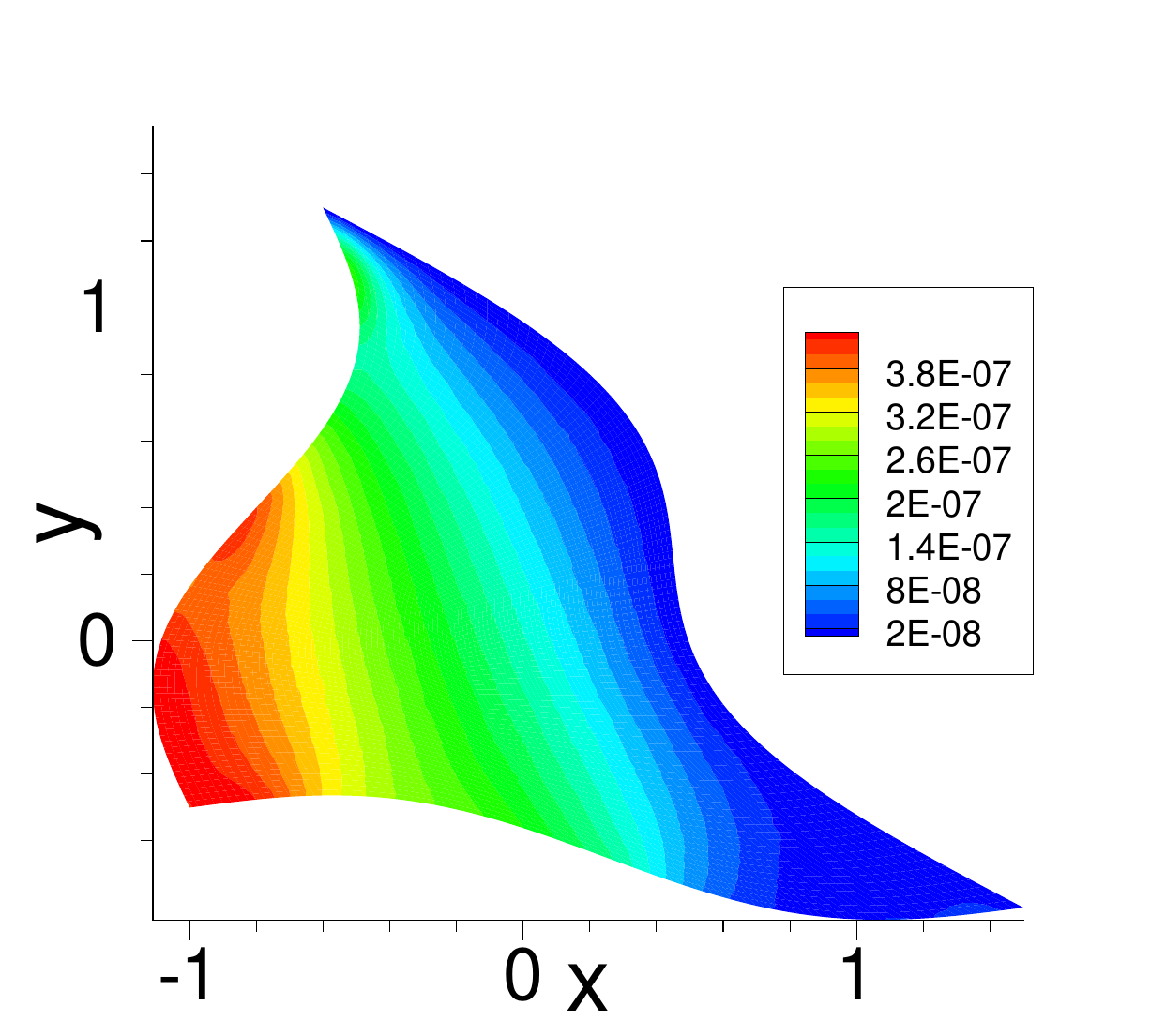}(h)
  }
  \caption{Variable-coefficient Poisson equation on domain \#1 (NBCs and RBCs):
    Distributions of the NN solutions (top row) and their point-wise absolute errors (bottom row)
    under four boundary configurations.
    (a,e) Config \#1:  NBC on $\overline{AC}$ and DBC on the other boundaries;
    (b,f) Config \#2: NBC on $\overline{AB}$ and $\overline{AC}$, and DBC on $\overline{BC}$;
    (c,g) Config \#3: RBC on $\overline{AC}$ with $\alpha_{AC}=1.0$, and DBC on the other boundaries;
    (d,h) Config \#4: RBC on $\overline{AB}$ and $\overline{AC}$ with
    $\alpha_{AB}=0.5$ and $\alpha_{AC}=1.0$, and DBC on $\overline{BC}$.
    Simulation parameters: (a,e) $R_m=3.3$,
    (b,f) $R_m=3.3$,
    (c,g) $R_m=3.4$,
    (d,h) $R_m=4.0$.
    $Q=100$ and $M=500$, together with the simple domain map, are used in all four configurations.
  }
  \label{fg_4}
\end{figure}

\begin{table}[tb]
  \centering\small
  \begin{tabular}{l|cccc}
    \hline
    & Config \#1 & Config \#2 & Config \#3 & Config \#4  \\ \hline
    max $u$-error (domain) & $9.113\times 10^{-8}$ & $8.459\times 10^{-7}$
    & $8.596\times 10^{-8}$ & $4.430\times 10^{-7}$ \\
    rms $u$-error (domain) & $1.462\times 10^{-8}$  & $1.873\times 10^{-7}$
    & $1.954\times 10^{-8}$ & $2.128\times 10^{-7}$ \\ \hline
    max bc-error ($\overline{AB}$) & $8.882\times 10^{-16}$ & $4.441\times 10^{-16}$
    & $8.882\times 10^{-16}$ & $3.747\times 10^{-16}$   \\
    rms bc-error ($\overline{AB}$) & $3.237\times 10^{-16}$ & $1.047\times 10^{-16}$
    & $3.237\times 10^{-16}$ & $1.476\times 10^{-16}$ \\ \hline
    max bc-error ($\overline{BC}$) & $0.0$ & $0.0$ & $0.0$ & $0.0$  \\
    rms bc-error ($\overline{BC}$) & $0.0$ & $0.0$ & $0.0$ & $0.0$ \\ \hline
    max bc-error ($\overline{AC}$) & $1.776\times 10^{-15}$ & $8.882\times 10^{-16}$
    & $1.776\times 10^{-15}$ & $1.776\times 10^{-15}$  \\
    rms bc-error ($\overline{AC}$) & $3.028\times 10^{-16}$ & $2.522\times 10^{-16}$
    & $4.909\times 10^{-16}$ & $4.544\times 10^{-16}$ \\
    \hline
  \end{tabular}
  \caption{Variable-coefficient Poisson equation on domain \#1 (NBCs and RBCs):
    Maximum/rms NN domain errors 
    and the maximum/rms boundary-condition errors (DBC, NBC, or RBC) on the three boundaries.
    Different configurations correspond to those in Figure \ref{fg_4}.
  }
  \label{tab_2}
\end{table}

\begin{figure}
  \centerline{
    \includegraphics[width=1.2in]{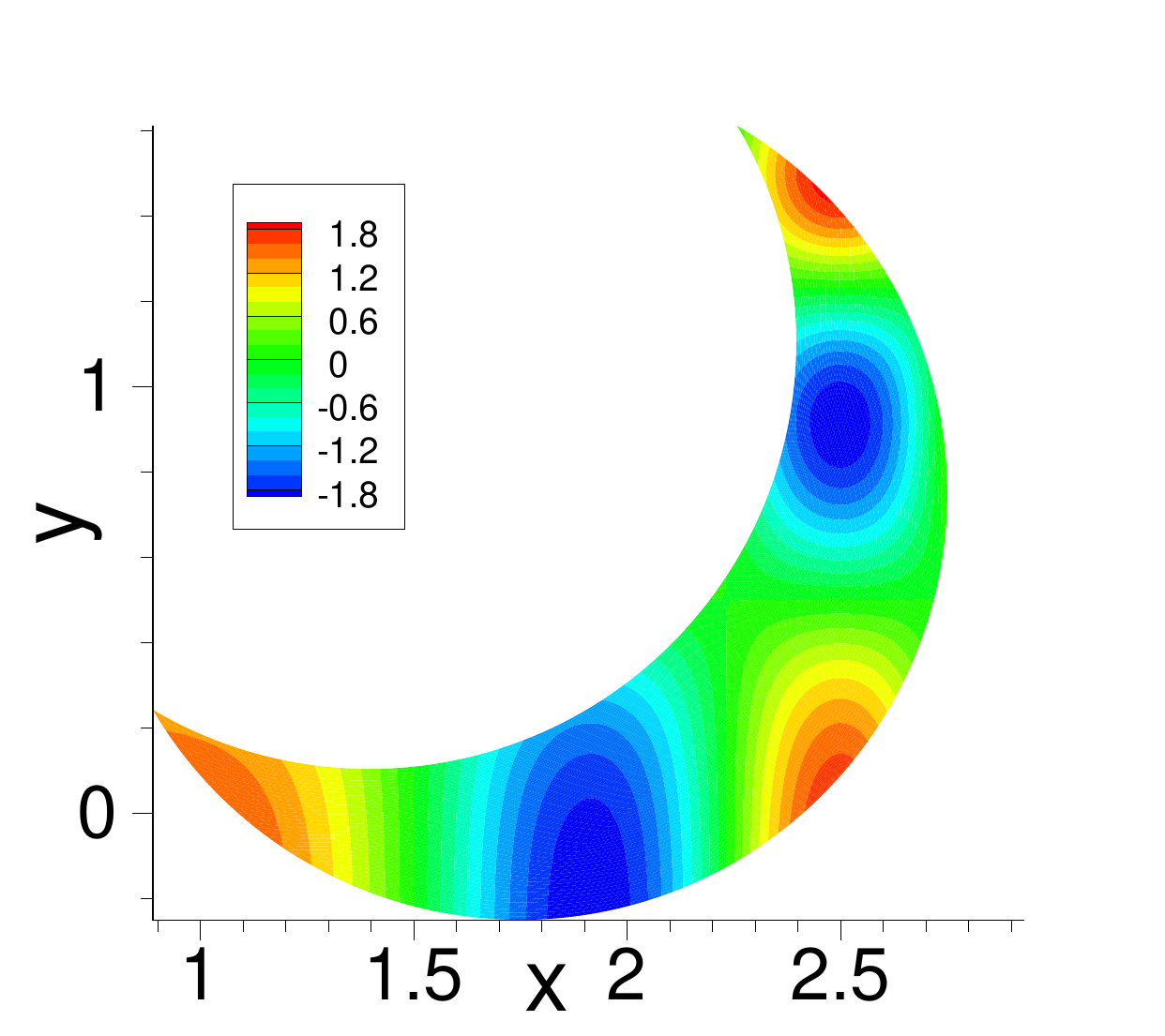}(a)
    \includegraphics[width=1.2in]{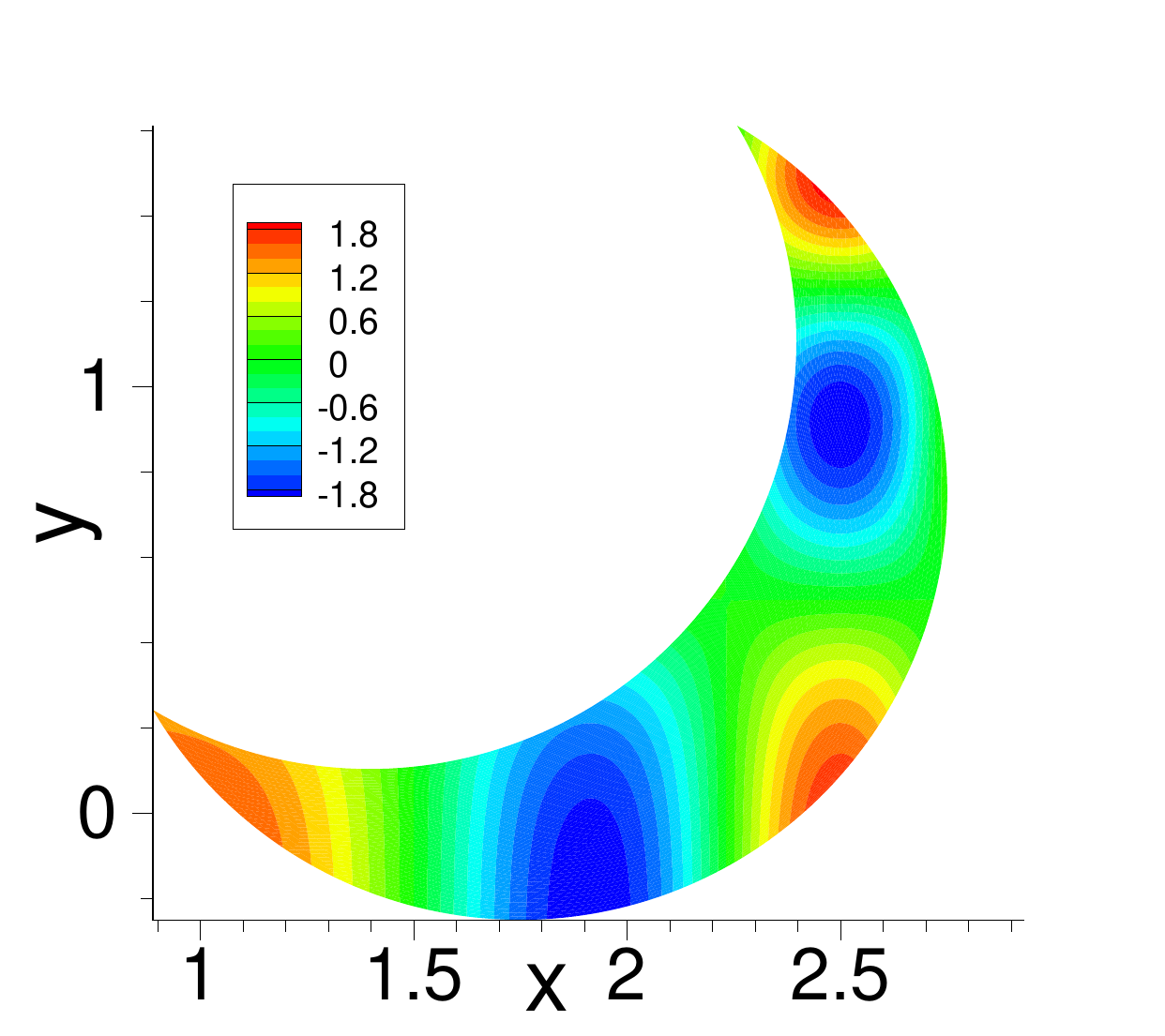}(b)
    \includegraphics[width=1.2in]{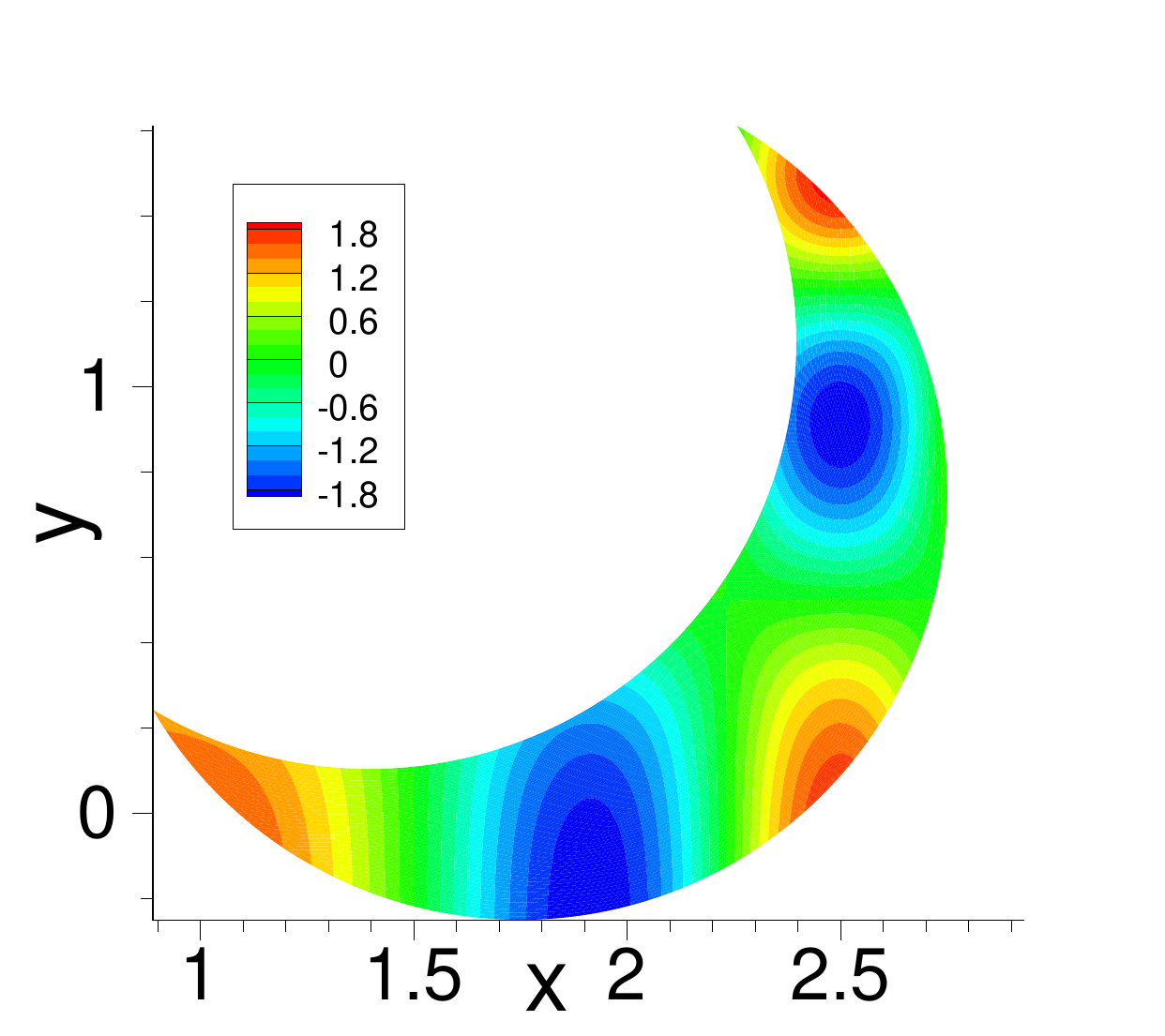}(c)
    \includegraphics[width=1.2in]{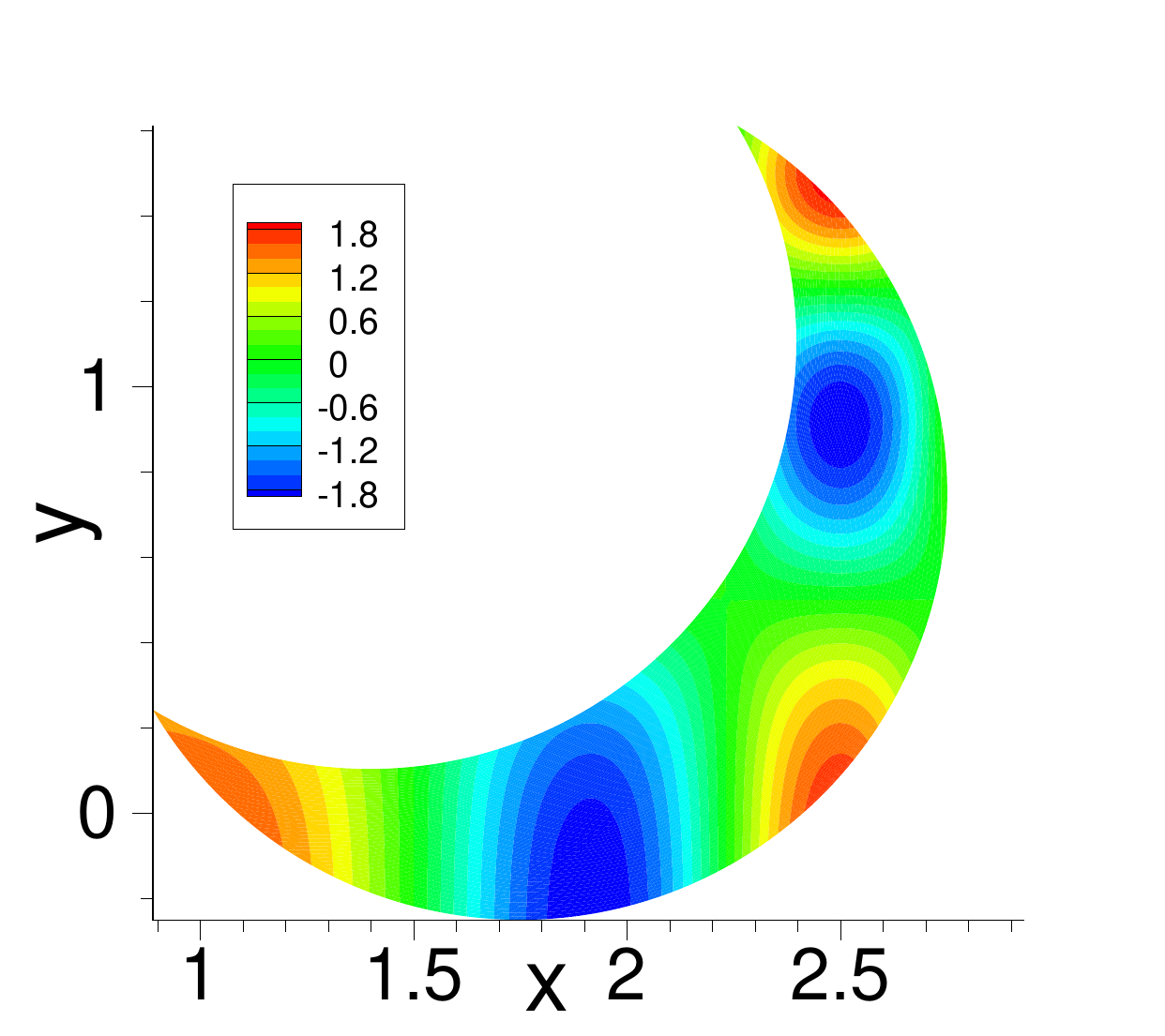}(d)
  }
  \centerline{
    \includegraphics[width=1.2in]{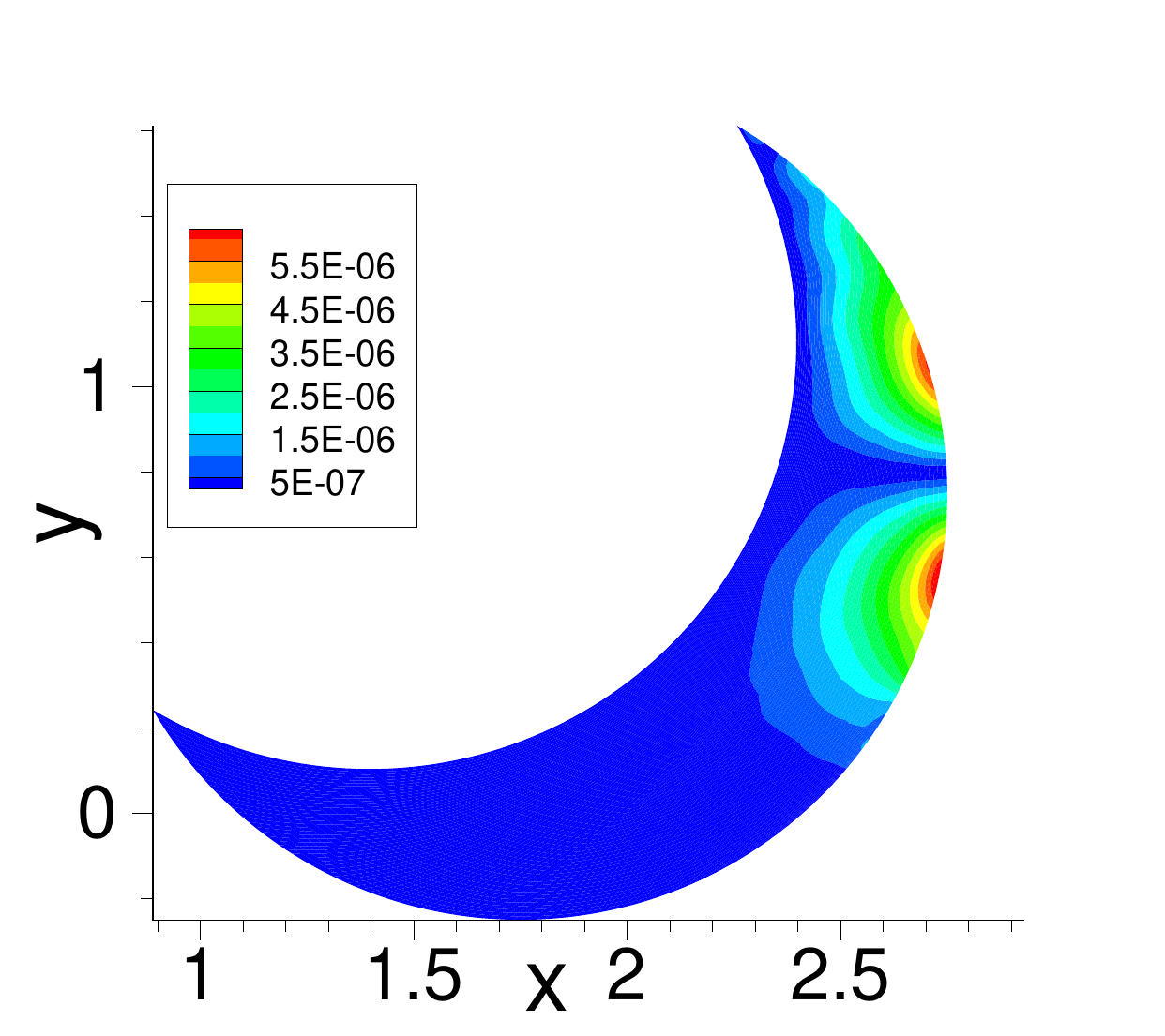}(e)
    \includegraphics[width=1.2in]{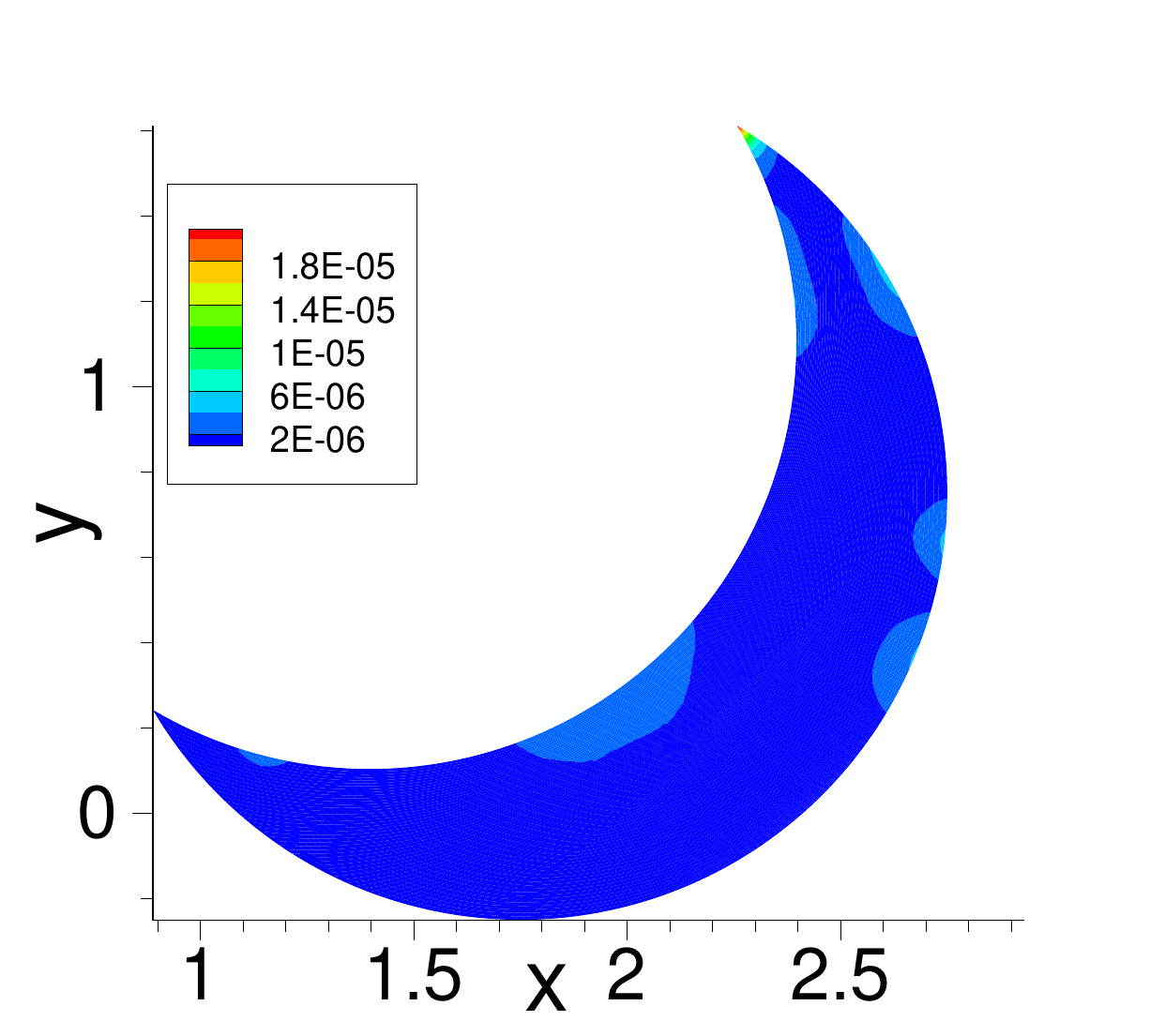}(f)
    \includegraphics[width=1.2in]{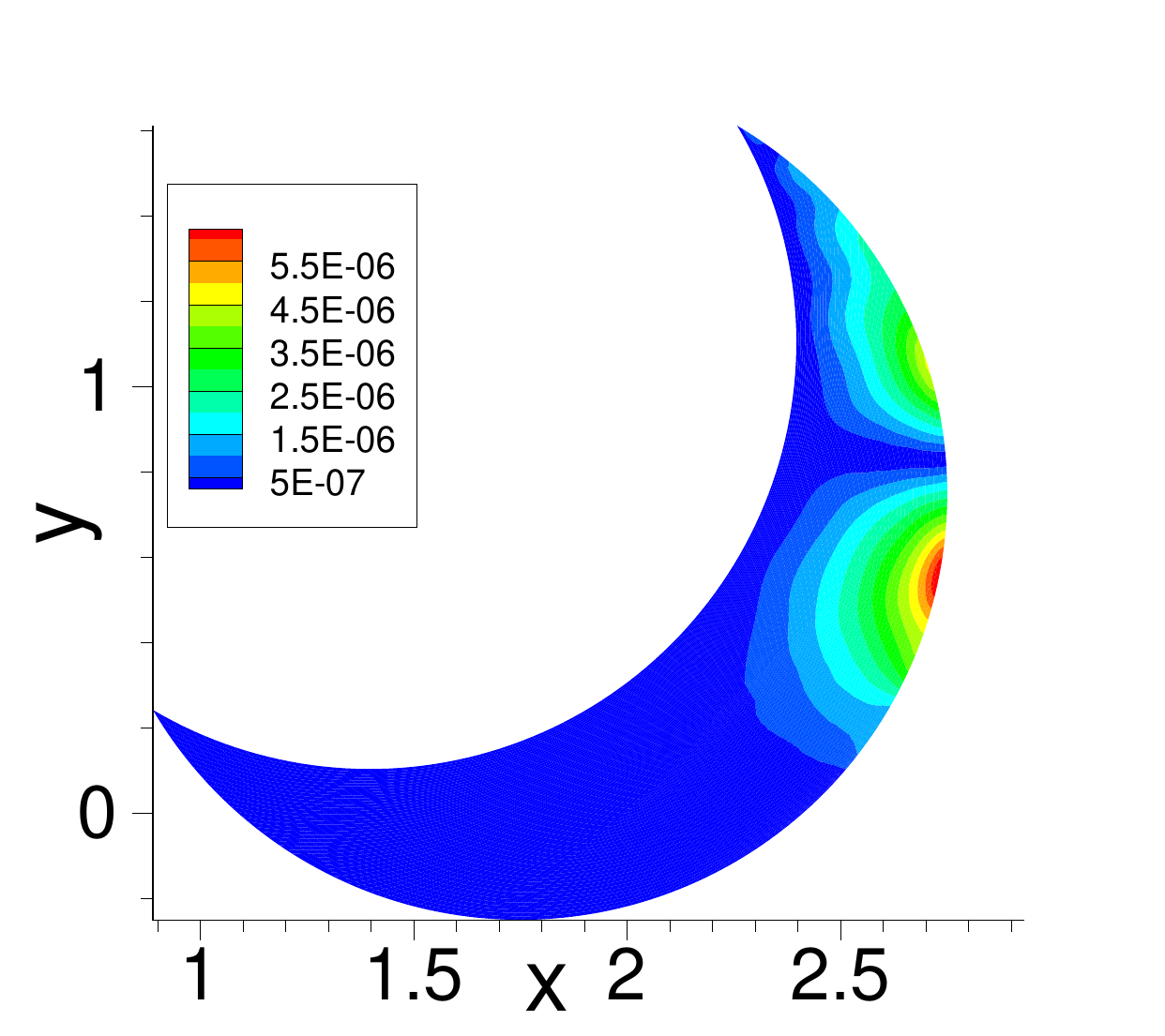}(g)
    \includegraphics[width=1.2in]{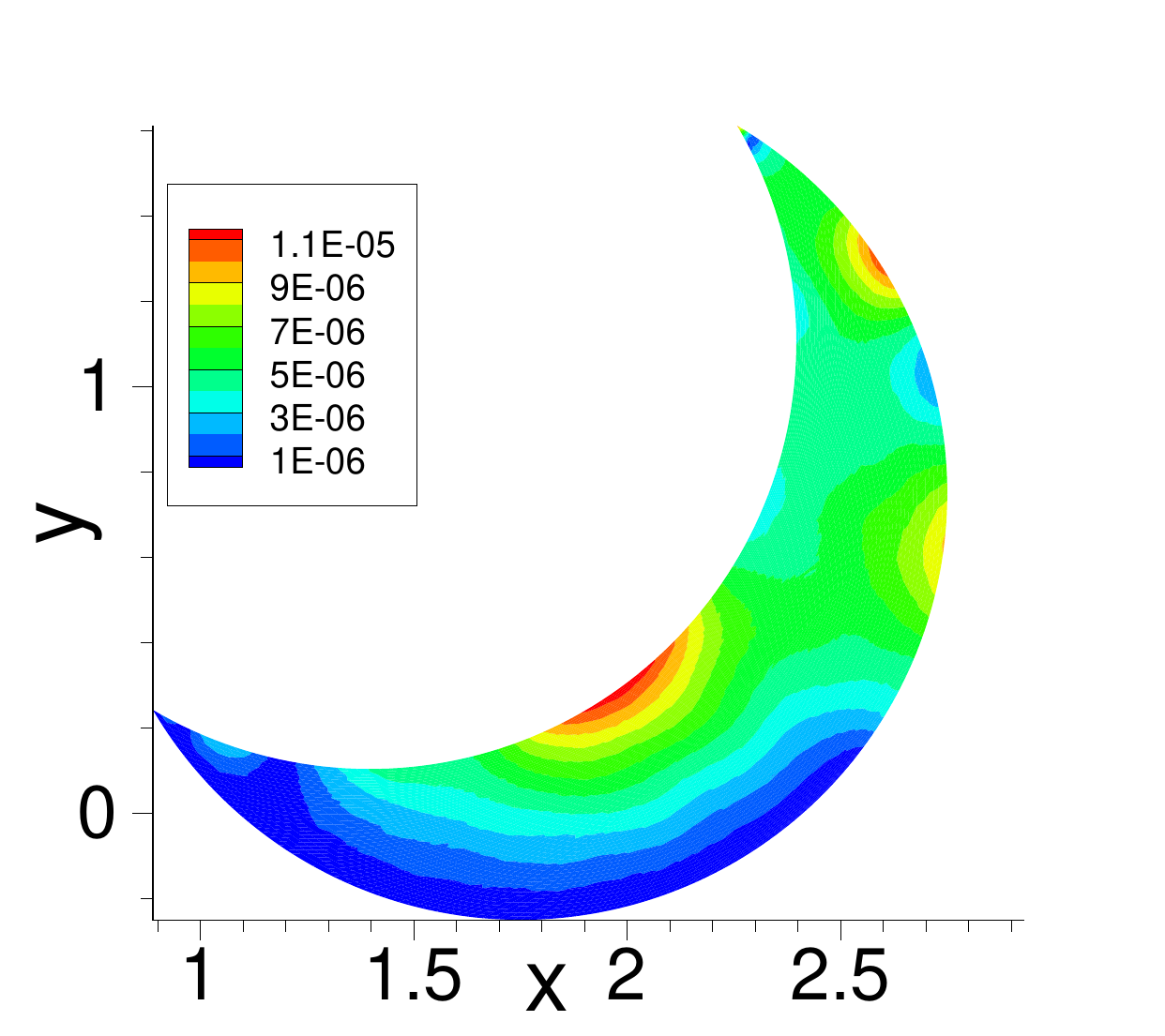}(h)
  }
  \caption{Variable-coefficient Poisson equation on domain \#3 (NBCs and RBCs):
    Distributions of the NN solutions (top row) and their point-wise absolute errors (bottom row)
    under four BC configurations.
    (a,e) Config \#1:  NBC on $\overline{AC}$, and DBC on the other boundaries;
    (b,f) Config \#2: NBC on $\overline{AB}$ and $\overline{AC}$, and DBC on $\overline{BC}$;
    (c,g) Config \#3: RBC on $\overline{AC}$ with $\alpha_{AC}=1.0$, and DBC on the other boundaries;
    (d,h) Config \#4: RBC on $\overline{AB}$ and $\overline{AC}$ with
    $\alpha_{AB}=0.5$ and $\alpha_{AC}=1.0$, and DBC on $\overline{BC}$.
    Simulation parameters: (a,e) $R_m=3.9$,
    (b,f) $R_m=3.3$,
    (c,g) $R_m=3.9$,
    (d,h) $R_m=3.2$.
    $Q=100$ and $M=500$, together with the Winslow map ($p_w=0$, $Q_w=50$),
    are used in all four configurations.
  }
  \label{fg_5}
\end{figure}

\begin{table}[tb]
  \centering\small
  \begin{tabular}{l|cccc}
    \hline
    & Config \#1 & Config \#2 & Config \#3 & Config \#4  \\ \hline
    max $u$-error (domain) & $6.542\times 10^{-6}$ & $2.139\times 10^{-5}$
    & $6.490\times 10^{-6}$ & $1.169\times 10^{-5}$  \\
    rms $u$-error (domain) & $1.286\times 10^{-6}$ & $1.317\times 10^{-6}$
    & $1.196\times 10^{-6}$ & $4.804\times 10^{-6}$ \\ \hline
    max bc-error ($\overline{AB}$) & $4.441\times 10^{-16}$ & $4.441\times 10^{-16}$
    & $4.441\times 10^{-16}$ & $8.882\times 10^{-16}$ \\
    rms bc-error ($\overline{AB}$) & $3.444\times 10^{-17}$ & $1.388\times 10^{-16}$
    & $3.444\times 10^{-17}$ & $2.126\times 10^{-16}$ \\ \hline
    max bc-error ($\overline{BC}$) & $0.0$ & $0.0$ & $0.0$ & $0.0$ \\
    rms bc-error ($\overline{BC}$) & $0.0$ & $0.0$ & $0.0$ & $0.0$ \\ \hline
    max bc-error ($\overline{AC}$) & $1.776\times 10^{-15}$ & $8.882\times 10^{-16}$
    & $1.776\times 10^{-15}$ & $1.776\times 10^{-15}$ \\
    rms bc-error ($\overline{AC}$) & $3.983\times 10^{-16}$ & $3.980\times 10^{-16}$
    & $6.444\times 10^{-16}$ & $6.748\times 10^{-16}$ \\
    \hline
  \end{tabular}
  \caption{Variable-coefficient Poisson equation on domain \#3 (NBCs and RBCs):
    Maximum/rms NN domain errors
    and maximum/rms boundary-condition errors (DBC, NBC, or RBC) on the three boundaries.
    Different configurations correspond to those in Figure \ref{fg_5}.
  }
  \label{tab_3}
\end{table}

The NN results under Neumann and Robin boundary conditions are
showcased in Figures \ref{fg_4} and \ref{fg_5} and in Tables \ref{tab_2} and \ref{tab_3}.
Figure \ref{fg_4} illustrates  the NN solutions (top row) and
their point-wise absolute errors (bottom row) on domain \#1 under four boundary
configurations: (i) NBC imposed on $\overline{AC}$,
combined with DBC on the other boundaries; (ii) NBC imposed on $\overline{AB}$
and $\overline{AC}$, combined with DBC on $\overline{BC}$; (iii) RBC imposed
on $\overline{AC}$ with $\alpha_{AC}=1.0$, combined with DBC on the other boundaries;
(iv) RBC imposed on $\overline{AB}$ and $\overline{AC}$ with $\alpha_{AB}=0.5$ and
$\alpha_{AC}=1.0$, combined with DBC on
$\overline{BC}$. Figure \ref{fg_5} shows the corresponding NN results on domain \#3
under similar configurations.
The simulation parameter values are provided in the captions of these figures,
including those parameters for computing the Winslow map with domain \#3.
We observe that the NN solutions are very accurate, with the maximum domain errors on the level
ranging from $10^{-7}$ to $10^{-8}$ for domain \#1 
and from $10^{-5}$ to $10^{-6}$ for domain \#3 with different configurations.

The current method also enforces the Neumann and Robin conditions on curved
domain boundaries to the machine accuracy. This point is demonstrated by
the numerical results in Tables \ref{tab_2} and \ref{tab_3}.
Table \ref{tab_2} lists the domain errors (max/rms) and the boundary-condition
errors (max/rms) on the three boundaries for domain \#1, corresponding to the four
different configurations in Figure \ref{fg_4}.
Table \ref{tab_3} lists the corresponding max/rms domain errors and boundary-condition errors
for domain \#3, under the four configurations in Figure \ref{fg_5}.
The maximum errors for NBC/RBC (and also DBC) range from $10^{-15}$ to $10^{-16}$ for
various configurations, underscoring the machine-epsilon level precision of our method
in enforcing these boundary conditions on curved boundaries.

\subsection{Nonlinear Helmholtz Equation}
\label{sec_32}

\begin{figure}
  \centerline{
    \includegraphics[width=1.1in]{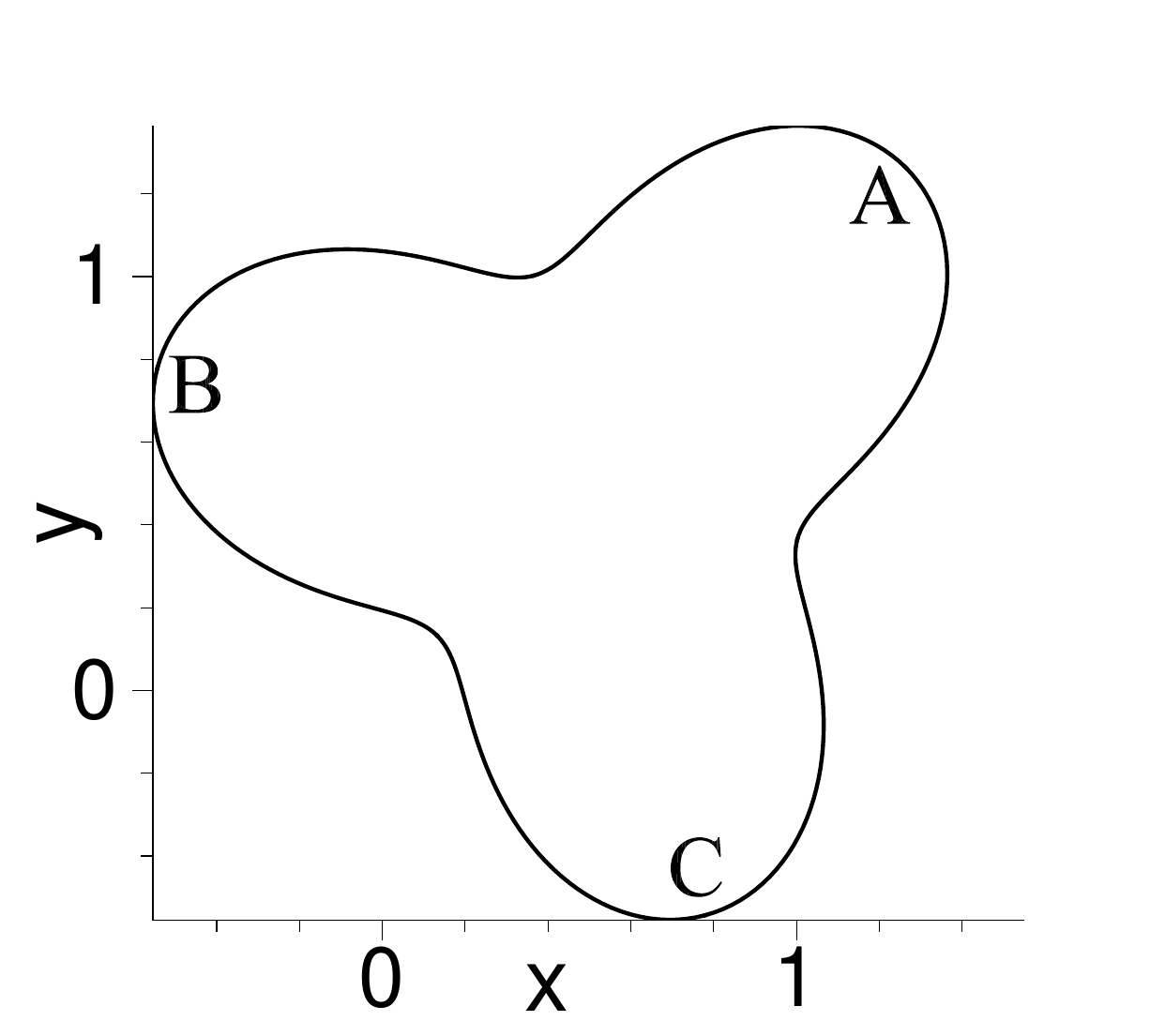}(a)
    \includegraphics[width=1.1in]{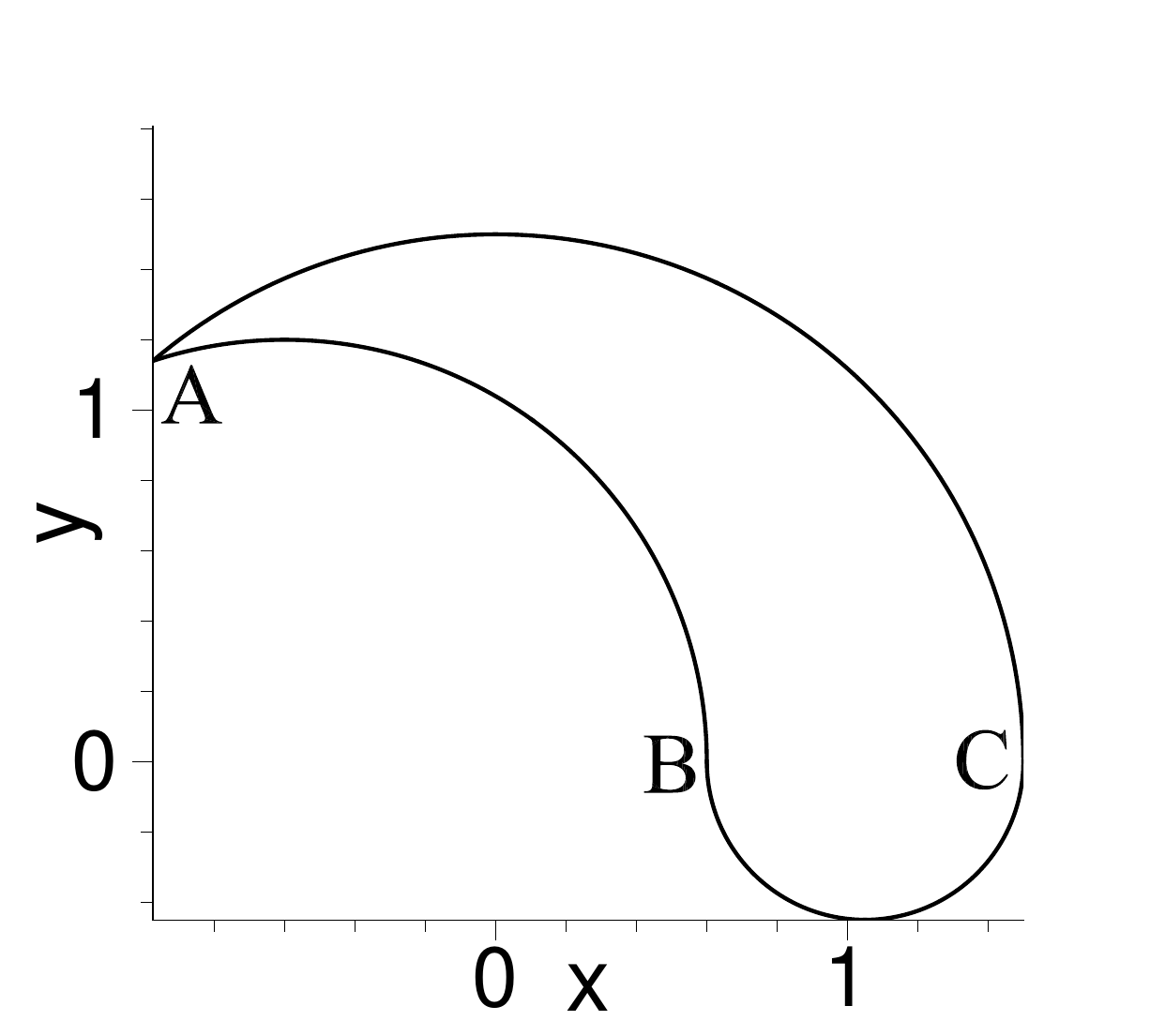}(b)
    \includegraphics[width=1.1in]{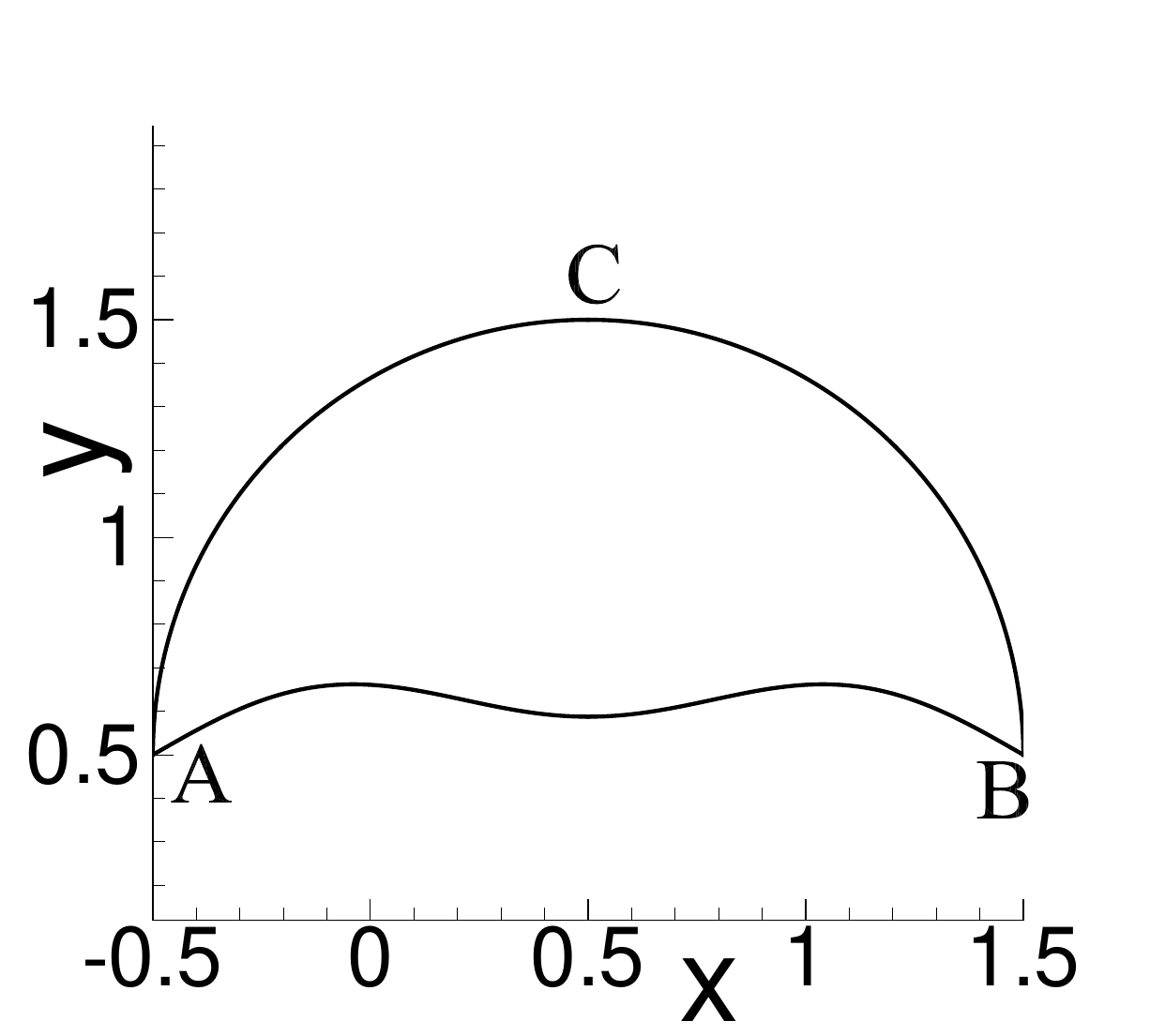}(c)
    \includegraphics[width=1.1in]{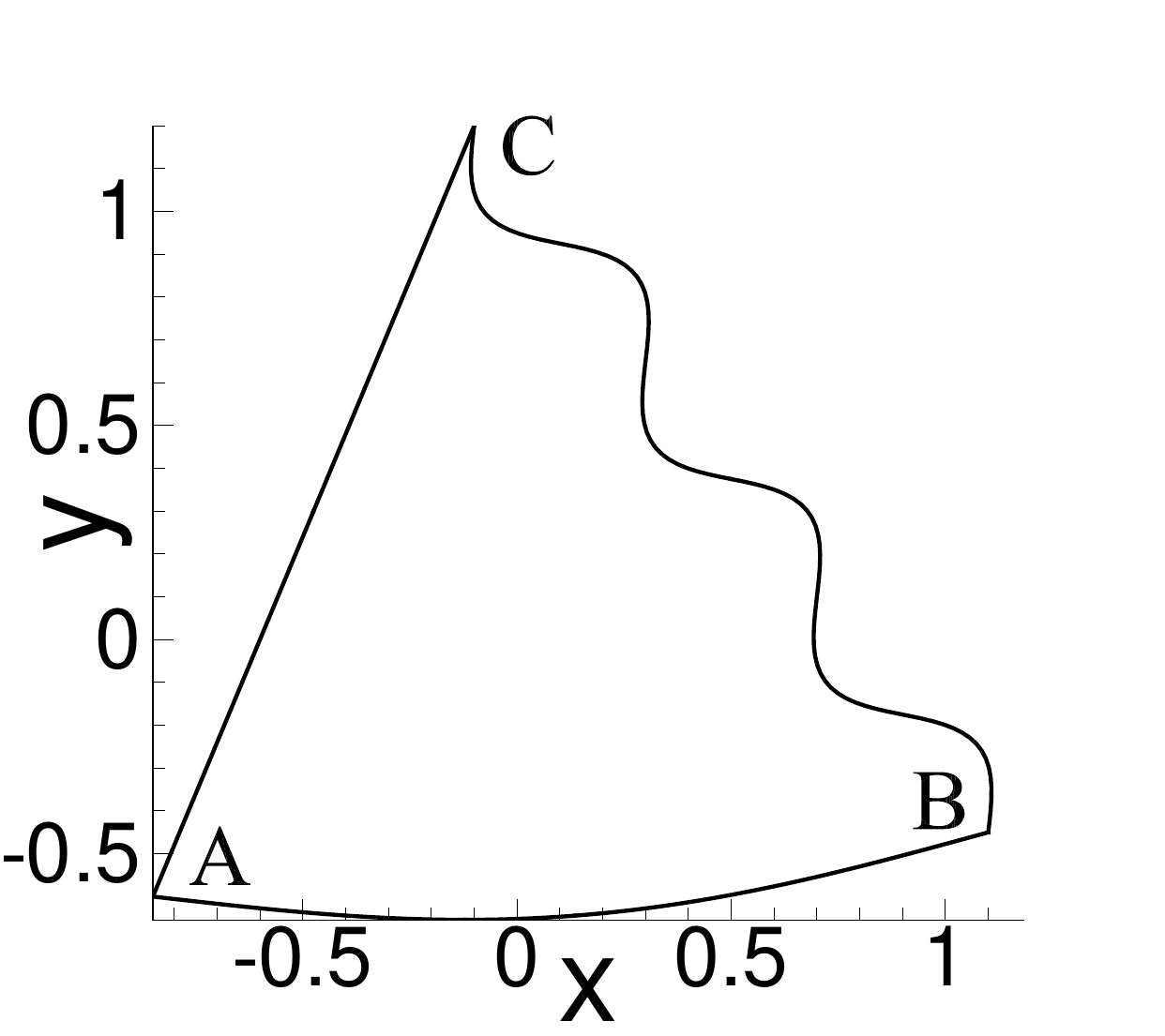}(d)
    \includegraphics[width=1.1in]{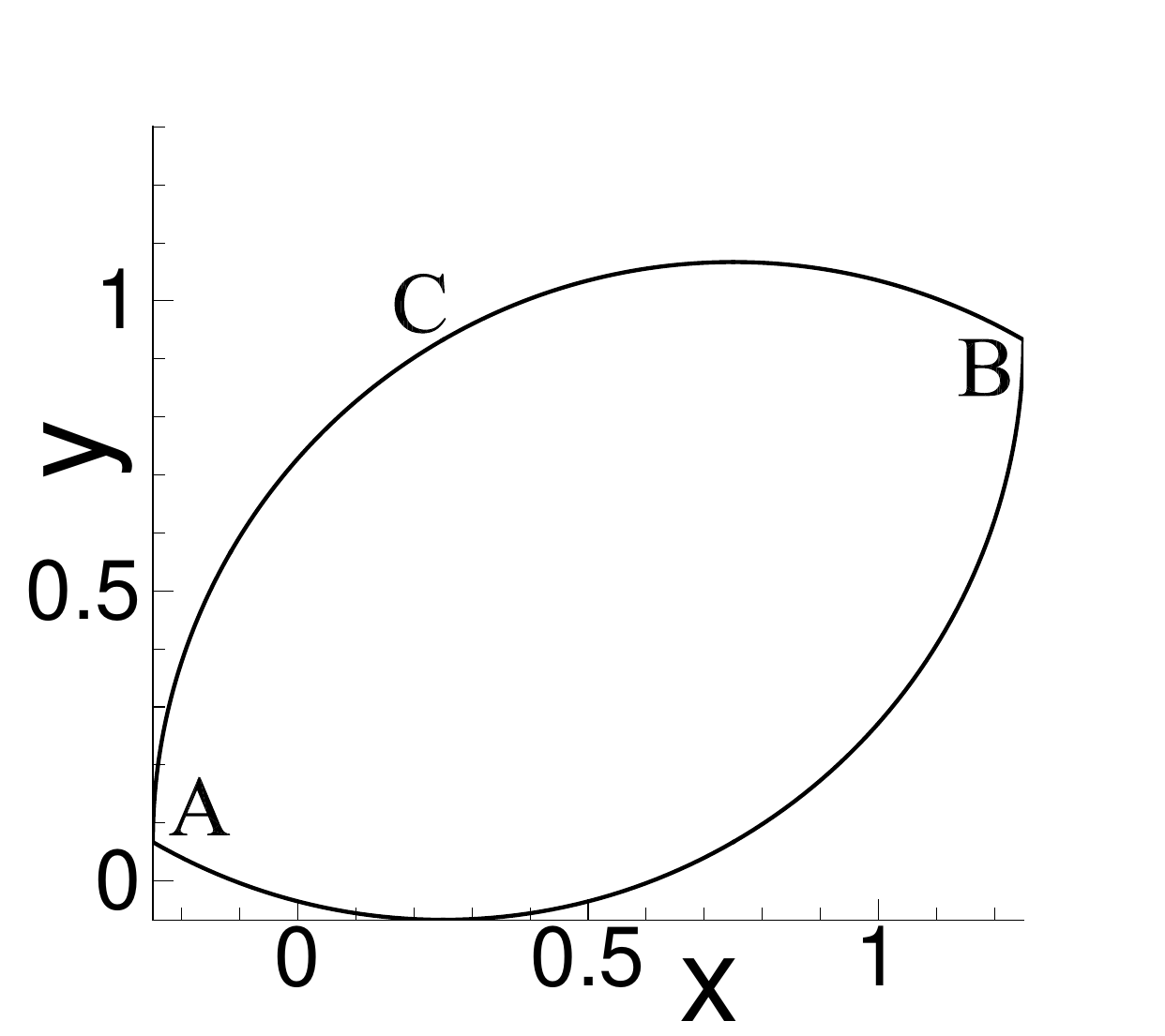}(e)
  }
  \centerline{
    \includegraphics[width=1.1in]{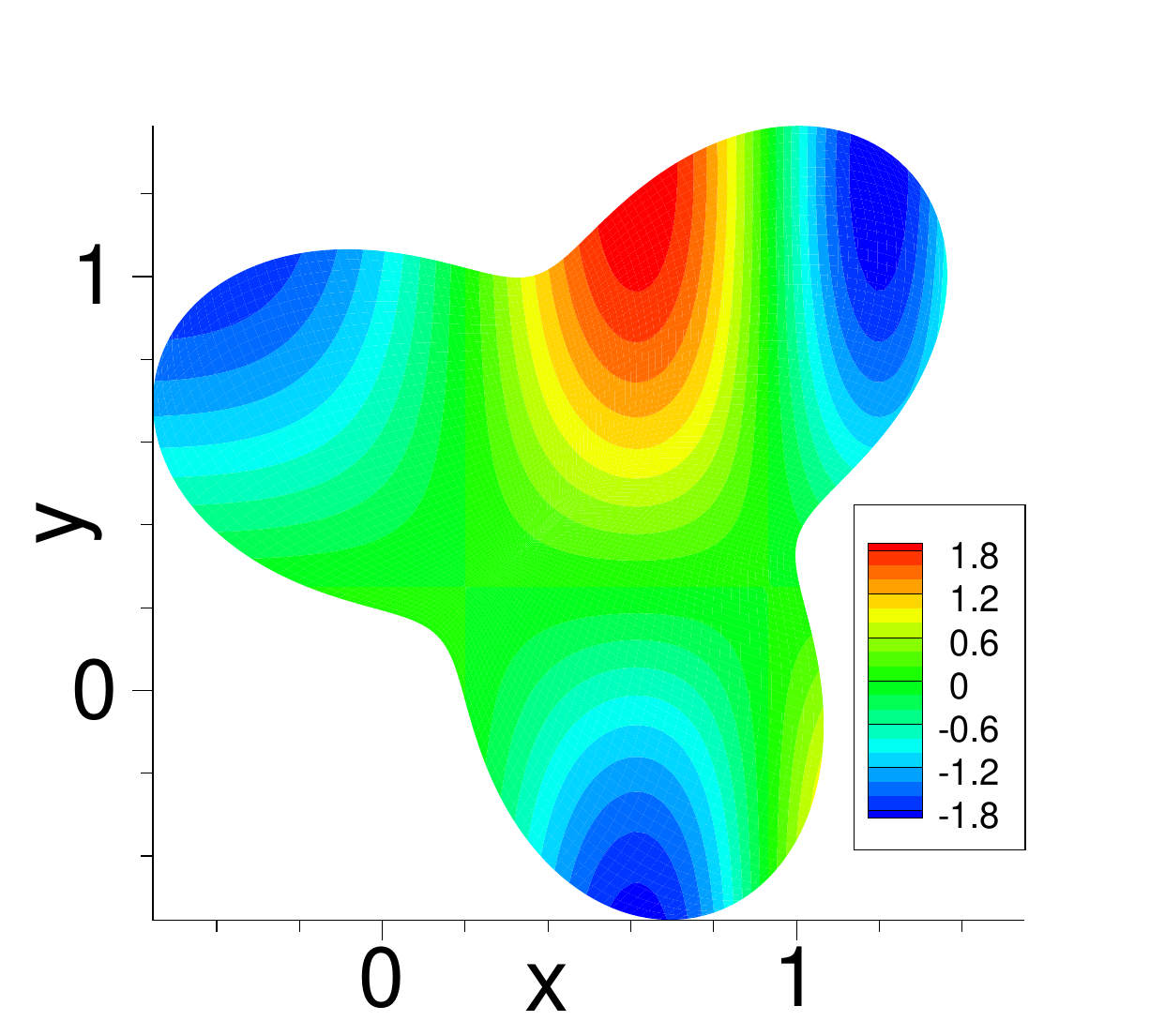}(f)
    \includegraphics[width=1.1in]{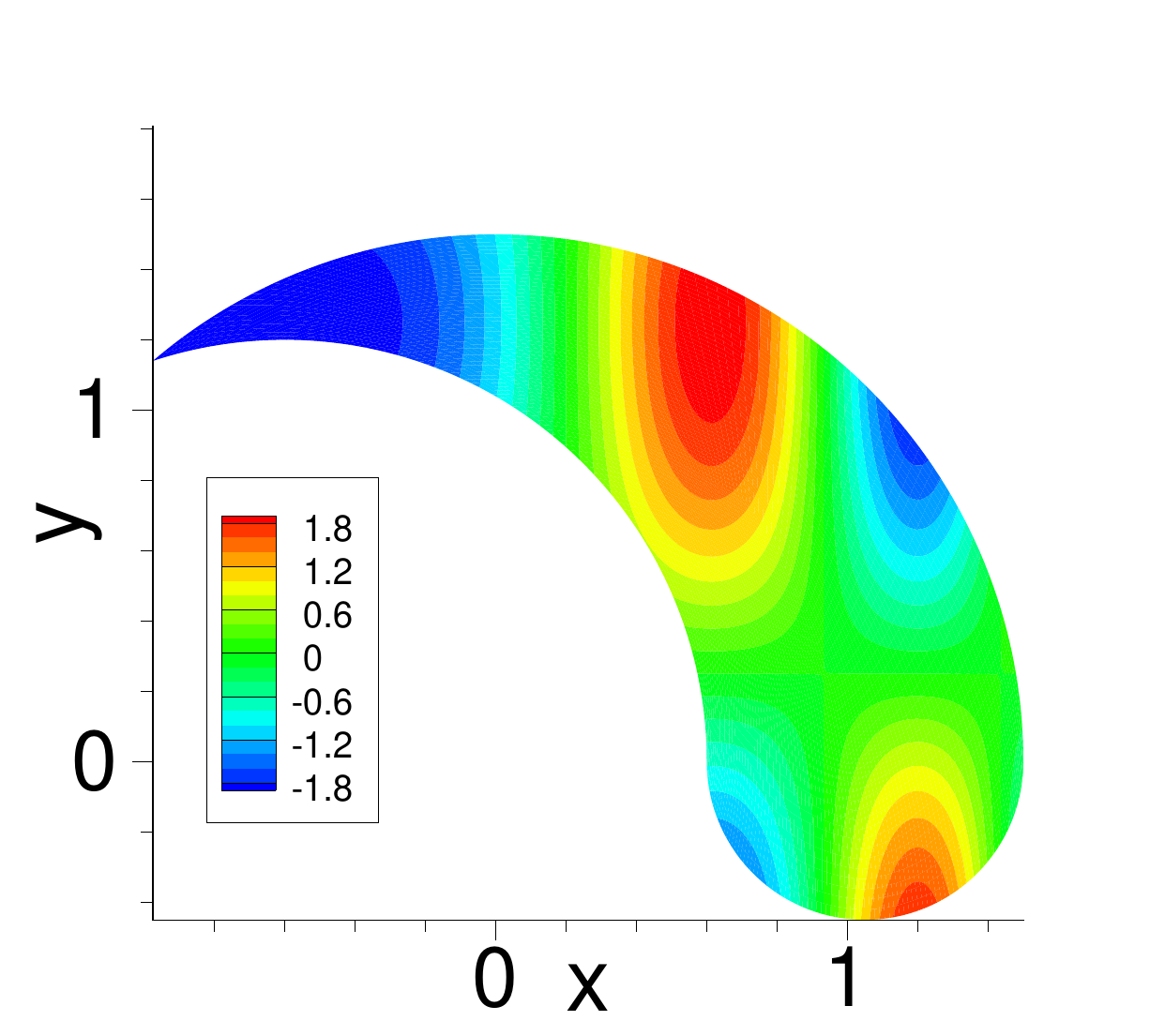}(g)
    \includegraphics[width=1.1in]{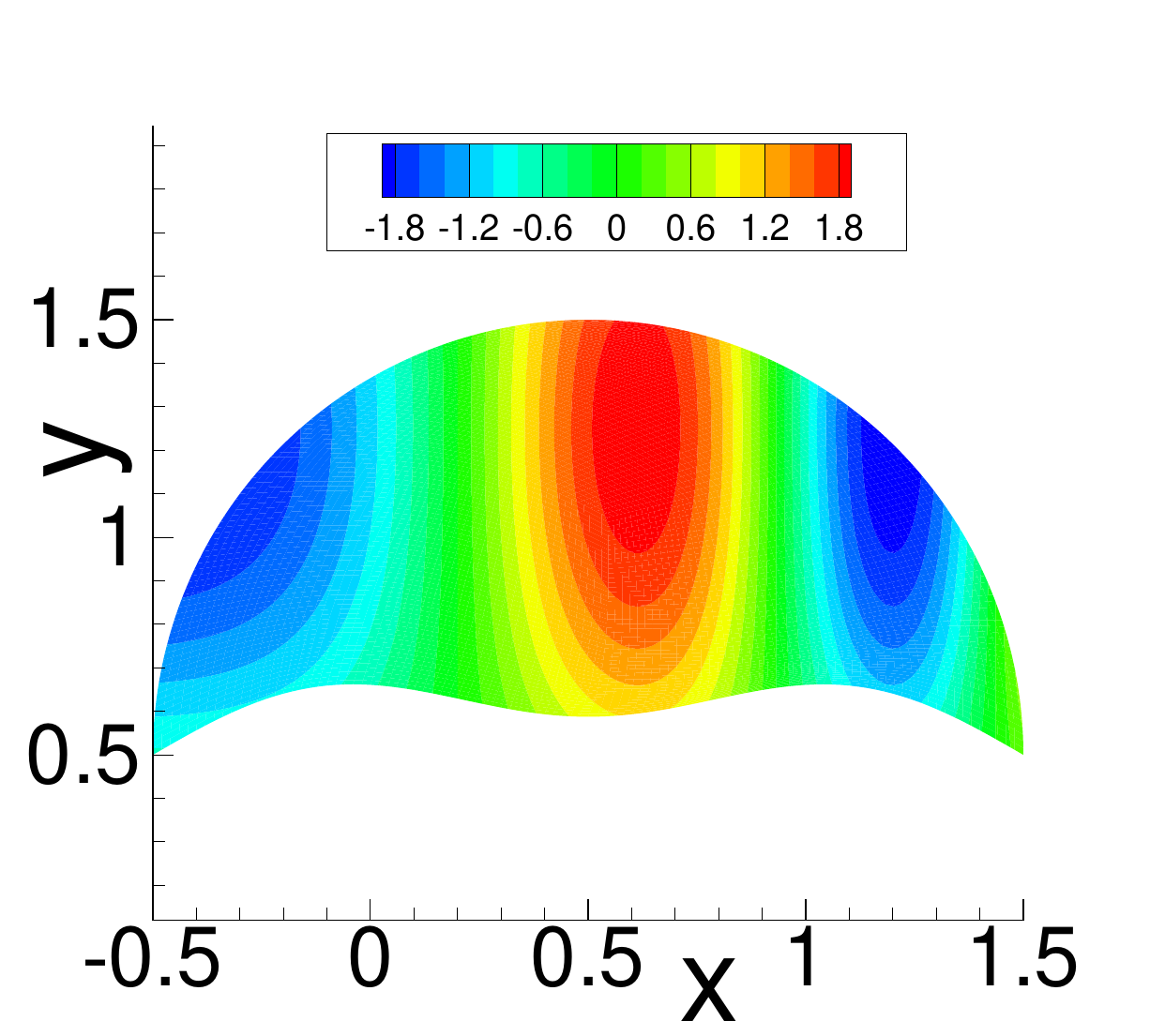}(h)
    \includegraphics[width=1.1in]{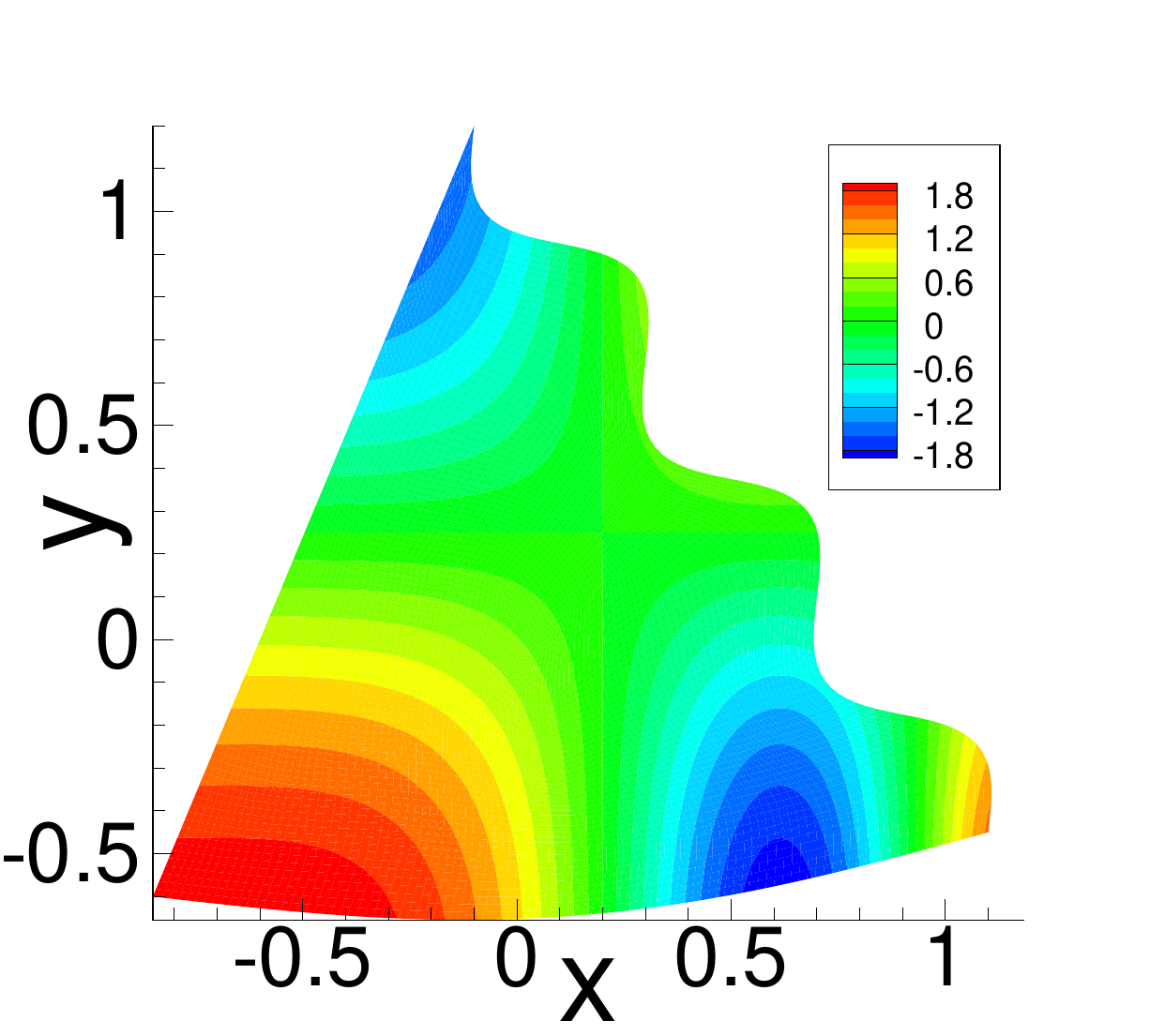}(i)
    \includegraphics[width=1.1in]{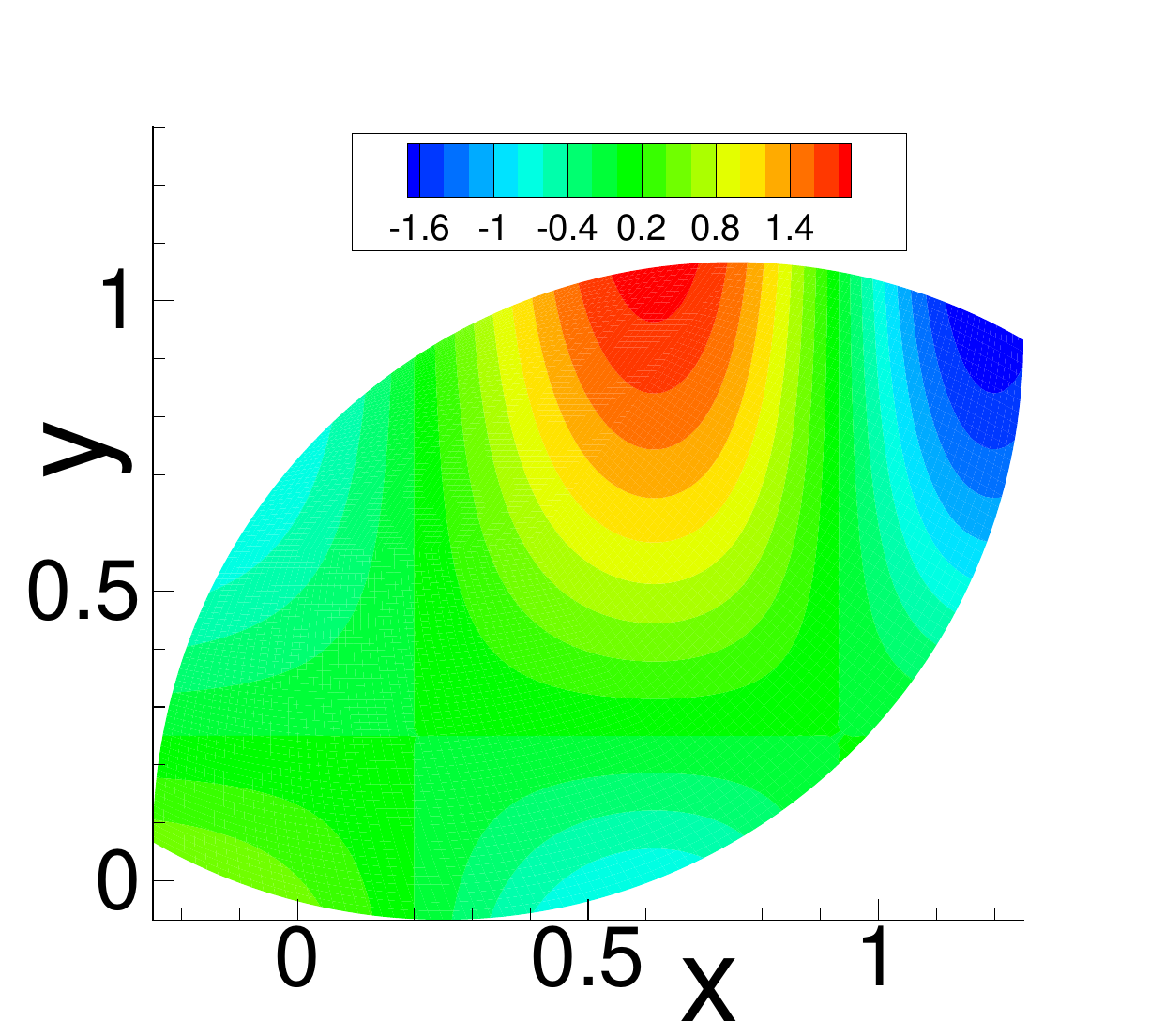}(j)
  }
  \caption{Nonlinear Helmholtz equation: Domain geometries (top row)
    and distributions of the exact solution (bottom row) for five domains:
    (a,f) domain \#1, (b,g) domain \#2, (c,h) domain \#3, (d,i) domain \#4,
    (e,j) domain \#5.
  }
  \label{fg_6}
\end{figure}

For the second test we employ 
the nonlinear Helmholtz equation and
a combination of Dirichlet/Neumann/Robin conditions on
the five domain geometries from Figure \ref{fg_6} (top row),
\begin{subequations}
\begin{align}
  & \frac{\partial^2 u}{\partial x^2} + \frac{\partial^2u}{\partial y^2} - 10u
  + \frac12\tanh(u) = f(x,y), \\
  & \left.\mathcal Bu\right|_{(x,y)\in\partial\Omega} = f_b(x,y),
\end{align}
\end{subequations}
where $u(x,y)$ is the unknown to be computed, $f$ and $f_b$ are prescribed source terms,
and $\mathcal B u$ denotes the boundary conditions as in the previous subsection.
The specific geometric parameters for these domains are provided in an Appendix
(Section \ref{sec_52}).
By choosing $f$ and $f_b$ appropriately, we have the following manufactured
exact solution to this problem
\begingroup\small\begin{equation}
  u(x,y) = 2\cos\left(\frac{\pi}{2}x^2+\frac{4\pi}{5} x + \frac{8\pi}{25} \right)
  \cos\left(\frac{\pi}{2}y+\frac{3\pi}{8} \right).
\end{equation}\endgroup
Distributions of this solution on the five domains are illustrated in
Figure \ref{fg_6} (plots in bottom row).

\begin{figure}
  \centerline{
    \includegraphics[width=1.1in]{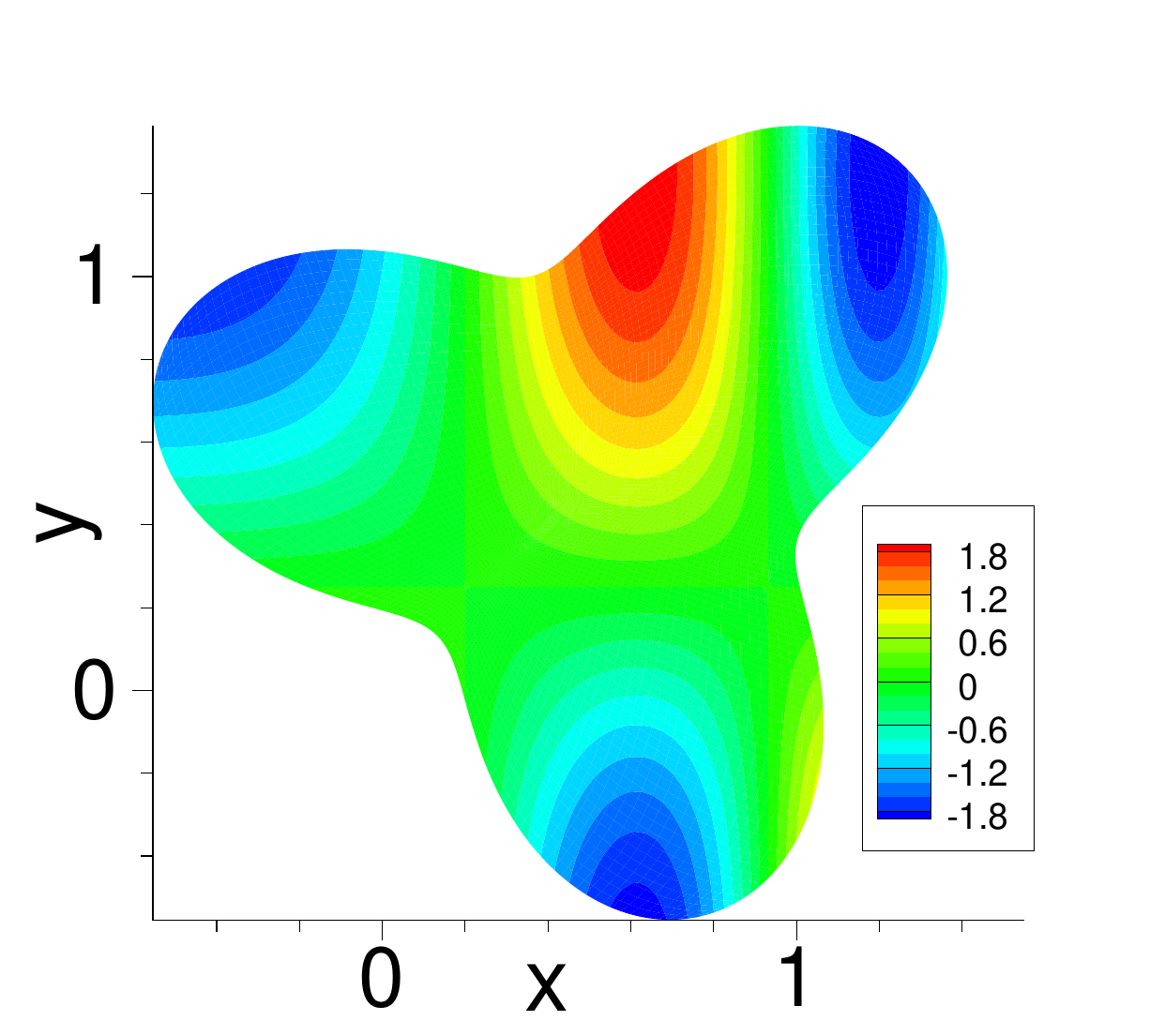}(a)
    \includegraphics[width=1.1in]{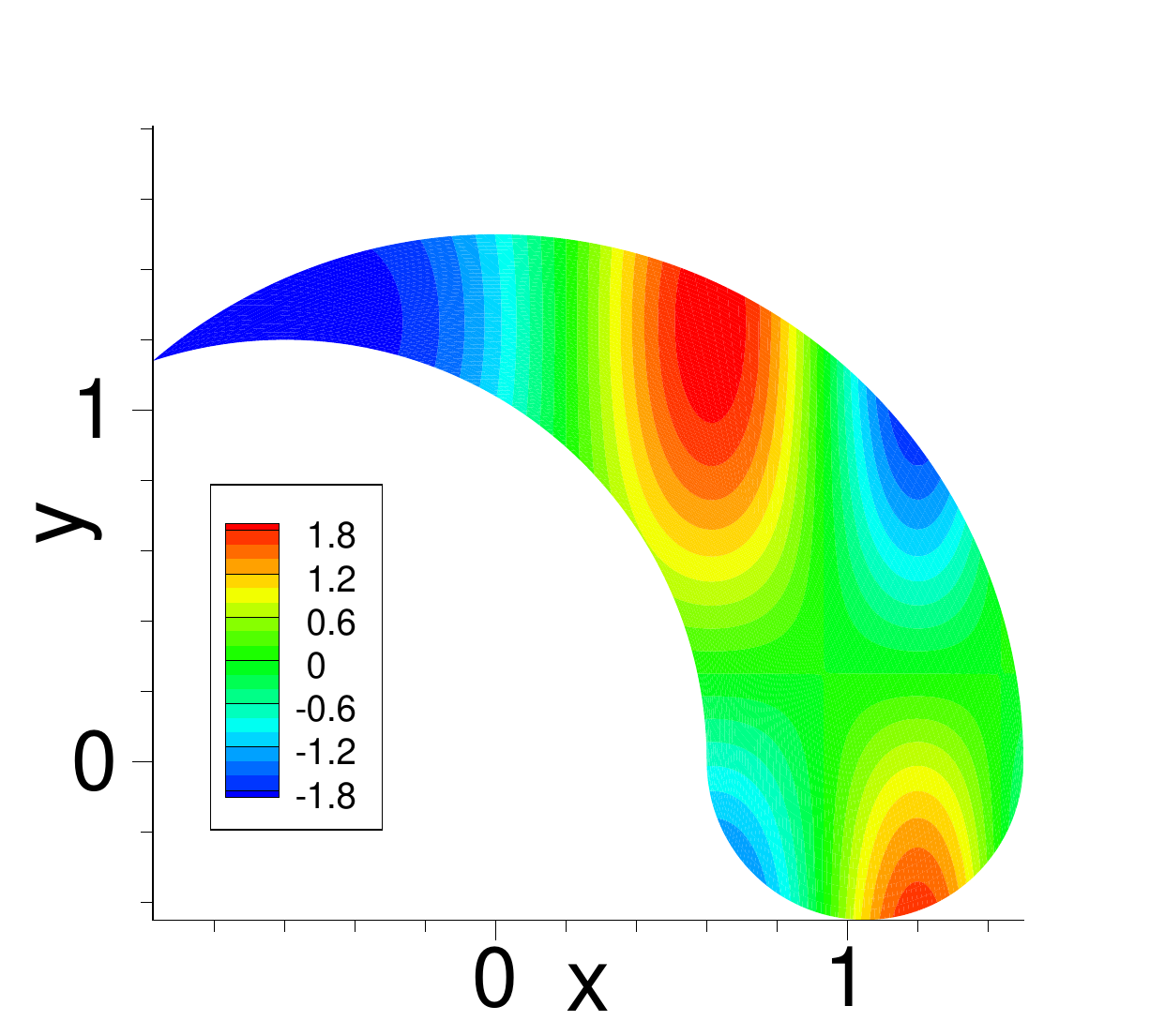}(b)
    \includegraphics[width=1.1in]{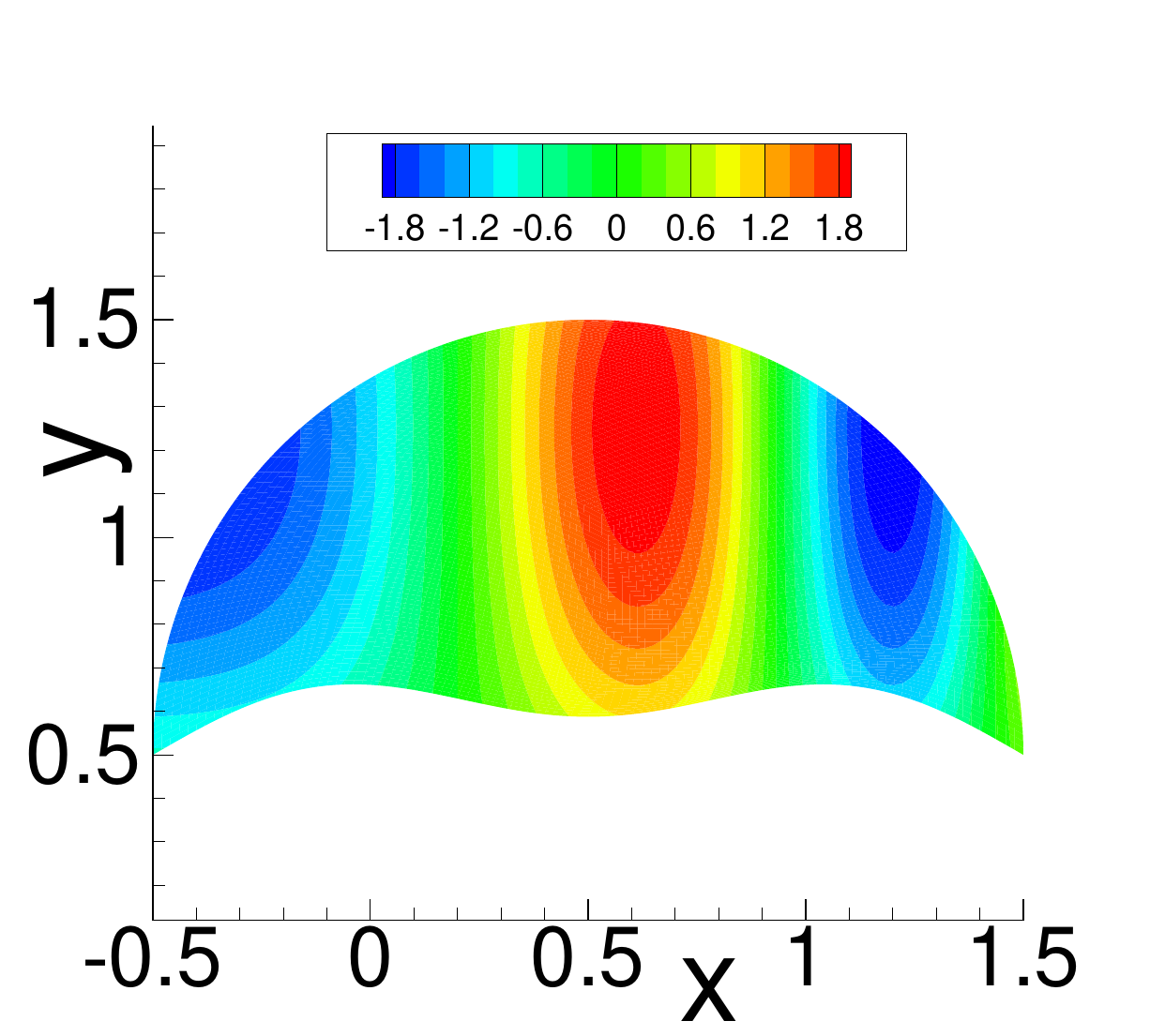}(c)
    \includegraphics[width=1.1in]{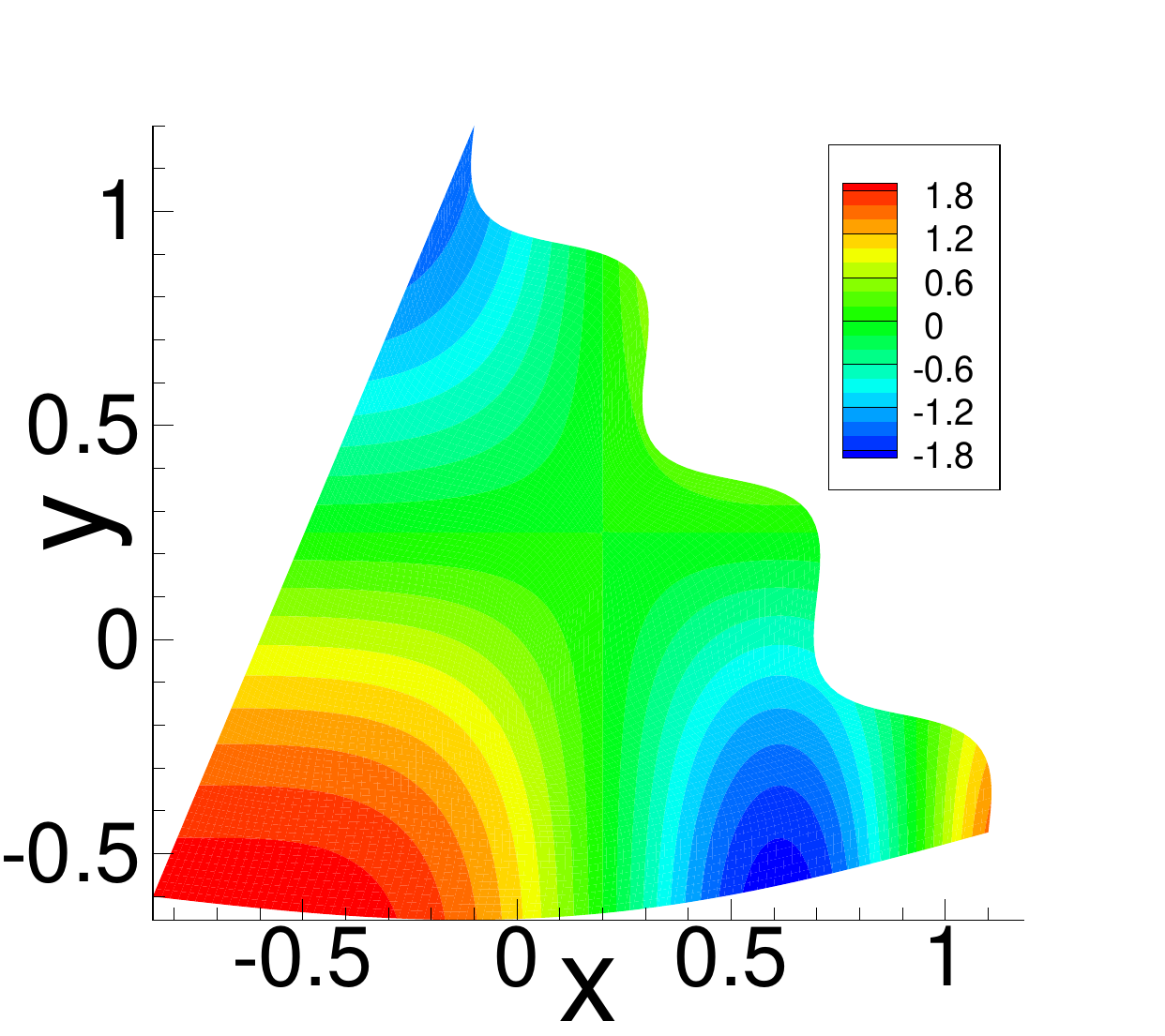}(d)
    \includegraphics[width=1.1in]{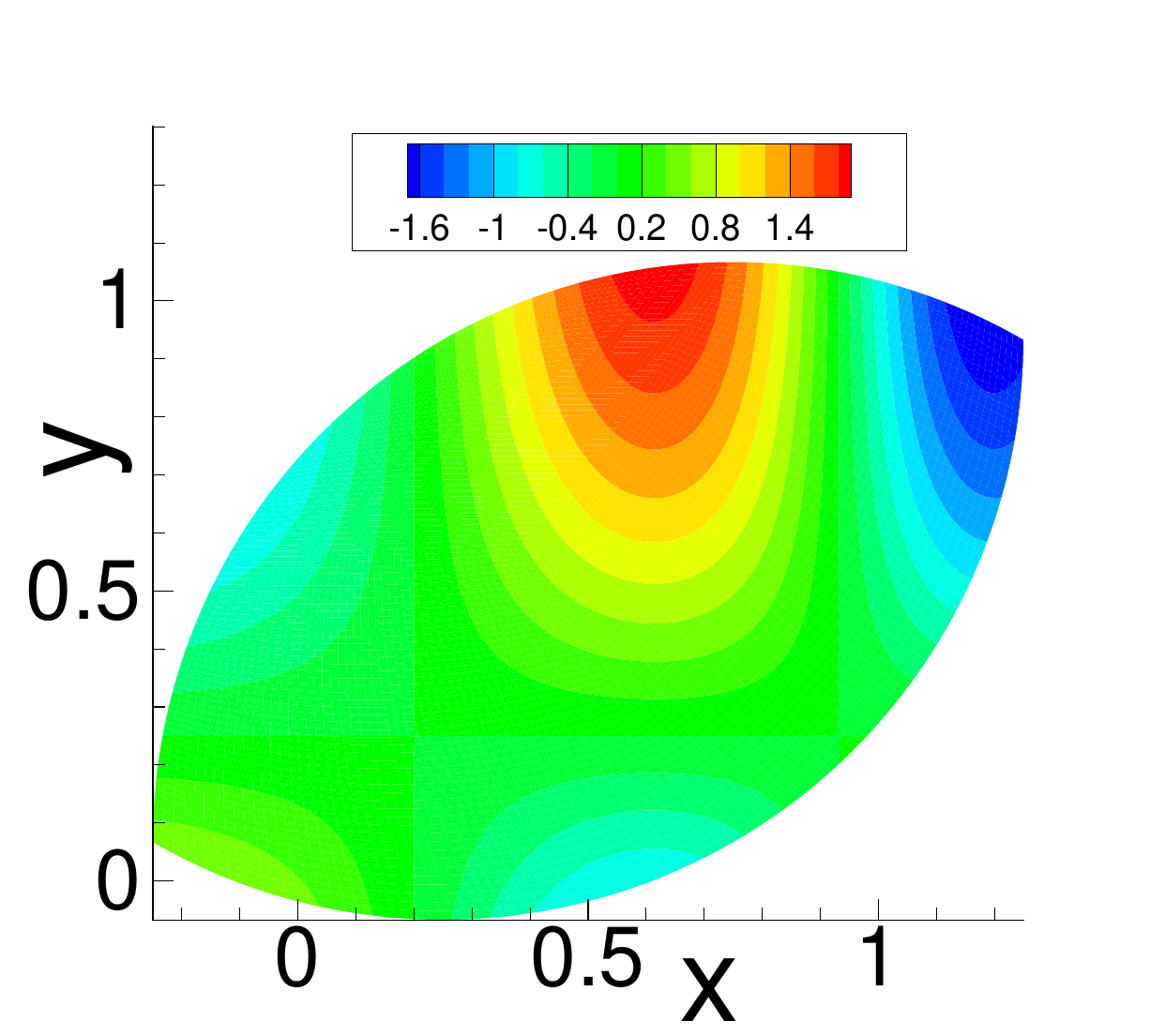}(e)
  }
  \centerline{
    \includegraphics[width=1.1in]{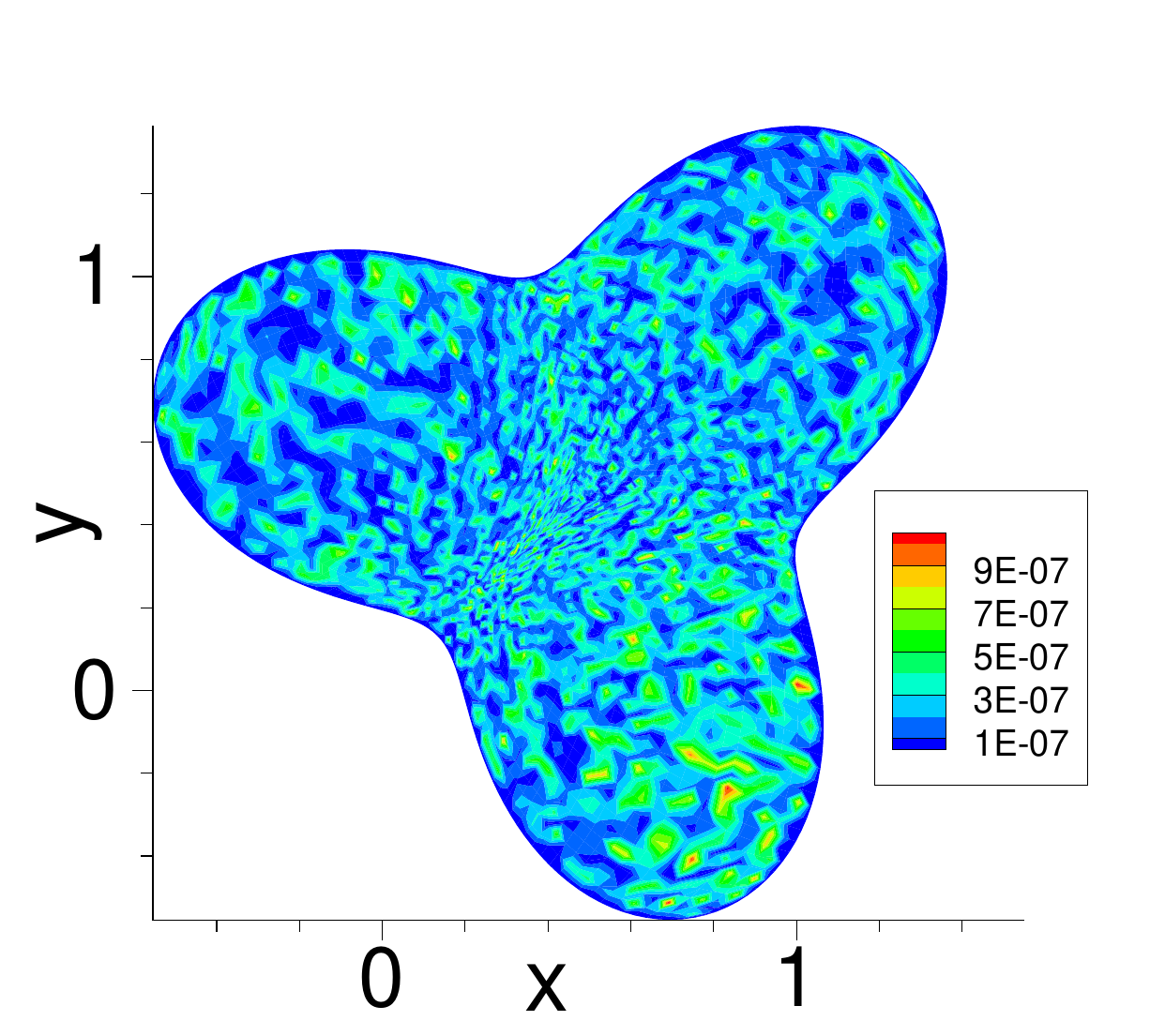}(f)
    \includegraphics[width=1.1in]{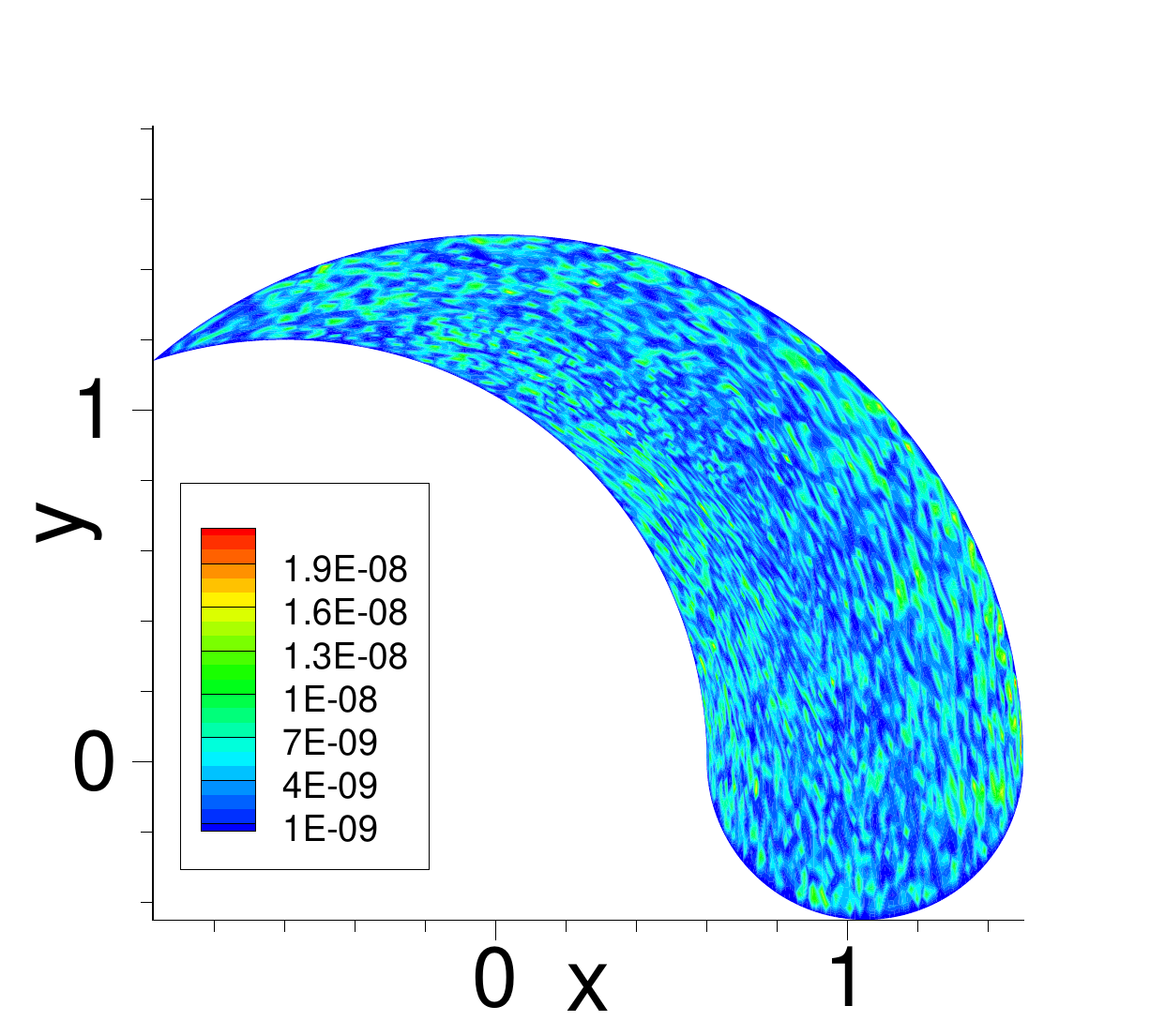}(g)
    \includegraphics[width=1.1in]{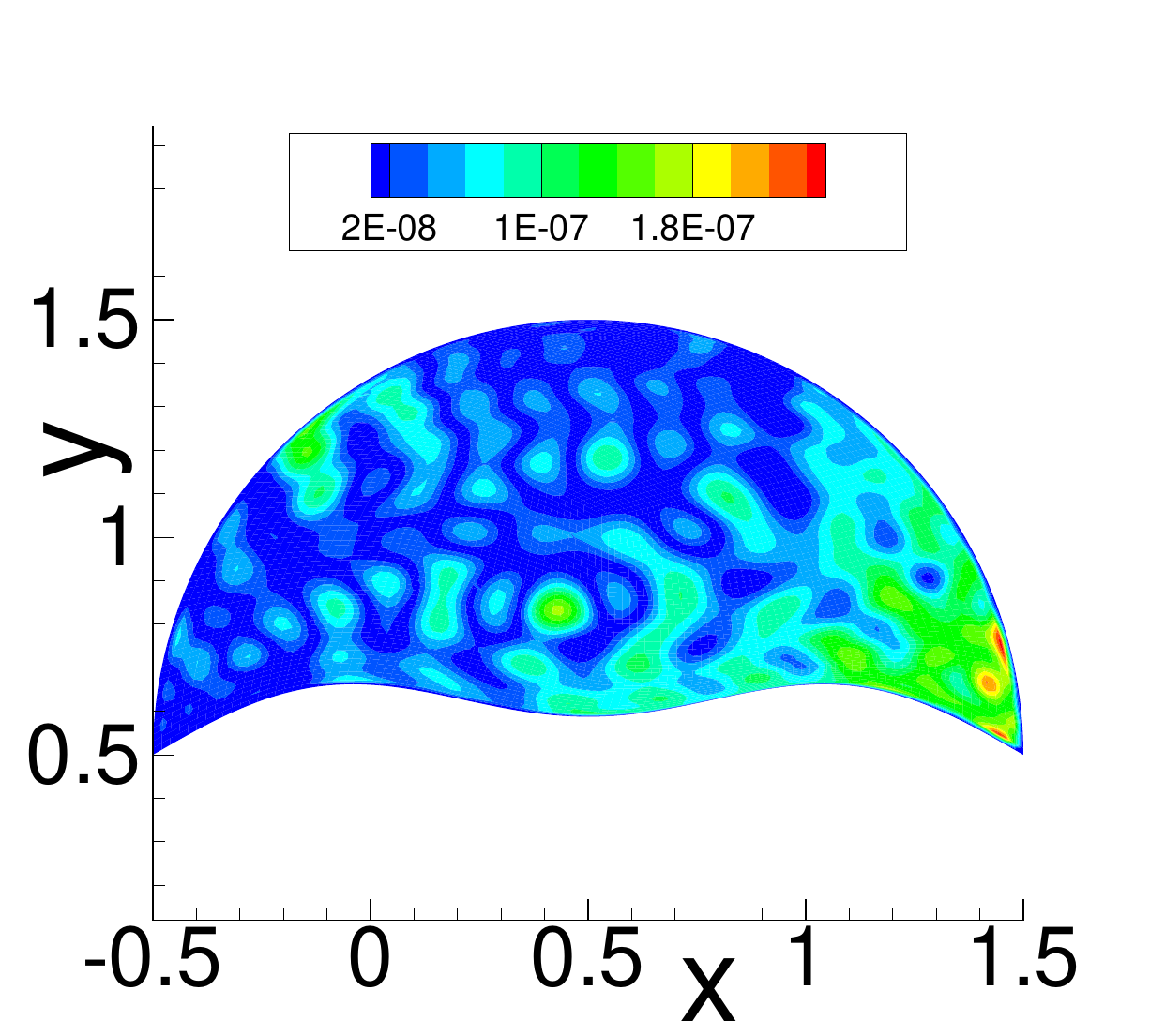}(h)
    \includegraphics[width=1.1in]{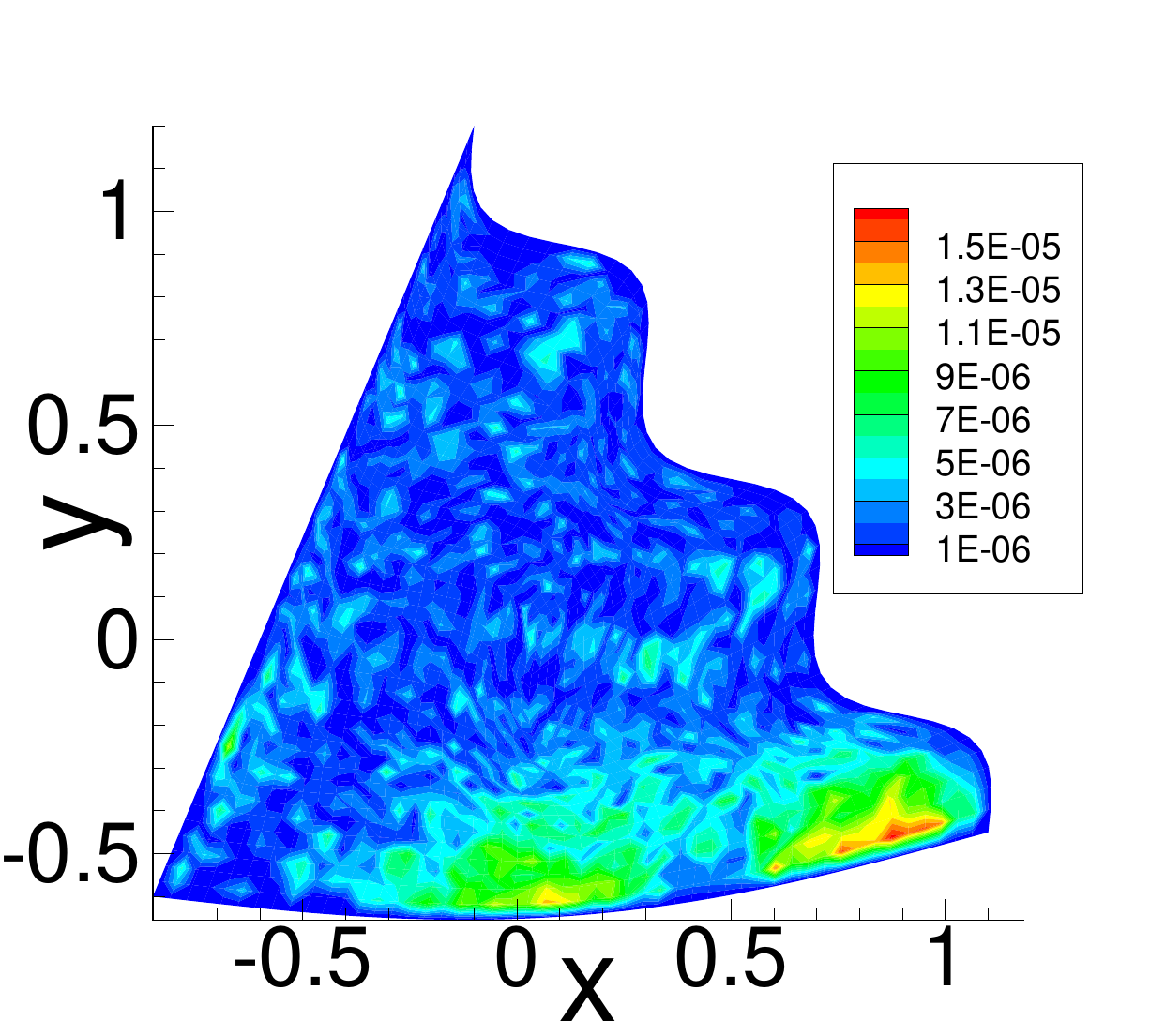}(i)
    \includegraphics[width=1.1in]{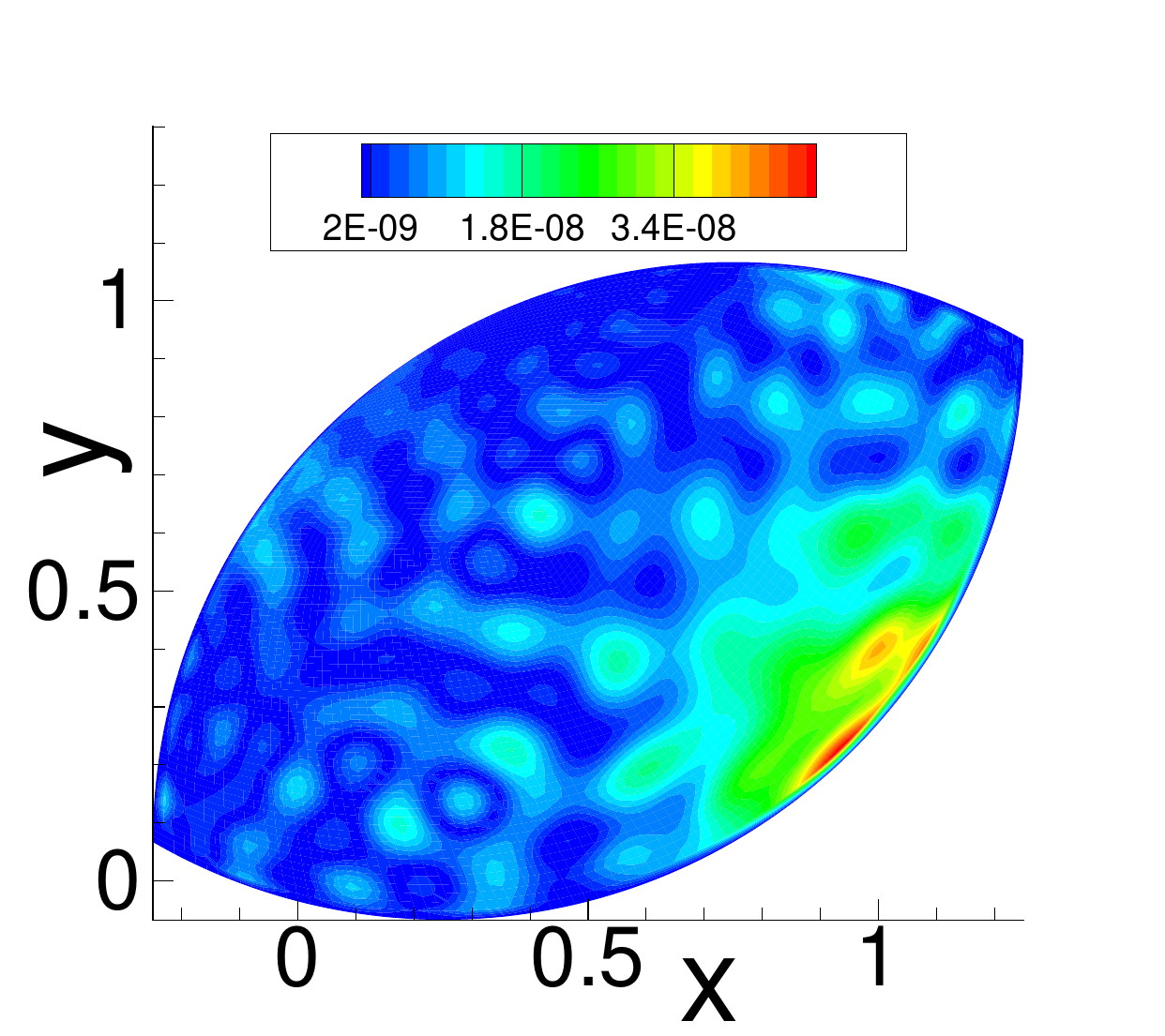}(j)
  }
  \caption{Nonlinear Helmholtz equation (DBC on all boundaries): Distributions of
    the NN solutions (top row) and their point-wise absolute errors (bottom row) on the five domains.
    Simulation parameters: (a,f) $R_m=3.9$, $Q=100$, $M=500$, Winslow map ($p_w=1$, $Q_w=100$);
    (b,g) $R_m=4.0$, $Q=100$, $M=500$, Winslow map ($p_w=1$, $Q_w=100$);
    (c,h) $R_m=4.0$, $Q=80$, $M=300$, simple map;
    (d,i) $R_m=4.8$, $Q=100$, $M=500$, simple map;
    (e,j) $R_m=4.0$, $Q=100$, $M=300$, simple map.
  }
  \label{fg_7}
\end{figure}

\begin{table}[tb]
  \centering\small
  \begin{tabular}{l|ccccc}
    \hline
    & Domain \#1 & Domain \#2 & Domain \#3 & Domain \#4 & Domain \#5 \\ \hline
    max $u$-error (domain) & $1.238\times 10^{-6}$ & $2.242\times 10^{-8}$ & $2.521\times 10^{-7}$
    & $1.887\times 10^{-5}$ & $4.867\times 10^{-8}$ \\
    rms $u$-error (domain) & $2.965\times 10^{-7}$ & $4.869\times 10^{-9}$ & $6.013\times 10^{-8}$
    & $3.586\times 10^{-6}$ & $9.449\times 10^{-9}$ \\ \hline
    max DBC-error ($\overline{AB}$) & $4.441\times 10^{-16}$ & $4.441\times 10^{-16}$ & $2.220\times 10^{-16}$ & $4.441\times 10^{-16}$ & $2.220\times 10^{-16}$ \\
    rms DBC-error ($\overline{AB}$) & $1.340\times 10^{-16}$ & $1.592\times 10^{-16}$ & $6.438\times 10^{-17}$ & $1.689\times 10^{-16}$ & $9.541\times 10^{-17}$ \\ \hline
    max DBC-error ($\overline{BC}$) & $5.130\times 10^{-16}$ & $6.245\times 10^{-16}$ & $3.192\times 10^{-16}$ & $5.551\times 10^{-16}$ & $4.441\times 10^{-16}$ \\
    rms DBC-error ($\overline{BC}$) & $1.372\times 10^{-16}$ & $1.277\times 10^{-16}$ & $1.010\times 10^{-16}$ & $1.420\times 10^{-16}$ & $1.252\times 10^{-16}$  \\ \hline
    max DBC-error ($\overline{AC}$) & $4.441\times 10^{-16}$ & $4.441\times 10^{-16}$ & $2.220\times 10^{-16}$ & $2.220\times 10^{-16}$ & $1.110\times 10^{-16}$ \\
    rms DBC-error ($\overline{AC}$) & $1.813\times 10^{-16}$ & $1.756\times 10^{-16}$ & $1.339\times 10^{-16}$ & $1.433\times 10^{-16}$ & $2.662\times 10^{-17}$ \\
    \hline
  \end{tabular}
  \caption{Nonlinear Helmholtz equation (DBC on all boundaries):
    Maximum/rms domain errors of the NN solution
    and the maximum/rms DBC errors on the three boundaries. Simulation parameters
    follow those of Figure \ref{fg_7}.
  }
  \label{tab_4}
\end{table}

The NN solution accuracy and the boundary-condition accuracy,
with Dirichlet conditions imposed on all boundaries, are
illustrated in Figure \ref{fg_7} and Table \ref{tab_4}.
Figure \ref{fg_7} plots contours of the NN solutions (top row)
and their point-wise absolute errors (bottom row) on the five domains.
The figure caption lists the NN and other simulation parameters, including
the Winslow parameters, when applicable.
On domains \#1 and \#2 the Winslow map \eqref{eq_12} has been used because the simple
map \eqref{eq_8} results in ``foldings'' and loses univalency,
while on domains \#3 to \#5 the simple map has been used in the simulations.
The NN method combined with exact BC enforcement
produces accurate  results. The level of the maximum domain errors
ranges from $10^{-5}$ for domain \#4 to $10^{-8}$ for domains \#2 and \#5. 

Table \ref{tab_4} lists the maximum/rms domain errors of the NN solution
and the maximum/rms boundary-condition (DBC) errors on different boundaries
for these five domains. The maximum DBC errors are on the order of $10^{-16}$
for all three boundaries on all five domains, indicating that our
method has enforced the Dirichlet conditions 
to the machine precision for this nonlinear problem.


\begin{figure}
  \centerline{
    \includegraphics[width=1.2in]{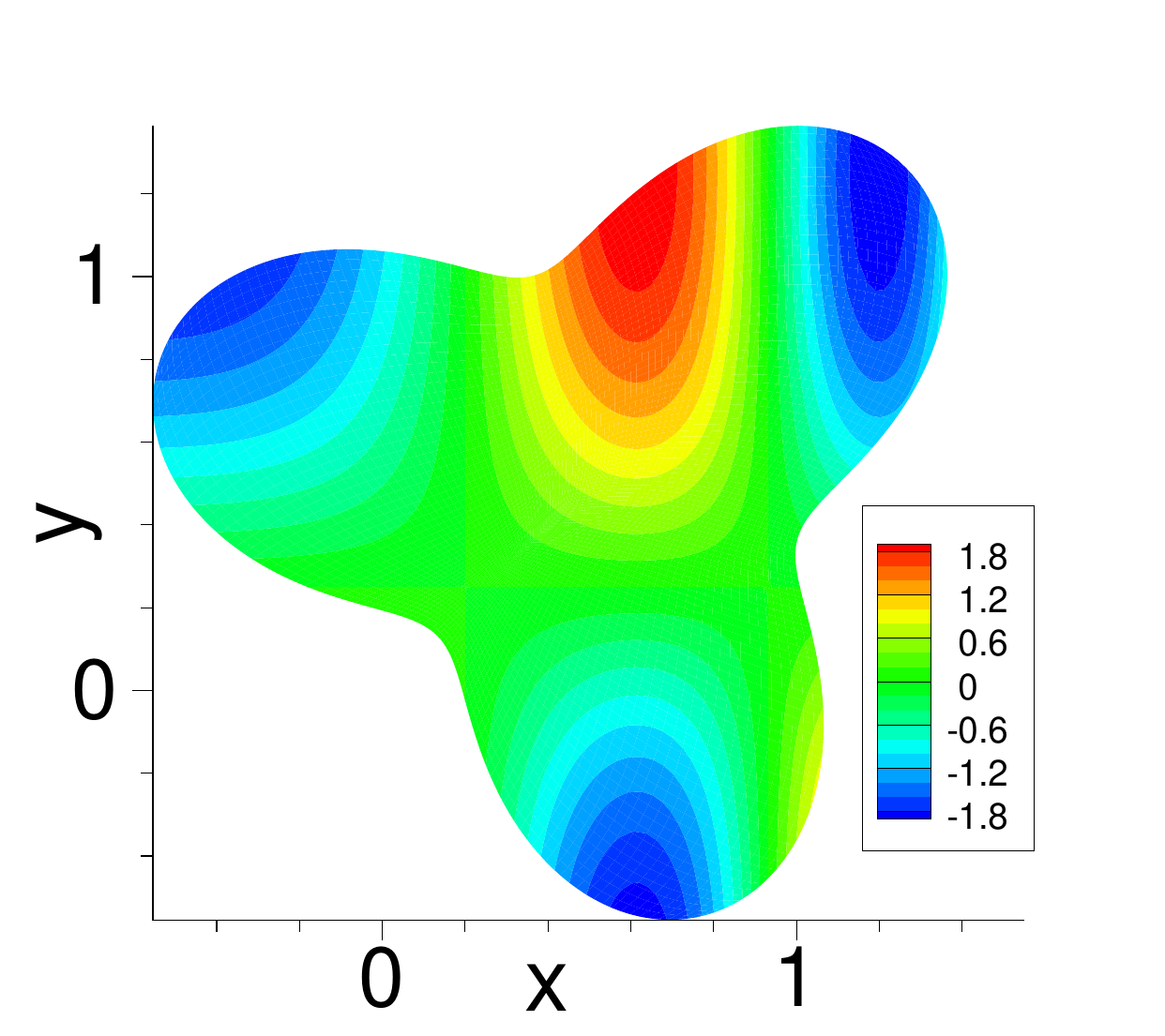}(a)
    \includegraphics[width=1.2in]{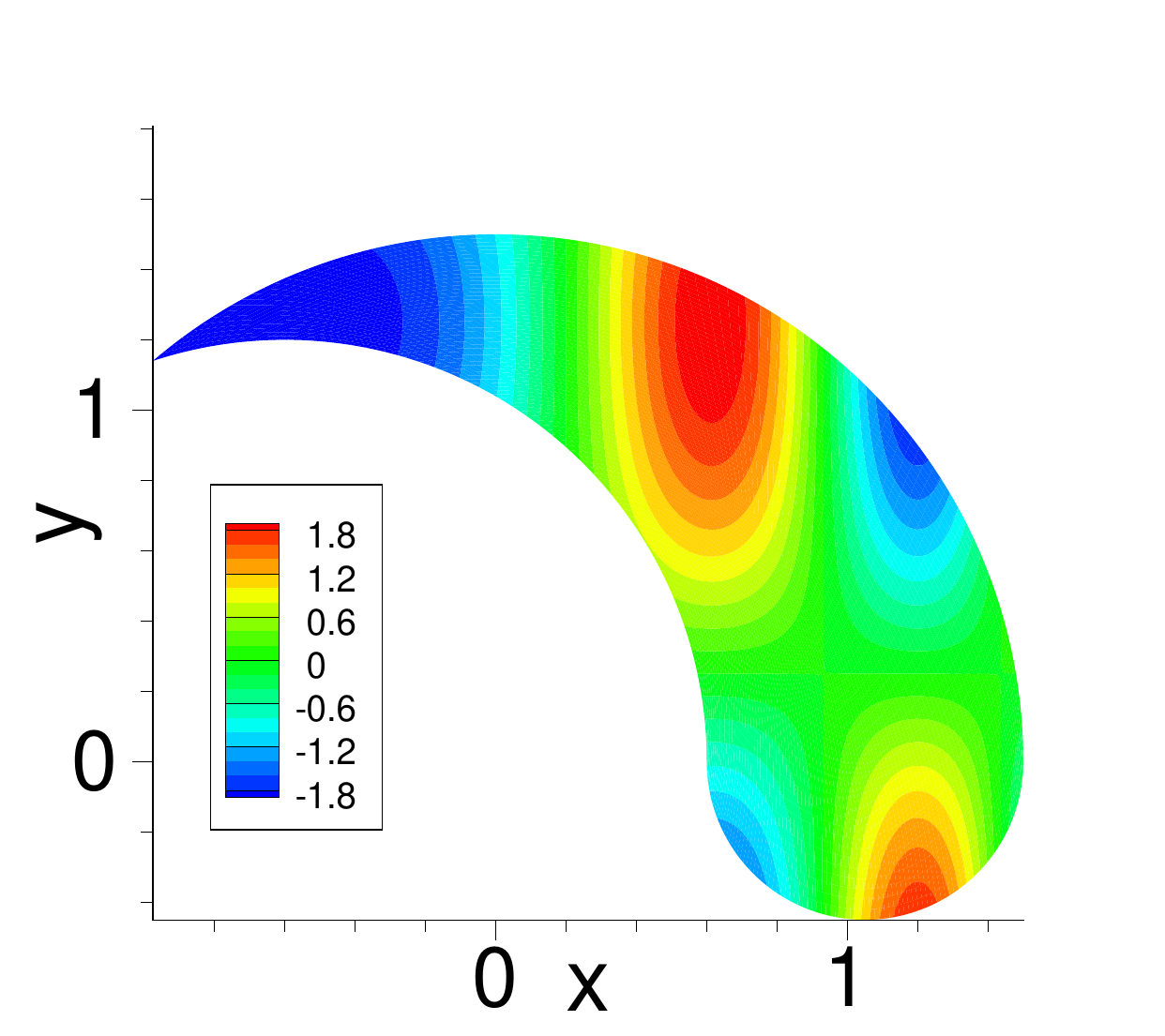}(b)
    \includegraphics[width=1.2in]{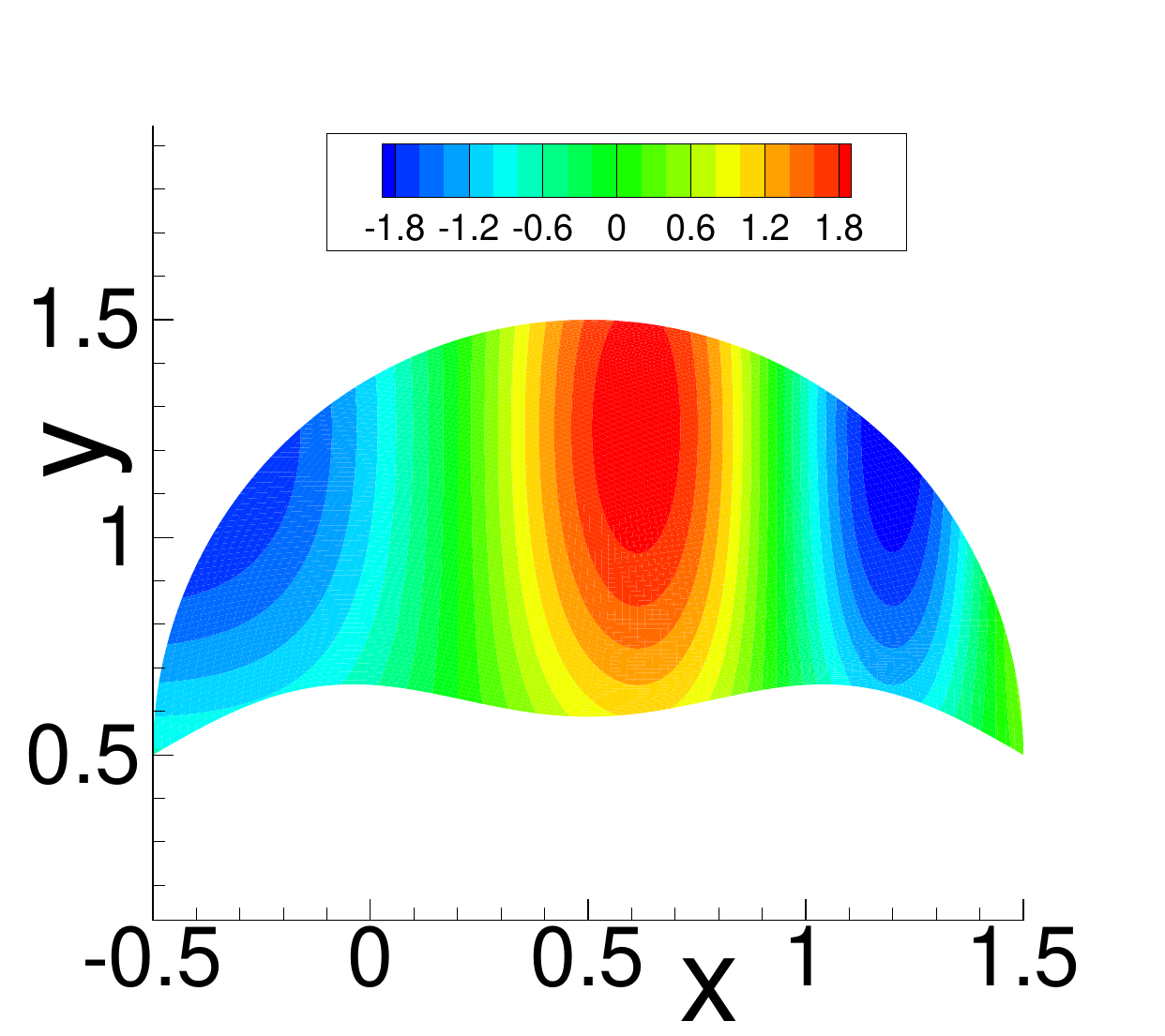}(c)
    \includegraphics[width=1.2in]{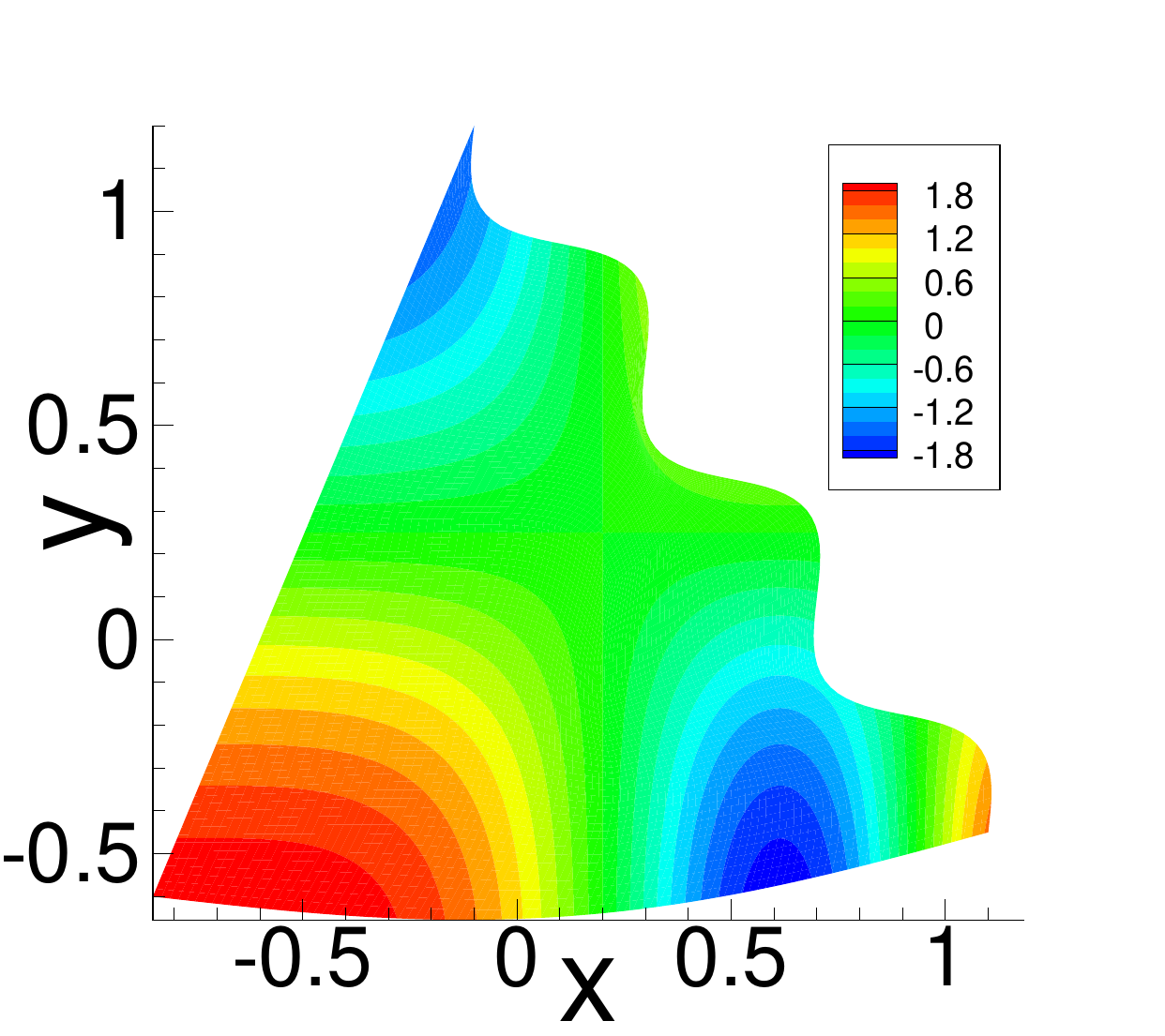}(d)
  }
  \centerline{
    \includegraphics[width=1.2in]{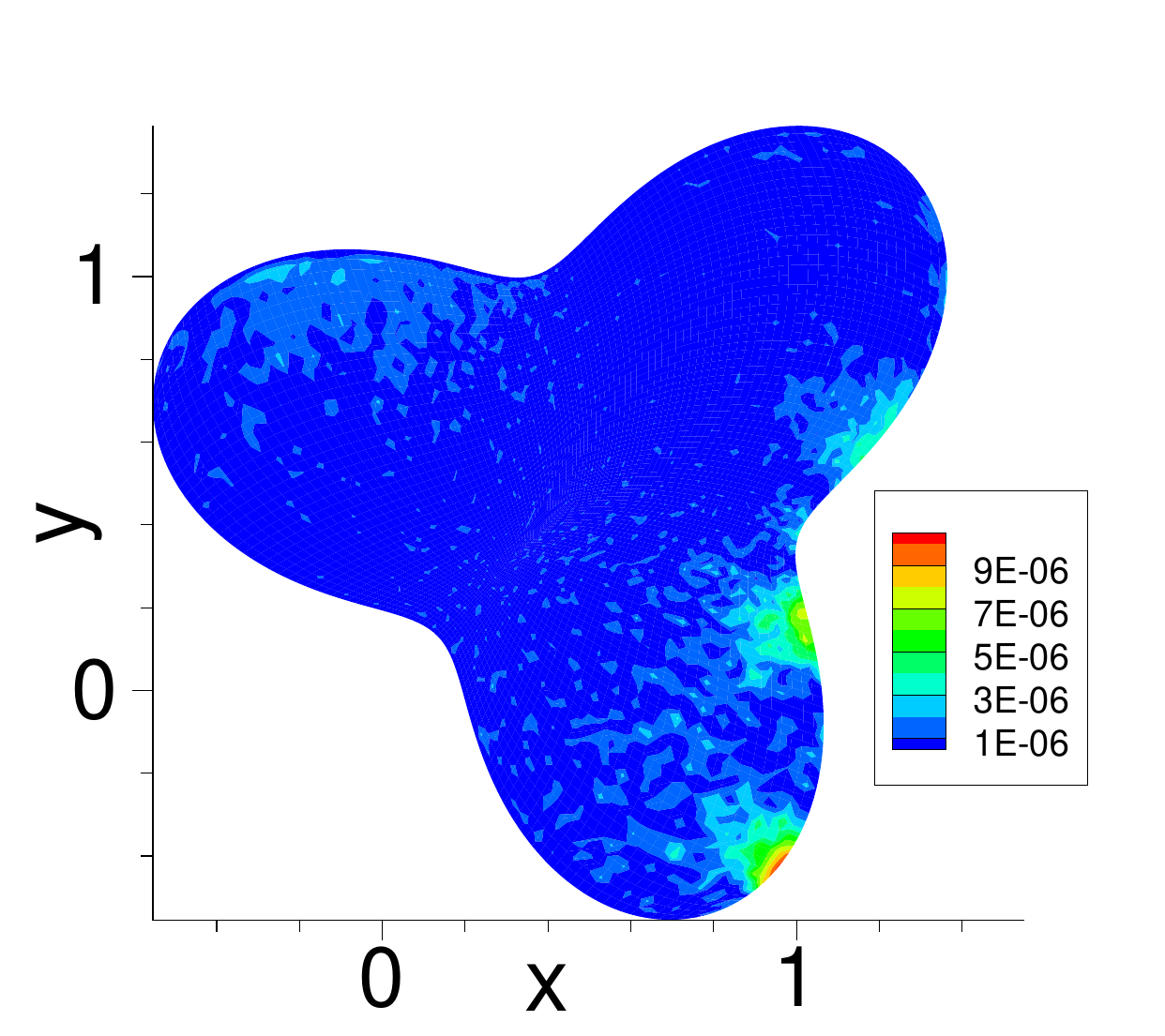}(e)
    \includegraphics[width=1.2in]{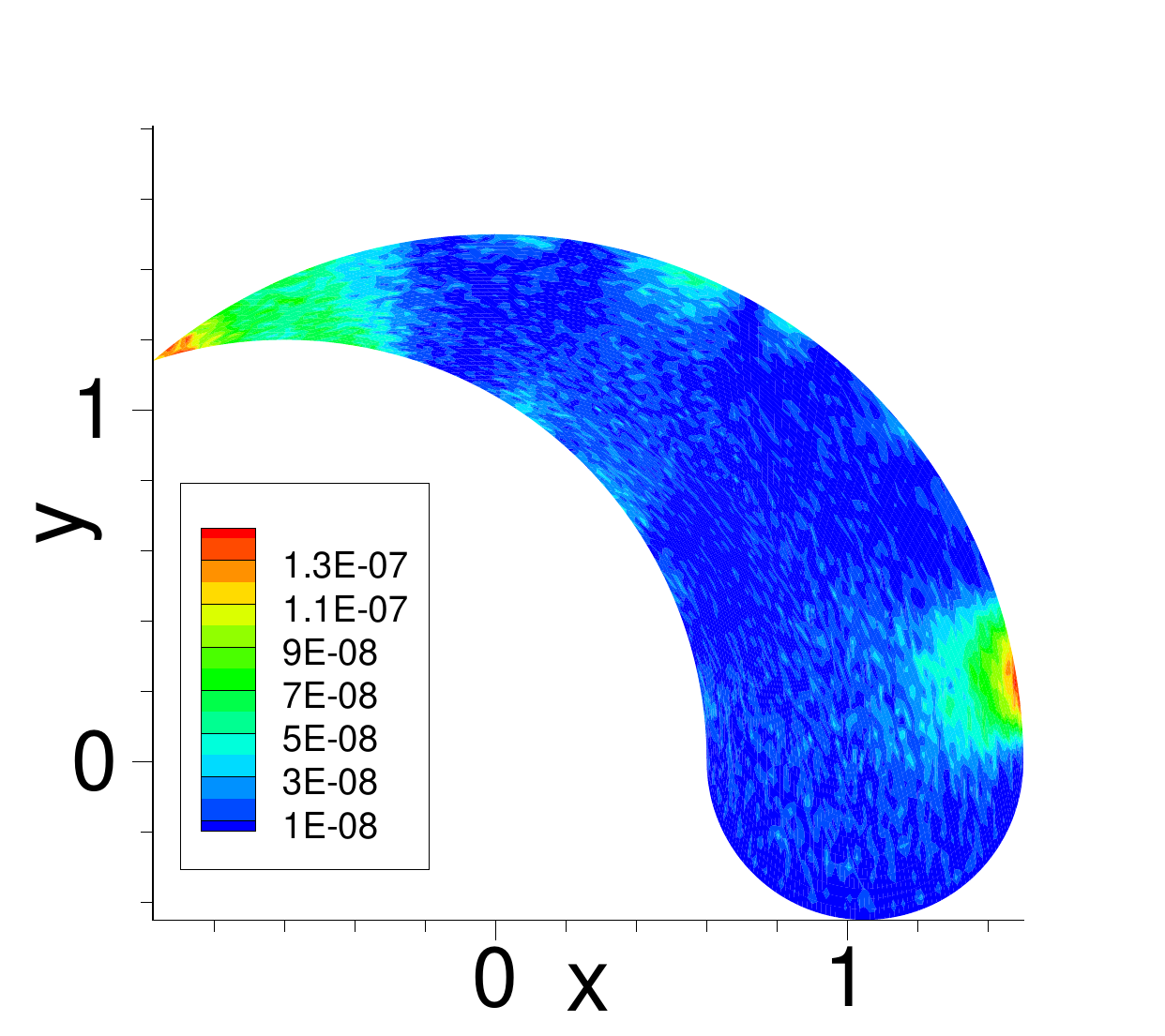}(f)
    \includegraphics[width=1.2in]{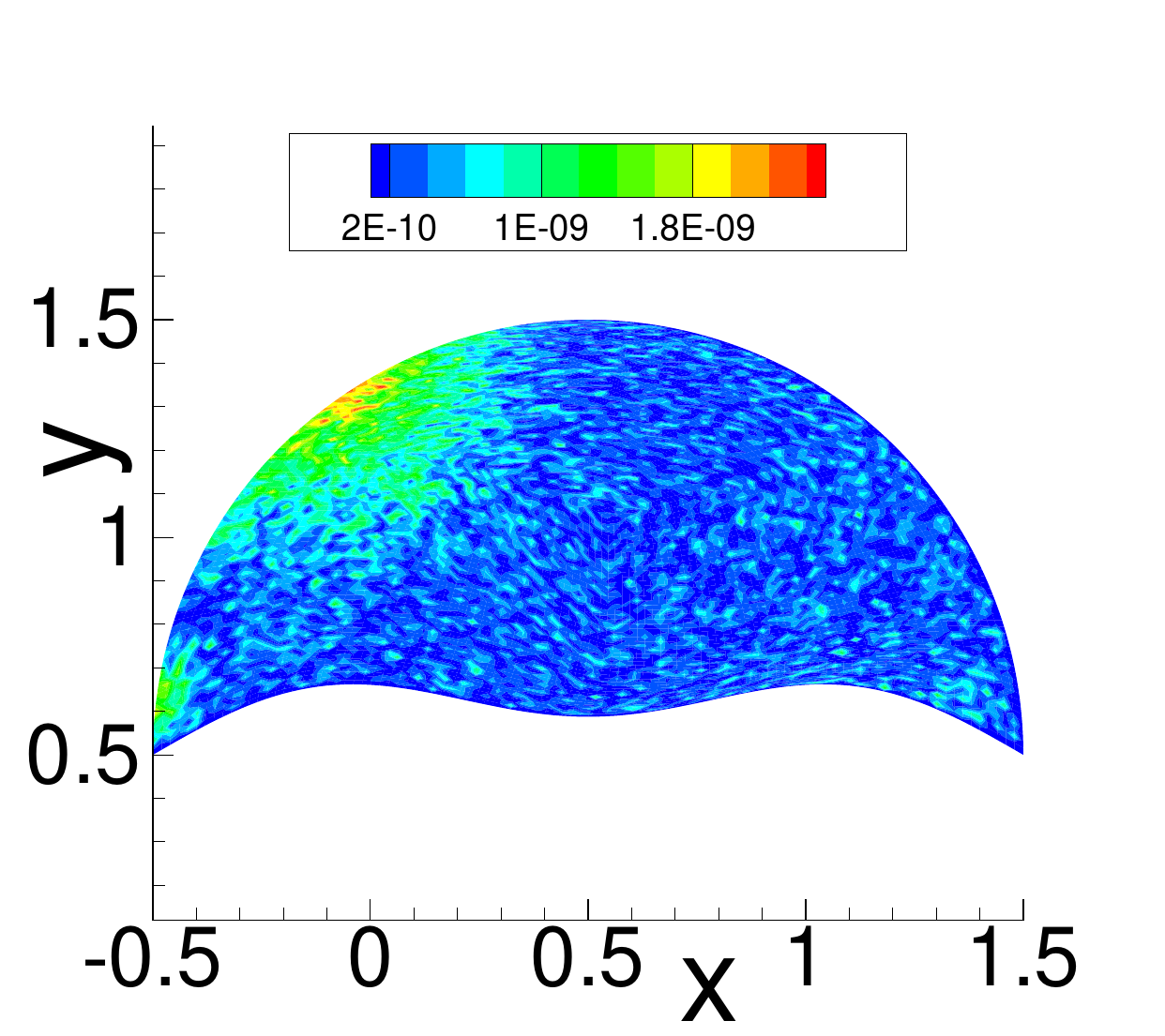}(g)
    \includegraphics[width=1.2in]{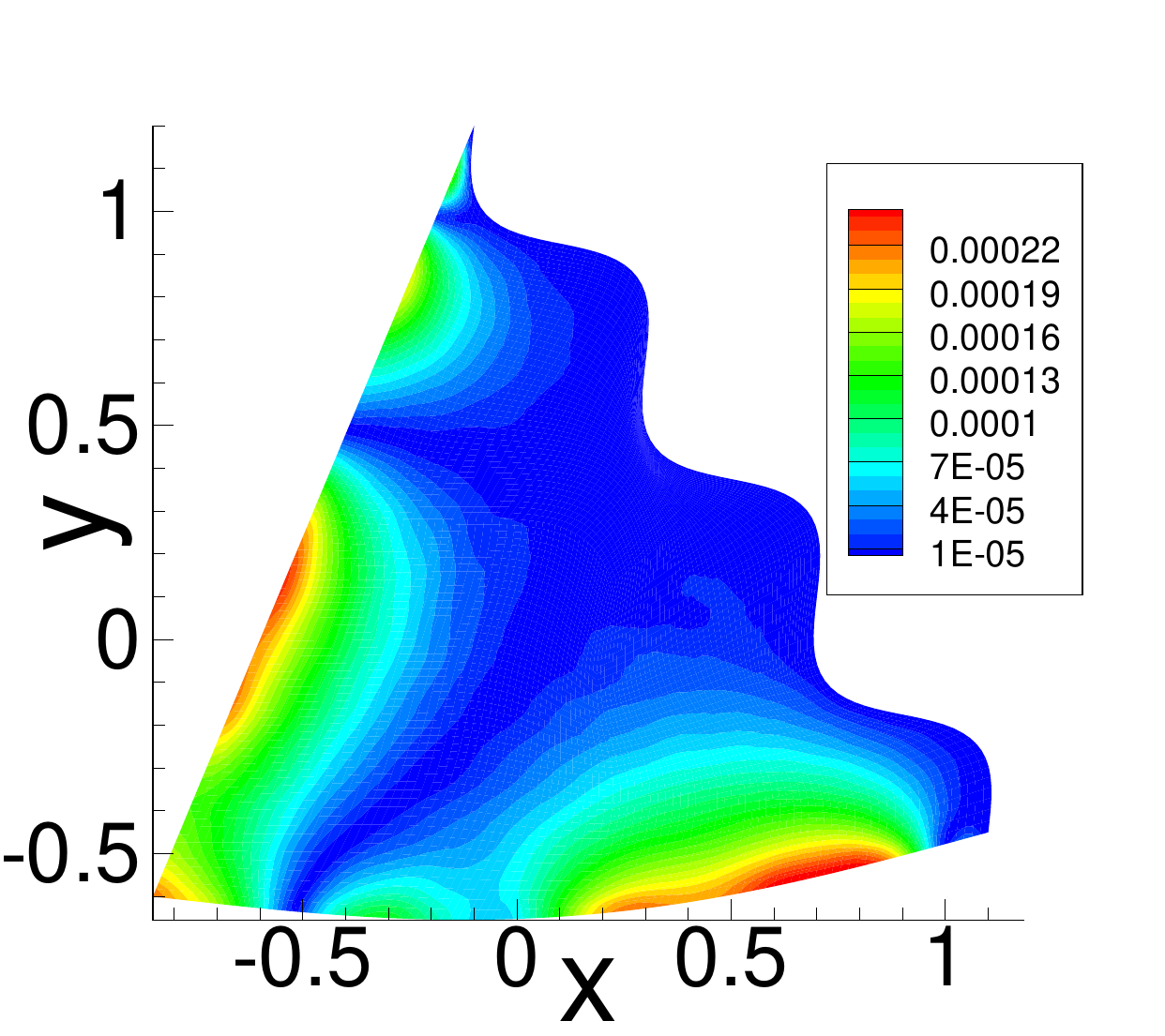}(h)
  }
  \caption{Nonlinear Helmholtz equation (NBCs/RBCs on boundaries):
    Distributions of the NN solutions (top row) and their point-wise absolute errors (bottom row)
    with domains \#1 to \#4 under four settings.
    (a,e) Case \#1:  NBC on $\overline{AC}$ and DBC on the other boundaries with domain \#1;
    (b,f) Case \#2: NBC on $\overline{AB}$ and $\overline{AC}$, and DBC on $\overline{BC}$, with
    domain \#2;
    (c,g) Case \#3: RBC on $\overline{AC}$ with $\alpha_{AC}=1.0$, and DBC on the other boundaries,
    with domain \#3;
    (d,h) Case \#4: RBC on $\overline{AB}$ and $\overline{AC}$ with
    $\alpha_{AB}=0.5$ and $\alpha_{AC}=1.0$, and DBC on $\overline{BC}$, with domain \#4;
    Simulation parameters: (a,e) $R_m=3.5$, $Q=100$, $M=500$, Winslow map ($p_w=1$, $Q_w=100$);
    (b,f) $R_m=3.7$, $Q=100$, $M=500$, Winslow map ($p_w=1$, $Q_w=100$);
    (c,g) $R_m=3.7$, $Q=100$, $M=500$, simple map;
    (d,h) $R_m=4.2$, $Q=100$, $M=500$, simple map.
  }
  \label{fg_8}
\end{figure}

\begin{table}[tb]
  \centering\small
  \begin{tabular}{l|cccc}
    \hline
    & Case \#1 & Case \#2 & Case \#3 & Case \#4  \\ \hline
    max $u$-error (domain) & $1.039\times 10^{-5}$ & $1.524\times 10^{-7}$ & $2.472\times 10^{-9}$ 
    & $2.517\times 10^{-4}$ \\
    rms $u$-error (domain) & $1.069\times 10^{-6}$ & $2.615\times 10^{-8}$ & $5.068\times 10^{-10}$ 
    & $7.118\times 10^{-5}$ \\ \hline
    max bc-error ($\overline{AB}$) & $8.882\times 10^{-16}$ & $8.882\times 10^{-16}$ & $2.220\times 10^{-16}$ 
    & $1.554\times 10^{-15}$  \\
    rms bc-error ($\overline{AB}$) & $3.361\times 10^{-16}$ & $2.688\times 10^{-16}$ & $8.345\times 10^{-17}$ 
    & $3.589\times 10^{-16}$  \\ \hline
    max bc-error ($\overline{BC}$) & $0.0$ & $0.0$ & $0.0$ & $0.0$  \\
    rms bc-error ($\overline{BC}$) & $0.0$ & $0.0$ & $0.0$ & $0.0$  \\ \hline
    max bc-error ($\overline{AC}$) & $1.776\times 10^{-15}$ & $4.441\times 10^{-16}$ & $1.665\times 10^{-15}$ 
    & $4.441\times 10^{-16}$  \\
    rms bc-error ($\overline{AC}$) & $4.872\times 10^{-16}$ & $1.624\times 10^{-16}$ & $4.393\times 10^{-16}$ 
    & $1.734\times 10^{-16}$  \\
    \hline
  \end{tabular}
  \caption{Nonlinear Helmholtz equation (NBCs/RBCs on boundaries):
    Maximum/rms domain errors of the NN solution
    and the maximum/rms boundary-condition errors (DBC, NBC, or RBC) on the three boundaries.
    Different cases correspond to those in Figure \ref{fg_8}.
  }
  \label{tab_5}
\end{table}


The effectiveness of the current method for enforcing Neumann and Robin
boundary conditions is exemplified by Figure \ref{fg_8} and Table \ref{tab_5} for
the nonlinear Helmholtz equation.
Figure \ref{fg_8} shows distributions of the NN solutions and their point-wise absolute errors
under four settings with four different domain geometries:
(i) Neumann condition imposed on $\overline{AC}$ and Dirichlet condition imposed
the other boundaries with domain \#1; (ii) Neumann condition imposed on $\overline{AB}$
and $\overline{AC}$ and Dirichlet condition imposed on $\overline{BC}$ with domain \#2;
(iii) Robin condition imposed on $\overline{AC}$ (with Robin coefficient $\alpha_{AC}=1.0$)
and Dirichlet condition imposed on the other boundaries with domain \#3;
(iv) Robin condition imposed on $\overline{AB}$ and $\overline{AC}$ (with coefficients
$\alpha_{AB}=1/2$ and $\alpha_{AC}=1.0$) and Dirichlet condition imposed on $\overline{BC}$
with domain \#4. The figure caption provides values for the simulation parameters
and Winslow parameters.
It is observed that the largest solution error occurs at the Neumann or Robin boundaries
in every case. The NN method captures the solution accurately, with the maximum domain
errors ranging from the order $10^{-4}$ for Case \#4 to $10^{-9}$ for case \#3.

Table \ref{tab_5} summarizes the maximum/rms domain errors of the NN solution and
the boundary-condition errors on the three boundaries for these four cases.
Evidently the current method has enforced the Neumann, Robin, and Dirichlet conditions
``exactly'', with the maximum numerical boundary-condition errors at
the level $10^{-16}\sim 10^{-15}$ for different cases.
These numerical results underscore the effectiveness and accuracy of our method for nonlinear
problems with complex domain geometries.

\subsection{Viscous Burgers Equation on Time-Evolving Domains}
\label{sec_33}

\begin{figure}
  \centerline{
    \includegraphics[width=1.1in]{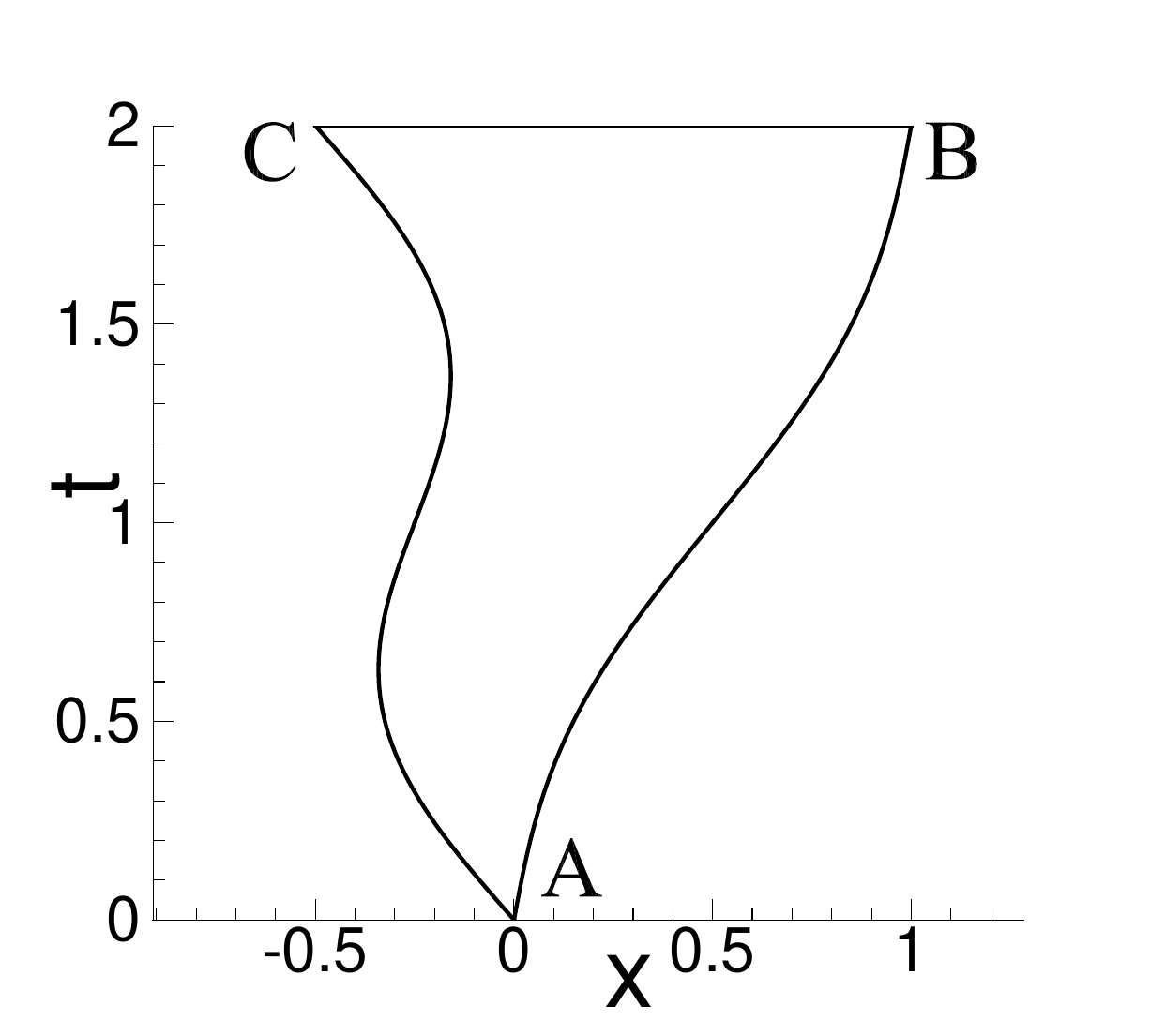}(a)
    \includegraphics[width=1.1in]{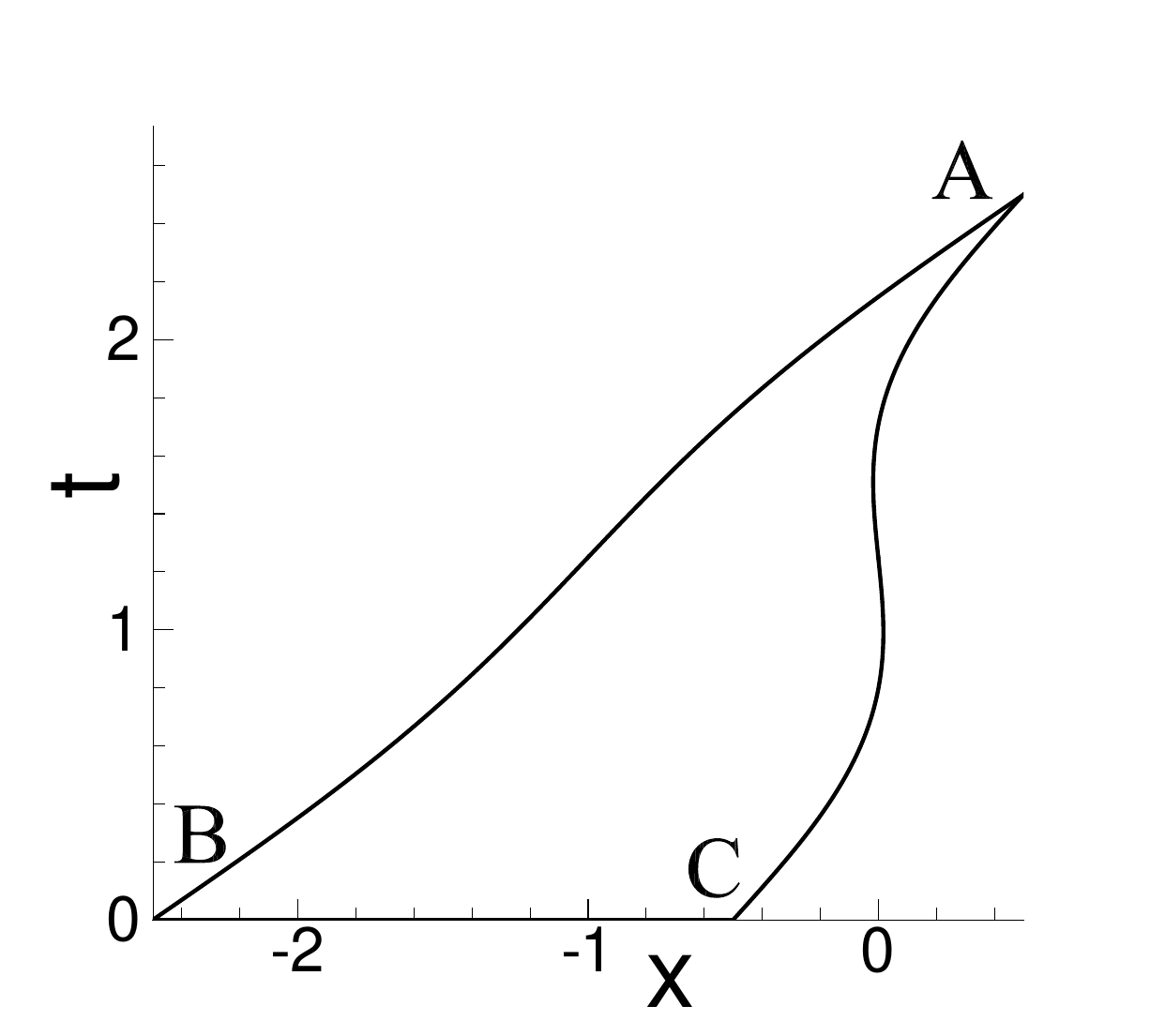}(b)
    \includegraphics[width=1.1in]{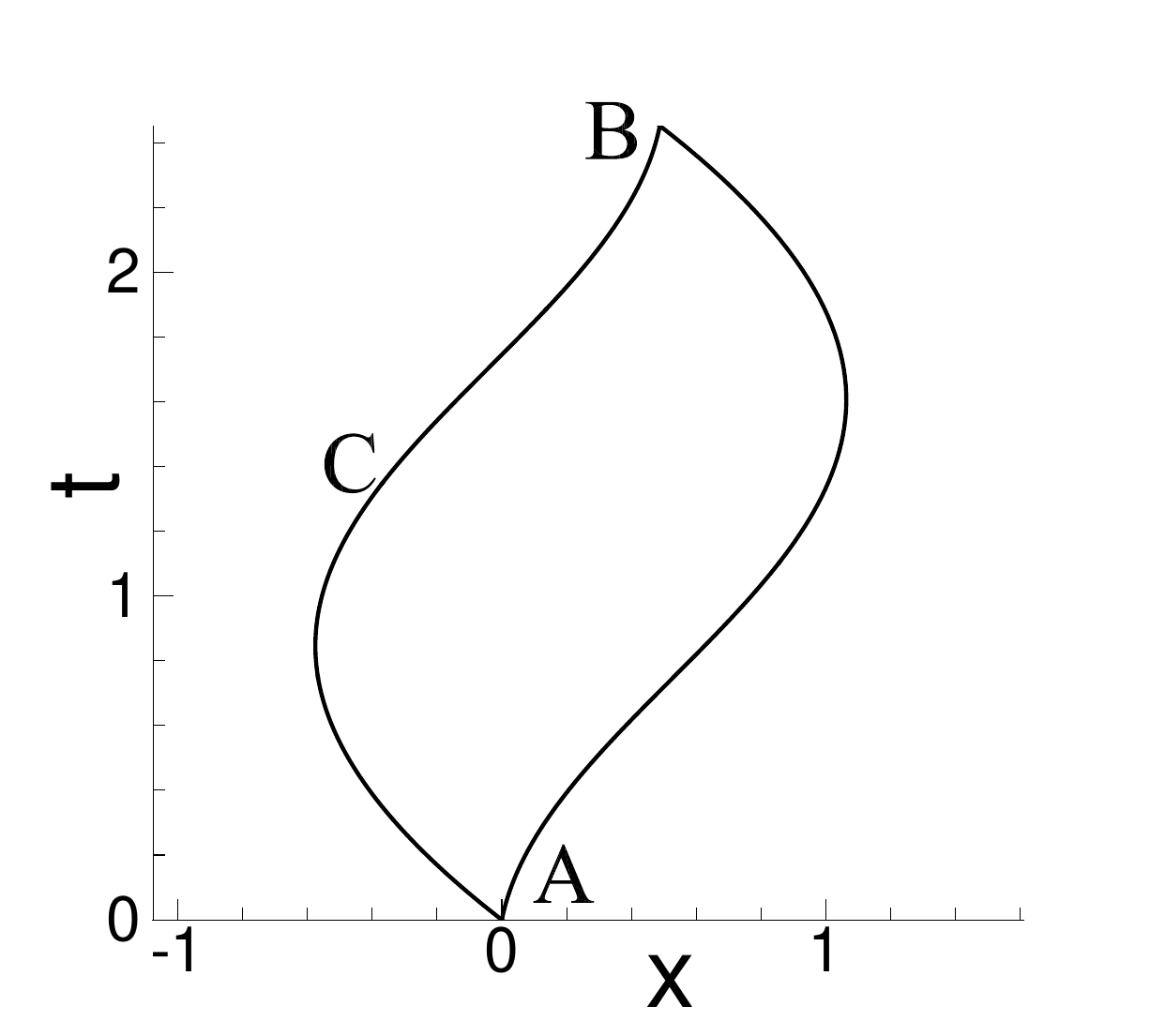}(c)
    \includegraphics[width=1.1in]{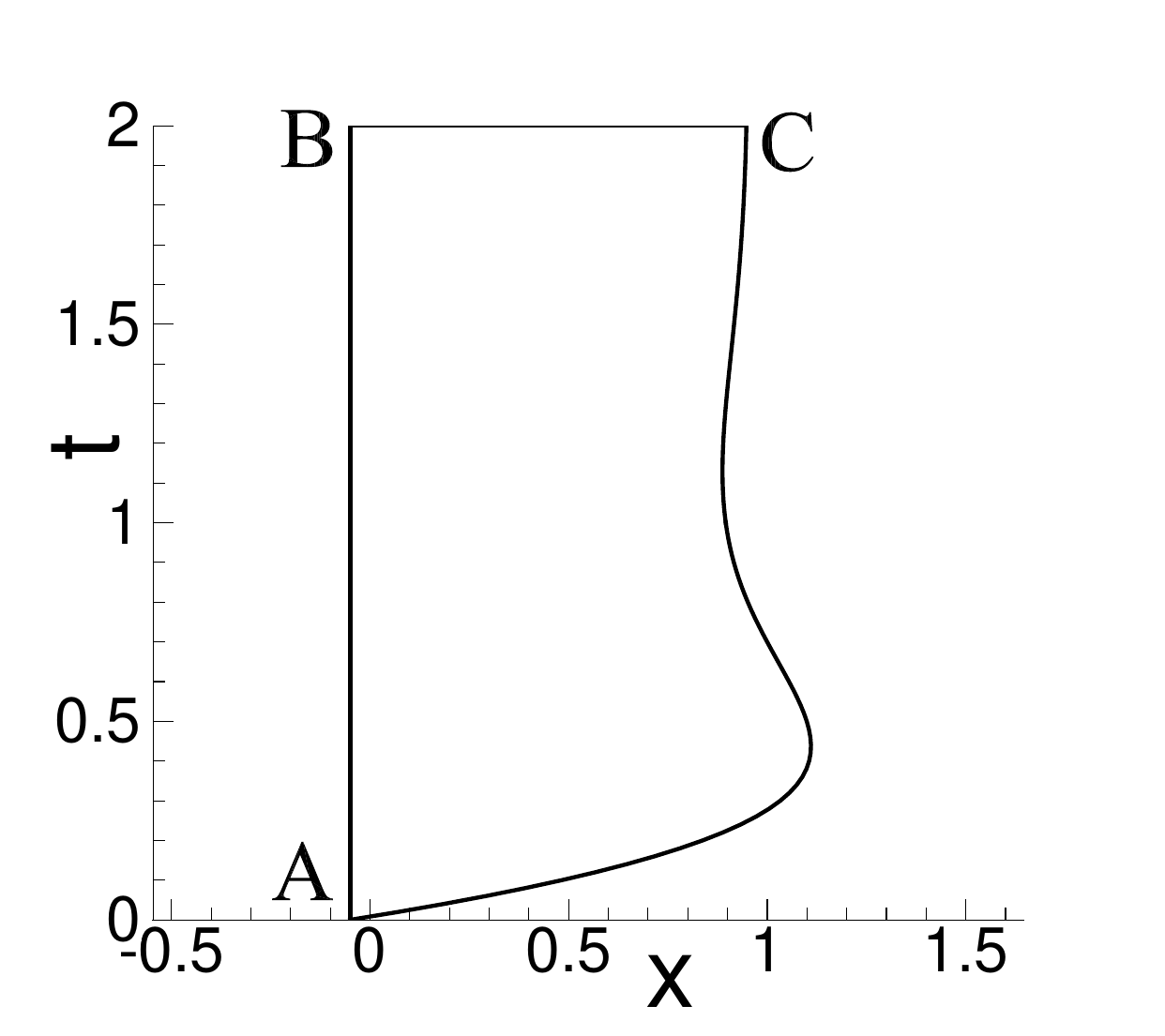}(d)
  }
  \centerline{
    \includegraphics[width=1.1in]{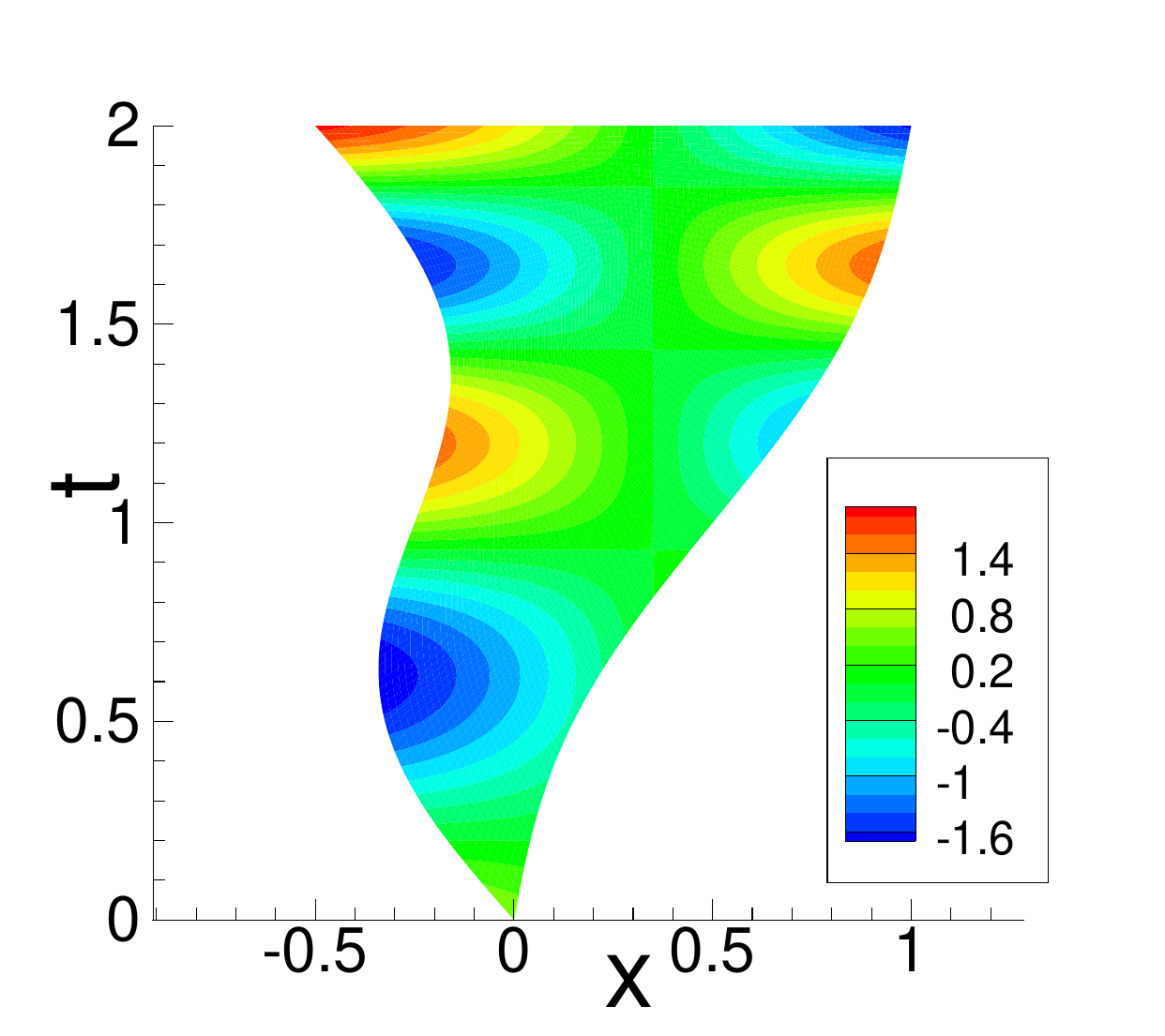}(e)
    \includegraphics[width=1.1in]{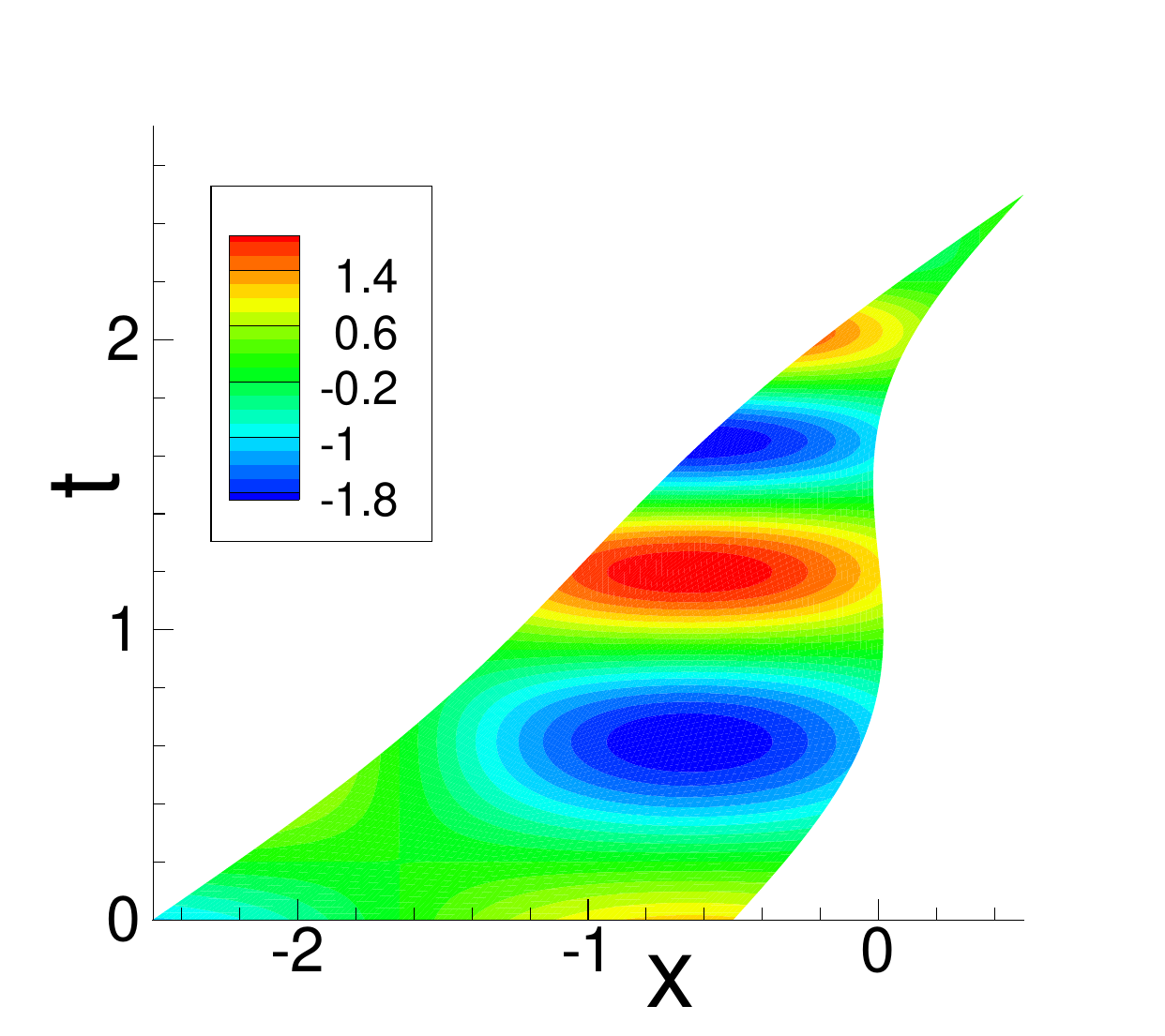}(f)
    \includegraphics[width=1.1in]{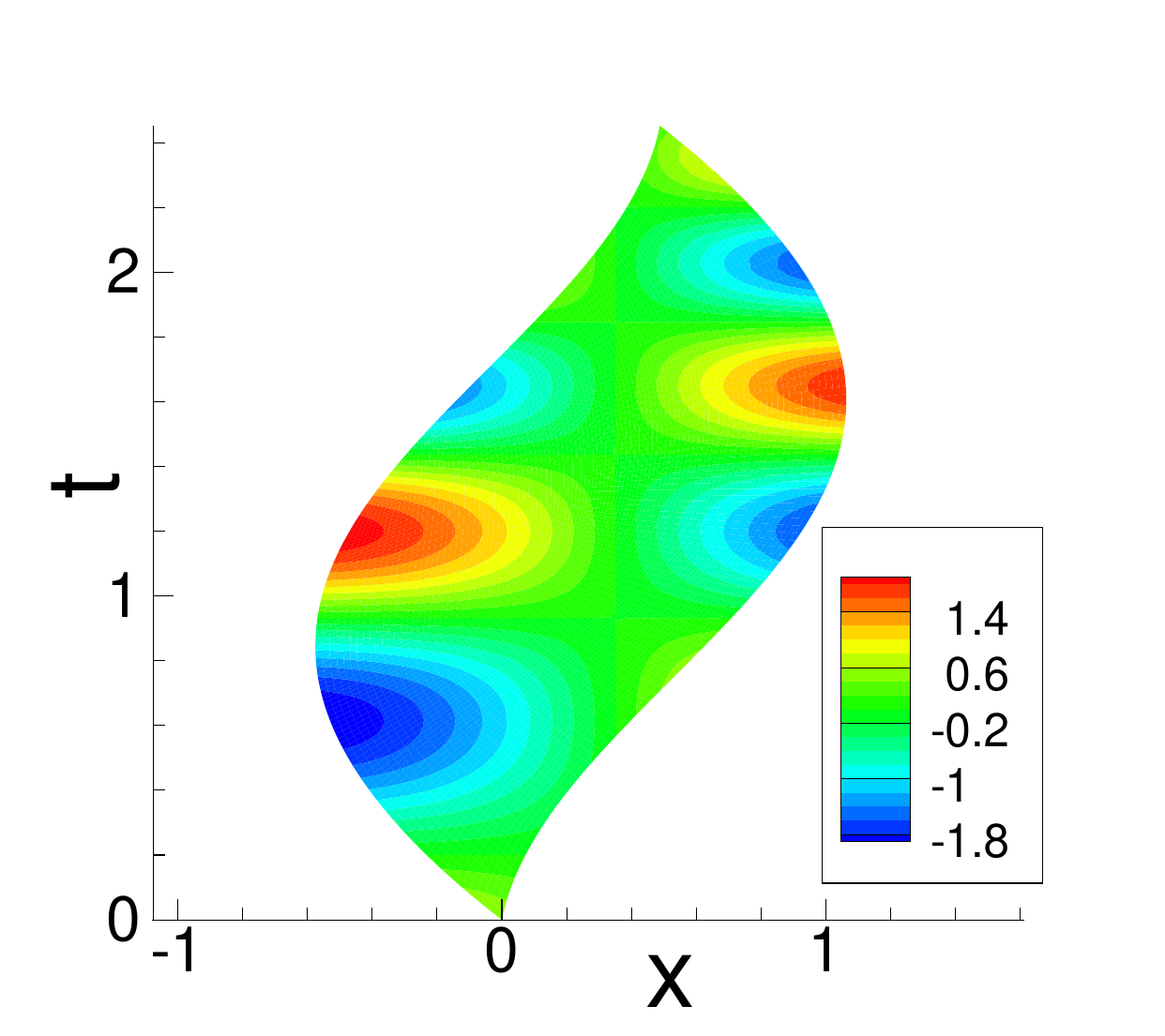}(g)
    \includegraphics[width=1.1in]{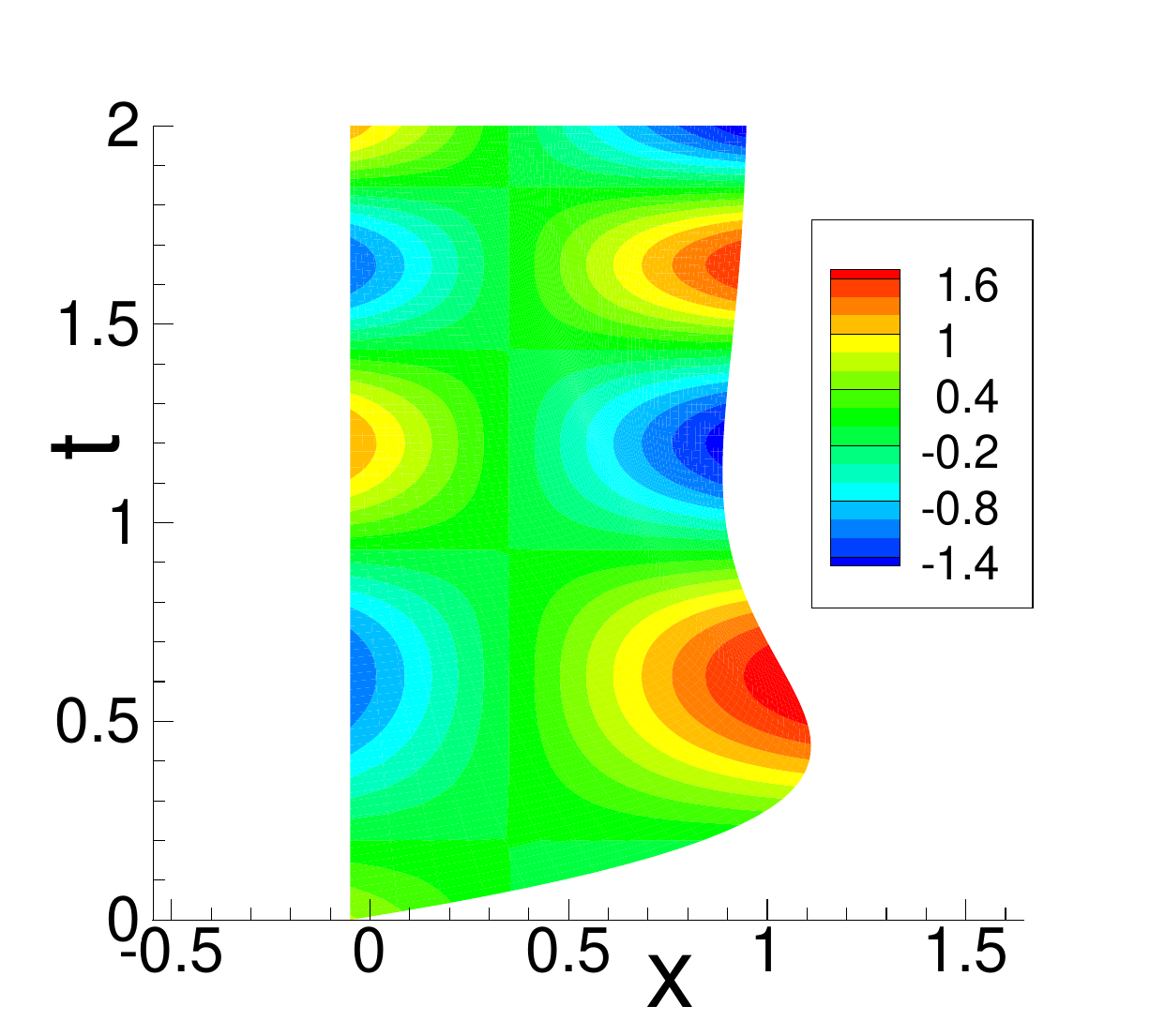}(h)
  }
  \caption{Burgers equation (time-evolving spatial domains):
    Domain geometries (top row)
    and distributions of the exact solution (bottom row) in the space-time plane:
    (a,e) Config \#1, DBCs on $\overline{AB}$ and $\overline{AC}$, no boundary condition
    on $\overline{BC}$; (b,f) Config \#2, DBCs on $\overline{AB}$, $\overline{BC}$ and
    $\overline{AC}$; 
    (c,g) Config \#3, DBCs on $\overline{AB}$, $\overline{BC}$ and
    $\overline{AC}$;  (d,h) Config \#4, NBC on $\overline{AB}$, DBC on $\overline{AC}$,
    and no boundary condition on $\overline{BC}$.
  }
  \label{fg_9}
\end{figure}

As another example we investigate a dynamic problem with
the viscous Burgers equation on one-dimensional (1D)  spatial domains
that evolve over time. Specifically, we consider the following IBVP,
\begin{subequations}
\begin{align}
  & \frac{\partial u}{\partial t} + u\frac{\partial u}{\partial x} = \nu\frac{\partial^2u}{\partial x^2}
  + f(x,t), \quad x\in D(t)=(a(t), b(t)), \ t\in(0,t_f), \\
  & u(x=a(t),t) = u_a(t), \quad t\in[0,t_f], \\
  & u(x=b(t),t) = u_b(t), \quad t\in[0,t_f], \\
  & u(x,t=0) = u_{in}(x), \quad x\in[a(0), b(0)]=[x_a,x_b],
\end{align}
\end{subequations}
where $u(x,t)$ is the field function to be computed, $\nu=0.01$ is the diffusivity,
$f(x,t)$ is a prescribed source term, and $a(t)$ and $b(t)$ are
prescribed functions that describe the evolution of the spatial domain $D(t)=[a(t),b(t)]$
for $t\in[0,t_f]$, with $t_f$ denoting the time horizon.
$u_a(t)$, $u_b(t)$ and $u_{in}(x)$ denote the prescribed boundary conditions or initial condition,
which must be compatible with one another at $t=0$ 
with $u_a(0)=u_{in}(x_a)$ and $u_b(0)=u_{in}(x_b)$, and also at $t=t_f$ if $D(t_f)$
becomes degenerate with $a(t_f)=b(t_f)$.
We choose $f$ and the initial/boundary conditions appropriately such that
this problem admits the solution,
\begingroup\small\begin{align}\label{eq_b126}
  & u(x,t) = 2\cos\left(\frac{\pi}{2}x + \frac{13\pi}{40} \right)
  \cos\left(\frac{\pi}{2}t^2 + \frac{4\pi}{5}t + \frac{8\pi}{25} \right).
\end{align}\endgroup


For this problem we focus on those cases in which the domain $D(t)$ becomes degenerate
at either $t=0$ or $t=t_f$ (or at both points), leading to $a(0)=b(0)$ or
$a(t_f)=b(t_f)$ (or both).
Physically, this corresponds to an evolving spatial domain $D(t)=[a(t),b(t)]$
that starts out as a point and expands over time or a spatial domain
that shrinks over time and eventually becomes a point at $t=t_f$.
We specifically consider the four configurations as illustrated by
Figure \ref{fg_9} (top row) in the space-time plane.
They are defined as follows.
\begin{itemize}
\item Config \#1: (see Figure \ref{fg_9}(a))
  \begin{align}\left\{\begin{array}{l}
     a(t) = x_A\left(1-\frac{t}{t_f} \right) + x_C\frac{t}{t_f}
    - \frac{1}{10}\sin\frac{2\pi t}{t_f}, \\
    b(t) = x_A\left(1-\frac{t}{t_f} \right) + x_B \frac{t}{t_f}
     - \frac15\sin\frac{2\pi t}{t_f}, \quad t\in[0,t_f],
  \end{array}\right.\end{align}
  with the following parameter values,
  \begin{align}
    & x_A=0, \quad x_B=1.0, \quad x_C=-0.5, \quad t_f=2.0.
  \end{align}
  This domain, $D(t)=[a(t),b(t)]$, is degenerate at $t=0$, leading to
  $x_a=x_b=a(0)=b(0)=x_A$, and it expands over time. The boundary distributions
  $u_a(t)$ and $u_b(t)$ are prescribed according to equation \eqref{eq_b126},
  and $u_{in}(x_A)=u_a(0)=u_b(0)$.

\item
  Config \#2: (see Figure \ref{fg_9}(b))
  \begin{align}
    \left\{\begin{array}{l}
    a(t) = X_1\frac{t}{t_f} + X_2\left(1-\frac{t}{t_f} \right)
    + 0.1\sin\frac{2\pi t}{t_f} - 0.5, \\
    b(t) = X_1\frac{t}{t_f} + X_3\left(1-\frac{t}{t_f} \right)
    + 0.2\sin\frac{2\pi t}{t_f} - 0.5, \quad t\in[0,t_f],
    \end{array}\right.
  \end{align}
  with the following parameter values,
  \begin{align}
    & X_1=1.0, \quad X_2=-2.0, \quad X_3=0.0, \quad t_f=2.5.
  \end{align}
  This domain shrinks over time and becomes degenerate at $t=t_f$.

\item Config \#3: (see Figure \ref{fg_9}(c))
  The boundary curves in the space-time plane are given by the following parametric
  forms,
  \begin{subequations}
    \begin{align}
      & \overline{AB}: \ \left\{ \begin{array}{l}
        x(\xi) = -\frac52\psi_1(\xi)\cos\frac{\pi}{16} + \frac52\phi_1(\xi)\sin\frac{\pi}{16}, \\
        t(\xi) = \frac52\psi_1(\xi)\sin\frac{\pi}{16} - \frac52\phi_1(\xi)\cos\frac{\pi}{16},
        \quad \xi\in[-1,1];
      \end{array}\right. \\
      & \overline{AC}: \ \left\{ \begin{array}{l}
        x(\xi) = -\frac52\psi_0\left(\phi_1(\xi)-1\right)\cos\frac{\pi}{16} + \frac54\phi_1(\xi)\sin\frac{\pi}{16}, \\
        t(\xi) = \frac52\psi_0\left(\phi_1(\xi)-1\right)\sin\frac{\pi}{16} + \frac54\phi_1(\xi)\cos\frac{\pi}{16},
        \quad \xi\in[-1,1];
      \end{array}\right. \\
      & \overline{BC}: \ \left\{ \begin{array}{l}
        x(\xi) = -\psi_0\left(\phi_0(\xi)\right)\cos\frac{\pi}{16}
        + \left[\frac52\phi_0(\xi)+\frac54\phi_1(\xi)\right]\sin\frac{\pi}{16}, \\
        t(\xi) = \psi_0\left(\phi_0(\xi)\right)\sin\frac{\pi}{16}
        + \left[\frac52\phi_0(\xi)+\frac54\phi_1(\xi)\right]\cos\frac{\pi}{16},
        \quad \xi\in[-1,1].
      \end{array}\right.
    \end{align}
  \end{subequations}
  In the above equations $\psi_0(\xi)$ and $\psi_1(\xi)$ are defined in \eqref{eq_29},
  and $\phi_0(\xi)$ and $\phi_1(\xi)$ are defined in \eqref{eq_5}.
  The spatial domain $D(t)$ for this configuration is degenerate at both $t=0$
  and $t=t_f=\frac52$.

\item Config \#4: (see Figure \ref{fg_9}(d))
  \begin{align}
    &\left\{\begin{array}{l}
       a(t) = -0.05, \\
       b(t) = -0.05 + 2e^{-3t}\sin(\pi t) + \left(1-e^{-3t} \right)\tanh(3t), \quad
       t\in[0,2].
       \end{array}\right.
  \end{align}
  The spatial domain $D(t)=[a(t),b(t)]$ 
  is degenerate at $t=0$. For this configuration we 
  consider a different set of boundary conditions.
  Here we impose a Neumann condition on the left boundary $x=a(t)=-0.05$ and the
  Dirichlet condition on the right boundary $x=b(t)$.
  
\end{itemize}


We employ a space-time approach to solve this problem, and treat the time
variable $t$ on the same footing as the space variable $x$. The problem is
then effectively reduced to a two-dimensional boundary-value problem
in the space-time plane.
Figures \ref{fg_9}(a,b,c,d) illustrate the space-time domain geometries
corresponding to these four configurations.
For Config \#1, Dirichlet conditions are imposed on $\overline{AB}$ and $\overline{AC}$,
with no boundary condition on $\overline{BC}$, which corresponds to the final time
instant $t=t_f$. For Configs \#2 and \#3, Dirichlet conditions
are imposed on all three boundaries $\overline{AB}$, $\overline{AC}$ and $\overline{BC}$.
For Config \#4, Neumann and Dirichlet conditions are imposed on
$\overline{AB}$ and $\overline{AC}$, respectively, with no boundary
condition on $\overline{BC}$.
Figures \ref{fg_9}(e,f,g,h) illustrate the distributions of the exact solution
for these four configurations in the space-time plane.

\begin{figure}
  \centerline{
    \includegraphics[width=1.1in]{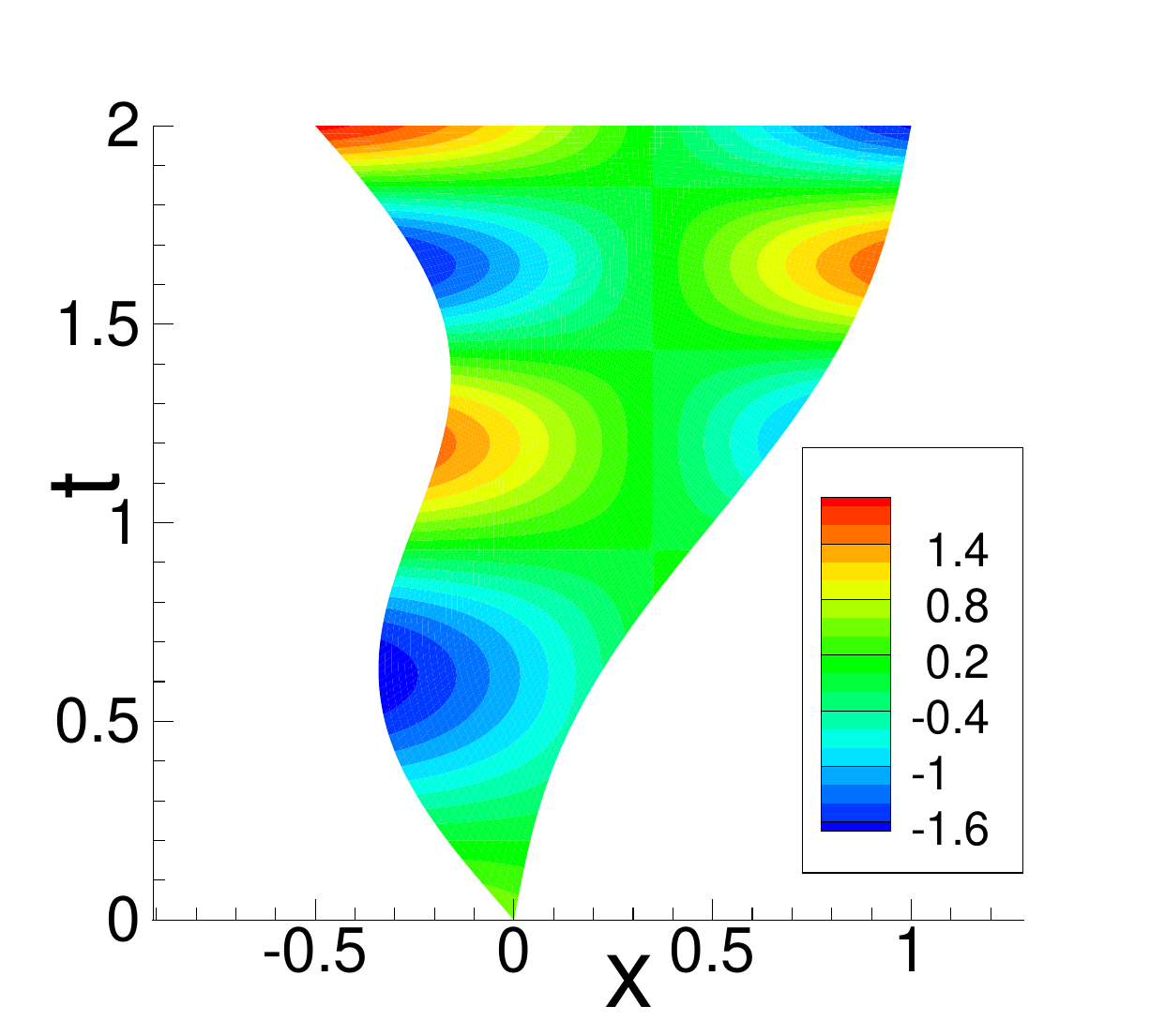}(a)
    \includegraphics[width=1.1in]{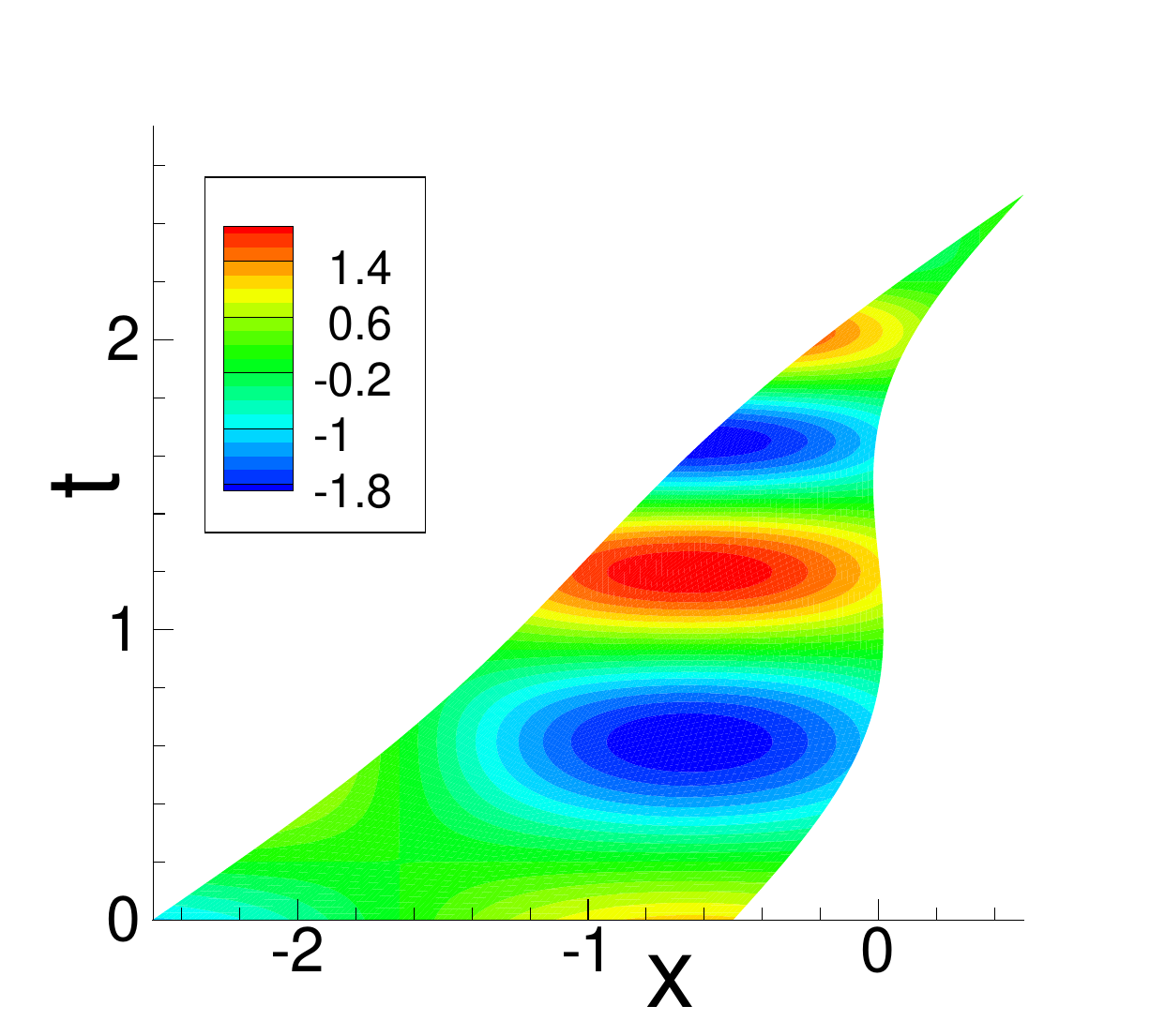}(b)
    \includegraphics[width=1.1in]{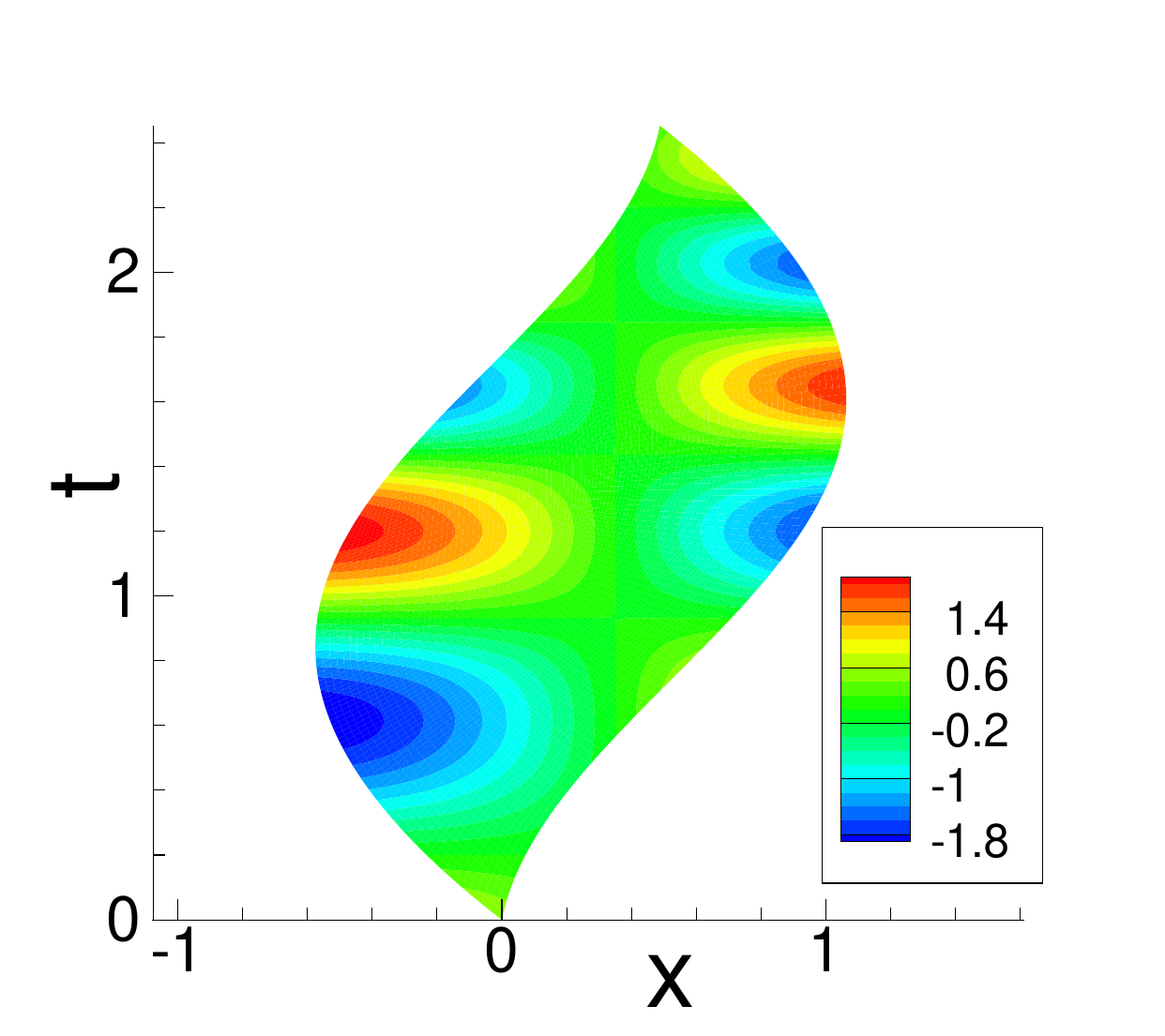}(c)
    \includegraphics[width=1.1in]{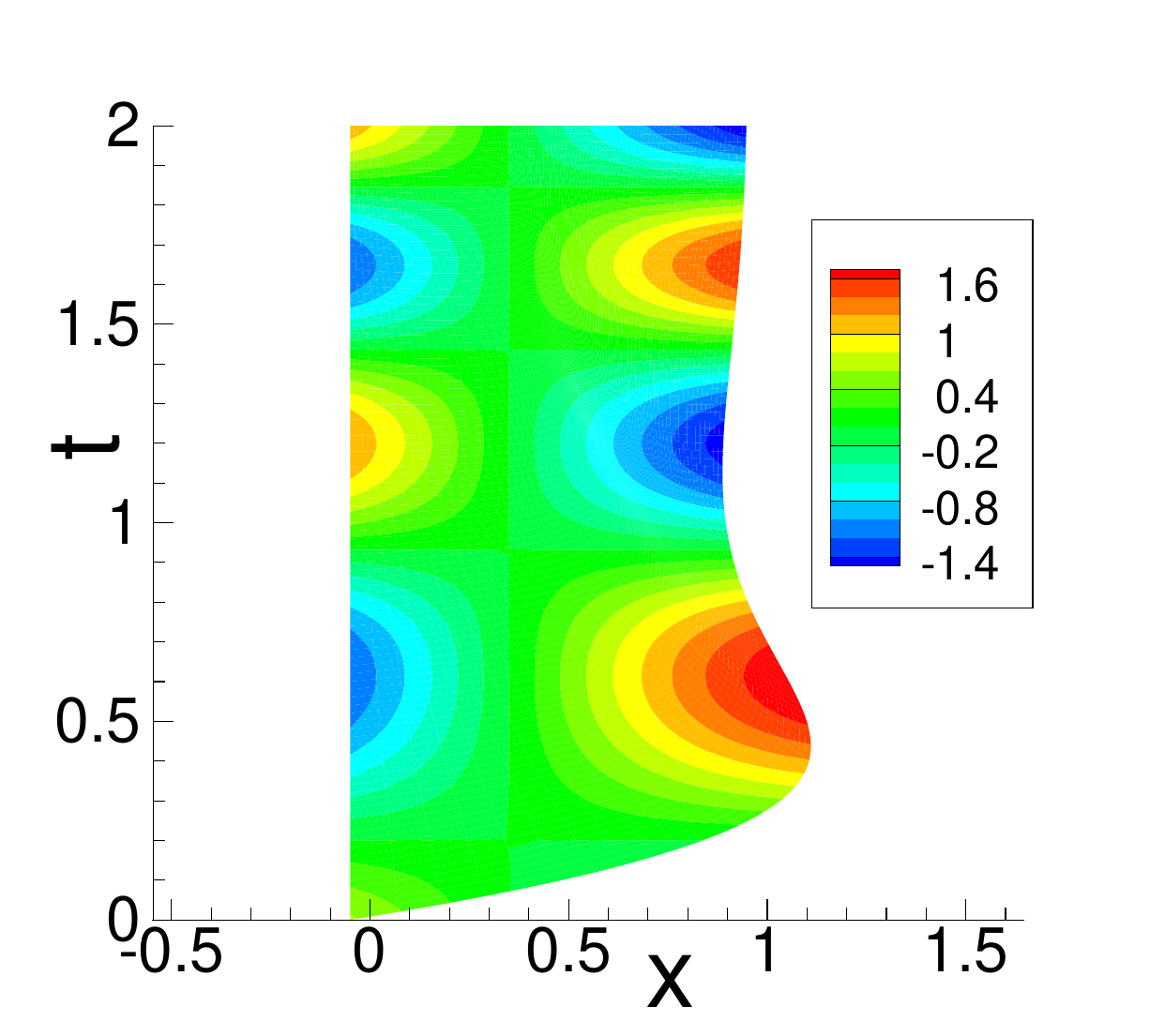}(d)
  }
  \centerline{
    \includegraphics[width=1.1in]{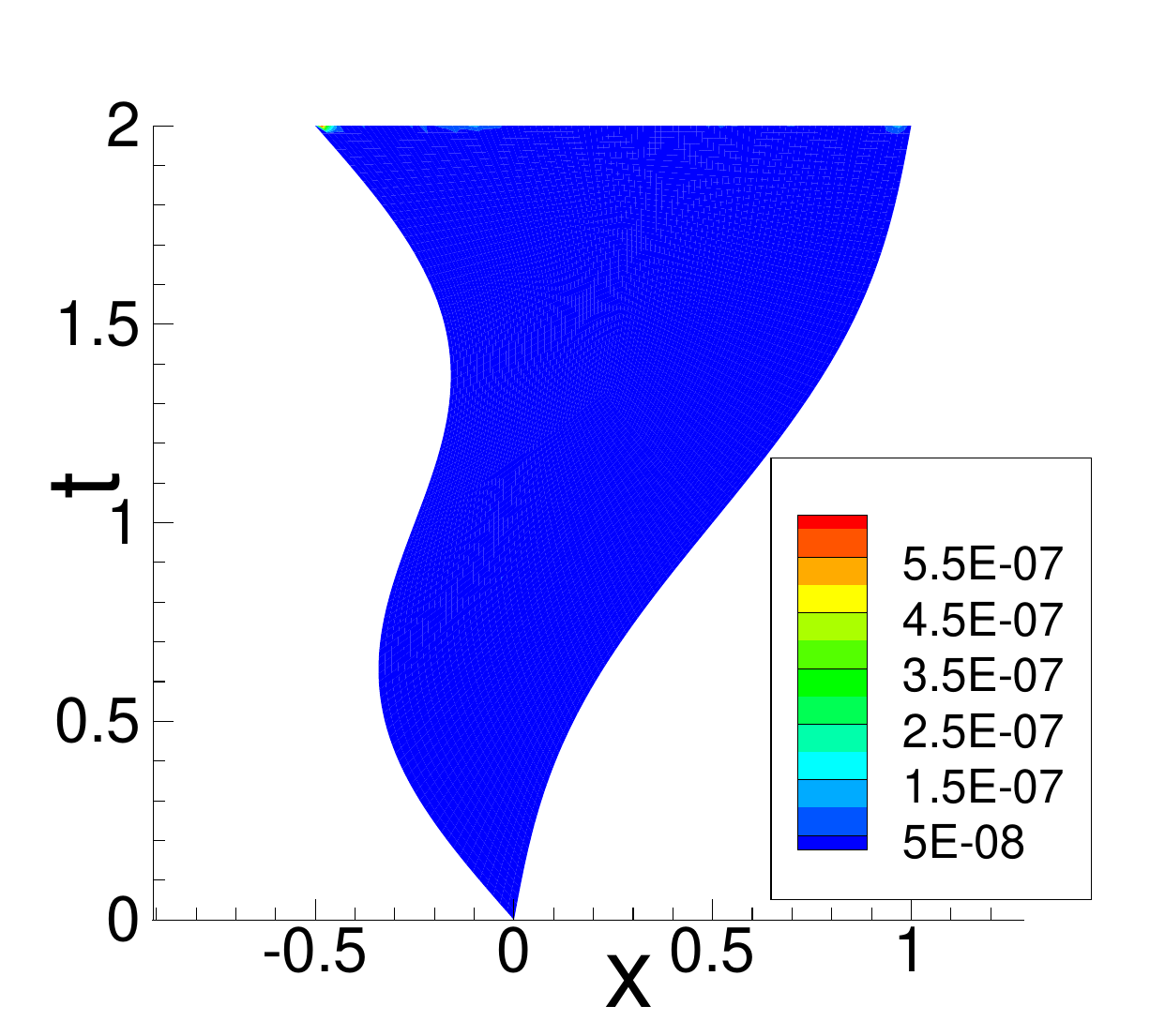}(e)
    \includegraphics[width=1.1in]{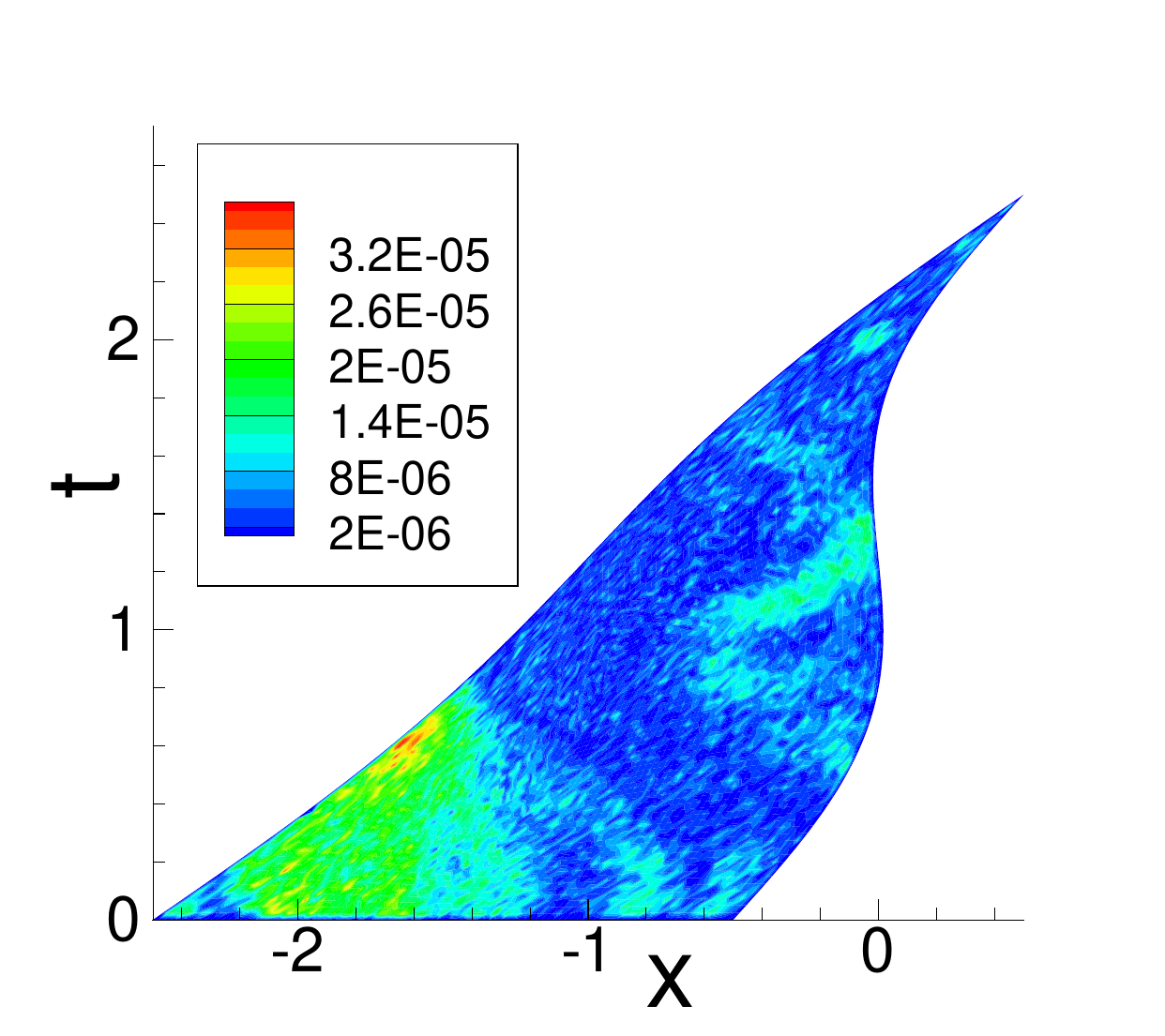}(f)
    \includegraphics[width=1.1in]{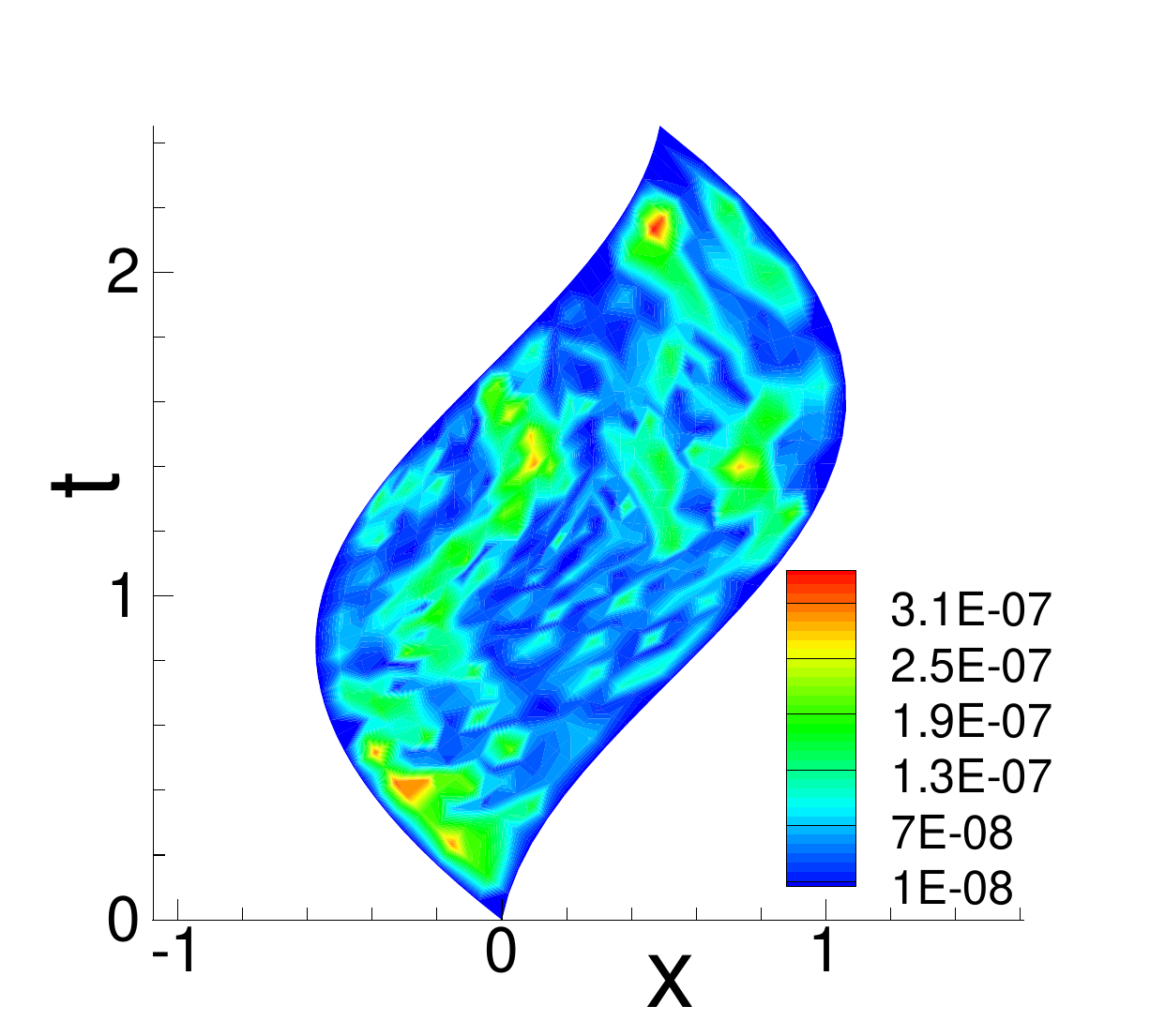}(g)
    \includegraphics[width=1.1in]{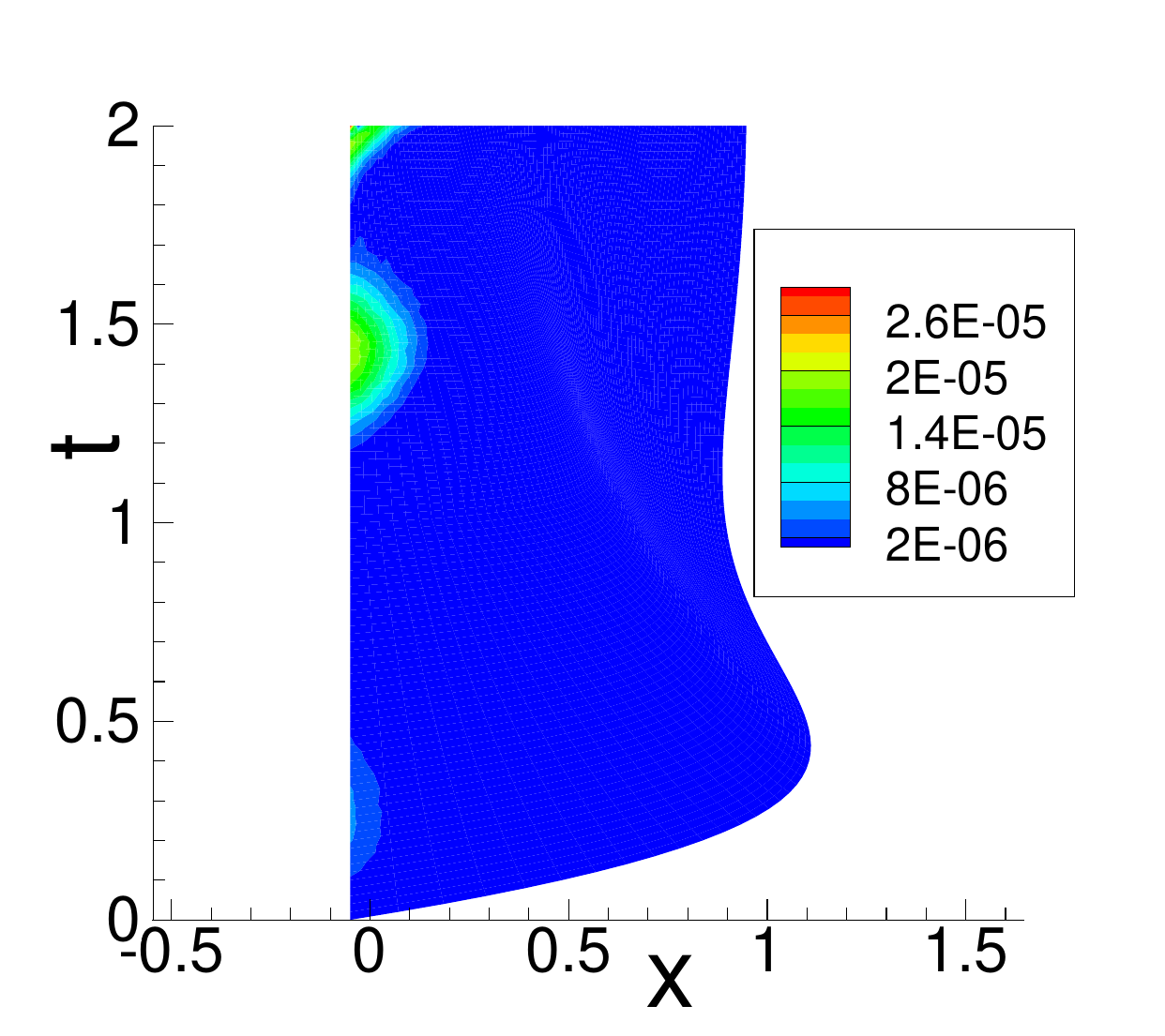}(h)
  }
  \caption{Burgers equation: Distributions of
    the NN solutions (top row) and their point-wise absolute errors (bottom row)
    in the space-time plane for the four configurations.
    Simulation parameters: (a,e): Config \#1, $R_m=4.0$, $Q=100$, $M=400$,
    Winslow map ($p_w=2$, $Q_w=200$);
    (b,f): Config \#2, $R_m=3.5$, $Q=100$, $M=400$, Winslow map ($p_w=1$, $Q_w=100$);
    (c,g) Config \#3, $R_m=4.2$, $Q=100$, $M=400$, Winslow map ($p_w=0$, $Q_w=100$);
    (d,h) Config \#4, $R_m=2.9$, $Q=100$, $M=500$, Winslow map ($p_w=0$, $Q_w=100$);
    Different configurations correspond to those as defined in Figure \ref{fg_9}.
  }
  \label{fg_10}
\end{figure}

\begin{table}[tb]
  \centering\small
  \begin{tabular}{l|cccc}
    \hline
    & Config \#1 & Config \#2 & Config \#3 & Config \#4  \\ \hline
    max $u$-error (domain) & $6.205\times 10^{-7}$ & $3.975\times 10^{-5}$ &
    $4.210\times 10^{-7}$ & $3.006\times 10^{-5}$  \\
    rms $u$-error (domain) & $1.898\times 10^{-8}$ & $9.017\times 10^{-6}$ &
    $1.049\times 10^{-7}$ & $2.100\times 10^{-6}$  \\ \hline
    max bc-error ($\overline{AB}$) & $2.220\times 10^{-16}$ & $2.220\times 10^{-16}$ &
    $4.441\times 10^{-16}$ & $1.776\times 10^{-15}$  \\
    rms bc-error ($\overline{AB}$) & $6.101\times 10^{-17}$ & $6.809\times 10^{-17}$ &
    $9.322\times 10^{-17}$ & $3.823\times 10^{-16}$  \\ \hline
    max bc-error ($\overline{BC}$) & -- & $3.331\times 10^{-16}$ & $4.441\times 10^{-16}$ & --  \\
    rms bc-error ($\overline{BC}$) & -- & $8.534\times 10^{-17}$ & $1.046\times 10^{-16}$ & --  \\ \hline
    max bc-error ($\overline{AC}$) & $2.220\times 10^{-16}$ & $2.220\times 10^{-16}$ &
    $4.441\times 10^{-16}$ & $2.220\times 10^{-16}$  \\
    rms bc-error ($\overline{AC}$) & $9.149\times 10^{-17}$ & $6.145\times 10^{-17}$ &
    $1.280\times 10^{-16}$ & $9.166\times 10^{-17}$  \\
    \hline
  \end{tabular}
  \caption{Burgers equation:
    Maximum/rms domain errors of the NN solution
    and the maximum/rms boundary-condition errors (DBC, NBC) for the four configurations.
    Different configurations correspond to those in Figures \ref{fg_9} and \ref{fg_10}.
    In Configs \#1 and \#4, note that no boundary condition is imposed on $\overline{BC}$.
  }
  \label{tab_6}
\end{table}

The characteristics of the NN results corresponding to these configurations
are shown in Figure \ref{fg_10}, with distributions of the obtained
NN solutions and their point-wise absolute errors in the space-time plane.
The simulation parameters are again included in the figure caption.
The NN method has captured the solution accurately, and the maximum domain
errors range from the level of $10^{-7}$ for Configs \#1 and \#3 to the
level of $10^{-5}$ for Configs \#2 and \#4.

Table \ref{tab_6} provides quantitative information about the NN solution
accuracy and boundary-condition accuracy, listing the maximum/rms
domain errors  and the maximum/rms boundary-condition
errors corresponding to the four configurations. 
One can observe that the current method has enforced the boundary conditions
to the machine precision for these time-evolving domains, resulting in
numerical boundary-condition errors on the order of $10^{-16}$.


\section{Concluding Remarks}
\label{sec_summary}

We have developed a method for the exact enforcement of Dirichlet, Neumann,
and Robin boundary conditions on general triangular domains with arbitrarily curved
boundaries. The method employs an exact mapping between a general curved triangular
domain and the standard triangle. For highly distorted physical domains, a Winslow-type
mapping is computed to maintain univalency while preserving the exactness of the
domain mapping. The method provides explicit, closed-form trial ansatz functions that
mathematically satisfy the prescribed Dirichlet, Neumann, and Robin boundary
conditions exactly.

The proposed method integrates several techniques for exact boundary-condition
enforcement on curved triangular domains, including the four-step procedure
of \cite{DongZ2026}, the TFC expression $(g-Pg+PF)$, and Barnhill-type transfinite
interpolation. The enforcement of Dirichlet conditions is conceptually analogous to
the construction of the mapping between the physical and standard triangular domains.
In contrast, the exact enforcement of Neumann and Robin conditions on curved triangular
boundaries presents significant technical challenges, particularly in satisfying the
induced corner compatibility constraints, and necessitates specialized algorithmic
constructions.
In addition to Barnhill-type transfinite interpolation, two other strategies are
essential to the formulations:
\begin{itemize}
\item
When either acute vertex of the standard triangle lies on a Neumann or Robin
boundary, all conditional compatibility constraints associated with that
vertex must be reformulated in unconditional forms.

\item
When the hypotenuse of the standard triangle corresponds to a Neumann or Robin
boundary, a separate TFC-style construction is required to define an auxiliary edge
function, together with the solution of an associated linear system for the gradient
components along the hypotenuse.
\end{itemize}

The formulations developed in this paper follow three well-defined stages:
(i) transforming the prescribed boundary conditions from the physical domain to
the standard triangle; (ii) analyzing the corner compatibility constraints induced
by the transformed conditions; and
(iii) constructing a general trial ansatz that satisfies the prescribed boundary
conditions and the associated corner compatibility constraints exactly.
The four-step procedure of \cite{DongZ2026}, Barnhill transfinite
interpolation, and the two specialized strategies  for handling the
hypotenuse and acute vertices collectively provide the essential tools for the
construction in the third stage.

Using these strategies, we have analyzed the corner compatibility
constraints and developed explicit
formulations for representative boundary-condition configurations. These include
(i) DBC on all triangle boundaries; (ii) NBC on one boundary, corresponding to
the hypotenuse or leg $\overline{AC}$, with DBC on the other boundaries;
(iii) NBC on two boundaries, corresponding to either both legs or the
hypotenuse and leg $\overline{AC}$, with DBC on the third boundary;
(iv) RBC on one boundary (the hypotenuse or leg $\overline{AC}$) with DBC
on the other boundaries; (v) RBC on two boundaries (both legs or the hypotenuse
and  leg $\overline{AC}$) with DBC on the third boundary.
All the other boundary-condition configurations on the triangular domain 
can be formulated analogously using the above configurations, either individually
or in combination.
For example, consider a triangular domain with Dirichlet, Neumann, and Robin
conditions prescribed on its three respective boundaries. By treating
the Neumann condition as a Robin condition with a zero Robin coefficient,
this case reduces to configuration (v) above. If Neumann conditions are
imposed on all three boundaries, the configuration can be treated as
a combination of two cases: (a) Neumann conditions on both legs of the standard
triangle, and (b) Neumann conditions on the hypotenuse and one leg.
The compatibility constraints at each vertex can then be derived from the
analyses in Sections \ref{sec_232} and \ref{sec_n_ACBC} and
the discussion in Remark \ref{rem_53}. The corresponding trial ansatz can be
constructed analogously using the four-step procedure, Barnhill transfinite
interpolation, and the other aforementioned strategies
for handling the hypotenuse and acute vertices.


The current method for exact BC enforcement has been combined
with ELM-type randomized neural networks for solving PDE problems.
We tested the combined NN method using several linear/nonlinear, stationary/dynamic
boundary-value or initial-boundary value problems on a variety of static or
evolving physical domains
with complex boundary geometries.
Extensive numerical experiments are conducted to assess the accuracy of
the computed solutions and boundary-condition enforcement. The results demonstrate
that the combined NN method produces highly accurate solution fields, with maximum
errors over the computational domains ranging from $10^{-5}$ to $10^{-9}$ across different
geometries and PDEs. Furthermore, the proposed method enforces Dirichlet, Neumann,
and Robin boundary conditions on curved domain boundaries exactly by construction,
with numerical boundary-condition errors consistently at the level of
machine precision across all the test problems.


The technique developed in this paper is based on triangular domains and therefore
inherits certain geometric limitations of this approach. Specifically, it applies to
simply connected two-dimensional domains with at most three boundary corners.
When applied to PDE problems, appropriate boundary regularity is assumed
according to the requirements of the underlying PDEs and boundary conditions.
Extending this and related approaches to domains with more general boundary geometries
is a subject for future investigation.

Despite these limitations, the proposed method provides an effective approach for
a broad class of problems involving complex boundary geometries. The
triangular-domain-based method developed here complements its quadrilateral-domain-based
counterpart in~\cite{DongZ2026}.
Together, these approaches may provide a foundation for developing more general and
powerful methods capable of handling increasingly complex domain geometries and
boundary-condition configurations.


\section{Appendix: Additional Formulations and Geometric Parameters}
\label{sec_append}

\subsection{Formulations with Hypotenuse Being a Neumann or Robin Boundary}

When the hypotenuse of  $\Omega_{st}$ corresponds to
a Neumann or Robin boundary, the formulation entails a separate TFC-style
construction of an auxiliary
edge function on the hypotenuse and an associated linear solve for $(V_{\xi},V_{\eta})$ there.
This  characteristic distinguishes the constructions for this set of configurations.

\subsubsection{Sole Neumann Condition Imposed on the Hypotenuse}
\label{sec_2312}

In this appendix we consider the configuration in which the sole Neumann boundary
corresponds to $\overline{BC}$, the hypotenuse of the
standard triangle $\Omega_{st}$.
The notations below follow those in the main text.

Consider the general triangular domain $\Omega$ in Figure~\ref{fg_1}(a)
and a scalar function $u(\mbs x)$ ($\mbs x\in\Omega$) satisfying,
\begingroup\small\begin{align}\label{eq_53}
    & u|_{\mbs x\in\overline{AB}} = u_{AB}(\mbs x),  \quad
     \mbs n\cdot\nabla u|_{\mbs x\in\overline{BC}} = u_{nBC}(\mbs x), \quad
     u|_{\mbs x\in\overline{AC}} = u_{AC}(\mbs x),
\end{align}\endgroup
where $u_{nBC}(\mbs x)$ denotes the prescribed Neumann data on $\overline{BC}$.
Using the map $\mbs x(\xi,\eta)$ from Section~\ref{sec_21},
we transform the Dirichlet conditions in~\eqref{eq_53}
into~\eqref{eq_16a}--\eqref{eq_16b}.
The Neumann condition is transformed into,
\begingroup\small\begin{align}\label{eq_54}
  R_{BC}(\eta) V_{\xi}(-\eta,\eta) + [1-R_{BC}(\eta)]V_{\eta}(-\eta,\eta)
  = T_{BC}(\eta), \quad \eta\in[-1,1],
\end{align}\endgroup
where 
\begingroup\small\begin{align}\label{eq_55}
  \left\{
    \begin{array}{l}
    R_{BC}(\eta) = K_{xBC}(\eta)W_{BC}(\eta)
    = \frac{[-\mbs x_{\xi}(-\eta,\eta)+\mbs x_{\eta}(-\eta,\eta)]
      \cdot\mbs x_{\eta}(-\eta,\eta)}{\|-\mbs x_{\xi}(-\eta,\eta)+\mbs x_{\eta}(-\eta,\eta) \|^2},\ 
    \mbs n_{BC}(\eta)=
    \begin{bmatrix}\frac{-y_{\xi}(-\eta,\eta)+y_{\eta}(-\eta,\eta)}
      {\|-\mbs x_{\xi}(-\eta,\eta)+ \mbs x_{\eta}(-\eta,\eta) \|}
      \\ -\frac{-x_{\xi}(-\eta,\eta)+x_{\eta}(-\eta,\eta)}
      {\|-\mbs x_{\xi}(-\eta,\eta)+ \mbs x_{\eta}(-\eta,\eta) \|}  \end{bmatrix},
    \\
    W_{BC}(\eta) = \frac{1}{K_{xBC}(\eta)+K_{yBC}(\eta) }
    = \frac{\text{det}\mbs J(-\eta,\eta)}{\|-\mbs x_{\xi}(-\eta,\eta)
      + \mbs x_{\eta}(-\eta,\eta) \|}, \quad
    T_{BC}(\eta) = u_{nBC}(\mbs x(-\eta,\eta))W_{BC}(\eta),
    \\
    \begin{bmatrix}K_{xBC}(\eta) \\ K_{yBC}(\eta) \end{bmatrix}
    = \mbs J^{-1}(-\eta,\eta)\mbs n_{BC}(\eta)
    = \frac{1}{\text{det}\mbs J(-\eta,\eta)}
    \begin{bmatrix} \frac{[-\mbs x_{\xi}(-\eta,\eta)+\mbs x_{\eta}(-\eta,\eta)]
      \cdot\mbs x_{\eta}(-\eta,\eta)}{\|-\mbs x_{\xi}(-\eta,\eta)+\mbs x_{\eta}(-\eta,\eta) \|}  \\
      - \frac{[-\mbs x_{\xi}(-\eta,\eta)+\mbs x_{\eta}(-\eta,\eta)]
      \cdot\mbs x_{\xi}(-\eta,\eta)}{\|-\mbs x_{\xi}(-\eta,\eta)+\mbs x_{\eta}(-\eta,\eta) \|}
    \end{bmatrix}.
    \end{array}\right.
  \end{align}\endgroup
Noting that $\mbs x_{BC}'(\eta)=-\mbs x_{\xi}(-\eta,\eta)+\mbs x_{\eta}(-\eta,\eta)$,
we have
\begingroup\small\begin{align}\label{eq_n112}
  & \overline{AB}\perp\overline{BC}\ \text{at}\ B \Longleftrightarrow
  R_{BC}(-1)=1; \qquad
  \overline{AC}\perp\overline{BC}\ \text{at}\ C \Longleftrightarrow
  R_{BC}(1)=0.
\end{align}\endgroup


The Neumann condition~\eqref{eq_54} and the Dirichlet
conditions~\eqref{eq_16a} and \eqref{eq_16b} must be compatible with one another.
The continuity of $V(\xi,\eta)$ at the three vertices leads to
\begin{subequations}\small
  \begin{align}
    & V(-1,-1) = F(-1,-1) = \lim_{\eta\rightarrow -1}F(-1,\eta)
    =\lim_{\xi\rightarrow -1}F(\xi,-1) = u_A, \label{eq_57a}  \\
    & V(1,-1) = F(1,-1) := \lim_{\xi\rightarrow 1} F(\xi,-1), \label{eq_57b}  \\
    & V(-1,1) = F(-1,1) := \lim_{\eta\rightarrow 1} F(-1,\eta).  \label{eq_57c}
  \end{align}
\end{subequations}
Differentiating equation~\eqref{eq_16a} w.r.t.~$\xi$ and evaluating it at $\xi=1$
leads to
\begingroup\small\begin{align}\label{eq_58}
  & V_{\xi}(1,-1) = F_{\xi}(1,-1) := \lim_{\xi\rightarrow 1}F_{\xi}(\xi,-1).
\end{align}\endgroup
Combine this equation with~\eqref{eq_54} evaluated at $\eta=-1$ and we have
\begin{subequations}\small
  \begin{align}
    & T_{BC}(-1) - F_{\xi}(1,-1)= 0, \qquad \text{if}\ \overline{AB}\perp\overline{BC}\
    \text{at}\ B; \label{eq_59a}  \\
    & V_{\eta}(1,-1) = \frac{T_{BC}(-1)-R_{BC}(-1)F_{\xi}(1,-1)}{1-R_{BC}(-1)}=:F_{\eta}^a(1,-1),
     \quad
    \text{if}\ \overline{AB}\not\perp\overline{BC}\ \text{at}\ B. \label{eq_59b}
  \end{align}
\end{subequations}
Differentiating equation~\eqref{eq_16b} w.r.t.~$\eta$ and evaluating it at $\eta=1$
leads to
\begingroup\small\begin{align}\label{eq_60}
  & V_{\eta}(-1,1) = F_{\eta}(-1,1) := \lim_{\eta\rightarrow 1}F_{\eta}(-1,\eta).
\end{align}\endgroup
Combine this equation with~\eqref{eq_54} evaluated at $\eta=1$ and we have
\begin{subequations}\label{eq_61}\small
  \begin{align}
    & T_{BC}(1) - F_{\eta}(-1,1) = 0, \qquad \text{if}\ \overline{AC}\perp\overline{BC}\
    \text{at}\ C; \label{eq_61a}  \\
    & V_{\xi}(-1,1) = \frac{T_{BC}(1)-[1-R_{BC}(1)]F_{\eta}(-1,1)}{R_{BC}(1)}=:F_{\xi}^a(-1,1),
    \quad
    \text{if}\ \overline{AC}\not\perp\overline{BC}\ \text{at}\ C. \label{eq_61b}
  \end{align}
\end{subequations}
Note that equations~\eqref{eq_59a} and \eqref{eq_61a} are constraints on
the prescribed Dirichlet/Neumann data, which are assumed to be
always satisfied.

Equations~\eqref{eq_59b} and~\eqref{eq_61b} are conditional constraints at vertices $B$ and $C$.
We transform them into equivalent unconditional forms,
\begin{subequations}\small
  \begin{align}
    & V_{\eta}(1,-1) = (1-\lambda_B)\Gamma_B + \lambda_B F_{\eta}^a(1,-1)
    =\left\{ \begin{array}{ll} \Gamma_B, & \overline{AB}\perp\overline{BC}\ \text{at}\ B, \\
      F_{\eta}^a(1,-1), & \text{otherwise}
    \end{array} \right\} =: F_{\eta}(1,-1), \label{eq_62a}
    \\
    & V_{\xi}(-1,1) = (1-\lambda_C)\Gamma_C + \lambda_C F_{\xi}^a(-1,1)
    =\left\{ \begin{array}{ll} \Gamma_C, & \overline{AC}\perp\overline{BC}\ \text{at}\ C, \\
      F_{\xi}^a(-1,1), & \text{otherwise}.
    \end{array} \right\} =: F_{\xi}(-1,1). \label{eq_62b}
  \end{align}
\end{subequations}
Here $\Gamma_B$ and $\Gamma_C$ are free scalar parameters,
and the constants $\lambda_B$ and $\lambda_C$ are defined in~\eqref{eq_33}.

The formulation for $u(\mbs x)$ satisfying
the conditions~\eqref{eq_53} then becomes:
Find the general form of $V(\xi,\eta)$ such that
it satisfies~\eqref{eq_16a}--\eqref{eq_16b},
\eqref{eq_54}, \eqref{eq_57b}--\eqref{eq_57c}, \eqref{eq_58}, \eqref{eq_60},
and \eqref{eq_62a}--\eqref{eq_62b}.

It is noted that  $V_{\xi}(-\eta,\eta)$ and $V_{\eta}(-\eta,\eta)$
are coupled together in the Neumann condition~\eqref{eq_54}.
Moreover, the conditions
\eqref{eq_57b}--\eqref{eq_57c}, \eqref{eq_58}, \eqref{eq_60},
and \eqref{eq_62a}--\eqref{eq_62b}
are point constraints on the vertices $B$ and $C$.
We would like to further  transform these conditions to facilitate
the construction for $V(\xi,\eta)$.

These conditions are linked together through the $V$ profile
along the hypotenuse $\overline{BC}$. Let
\begingroup\small\begin{equation}\label{eq_63}
  V(-\eta,\eta) = U_{BC}(\eta) =: F(-\eta,\eta), \quad \eta\in[-1,1],
\end{equation}\endgroup
where $U_{BC}(\eta)$ denotes the edge profile of $V(\xi,\eta)$ on $\overline{BC}$
that is to be determined. Differentiating this equation w.r.t.~$\eta$ leads to,
\begingroup\small\begin{equation}\label{eq_64}
  -V_{\xi}(-\eta,\eta) + V_{\eta}(-\eta,\eta) = U_{BC}'(\eta).
\end{equation}\endgroup
In light of~\eqref{eq_57b}--\eqref{eq_57c}, \eqref{eq_58}, \eqref{eq_60},
\eqref{eq_62a}--\eqref{eq_62b}, and~\eqref{eq_63}--\eqref{eq_64}, $U_{BC}(\eta)$ must satisfy
the following conditions,
\begingroup\small\begin{align}\label{eq_65}
  \left\{
  \begin{array}{ll}
  U_{BC}(-1)=F(1,-1), &
  U_{BC}'(-1)=-F_{\xi}(1,-1)+F_{\eta}(1,-1), \\
  U_{BC}(1)=F(-1,1), &
  U_{BC}'(1)=-F_{\xi}(-1,1)+F_{\eta}(-1,1),
  \end{array}\right.
\end{align}\endgroup
where $F_{\eta}(1,-1)$ and $F_{\xi}(-1,1)$ are defined in~\eqref{eq_62a}--\eqref{eq_62b}.

The general form of $U_{BC}(\eta)$ that satisfies the conditions in~\eqref{eq_65}
is given by the TFC expression,
\begingroup\small\begin{align}\label{eq_66}
  & U_{BC}(\eta) = h(-\eta,\eta) - \mathcal Qh(-\eta,\eta) + \mathcal QF(-\eta,\eta)
  =: U_{BC}^h(\eta),
\end{align}\endgroup
where $h(\xi,\eta)$ is a free (arbitrary) function that is sufficiently differentiable.
$\mathcal Q$ is an operator  defined by
\begin{subequations}\small
\begin{align}
  \mathcal Qh(-\eta,\eta) =&\ h(1,-1)\varphi_0(\eta) + h(-1,1)\varphi_1(\eta)
  + [-h_{\xi}(1,-1)+h_{\eta}(1,-1)]\psi_0(\eta) \notag \\
  &+ [-h_{\xi}(-1,1)+h_{\eta}(-1,1)]\psi_1(\eta), \\
  \mathcal QF(-\eta,\eta) =&\ F(1,-1)\varphi_0(\eta) + F(-1,1)\varphi_1(\eta)
  + [-F_{\xi}(1,-1)+F_{\eta}(1,-1)]\psi_0(\eta) \notag \\
  &+ [-F_{\xi}(-1,1)+F_{\eta}(-1,1)]\psi_1(\eta).
\end{align}
\end{subequations}
We will refer to $h(-\eta,\eta)$ as the edge
free function on $\overline{BC}$.

Combining equations~\eqref{eq_54} and \eqref{eq_64} gives rise to
\begingroup\small
\begin{align}\label{eq_68}
  & \begin{bmatrix}V_{\xi}(-\eta,\eta)\\ V_{\eta}(-\eta,\eta) \end{bmatrix}
  =\begin{bmatrix}T_{BC}(\eta) \\ T_{BC}(\eta) \end{bmatrix}
  +\begin{bmatrix}R_{BC}(\eta)-1 \\ R_{BC}(\eta) \end{bmatrix}U_{BC}'(\eta)
  =: \begin{bmatrix} F_{\xi}(-\eta,\eta)\\ F_{\eta}(-\eta,\eta)  \end{bmatrix},
\end{align}\endgroup
where $U_{BC}(\eta)$ is given by~\eqref{eq_66}.
The foregoing derivations establish the following result:
\begin{lemma}
  The set of constraints consisting of~\eqref{eq_63} and~\eqref{eq_68},
  with $U_{BC}(\eta)$ therein given by~\eqref{eq_66},
  is equivalent to
  the set of conditions consisting of equations~\eqref{eq_54}, 
  \eqref{eq_57b}--\eqref{eq_58}, \eqref{eq_60},
  and \eqref{eq_62a}--\eqref{eq_62b}, for any combination of
  $(\lambda_B,\lambda_C)=(0/1,0/1)$ therein.
\end{lemma}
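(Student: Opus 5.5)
We prove the two implications separately. Set A: conditions \eqref{eq_63} and \eqref{eq_68}, with $U_{BC}=U_{BC}^h$ from \eqref{eq_66}, for some free edge function $h$. Set B: conditions \eqref{eq_54}, \eqref{eq_57b}--\eqref{eq_58}, \eqref{eq_60}, \eqref{eq_62a}--\eqref{eq_62b}. Throughout, $F(1,-1)$, $F_\xi(1,-1)$, $F(-1,1)$ and $F_\eta(-1,1)$ are Dirichlet-derived data. The quantities $F_\eta(1,-1)$ and $F_\xi(-1,1)$ are the unconditional forms \eqref{eq_62a}--\eqref{eq_62b}, with free parameters $\Gamma_B,\Gamma_C$.

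\textbf{Step 1: pointwise equivalence on the hypotenuse.} Fix $\eta$. Given \eqref{eq_63}, differentiating along the hypotenuse gives \eqref{eq_64}. The pair \eqref{eq_54}, \eqref{eq_64} is a $2\times 2$ linear system in $(V_\xi,V_\eta)(-\eta,\eta)$ with matrix $\begin{bmatrix}R_{BC} & 1-R_{BC}\\ -1 & 1\end{bmatrix}$. Its determinant is $R_{BC}+1-R_{BC}=1$ for every $\eta$, with no orthogonality or degeneracy assumption. Solving it by Cramer's rule yields exactly \eqref{eq_68}. Conversely, \eqref{eq_68} implies both $-V_\xi+V_\eta=U_{BC}'$ and $R_{BC}V_\xi+(1-R_{BC})V_\eta=T_{BC}$; the latter is checked directly as $R_{BC}(T+(R-1)U')+(1-R)(T+RU')=T$. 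Hence, given \eqref{eq_63}, \eqref{eq_68} is equivalent to \eqref{eq_54}.

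\textbf{Step 2: the endpoint conditions via the TFC edge expression.} Using \eqref{eq_30}, the properties of $\varphi_i,\psi_i$ at $\eta=\mp1$ give
\[
U_{BC}^h(-1)=F(1,-1),\qquad U_{BC}^h(1)=F(-1,1),
\]
\[
U_{BC}^{h\prime}(-1)=-F_\xi(1,-1)+F_\eta(1,-1),\qquad U_{BC}^{h\prime}(1)=-F_\xi(-1,1)+F_\eta(-1,1).
\]
The $h$-terms cancel because $h-\mathcal Qh$ and its derivative vanish at $\eta=\pm1$. So \eqref{eq_65} holds for every $h$. Conversely, any $U_{BC}$ satisfying \eqref{eq_65} is representable by \eqref{eq_66}: take $h(-\eta,\eta)=U_{BC}(\eta)$, for which $\mathcal Qh=\mathcal QF$. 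Thus \eqref{eq_66} is exactly the general solution of \eqref{eq_65}.

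\textbf{Step 3: recover the vertex point constraints (A $\Rightarrow$ B).} At $\eta=-1$, \eqref{eq_63} gives \eqref{eq_57b}. Evaluating \eqref{eq_68} at $\eta=-1$ and using $U_{BC}'(-1)$ from Step 2:
\[
V_\xi(1,-1)=T_{BC}(-1)+(R_{BC}(-1)-1)\bigl[-F_\xi(1,-1)+F_\eta(1,-1)\bigr],
\]
\[
V_\eta(1,-1)=T_{BC}(-1)+R_{BC}(-1)\bigl[-F_\xi(1,-1)+F_\eta(1,-1)\bigr].
\]
\emph{Case $\lambda_B=1$.} Insert $F_\eta(1,-1)=F_\eta^a(1,-1)$ from \eqref{eq_59b}; a short algebraic check gives $V_\xi(1,-1)=F_\xi(1,-1)$, which is \eqref{eq_58}, and $V_\eta(1,-1)=F_\eta^a(1,-1)$, which is \eqref{eq_62a}.

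\emph{Case $\lambda_B=0$.} Here $R_{BC}(-1)=1$ by \eqref{eq_n112}, and the data compatibility \eqref{eq_59a} gives $T_{BC}(-1)=F_\xi(1,-1)$. The formulas reduce to $V_\xi(1,-1)=F_\xi(1,-1)$ and $V_\eta(1,-1)=\Gamma_B$, again giving \eqref{eq_58} and \eqref{eq_62a}.

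Vertex $C$ at $\eta=1$ is handled symmetrically: use \eqref{eq_61a}--\eqref{eq_61b}, $R_{BC}(1)=0$ when $\lambda_C=0$, and the solved value $F_\xi^a(-1,1)$ when $\lambda_C=1$. This recovers \eqref{eq_57c}, \eqref{eq_60} and \eqref{eq_62b}. Together with Step 1, Set B follows from Set A.

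\textbf{Step 4: the converse (B $\Rightarrow$ A).} Define $U_{BC}(\eta):=V(-\eta,\eta)$. The point constraints in B and differentiation along the edge give \eqref{eq_65}. By Step 2, $U_{BC}$ is of the form \eqref{eq_66}. Step 1 then gives \eqref{eq_68} from \eqref{eq_54}.

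The main obstacle is the bookkeeping in Step 3 for the four $(\lambda_B,\lambda_C)$ cases. The orthogonal cases require invoking the assumed data compatibility \eqref{eq_59a} and \eqref{eq_61a}, with $\Gamma_B,\Gamma_C$ kept free. The non-orthogonal cases require confirming that the solved values from \eqref{eq_59b} and \eqref{eq_61b} are reproduced exactly.
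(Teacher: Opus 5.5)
Your proof is correct and follows essentially the paper's own derivation. Like the paper, you encode the vertex constraints as the endpoint conditions \eqref{eq_65} on the hypotenuse profile, take the TFC form \eqref{eq_66} as their general solution, and combine \eqref{eq_54} with \eqref{eq_64} through the unit-determinant $2\times 2$ system to obtain \eqref{eq_68}. Your case-by-case check that \eqref{eq_68} at $\eta=\mp1$ reproduces \eqref{eq_58}, \eqref{eq_60} and \eqref{eq_62a}--\eqref{eq_62b} (using \eqref{eq_59a} and \eqref{eq_61a} in the orthogonal cases) makes explicit the reverse implication that the paper leaves implicit in ``the foregoing derivations.''
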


The final transformed formulation problem is reduced to:
Find the general form of $V(\xi,\eta)$ such that it satisfies
the conditions~\eqref{eq_16a}, \eqref{eq_16b}, \eqref{eq_63}
and \eqref{eq_68}, in which $U_{BC}(\eta)$ is given by~\eqref{eq_66}.
Note that $U_{BC}(\eta)$ contains an edge free function $h(-\eta,\eta)$
and the free parameters $\Gamma_B$ and $\Gamma_C$.

The transfinite interpolation for these constraints is
\begingroup\small\begin{equation}\label{eq_69}
  \begin{split}
  PF(\xi,\eta) =&\ (P_1\oplus P_2)F(\xi,\eta)
  = P_1F(\xi,\eta)+P_2F(\xi,\eta)-P_1P_2F(\xi,\eta) \\
  =&\ A_F(\xi,\eta) + \alpha(\xi,\eta)\big\{
  F(\xi,-\xi) - A_F(\xi,-\xi) + (\xi+\eta)\left[F_{\eta}(\xi,-\xi)-F_{\eta}(-1,-\xi) \right]
  \big\} \\
  &\ +\beta(\xi,\eta)\big\{
  F(-\eta,\eta) - A_F(-\eta,\eta) + (\xi+\eta)\left[F_{\xi}(-\eta,\eta)-F_{\xi}(-\eta,-1) \right]
  \big\},
  \end{split}
\end{equation}\endgroup
where
\begin{subequations}\small
  \begin{align}
    & A_F(\xi,\eta) = P_2F(\xi,\eta) := F(-1,\eta) + F(\xi,-1) - F(-1,-1), \label{eq_70a} \\
    & P_1F(\xi,\eta) := \alpha(\xi,\eta)[F(\xi,-\xi) + (\xi+\eta)F_{\eta}(\xi,-\xi)]
    + \beta(\xi,\eta)[F(-\eta,\eta) + (\xi+\eta)F_{\xi}(-\eta,\eta)].
  \end{align}
\end{subequations}
Here $P_2F$ is obtained by the boolean sum of two 0-th order Taylor approximation
operators, $P_2F(\xi,\eta)=(T_{\xi}\oplus T_{\eta})F(\xi,\eta)$,
with $T_{\xi}F(\xi,\eta)=F(-1,\eta)$ and $T_{\eta}F(\xi,\eta)=F(\xi,-1)$.
$P_1F$ is obtained by blending two Taylor approximations of 1st order
along the $\xi$ and $\eta$ directions about $(-\eta,\eta)$ and $(\xi,-\xi)$
on $\overline{BC}$,
$T_{\xi}^{BC}F(\xi,\eta)=F(-\eta,\eta) + (\xi+\eta)F_{\xi}(-\eta,\eta)$
and $T_{\eta}^{BC}F(\xi,\eta)=F(\xi,-\xi) + (\eta+\xi)F_{\eta}(\xi,-\xi)$.
By verification we can establish the following result:
\begin{property}\label{lem_29}
  $PF(\xi,\eta)$ given by~\eqref{eq_69} satisfies,
  \begin{equation}\small
    \left\{\begin{split}
    & PF(-1,\eta) = F(-1,\eta), \ PF(\xi,-1) = F(\xi,-1), \
    PF(-\eta,\eta)=F(-\eta,\eta), \\
    & (PF)_{\xi}(-\eta,\eta)=F_{\xi}(-\eta,\eta),  \quad
    (PF)_{\eta}(-\eta,\eta)=F_{\eta}(-\eta,\eta).
    \end{split}\right.
  \end{equation}
\end{property}

The preliminary general form for $V(\xi,\eta)$ is
\begingroup\small\begin{align}\label{eq_72}
  & V(\xi,\eta) = g(\xi,\eta) - Pg(\xi,\eta) + PF(\xi,\eta),
\end{align}\endgroup
where $g(\xi,\eta)$ is a sufficiently-differentiable free function.
$Pg$ is given by, in light of~\eqref{eq_69},
\begin{equation}\label{eq_73}\small
  \begin{split}
    Pg(\xi,\eta) =&\ A_g(\xi,\eta) + \alpha(\xi,\eta)\big\{
  g(\xi,-\xi) - A_g(\xi,-\xi) + (\xi+\eta)\left[g_{\eta}(\xi,-\xi)-g_{\eta}(-1,-\xi) \right]
  \big\} \\
  &\ +\beta(\xi,\eta)\big\{
  g(-\eta,\eta) - A_g(-\eta,\eta) + (\xi+\eta)\left[g_{\xi}(-\eta,\eta)-g_{\xi}(-\eta,-1) \right]
  \big\},
  \end{split}
\end{equation}
where $A_g(\xi,\eta)=g(-1,\eta)+g(\xi,-1)-g(-1,-1)$.

Lastly, we update the placeholder terms in $PF$ by their compatible data.
Note that $\Gamma_B$, $\Gamma_C$ and $h(-\eta,\eta)$ in $U_{BC}(\eta)$ are free
parameters and free edge functions.
We choose
\begingroup\small\begin{equation}
  \Gamma_B = g_{\eta}(1,-1), \quad \Gamma_C=g_{\xi}(-1,1), \quad
  h(-\eta,\eta) = g(-\eta,\eta).
\end{equation}\endgroup
This eliminates $h(-\eta,\eta)$, $\Gamma_B$ and $\Gamma_C$ from
the final formulation, and leaves $g(\xi,\eta)$ as the sole unknown
to be determined.
The updated transfinite interpolation is,
\begingroup\small\begin{equation}\label{eq_76}
  \begin{split}
  PF^g(\xi,\eta) =&\ A_F(\xi,\eta) + \alpha(\xi,\eta)\big\{
  U_{BC}^g(-\xi) - A_F(\xi,-\xi) + (\xi+\eta)\left[F_{\eta}^{gg}(\xi,-\xi)-F_{\eta}(-1,-\xi) \right]
  \big\} \\
  &\ +\beta(\xi,\eta)\big\{
  U_{BC}^g(\eta) - A_F(-\eta,\eta) + (\xi+\eta)\left[F_{\xi}^{gg}(-\eta,\eta)-F_{\xi}(-\eta,-1) \right]
  \big\},
  \end{split}
\end{equation}\endgroup
where $A_F(\xi,\eta)$ is defined in~\eqref{eq_70a}. The other symbols
are given by
\begin{subequations}\label{eq_77}\small
  \begin{align}
    & U_{BC}^g(\eta) = g(-\eta,\eta) - \mathcal Qg(-\eta,\eta) + \mathcal QF^g(-\eta,\eta); \label{eq_77a} \\
    & \mathcal Q g(-\eta,\eta) = g(1,-1)\varphi_0(\eta) + g(-1,1)\varphi_1(\eta)
    + [-g_{\xi}(1,-1)+g_{\eta}(1,-1)]\psi_0(\eta) \notag \\
    &\qquad\qquad\quad\  + [-g_{\xi}(-1,1)+g_{\eta}(-1,1)]\psi_1(\eta); \label{eq_77b} \\
    & \mathcal Q F^g(-\eta,\eta) = F(1,-1)\varphi_0(\eta) + F(-1,1)\varphi_1(\eta)
    + [-F_{\xi}(1,-1)+F_{\eta}^g(1,-1)]\psi_0(\eta) \notag \\
    &\qquad\qquad\qquad  + [-F_{\xi}^g(-1,1)+F_{\eta}(-1,1)]\psi_1(\eta); \label{eq_77c} \\
    & F_{\eta}^g(1,-1) = (1-\lambda_B)g_{\eta}(1,-1) + \lambda_B F_{\eta}^a(1,-1); 
    \label{eq_77d} \\
    & F_{\xi}^g(-1,1) = (1-\lambda_C)g_{\xi}(-1,1) + \lambda_C F_{\xi}^a(-1,1);
    \label{eq_77e} \\
    & \begin{bmatrix} F_{\xi}^{gg}(-\eta,\eta)\\ F_{\eta}^{gg}(-\eta,\eta)  \end{bmatrix}
    = \begin{bmatrix}T_{BC}(\eta) \\ T_{BC}(\eta) \end{bmatrix}
    +\begin{bmatrix}R_{BC}(\eta)-1 \\ R_{BC}(\eta) \end{bmatrix}\frac{d}{d\eta}U_{BC}^{g}(\eta).
  \end{align}
\end{subequations}
\begin{property}\label{lem_211}
  The following relations hold, for any combination of $(\lambda_B,\lambda_C)=(0/1,0/1)$,
  \begin{align}\small
    \left\{\begin{array}{l}
    F_{\eta}^{gg}(-1,1) = F_{\eta}(-1,1), \quad
    F_{\xi}^{gg}(1,-1) = F_{\xi}(1,-1), \quad
    F_{\eta}^{gg}(-\eta,\eta) - F_{\xi}^{gg}(-\eta,\eta) = \frac{d}{d\eta}U_{BC}^{g}(\eta), \\
    R_{BC}(\eta)F_{\xi}^{gg}(-\eta,\eta) + [1-R_{BC}(\eta)]F_{\eta}^{gg}(-\eta,\eta)
    =T_{BC}(\eta).
  \end{array}\right.\end{align}
\end{property}

\begin{property}\label{lem_212}
  $PF^g(\xi,\eta)$ given by~\eqref{eq_76}, for any sufficiently differentiable
  $g(\xi,\eta)$ therein, satisfies:
  \begin{equation}\small
    \left\{\begin{split}
    & PF^g(-1,\eta) = F(-1,\eta), \quad PF^g(\xi,-1) = F(\xi,-1), \quad
    PF^g(-\eta,\eta)=U_{BC}^g(\eta),  \\
    & (PF^g)_{\xi}(-\eta,\eta)=F_{\xi}^{gg}(-\eta,\eta),\quad
    (PF^g)_{\eta}(-\eta,\eta)=F_{\eta}^{gg}(-\eta,\eta).
    \end{split}\right.
  \end{equation}
\end{property}
\begin{proof}By using Property~\ref{lem_211} and the relations in~\eqref{eq_32}
  and \eqref{eq_30} \end{proof}

The final form for $V(\xi,\eta)$ is then given by,
\begingroup\small\begin{equation}\label{eq_80}
  V(\xi,\eta) = g(\xi,\eta) - Pg(\xi,\eta) + PF^g(\xi,\eta),
\end{equation}\endgroup
where $g(\xi,\eta)$ is a free function, $Pg$ is defined in~\eqref{eq_73},
and $PF^g$ is given by~\eqref{eq_76}.
\begin{theorem}
  $V(\xi,\eta)$ given by~\eqref{eq_80}, for any sufficiently differentiable
  $g(\xi,\eta)$ therein, satisfies the Dirichlet conditions~\eqref{eq_16a} and
  \eqref{eq_16b} and the Neumann condition~\eqref{eq_54}.
\end{theorem}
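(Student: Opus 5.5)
The plan is to verify each condition directly from the decomposition $V = (g - Pg) + PF^g$. The first term carries the ``trace-killing'' property, and $PF^g$ supplies the data through Property~\ref{lem_212}.

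\textbf{Step 1: the free-function part vanishes on the relevant traces.} Since $Pg$ in \eqref{eq_73} is built from exactly the same operator as $PF$ in \eqref{eq_69}, with $g$ in place of $F$, Property~\ref{lem_29} applies verbatim with $F\to g$. This gives
\[
Pg(-1,\eta)=g(-1,\eta),\quad Pg(\xi,-1)=g(\xi,-1),\quad Pg(-\eta,\eta)=g(-\eta,\eta),
\]
\[
(Pg)_{\xi}(-\eta,\eta)=g_{\xi}(-\eta,\eta),\quad (Pg)_{\eta}(-\eta,\eta)=g_{\eta}(-\eta,\eta).
\]
Hence $g-Pg$ has zero trace on all three edges and zero gradient on $\overline{BC}$. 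The one point to check is that Property~\ref{lem_29} uses no compatibility of the data $F$. I would confirm this by recalling that its verification rests only on the Barnhill identities \eqref{eq_32}, namely $\alpha(-1,\eta)$, $\beta(-1,\eta)=0$, the vanishing derivatives of $\alpha$ on $\eta=-1$, $\alpha+\beta=1$ on the hypotenuse, and $\nabla(\alpha+\beta)=0$ there. These hold for any smooth function, including $g$.

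\textbf{Step 2: the Dirichlet conditions.} I would add Property~\ref{lem_212}. On the leg $\xi=-1$ this gives $V(-1,\eta)=0+PF^g(-1,\eta)=F(-1,\eta)$, which is \eqref{eq_16b}. Similarly, on $\eta=-1$ it gives $V(\xi,-1)=F(\xi,-1)$, which is \eqref{eq_16a}.

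\textbf{Step 3: the Neumann condition on the hypotenuse.} Differentiating $V$ and using Step 1 together with Property~\ref{lem_212} gives
\[
V_{\xi}(-\eta,\eta)=F_{\xi}^{gg}(-\eta,\eta),\qquad V_{\eta}(-\eta,\eta)=F_{\eta}^{gg}(-\eta,\eta).
\]
The last relation of Property~\ref{lem_211} is then precisely \eqref{eq_54}. That relation can also be seen directly from the definition of $(F_{\xi}^{gg},F_{\eta}^{gg})$ in \eqref{eq_77}, since
\[
R_{BC}\,(T_{BC}+(R_{BC}-1)U')+(1-R_{BC})(T_{BC}+R_{BC}U')=T_{BC}
\]
identically.

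\textbf{Main obstacle.} The real work is hidden in Property~\ref{lem_212}, which the theorem takes as given. I will simply invoke it. If it had to be justified, the delicate point would be consistency at the vertices $B$ and $C$. For example, $PF^g(-\eta,\eta)$ must agree with $F(-1,1)$ and $F(1,-1)$, and its gradient must agree with the leg Dirichlet derivatives $F_{\eta}(-1,1)$ and $F_{\xi}(1,-1)$. This is exactly what the first two identities in Property~\ref{lem_211} secure, for every choice of $(\lambda_B,\lambda_C)$. Those identities in turn follow from the $\mathcal Q$-construction of $U^g_{BC}$ and the compatibility conditions \eqref{eq_59a}, \eqref{eq_61a}.
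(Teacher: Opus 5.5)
Your proof is correct and is essentially the paper's own argument: Property~\ref{lem_29} applied to $g$ makes the traces of $g-Pg$ and its gradient on $\overline{BC}$ vanish, then Property~\ref{lem_212} gives \eqref{eq_16a}--\eqref{eq_16b}, and the last identity of Property~\ref{lem_211} (which you also check directly) gives \eqref{eq_54}. One correction to Step~1: Property~\ref{lem_29} does rely on compatibility of the placeholder data. First, $\alpha(-1,\eta)=\varphi_1(\eta)$ and $\beta(\xi,-1)=\varphi_1(\xi)$ do not vanish, so the hypotenuse data must match the leg data in value and $F_{\eta}$ at $C$, and in value and $F_{\xi}$ at $B$. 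Second, the hypotenuse-gradient identities need $\frac{d}{d\eta}F(-\eta,\eta)=-F_{\xi}(-\eta,\eta)+F_{\eta}(-\eta,\eta)$. For $g$ all of these conditions hold automatically, because every term is a trace of one smooth function. That is the real reason the property transfers to $g$, and it matches what your final paragraph says about $PF^g$.
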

\begin{proof} By noting $V_{\xi}(-\eta,\eta)=F_{\xi}^{gg}(-\eta,\eta)$
  and $V_{\eta}(-\eta,\eta)=F_{\eta}^{gg}(-\eta,\eta)$, and by
  using Properties \ref{lem_29}, \ref{lem_212}, and \ref{lem_211}.
\end{proof}

\begin{remark}
  The above formulation, with no change, applies to
  configurations where the boundary $\partial\Omega$ is smooth at vertices $B$ or $C$.
\end{remark}


\subsubsection{Two Neumann Conditions Imposed on the Hypotenuse and a Leg}
\label{sec_n_ACBC}

This appendix outlines the formulation for two Neumann conditions imposed
on the hypotenuse and a leg.
Without loss of generality, we assume that the two Neumann boundaries correspond
to $\overline{AC}$ and $\overline{BC}$.

Consider a scalar function $u(\mbs x)$ on the general triangular
domain $\Omega$ in Figure~\ref{fg_1}(a) that satisfies,
\begingroup\small\begin{equation}
  u|_{\mbs x\in\overline{AB}}=u_{AB}(\mbs x), \quad
  \mbs n\cdot\nabla u|_{\mbs x\in\overline{BC}} = u_{nBC}(\mbs x), \quad
  \mbs n\cdot\nabla u|_{\mbs x\in\overline{AC}} = u_{nAC}(\mbs x).
\end{equation}\endgroup
Leveraging the map $\mbs x(\xi,\eta)$ from Section \ref{sec_21}, these conditions are
transformed into \eqref{eq_16a}, \eqref{eq_54} and \eqref{eq_21},
respectively. Our goal is to construct the general form of $V(\xi,\eta)$
that satisfies these conditions.

An analysis of these conditions lead to the following compatibility constraints.
At vertex $A$ the conditions \eqref{eq_24a}--\eqref{eq_24b} must hold.
At vertex $B$ the conditions \eqref{eq_59a}--\eqref{eq_59b}, or equivalently
\eqref{eq_62a} (with $\Gamma_B$ being a free parameter), must be satisfied.
Vertex $C$ is where the two NBCs meet. A combination of \eqref{eq_21a}
and \eqref{eq_54} evaluated at $\eta=1$ gives rise to
\begingroup\small
\begin{align}\label{eq_n138}
  & \begin{bmatrix}V_{\xi}(-1,1) \\ V_{\eta}(-1,1) \end{bmatrix}
  = \begin{bmatrix}F_{\xi}^a(-1,1) \\ F_{\eta}^a(-1,1) \end{bmatrix}
  = \begin{bmatrix}
    \frac{[1-R_{BC}(1)]T_{AC}(1)-S_{AC}T_{BC}(1)}{1-R_{BC}(1)[1+S_{AC}(1)]} \\
    \frac{-R_{BC}(1)T_{AC}(1)+T_{BC}(1)}{1-R_{BC}(1)[1+S_{AC}(1)]}
  \end{bmatrix}
  =: \begin{bmatrix}F_{\xi}(-1,1) \\ F_{\eta}(-1,1) \end{bmatrix}.
\end{align}
\endgroup
Note that $1-R_{BC}(1)[1+S_{AC}(1)]\geqslant 0$ by the Cauchy-Schwarz inequality,
with equality if and only if $\overline{AC}\parallel \overline{BC}$ at $C$
(a smooth/degenerate vertex). We require that $C$ be a genuine corner, thus excluding
this degenerate case.
Differentiating \eqref{eq_21a} and \eqref{eq_54} w.r.t.~$\eta$ and evaluating them
at $\eta=1$ lead to,
\begingroup\small\begin{align}\label{eq_n139}
  & V_{\xi\eta}(-1,1) = -S_{AC}(1)V_{\eta\eta}(-1,1)
  + \big[T_{AC}'(1) - S_{AC}'(1)F_{\eta}^a(-1,1) \big] =: F_{\xi\eta}(-1,1),
\end{align}\endgroup
and 
\begin{subequations}\small
  \begin{align}
    & 0 = Q^a(-1,1) := \big[T_{BC}'(1)-R_{BC}'(1)F_{\xi}^a(-1,1) + R_{BC}'(1)F_{\eta}^a(-1,1) \big]
    \notag \\
    &\qquad\qquad\qquad\quad\ \
    - \big[2R_{BC}(1)-1 \big]\big[T_{AC}'(1)-S_{AC}'(1)F_{\eta}^a(-1,1) \big],
    \quad \text{if}\ \overline{AC}\perp\overline{BC}\ \text{at}\ C; \\
    & V_{\xi\xi}(-1,1) = S_1V_{\eta\eta} + S_2, \quad
    \text{if}\ \overline{AC}\not\perp\overline{BC}\ \text{at}\ C, \label{eq_n140b}
  \end{align}
\end{subequations}
where
\begingroup\small
\begin{equation}
  S_1 = 1+[1-2R_{BC}(1)]\frac{\|\mbs x_{BC}'(1)\|^2}{\|\mbs x_{AC}'(1) \|^2}, \quad
  S_2 = -\frac{Q^a(-1,1)}{R_{BC}(1)},
\end{equation}\endgroup
and we have used  \eqref{eq_27} and \eqref{eq_n112}.
We then transform the constraint \eqref{eq_n140b} into an unconditional
form
\begingroup\small\begin{align}\label{eq_n142}
  & V_{\xi\xi}(-1,1) = (1-\lambda_C)\Gamma_C^{\xi\xi} + \lambda_C\big[S_1V_{\eta\eta} + S_2\big]
  =: F_{\xi\xi}(-1,1),
\end{align}\endgroup
where $\Gamma_C^{\xi\xi}$ is a free parameter. Finally we note that there is no constraint
on $V(-1,1)$ or $V_{\eta\eta}(-1,1)$ and they are completely free. This is equivalent to,
\begingroup\small\begin{align}\label{eq_n143}
  & V(-1,1)=\Gamma_C^V=:F(-1,1), \quad
  V_{\eta\eta}(-1,1)= \Gamma_C^{\eta\eta} =: F_{\eta\eta}(-1,1),
\end{align}\endgroup
where $\Gamma_C^V$ and $\Gamma_{C}^{\eta\eta}$ are free parameters.

At this point we proceed with two strategies:
(i) Transform the point constraints \eqref{eq_24b}, \eqref{eq_62a}, 
\eqref{eq_n138}, \eqref{eq_n139}, \eqref{eq_n142}--\eqref{eq_n143} into
conditions about the edge profiles $V(-1,\eta)$ and $V(-\eta,\eta)$.
(ii) Reformulate the NBC \eqref{eq_54} on the hypotenuse
into constraints about $V_{\xi}(-\eta,\eta)$ and $V_{\eta}(-\eta,\eta)$
in a way similar to in Section \ref{sec_2312}.

We transform the point constraints into
\begingroup\small
\begin{align}\label{eq_n144}
  V(-1,\eta) = F(-1,\eta) :=&\ F(-1,-1)\rho_0(\eta) + F(-1,1)\rho_1(\eta)
  + \lambda_A F_{\eta}(-1,-1)\upsilon_0(\eta) 
  + F_{\eta}(-1,1)\upsilon_1(\eta) \notag \\
  &+ F_{\eta\eta}(-1,1)\omega_1(\eta),
\end{align}\endgroup
on $\overline{AC}$, and
\begingroup\small\begin{align}\label{eq_n145}
  V(-\eta,\eta) = U_{BC}(\eta) =: F(-\eta,\eta),
\end{align}\endgroup
on $\overline{BC}$. Here $U_{BC}(\eta)$ is given by
\begingroup\small\begin{align}\label{eq_n146}
  U_{BC}(\eta) =&\ h(-\eta,\eta) - \mathcal Qh(-\eta,\eta) + \mathcal QF(-\eta,\eta),
\end{align}\endgroup
where $h(-\eta,\eta)$ is an edge free function and
\begin{subequations}\label{eq_147}
  \small
  \begin{align}
   \mathcal Qh(-\eta,\eta) =&\ h(1,-1)\rho_0(\eta) + h(-1,1)\rho_1(\eta)
    + [-h_{\xi}(1,-1)+h_{\eta}(1,-1)]\upsilon_0(\eta) \notag \\
    & + [-h_{\xi}(-1,1) + h_{\eta}(-1,1)]\upsilon_1(\eta)
    + [h_{\xi\xi}(-1,1) - 2h_{\xi\eta}(-1,1) + h_{\eta\eta}(-1,1)]\omega_1(\eta), \label{eq_n147a} \\
   \mathcal QF(-\eta,\eta) =&\ F(1,-1)\rho_0(\eta) + F(-1,1)\rho_1(\eta)
    + [-F_{\xi}(1,-1)+F_{\eta}(1,-1)]\upsilon_0(\eta) \notag \\
    & + [-F_{\xi}(-1,1) + F_{\eta}(-1,1)]\upsilon_1(\eta)
    + [F_{\xi\xi}(-1,1) - 2F_{\xi\eta}(-1,1) + F_{\eta\eta}(-1,1)]\omega_1(\eta). \label{eq_n147b}
  \end{align}
\end{subequations}
Note that we have also used \eqref{eq_64},
which follows from \eqref{eq_n145}.

The Neumann condition \eqref{eq_54}, together with \eqref{eq_n145},
can be reformulated into the form \eqref{eq_68} about $V_{\xi}(-\eta,\eta)$
and $V_{\eta}(-\eta,\eta)$, where $U_{BC}(\eta)$ is now defined by
\eqref{eq_n146}--\eqref{eq_n147b} and contains the free parameters
$\Gamma_B$, $\Gamma_C^V$, $\Gamma_C^{\xi\xi}$, and $\Gamma_C^{\eta\eta}$ through
the $\mathcal QF(-\eta,\eta)$ terms.

Therefore the formulation problem becomes the following: Find the general form of
$V(\xi,\eta)$ such that the conditions \eqref{eq_16a}, \eqref{eq_n144},
\eqref{eq_21b}, \eqref{eq_n145}, \eqref{eq_68}, \eqref{eq_n139},
and \eqref{eq_n142}--\eqref{eq_n143} are satisfied.

The transfinite interpolation satisfying these constraints is given by,
\begingroup\small
\begin{align}\label{eq_148}
  PF(\xi,\eta) =&\ (P_1\oplus P_2)F(\xi,\eta)
  =  P_1F(\xi,\eta) + P_2F(\xi,\eta) - P_1P_2F(\xi,\eta) \notag \\
  =&\ A_F(\xi,\eta) + \kappa(\xi,\eta)\big[
    F(\xi,-\xi) + (\xi+\eta)F_{\eta}(\xi,-\xi) - A_F(\xi,-\xi) 
    - (\xi+\eta)A_{F,\eta}(\xi,-\xi)
    \big] \notag \\
  &+ \delta(\xi,\eta)\big[F(-\eta,\eta) + (\xi+\eta)F_{\xi}(-\eta,\eta) - A_F(-\eta,\eta)
    - (\xi+\eta)A_{F,\xi}(-\eta,\eta) \big].
\end{align}\endgroup
Here $\kappa(\xi,\eta)$ and $\delta(\xi,\eta)$ are the $C^2$ Barnhill polynomials.
The operators $P_1$ and $P_2$ are defined as the blending (based on Barnhill polynomials)
or the boolean sum of Taylor approximation operators, and are given by,
\begin{subequations}\label{eq_149}
  \small
  \begin{align}
    P_1F(\xi,\eta) &= \kappa(\xi,\eta)[F(\xi,-\xi) + (\xi+\eta)F_{\eta}(\xi,-\xi)]
    + \delta(\xi,\eta)[F(-\eta,\eta) + (\xi+\eta)F_{\xi}(-\eta,\eta)];
    \\
    A_F(\xi,\eta) &= P_2F(\xi,\eta) := (T_{\xi}\oplus T_{\eta})F(\xi,\eta)
    = T_{\xi}F(\xi,\eta) + T_{\eta}F(\xi,\eta) - T_{\xi}T_{\eta}F(\xi,\eta) \notag \\
    &\qquad\qquad\quad\ \
    = F(\xi,-1) + F(-1,\eta) - F(-1,-1) + (\xi+1)[F_{\xi}(-1,\eta)-F_{\xi}(-1,-1)]; \label{eq_149b} \\
    T_{\xi}F(\xi,\eta) &= F(-1,\eta) + (\xi+1)F_{\xi}(-1,\eta); \qquad
    T_{\eta}F(\xi,\eta) = F(\xi,-1).
  \end{align}
\end{subequations}
Note that the term $F(-1,\eta)$ in the $A_F(\xi,\eta)$ expression \eqref{eq_149b}
is defined by \eqref{eq_n144}.
The following properties about $PF$ can be established by verification.
\begin{property}\label{lem_57}
  $PF(\xi,\eta)$ given by \eqref{eq_148} satisfies
  \begin{align}\small
    \left\{
    \begin{array}{l}
      PF(\xi,-1) = F(\xi,-1), \quad PF(-\eta,\eta)=F(-\eta,\eta), \\
      PF(-1,\eta)=F(-1,\eta)=F(-1,-1)\rho_0(\eta) + F(-1,1)\rho_1(\eta)
      + \lambda_AF_{\eta}(-1,-1)\upsilon_0(\eta) + F_{\eta}(-1,1)\upsilon_1(\eta)  \\
      \qquad\qquad\qquad\qquad\qquad\quad
      + F_{\eta\eta}(-1,1)\omega_1(\eta), \\
      PF_{\xi}(-1,\eta) = F_{\xi}(-1,\eta), \quad
      PF_{\xi}(-\eta,\eta) = F_{\xi}(-\eta,\eta), \quad
      PF_{\eta}(-\eta,\eta) = F_{\eta}(-\eta,\eta), \\
      PF_{\xi\xi}(-1,1) = F_{\xi\xi}(-1,1), \quad PF_{\eta\eta}(-1,1)=F_{\eta\eta}(-1,1), \quad
      PF_{\xi\eta}(-1,1) = F_{\xi\eta}(-1,1).
    \end{array}\right.
  \end{align}
\end{property}

This gives rise to the preliminary form for $V(\xi,\eta)$,
\begingroup\small\begin{align}\label{eq_n151}
  & V(\xi,\eta) = g(\xi,\eta) - Pg(\xi,\eta) + PF(\xi,\eta),
\end{align}\endgroup
where $g(\xi,\eta)$ is a free function, and $Pg$ is defined
according to \eqref{eq_148} and specifically given by
\begin{subequations}\label{eq_n152}
  \small
\begin{align}
  Pg(\xi,\eta) =&\ A_g(\xi,\eta) + \kappa(\xi,\eta)\big[
    g(\xi,-\xi) + (\xi+\eta)g_{\eta}(\xi,-\xi) - A_g(\xi,-\xi) 
    - (\xi+\eta)A_{g,\eta}(\xi,-\xi)
    \big] \notag \\
  &+ \delta(\xi,\eta)\big[g(-\eta,\eta) + (\xi+\eta)g_{\xi}(-\eta,\eta) - A_g(-\eta,\eta)
    - (\xi+\eta)A_{g,\xi}(-\eta,\eta) \big], \\
  A_g(\xi,\eta) =&\ g(\xi,-1) - g(-1,-1)
  + (\xi+1)\big[g_{\xi}(-1,\eta) - g_{\xi}(-1,-1) \big] + \big[g(-1,-1)\rho_0(\eta) 
  \notag \\
  &+ g(-1,1)\rho_1(\eta)
    + \lambda_Ag_{\eta}(-1,-1)\upsilon_0(\eta) + g_{\eta}(-1,1)\upsilon_1(\eta) 
    + g_{\eta\eta}(-1,1)\omega_1(\eta) \big].
\end{align}
\end{subequations}

Lastly, we update the placeholder terms in $PF(\xi,\eta)$ by their compatible data.
We choose the free parameters and the edge free function $h(-\eta,\eta)$ involved in $PF$
as follows,
\begingroup\small
\begin{equation}\label{eq_n153}
  \Gamma_B = g_{\eta}(1,-1), \quad \Gamma_C^V=g(-1,1), \quad
  \Gamma_C^{\eta\eta}=g_{\eta\eta}(-1,1), \quad \Gamma_C^{\xi\xi} = g_{\xi\xi}(-1,1),
  \quad h(-\eta,\eta) = g(-\eta,\eta).
\end{equation}\endgroup
The updated transfinite interpolation is then given by
\begingroup\small
\begin{align}\label{eq_n154}
  PF^g(\xi,\eta) =&\ A_F^g(\xi,\eta) + \kappa(\xi,\eta)\big[
    U_{BC}^g(-\xi) + (\xi+\eta)F_{\eta}^{gg}(\xi,-\xi) - A_F^g(\xi,-\xi) 
    - (\xi+\eta)A^g_{F,\eta}(\xi,-\xi)
    \big] \notag \\
  &+ \delta(\xi,\eta)\big[U_{BC}^g(\eta) + (\xi+\eta)F_{\xi}^{gg}(-\eta,\eta) - A_F^g(-\eta,\eta)
    - (\xi+\eta)A^g_{F,\xi}(-\eta,\eta) \big]. 
\end{align}\endgroup
The symbols in this equation are defined by
\begin{subequations}
  \small
  \begin{align}
    A_F^g(\xi,\eta) =&\  F(\xi,-1) + F^g(-1,\eta) - F(-1,-1)
    + (\xi+1)\big[F_{\xi}^g(-1,\eta) - F_{\xi}(-1,-1) \big],  \\
    F^g(-1,\eta) =&\ F(-1,-1)\rho_0(\eta) + g(-1,1)\rho_1(\eta)
    + \lambda_AF_{\eta}^a(-1,-1)\upsilon_0(\eta) + F_{\eta}^a(-1,1)\upsilon_1(\eta) \notag \\
    &+ g_{\eta\eta}(-1,1)\omega_1(\eta),  \\
    U_{BC}^g(\eta) =&\ g(-\eta,\eta) - \mathcal Qg(-\eta,\eta) + \mathcal QF^g(-\eta,\eta), \\
    \mathcal Qg(-\eta,\eta) =&\ g(1,-1)\rho_0(\eta) + g(-1,1)\rho_1(\eta)
    + \big[-g_{\xi}(1,-1) + g_{\eta}(1,-1) \big]\upsilon_0(\eta) \notag \\
    &+ \big[-g_{\xi}(-1,1) + g_{\eta}(-1,1) \big]\upsilon_1(\eta)
    + \big[g_{\xi\xi}(-1,1) - 2g_{\xi\eta}(-1,1) + g_{\eta\eta}(-1,1) \big]\omega_1(\eta), \\
    \mathcal QF^g(-\eta,\eta) =&\ F(1,-1)\rho_0(\eta) + g(-1,1)\rho_1(\eta)
    + \big[-F_{\xi}(1,-1) + F^g_{\eta}(1,-1) \big]\upsilon_0(\eta) \notag \\
    &+ \big[-F^a_{\xi}(-1,1) + F^a_{\eta}(-1,1) \big]\upsilon_1(\eta)
    + \big[F^g_{\xi\xi}(-1,1) - 2F^g_{\xi\eta}(-1,1) + g_{\eta\eta}(-1,1) \big]\omega_1(\eta), \\
    F_{\eta}^g(1,-1) =&\ (1-\lambda_B)g_{\eta}(1,-1) + \lambda_B F_{\eta}^a(1,-1), \\
    F_{\xi\xi}^g(-1,1) =&\ (1-\lambda_C)g_{\xi\xi}(-1,1) + \lambda_C[S_1g_{\eta\eta}(-1,1)+S_2], \\
    F_{\xi\eta}^g(-1,1) =&\ -S_{AC}(1)g_{\eta\eta}(-1,1) +
    \big[T_{AC}'(1) - S_{AC}'(1)F_{\eta}^a(-1,1) \big], \\
    F_{\xi}^{gg}(-\eta,\eta) =&\ T_{BC}(\eta) + [R_{BC}(\eta)-1]\frac{d}{d\eta} U_{BC}^{g}(\eta), \\
    F_{\eta}^{gg}(\xi,-\xi) = &\ T_{BC}(-\xi) +
    R_{BC}(-\xi) \left.\frac{d}{d\eta}U_{BC}^g(\eta)\right|_{\eta=-\xi}, \\
    F_{\xi}^g(-1,\eta) =&\ T_{AC}(\eta) - S_{AC}(\eta) V_{\eta}^g(-1,\eta), \\
    V_{\eta}^g(-1,\eta) =&\ g_{\eta}(-1,\eta) - [g(-1,-1)-F(-1,-1)]\rho_0'(\eta)
    -\lambda_A[g_{\eta}(-1,-1) - F_{\eta}^a(-1,-1)]\upsilon_0'(\eta) \notag \\
    &- [g_{\eta}(-1,1) - F_{\eta}^a(-1,1)]\upsilon_1'(\eta).
  \end{align}
\end{subequations}
The following properties about $PF^g(\xi,\eta)$ can be established by verification.
\begin{property}\label{lem_58}
  $PF^g(\xi,\eta)$ given by \eqref{eq_n154} satisfies, for any
  sufficiently differentiable $g(\xi,\eta)$ therein,
  \begingroup\small\begin{align}
    \left\{\begin{array}{l}
    PF^g(\xi,-1) = F(\xi,-1), \quad PF^g(-1,\eta)=F^g(-1,\eta), \quad
    PF^g(-\eta,\eta) = U^g_{BC}(\eta), \\
    (PF^g)_{\xi}(-1,\eta)=F_{\xi}^g(-1,\eta), \quad
    (PF^g)_{\xi}(-\eta,\eta) = F_{\xi}^{gg}(-\eta,\eta), \quad
    (PF^g)_{\eta}(-\eta,\eta) = F_{\eta}^{gg}(-\eta,\eta), \\
    (PF^g)_{\xi\xi}(-1,1) = F_{\xi\xi}^g(-1,1), \quad
    (PF^g)_{\xi\eta}(-1,1) = F_{\xi\eta}^g(-1,1), \quad
    (PF^g)_{\eta\eta}(-1,1) = g_{\eta\eta}(-1,1).
    \end{array}\right.
  \end{align}\endgroup
\end{property}

The final form for $V(\xi,\eta)$ is given by
\begingroup\small\begin{align}\label{eq_n157}
  V(\xi,\eta) = g(\xi,\eta) - Pg(\xi,\eta) + PF^g(\xi,\eta),
\end{align}\endgroup
where $g(\xi,\eta)$ denotes a free function,
$Pg$ is defined in \eqref{eq_n152}, and $PF^g$ is given by \eqref{eq_n154}.
The following result holds.
\begin{theorem}
  $V(\xi,\eta)$ given by \eqref{eq_n157} satisfies the Dirichlet
  condition \eqref{eq_16a} and the Neumann conditions \eqref{eq_21a}
  and \eqref{eq_54}, for any $g(\xi,\eta)$ therein that is
  sufficiently differentiable.
\end{theorem}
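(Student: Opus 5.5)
The plan is to argue exactly as for the earlier theorems. Split $V=g-Pg+PF^g$ and read off the traces and normal-type derivatives on each edge from Property~\ref{lem_57} (applied to $Pg$, with $F$ replaced by $g$) and Property~\ref{lem_58} (applied to $PF^g$). The key observation is that $Pg$ reproduces $g$ in the same way $PF$ reproduces $F$. Consequently, on every edge and derivative constrained by $P$, the combination $g-Pg$ has vanishing trace, so the relevant quantity of $V$ equals that of $PF^g$.

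For the Dirichlet edge $\overline{AB}$, the first relation of Property~\ref{lem_57} gives $Pg(\xi,-1)=g(\xi,-1)$. The first relation of Property~\ref{lem_58} gives $PF^g(\xi,-1)=F(\xi,-1)$. Hence $V(\xi,-1)=F(\xi,-1)$, which is \eqref{eq_16a}.

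For the Neumann leg $\overline{AC}$, the relation $PF_\xi(-1,\eta)=F_\xi(-1,\eta)$ gives $(g-Pg)_\xi(-1,\eta)=0$. The relation $PF(-1,\eta)=F(-1,\eta)$ gives $V(-1,\eta)=PF^g(-1,\eta)=F^g(-1,\eta)$. Therefore $V_\xi(-1,\eta)=F_\xi^g(-1,\eta)$. Differentiating $g(-1,\eta)-Pg(-1,\eta)+F^g(-1,\eta)$ in $\eta$ identifies $V_\eta(-1,\eta)$ with $V_\eta^g(-1,\eta)$. This requires checking that the $\rho_1,\omega_1$ terms of $Pg$ cancel against $g(-1,1)\rho_1+g_{\eta\eta}(-1,1)\omega_1$ in $F^g(-1,\eta)$. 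It also requires that the $\upsilon_1$ coefficient becomes $F^a_\eta(-1,1)$, since $C$ is now a double-Neumann vertex with no free $\Gamma$ there. Substituting into the definition of $F_\xi^g(-1,\eta)$ then yields \eqref{eq_21a}.

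For the hypotenuse, Property~\ref{lem_57} gives $(g-Pg)_\xi(-\eta,\eta)=(g-Pg)_\eta(-\eta,\eta)=0$. Then Property~\ref{lem_58} gives $V_\xi(-\eta,\eta)=F_\xi^{gg}(-\eta,\eta)$ and $V_\eta(-\eta,\eta)=F_\eta^{gg}(-\eta,\eta)$. By their definitions,
$R_{BC}F_\xi^{gg}+(1-R_{BC})F_\eta^{gg}=T_{BC}+\big[R_{BC}(R_{BC}-1)+(1-R_{BC})R_{BC}\big]\tfrac{d}{d\eta}U_{BC}^g=T_{BC}$.
This is \eqref{eq_54}.

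The main obstacle lies in Property~\ref{lem_58}, which I would verify by hand as the heart of the proof. Its delicate part is corner compatibility at $C$ and $B$. The data fed to the $\kappa,\delta$ blending on $\overline{BC}$ must agree with $A_F^g$ and its first and second derivatives at the endpoints $(1,-1)$ and $(-1,1)$. Only then do the corrections vanish on the legs, using $\kappa(\xi,-1)=\kappa_\eta(\xi,-1)=\kappa_{\eta\eta}(\xi,-1)=0$, the analogous conditions for $\delta$ on $\xi=-1$, and the partition-of-unity relations \eqref{eq_n10}. One must also check that $(PF^g)_\xi(-1,\eta)$ is unaffected by the hypotenuse correction.

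Concretely, I would first check that $U_{BC}^g$ and its $\eta$-derivative at $\eta=\pm1$ reproduce the vertex data. This means $U_{BC}^g(1)=g(-1,1)$, $U_{BC}^{g\prime}(1)=-F_\xi^a(-1,1)+F_\eta^a(-1,1)$, and $U_{BC}^{g\prime\prime}(1)=F^g_{\xi\xi}-2F^g_{\xi\eta}+g_{\eta\eta}$ at $(-1,1)$, with the analogous relations at $B$ through $\lambda_B$ and $F^g_\eta(1,-1)$. I would then check that these values match $F^g(-1,\eta)$, $F^g_\xi(-1,\eta)$, and the second derivatives at $C$. The second-derivative consistency uses \eqref{eq_n139}, \eqref{eq_n142}, and the orthogonality identities \eqref{eq_27} and \eqref{eq_n112}. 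It must be done separately for $\lambda_C=0$, where $Q^a(-1,1)=0$ is assumed, and for $\lambda_C=1$.
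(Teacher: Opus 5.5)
Your proposal is correct and takes the same route as the paper: read off the traces and first derivatives of $V$ on each edge from Properties~\ref{lem_57} and~\ref{lem_58}, obtain $V_\xi(-1,\eta)=F_\xi^g(-1,\eta)$, $V_\eta(-1,\eta)=V_\eta^g(-1,\eta)$, $V_\xi(-\eta,\eta)=F_\xi^{gg}(-\eta,\eta)$ and $V_\eta(-\eta,\eta)=F_\eta^{gg}(-\eta,\eta)$, and substitute these into the definitions. Your outline of the corner checks behind Property~\ref{lem_58} fills in what the paper leaves to ``verification'': only first-order matching is actually needed at $B$, and second-order matching is needed at $C$, split by $\lambda_C$. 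One slip: the trace of $g-Pg$ on $\overline{AC}$ does not vanish, because $Pg(-1,\eta)$ is the $C^2$ Hermite-type profile built from the corner data of $g$, so $V(-1,\eta)\neq F^g(-1,\eta)$ in general; your subsequent differentiation of $g(-1,\eta)-Pg(-1,\eta)+F^g(-1,\eta)$ is the correct step and is what produces $V_\eta^g(-1,\eta)$, so the conclusion is unaffected.
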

\begin{proof} By using Properties \ref{lem_57} and \ref{lem_58}, and for
  the Neumann conditions by further noting that
  \begingroup\small\begin{align}
    \left\{\begin{array}{l}
    V_{\xi}(-1,\eta) = F_{\xi}^g(-1,\eta), \quad V_{\xi}(-\eta,\eta) = F_{\xi}^{gg}(-\eta,\eta),
    \quad V_{\eta}(-\eta,\eta) = F_{\eta}^{gg}(-\eta,\eta), \\
    V_{\eta}(-1,\eta) = g_{\eta}(-1,\eta) - \big[g(-1,-1)-F(-1,-1) \big]\rho_0'(\eta) 
    -\lambda_A\big[g_{\eta}(-1,-1)-F_{\eta}^a(-1,-1) \big]\upsilon_0'(\eta) \\
    \qquad\qquad\quad\
    -\big[g_{\eta}(-1,1)-F_{\eta}^a(-1,1) \big]\upsilon_1'(\eta) = V_{\eta}^g(-1,\eta).
    \end{array}\right.
  \end{align}\endgroup
\end{proof}

\begin{remark}
  The use of $C^2$ Barnhill polynomials in the construction of
  the $P_1$ operator is critical.
  They are required to ensure the point constraints
  involving the second derivatives (equations \eqref{eq_n139}, \eqref{eq_n142} and
  \eqref{eq_n143}) at the double-Neumann vertex $C$.
\end{remark}

\begin{remark}\label{rem_53}
  If the Neumann conditions are imposed on $\overline{BC}$ and
  $\overline{AB}$ (instead of $\overline{AC}$),
  a reciprocal formulation can be constructed analogously.
\end{remark}

\subsubsection{Sole Robin Condition Imposed on the Hypotenuse}
\label{sec_r_BC}

For this configuration
we aim to construct the general form of $u(\mbs x)$ that satisfies,
\begingroup\small\begin{equation}
  u|_{\mbs x\in\overline{AB}} = u_{AB}(\mbs x),  \quad
     u|_{\mbs x\in\overline{AC}} = u_{AC}(\mbs x),  \quad
     \mbs n\cdot\nabla u|_{\mbs x\in\overline{BC}} + \alpha_{BC}u|_{\mbs x\in\overline{BC}} = u_{rBC}(\mbs x),
\end{equation}\endgroup
where $\alpha_{BC}$ and $u_{rBC}$ are the Robin coefficient and Robin
data on $\overline{BC}$, respectively.
The transformed Robin condition is, under the map from Section \ref{sec_21},
\begingroup\small\begin{align}\label{eq_117}
  R_{BC}(\eta) V_{\xi}(-\eta,\eta) + [1-R_{BC}(\eta)]V_{\eta}(-\eta,\eta)
  + \alpha_{BC}W_{BC}(\eta)V(-\eta,\eta)
  = T_{rBC}(\eta), 
\end{align}\endgroup
where $R_{BC}(\eta)$ and $W_{BC}(\eta)$ are defined in \eqref{eq_55},
and $T_{rBC}(\eta)=u_{rBC}(\mbs x(-\eta,\eta))W_{BC}(\eta)$.
The Dirichlet conditions are transformed into \eqref{eq_16a} and \eqref{eq_16b},
respectively.


By an analysis similar to in Section \ref{sec_2312}, we attain the compatibility
constraints at vertex $B$,
\begingroup\small\begin{subequations}\label{eq_118}
  \begin{align}
    & T_{rBC}(-1) - R_{BC}(-1)F_{\xi}(1,-1) - \alpha_{BC}W_{BC}(-1)F(1,-1) = 0, \quad
    \text{if}\ \overline{AB}\perp\overline{BC}\ \text{at}\ B, \label{eq_n161a} \\
    & V_{\eta}(1,-1) = \frac{T_{BC}(-1) - R_{BC}(-1)F_{\xi}(1,-1) - \alpha_{BC}W_{BC}(-1)F(1,-1)}{1-R_{BC}(-1)} =: F_{\eta}^a(1,-1), \
    \notag \\
    &\qquad\qquad\qquad
    \text{if}\ \overline{AB}\not\perp\overline{BC}\ \text{at}\ B. \label{eq_118d}
  \end{align}
\end{subequations}\endgroup
At vertex $C$ we have the following constraints,
\begingroup\small\begin{subequations}\label{eq_119}
  \begin{align}
    & T_{rBC}(1) - [1-R_{BC}(1)]F_{\eta}(-1,1) - \alpha_{BC}W_{BC}(1)F(-1,1) = 0, \quad
    \text{if}\ \overline{AC}\perp\overline{BC}\ \text{at}\ C, \\
    & V_{\xi}(-1,1) = \frac{T_{rBC}(1) - [1-R_{BC}(1)]F_{\eta}(-1,1) - \alpha_{BC}W_{BC}(1)F(-1,1)}{R_{BC}(1)}
    =: F_{\xi}^a(-1,1), \notag \\
    &\qquad\qquad\qquad
    \text{if}\ \overline{AC}\not\perp\overline{BC}\ \text{at}\ C. \label{eq_119d}
  \end{align}
\end{subequations}\endgroup

The construction for $V(\xi,\eta)$ mirrors that of Section \ref{sec_2312},
with $T_{BC}(\eta)$ therein replaced by $T_{rBC}(\eta)$,
and with $F_{\eta}^a(1,-1)$ and $F_{\xi}^a(-1,1)$
now defined by \eqref{eq_118d} and \eqref{eq_119d}. The final form
for $V(\xi,\eta)$ is given by
\begin{align}\label{eq_120}
  V(\xi,\eta) = g(\xi,\eta) - Pg(\xi,\eta) + PF^g(\xi,\eta),
\end{align}
where $g$ is a free function and $Pg$ is defined by \eqref{eq_73}.
$PF^g$ is given by \eqref{eq_76}, 
in which $U_{BC}^g(\eta)$ is defined by \eqref{eq_77a}--\eqref{eq_77e} and 
$F_{\xi}^{gg}(-\eta,\eta)$ and $F_{\eta}^{gg}(\xi,-\xi)$ are given by
\begingroup\small
\begin{align}\label{eq_121}
  & \begin{bmatrix} F_{\xi}^{gg}(-\eta,\eta)\\ F_{\eta}^{gg}(-\eta,\eta)  \end{bmatrix}
  = \begin{bmatrix}T_{rBC}(\eta) - \alpha_{BC}W_{BC}(\eta)U_{BC}^g(\eta) \\
  T_{rBC}(\eta) - \alpha_{BC}W_{BC}(\eta)U_{BC}^g(\eta) \end{bmatrix}
    +\begin{bmatrix}R_{BC}(\eta)-1 \\ R_{BC}(\eta) \end{bmatrix}\frac{d}{d\eta}U_{BC}^{g}(\eta).
\end{align}
\endgroup

\begin{theorem}
  $V(\xi,\eta)$ given by \eqref{eq_120} satisfies the Dirichlet conditions
  \eqref{eq_16a} and \eqref{eq_16b} and the Robin condition \eqref{eq_117},
  for any $g(\xi,\eta)$ therein that is sufficiently differentiable.
\end{theorem}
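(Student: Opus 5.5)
The proof follows the one for the sole Neumann condition on the hypotenuse (Section~\ref{sec_2312}). The construction in \eqref{eq_120} differs from \eqref{eq_80} only in the edge data $F_{\xi}^{gg}$, $F_{\eta}^{gg}$ on $\overline{BC}$ and in the corner data $F_{\eta}^a(1,-1)$, $F_{\xi}^a(-1,1)$. So I would reuse Properties~\ref{lem_29} and \ref{lem_212}, whose proofs depend only on the structure of $PF$ and $PF^g$ and not on the specific data. That leaves one genuinely new verification: the Robin analogue of Property~\ref{lem_211}.

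\textbf{Dirichlet conditions.} The Dirichlet conditions \eqref{eq_16a}--\eqref{eq_16b} come from the $A_F$ part of \eqref{eq_76}. On $\eta=-1$ we have $\xi+\eta=\xi-1$ and $\alpha(\xi,-1)=\beta(\xi,-1)=0$; on $\xi=-1$, $\alpha$ and $\beta$ behave as listed in \eqref{eq_32}. With these, $V=g-Pg+PF^g$ reduces to $F(\xi,-1)$ and $F(-1,\eta)$ on the legs exactly as before. The only point to check is the correction terms $(\xi+\eta)[F_{\xi}^{gg}(-\eta,\eta)-F_{\xi}(-\eta,-1)]$, and their analogues, multiplying $\beta$ and $\alpha$. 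These must vanish, or be cancelled, on the legs wherever the Barnhill polynomials do not vanish there. That cancellation is the content of Property~\ref{lem_212}. It needs $F_{\xi}^{gg}(1,-1)=F_{\xi}(1,-1)$ and $F_{\eta}^{gg}(-1,1)=F_{\eta}(-1,1)$ at the corners.

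\textbf{Robin analogue of Property~\ref{lem_211}.} From \eqref{eq_121}, the difference $F_{\eta}^{gg}-F_{\xi}^{gg}$ equals $\frac{d}{d\eta}U^g_{BC}$ immediately, since the two first rows coincide. Substituting \eqref{eq_121} gives
\[
R_{BC}F_{\xi}^{gg}+(1-R_{BC})F_{\eta}^{gg}=T_{rBC}-\alpha_{BC}W_{BC}U^g_{BC},
\]
because the $U'$ coefficients give $R(R-1)+(1-R)R=0$.

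\textbf{Corner identities.} Next I would check $F_{\xi}^{gg}(1,-1)=F_{\xi}(1,-1)$ and $F_{\eta}^{gg}(-1,1)=F_{\eta}(-1,1)$, using the endpoint values of $U_{BC}^g$ and its derivative supplied by $\mathcal Q$. The Hermite properties \eqref{eq_30} give
\[
U_{BC}^g(-1)=F(1,-1),\qquad \frac{dU^g_{BC}}{d\eta}(-1)=-F_{\xi}(1,-1)+F^g_{\eta}(1,-1),
\]
and similarly at $\eta=1$. Insert these into \eqref{eq_121} at $\eta=\mp1$ and split into cases on $\lambda_B$. If $\lambda_B=1$, the definition \eqref{eq_118d} of $F_{\eta}^a(1,-1)$, which now carries the $\alpha_{BC}W_{BC}F(1,-1)$ term, makes the first component reduce to $F_{\xi}(1,-1)$. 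If $\lambda_B=0$, so $R_{BC}(-1)=1$, the factor $R_{BC}-1$ kills the derivative term, and the compatibility assumption \eqref{eq_n161a} gives the result. Vertex $C$ is handled symmetrically with $\lambda_C$, using \eqref{eq_119d} and its companion condition.

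\textbf{Robin condition on $\overline{BC}$.} Property~\ref{lem_212} gives $V(-\eta,\eta)=U_{BC}^g(\eta)$ and $(V_{\xi},V_{\eta})(-\eta,\eta)=(F_{\xi}^{gg},F_{\eta}^{gg})(-\eta,\eta)$; the $g-Pg$ part contributes nothing by Property~\ref{lem_29} applied to $g$. Substituting into \eqref{eq_117} and using the identity above yields $T_{rBC}(\eta)$.

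The main obstacle is the corner case analysis. One must confirm that the Robin term, evaluated with $U^g_{BC}(\mp1)$ equal to the Dirichlet corner values, matches the terms appearing in \eqref{eq_118d} and \eqref{eq_119d}. It also needs to be checked against the notation in \eqref{eq_118d}, where $T_{BC}$ appears but $T_{rBC}$ is evidently intended.
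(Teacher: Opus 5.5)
Your proposal is correct and is essentially the paper's proof. The paper also argues by verification, reducing the theorem to three ingredients:
\begin{itemize}
\item the corner identities $F_{\eta}^{gg}(-1,1)=F_{\eta}(-1,1)$ and $F_{\xi}^{gg}(1,-1)=F_{\xi}(1,-1)$;
\item the identity $F_{\eta}^{gg}-F_{\xi}^{gg}=\frac{d}{d\eta}U_{BC}^g$;
\item the boundary properties of $PF^g$.
\end{itemize}
Your case split on $\lambda_B,\lambda_C$, with $T_{BC}$ in \eqref{eq_118d} read as $T_{rBC}$ as you note, is exactly what establishes the corner identities.

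There is one slip in your Dirichlet paragraph. By \eqref{eq_32}, $\beta(\xi,-1)=\varphi_1(\xi)\neq 0$; only $\alpha$ vanishes on $\eta=-1$, and only $\beta$ vanishes on $\xi=-1$. It is precisely this nonvanishing term, $\varphi_1(\xi)(\xi-1)\bigl[F_{\xi}^{gg}(1,-1)-F_{\xi}(1,-1)\bigr]$, that makes the vertex-$B$ identity necessary for the Dirichlet condition on $\overline{AB}$.
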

\begin{proof} By verification and by noting that
  \begingroup\small\begin{equation}
    \left\{\begin{array}{l}
    F_{\eta}^{gg}(-1,1) = F_{\eta}(-1,1), \quad F_{\xi}^{gg}(1,-1)=F_{\xi}(1,-1), \quad
    -F_{\xi}^{gg}(-\eta,\eta) + F_{\eta}^{gg}(-\eta,\eta) = \frac{d}{d\eta}U_{BC}^g(\eta); \\
    PF^g(-1,\eta) = F(-1,\eta), \quad PF^g(\xi,-1)=F(\xi,-1), \quad
    PF^g(-\eta,\eta) = U_{BC}^g(\eta), \\
    (PF^g)_{\xi}(-\eta,\eta) = F_{\xi}^{gg}(-\eta,\eta), \quad
    (PF^g)_{\eta}(-\eta,\eta) = F_{\eta}^{gg}(-\eta,\eta).
    \end{array}\right.
  \end{equation}\endgroup
\end{proof}

\subsubsection{Two Robin Conditions Imposed on the Hypotenuse and a Leg}
\label{sec_r_ACBC}

If two Robin boundaries are present, with one of them being
the hypotenuse, we focus on the configuration in which these boundaries
correspond to $\overline{AC}$ and $\overline{BC}$, without loss of generality.
Our objective is to formulate the general form of a scalar function $u(\mbs x)$
that satisfies,
\begingroup\small
\begin{equation}
  u|_{\mbs x\in\overline{AB}} = u_{AB}(\mbs x),  \quad
     \mbs n\cdot\nabla u|_{\mbs x\in\overline{BC}}
     + \alpha_{BC}u|_{\mbs x\in\overline{BC}} = u_{rBC}(\mbs x), \quad
  \mbs n\cdot\nabla u|_{\mbs x\in\overline{AC}}
  + \alpha_{AC} u|_{\mbs x\in\overline{AC}} = u_{rAC}(\mbs x). 
\end{equation}\endgroup
Using the map from Section \ref{sec_21}, these conditions
are transformed into \eqref{eq_16a}, \eqref{eq_117}, and \eqref{eq_111}
about  $V(\xi,\eta)$.

These boundary conditions must be compatible.
At vertex $A$, the compatibility constraints consist of \eqref{eq_112a}--\eqref{eq_112b}.
At vertex $B$, equations \eqref{eq_n161a}--\eqref{eq_118d}
constitute the compatibility constraints, and equation~\eqref{eq_62a} is
the equivalent unconditional form,
in which $F_{\eta}^a(1,-1)$ is now defined by \eqref{eq_118d}
and $\Gamma_B$ is a free parameter.
An analysis analogous to in Section \ref{sec_n_ACBC} leads to
the following constraints at vertex $C$,
\begingroup\small
\begin{align}
  &\begin{bmatrix}V_{\xi}(-1,1)\\ V_{\eta}(-1,1)  \end{bmatrix}
  =\begin{bmatrix}F_{\xi}^a(-1,1)\\ F_{\eta}^a(-1,1) \end{bmatrix}
  - \begin{bmatrix}F_{\xi}^b(-1,1)\\ F_{\eta}^b(-1,1) \end{bmatrix}V(-1,1)
  =: \begin{bmatrix}F_{\xi}(-1,1)\\ F_{\eta}(-1,1)  \end{bmatrix}, \label{eq_n167} \\
  & V_{\xi\eta}(-1,1) = -S_{AC}(1)V_{\eta\eta}(-1,1) + Q_{AC}^a(1) + Q_{AC}^b(1)V(-1,1)
  =: F_{\xi\eta}(-1,1), \label{eq_n168}
\end{align}\endgroup
and
\begin{subequations}\label{eq_n169}
  \small
  \begin{align}
    & V_{\xi\xi}(-1,1) = S_1V_{\eta\eta}(-1,1) + S_2 + S_3V(-1,1), \quad
    \text{if}\ \overline{AC}\not\perp\overline{BC}\ \text{at}\ C, \label{eq_n169a} \\
    & V(-1,1) = -\frac{Q^a(-1,1)}{Q^b(-1,1)} =: F^a(-1,1), 
    \qquad\qquad\
    \text{if}\ \overline{AC}\perp\overline{BC}\ \text{at}\ C\ \text{and}\
    Q^b(-1,1)\neq 0, \label{eq_n169b} \\
    & Q^a(-1,1)=0,
    \qquad\qquad\qquad\qquad\qquad\qquad\qquad\ \;
    \text{if}\ \overline{AC}\perp\overline{BC}\ \text{at}\ C\ \text{and}\
    Q^b(-1,1)= 0. \label{eq_n169c}
  \end{align}
\end{subequations}
The symbols in the above equations are defined by,
\begingroup\small\begin{align}
  \left\{\begin{array}{l}
  F_{\xi}^a(-1,1) = \frac{[1-R_{BC}(1)]T_{rAC}(1) - S_{AC}(1)T_{rBC}(1)}
  {1-R_{BC}(1)[1+S_{AC}(1)]}, \quad
  F_{\eta}^a(-1,1) = \frac{-R_{BC}(1)T_{rAC}(1) + T_{rBC}(1)}
  {1-R_{BC}(1)[1+S_{AC}(1)]}, \\
  F_{\xi}^b(-1,1) = \frac{[1-R_{BC}(1)]\alpha_{AC}W_{AC}(1) - S_{AC}(1)\alpha_{BC}W_{BC}(1)}
  {1-R_{BC}(1)[1+S_{AC}(1)]}, \
  F_{\eta}^b(-1,1) = \frac{-R_{BC}(1)\alpha_{AC}W_{AC}(1) + \alpha_{BC}W_{BC}(1)}
  {1-R_{BC}(1)[1+S_{AC}(1)]},
  \\
  Q_{AC}^a(1) = T_{rAC}'(1) - [S_{AC}'(1)+\alpha_{AC}W_{AC}(1)]F_{\eta}^a(-1,1), \\
  Q_{AC}^b(1) = [S_{AC}'(1) + \alpha_{AC}W_{AC}(1)]F_{\eta}^b(-1,1)
  - \alpha_{AC}W_{AC}'(1), \\
  Q^a(-1,1) = T_{rBC}'(1) - [R_{BC}'(1)-\alpha_{BC}W_{BC}(1)][F_{\xi}^a(-1,1)-F_{\eta}^a(-1,1)]
  -[2R_{BC}(1)-1]Q_{AC}^a(1), \\
  Q^b(-1,1) = [R_{BC}'(1)-\alpha_{BC}W_{BC}(1)][F_{\xi}^b(-1,1)-F_{\eta}^b(-1,1)]
  - [2R_{BC}(1)-1]Q_{AC}^b(1) \\
  \qquad\qquad\quad\ \
  - \alpha_{BC}W_{BC}'(1), \\
  S_1 = 1 + [1-2R_{BC}(1)]\frac{\|\mbs x_{BC}'(1) \|^2}{\|\mbs x_{AC}'(1) \|^2}, \quad
  S_2 = -\frac{Q^a(-1,1)}{R_{BC}(1)}, \quad
  S_3 = -\frac{Q^b(-1,1)}{R_{BC}(1)}.
  \end{array}\right.
\end{align}\endgroup


To flag whether $Q^b(-1,1)=0$ when $\overline{AC}$ and $\overline{BC}$ are orthogonal
at $C$, we introduce a constant
\begingroup\small\begin{align}
  & \gamma_C = \text{sign}(|Q^b(-1,1)|) = \left\{\begin{array}{ll}
  0, & \text{if}\ Q^b(-1,1)=0, \\ 1, & \text{otherwise}.
  \end{array}\right.
\end{align}\endgroup
We next transform the conditional constraints in \eqref{eq_n169a} and \eqref{eq_n169b}
into equivalent unconditional forms,
\begingroup\small\begin{align}
  V_{\xi\xi}(-1,1) &= (1-\lambda_C)\Gamma_C^{\xi\xi}
  + \lambda_C\big[S_1V_{\eta\eta}(-1,1) + S_2 + S_3V(-1,1) \big] =: F_{\xi\xi}(-1,1),
  \label{eq_n172}\\
  V(-1,1) &= \big[1-(1-\lambda_C)\gamma_C \big]\Gamma_C^V
  + (1-\lambda_C)\gamma_CF^a(-1,1) =: F(-1,1), \label{eq_n173}
\end{align}\endgroup
where $\Gamma_C^{\xi\xi}$ and $\Gamma_C^V$ are free (undetermined) parameters.
No compatibility constraint exists on $V_{\eta\eta}(-1,1)$,
which is equivalent to the condition
\begingroup\small\begin{align}
  & V_{\eta\eta}(-1,1) = \Gamma_C^{\eta\eta} =: F_{\eta\eta}(-1,1), \label{eq_n174}
\end{align}\endgroup
with $\Gamma_C^{\eta\eta}$ being a free parameter.

Equations \eqref{eq_112b}, \eqref{eq_62a}, \eqref{eq_n167}, \eqref{eq_n168},
\eqref{eq_n172}, \eqref{eq_n173}, and \eqref{eq_n174}, together with
\eqref{eq_23a} and \eqref{eq_57b},
constitute the point constraints on the vertices of the triangle.
It should be noted that all the point constraints on vertices $B$ and $C$ are
in unconditional forms and
involve several free parameters $\Gamma_B$, $\Gamma_C^V$, $\Gamma_C^{\xi\xi}$,
and $\Gamma_C^{\eta\eta}$.

Using the same strategy as in Section \ref{sec_n_ACBC},
we transform the point constraints into conditions on
the edge profiles along $\overline{AC}$ and $\overline{BC}$.
Specifically, on $\overline{AC}$ we have the condition \eqref{eq_n144},
in which $F(-1,1)$, $F_{\eta}(-1,-1)$, $F_{\eta}(-1,1)$ and $F_{\eta\eta}(-1,1)$ are now
defined in \eqref{eq_n173}, \eqref{eq_112b}, \eqref{eq_n167}
and \eqref{eq_n174}, respectively.
On $\overline{BC}$ we have the condition \eqref{eq_n145},
in which $U_{BC}(\eta)$ is defined by \eqref{eq_n146}--\eqref{eq_n147b}
and $F_{\eta}(1,-1)$, $F_{\xi}(-1,1)$, $F_{\xi\xi}(-1,1)$, $F_{\xi\eta}(-1,1)$
are now defined by \eqref{eq_62a}, \eqref{eq_n167}, \eqref{eq_n172},
and \eqref{eq_n168}, respectively.
Again following the same strategy as in Section \ref{sec_n_ACBC}, we transform
the Robin condition \eqref{eq_117} together with \eqref{eq_n145} into,
\begin{subequations}\label{eq_n175}
  \small
\begin{align}
  & V_{\xi}(-\eta,\eta) = T_{rBC}(\eta) - \alpha_{BC}W_{BC}(\eta)U_{BC}(\eta)
  + [R_{BC}(\eta)-1]U_{BC}'(\eta) =: F_{\xi}(-\eta,\eta), \label{eq_n175a} \\
  & V_{\eta}(-\eta,\eta) = T_{rBC}(\eta) - \alpha_{BC}W_{BC}(\eta)U_{BC}(\eta)
  + R_{BC}(\eta)U_{BC}'(\eta) \qquad\
  =: F_{\eta}(-\eta,\eta). \label{eq_n175b}
\end{align}
\end{subequations}

Our goal is to find the general form of $V(\xi,\eta)$ that satisfies
\eqref{eq_16a}, \eqref{eq_n144}, \eqref{eq_111b}, \eqref{eq_n145},
\eqref{eq_n175a}, \eqref{eq_n175b}, together with \eqref{eq_n172},
\eqref{eq_n168}, and \eqref{eq_n174}. 

These constraints have the same forms to those in Section
\ref{sec_n_ACBC}. Therefore, the transfinite interpolation $PF(\xi,\eta)$
and the preliminary $V(\xi,\eta)$ are the same
as those from Section \ref{sec_n_ACBC}, respectively given by
\eqref{eq_148} and \eqref{eq_n151}, in
which $A_{F}(\xi,\eta)$, $Pg(\xi,\eta)$ and $A_g(\xi,\eta)$ are
defined in \eqref{eq_149b} and \eqref{eq_n152}.
It should be noted that $PF(\xi,\eta)$ contains the terms
$F(-1,1)$, $F_{\xi}(-1,1)$, $F_{\eta}(-1,1)$, $F_{\xi\xi}(-1,1)$,
$F_{\xi\eta}(-1,1)$, $F_{\eta\eta}(-1,1)$, $F_{\xi}(-\eta,\eta)$ and
$F_{\eta}(-\eta,\eta)$, which are now defined
by \eqref{eq_n173}, \eqref{eq_n167}, \eqref{eq_n172}, \eqref{eq_n168},
\eqref{eq_n174}, \eqref{eq_n175a} and \eqref{eq_n175b},
respectively, and contain several
free parameters and the edge free function $h(-\eta,\eta)$ (through $U_{BC}(\eta)$).

Substitution of the placeholder terms in $PF$ by their compatible data
closes the loop and results in the updated transfinite interpolant.
We first eliminate the free parameters and the edge free function
by relating them to the free function $g(\xi,\eta)$ based on
\eqref{eq_n153}, which leaves $g(\xi,\eta)$ as the only unknown
in the final formulation.
The updated transfinite interpolant is given by,
\begingroup\small\begin{align}\label{eq_n176}
  PF^g(\xi,\eta) =&\ A_F^g(\xi,\eta) + \kappa(\xi,\eta)\big[
    U_{BC}^g(-\xi) + (\xi+\eta)F_{\eta}^{gg}(\xi,-\xi) - A_F^g(\xi,-\xi) 
    - (\xi+\eta)A^g_{F,\eta}(\xi,-\xi)
    \big] \notag \\
  &+ \delta(\xi,\eta)\big[U_{BC}^g(\eta) + (\xi+\eta)F_{\xi}^{gg}(-\eta,\eta) - A_F^g(-\eta,\eta)
    - (\xi+\eta)A^g_{F,\xi}(-\eta,\eta) \big]. 
\end{align}\endgroup
where 
\begin{subequations}
  \small
  \begin{align}
    A_F^g(\xi,\eta) =&\  F(\xi,-1) + F^g(-1,\eta) - F(-1,-1)
    + (\xi+1)\big[F_{\xi}^g(-1,\eta) - F_{\xi}(-1,-1) \big],  \\
    F^g(-1,\eta) =&\ F(-1,-1)\rho_0(\eta) + F^{ag}(-1,1)\rho_1(\eta)
    + \lambda_AF_{\eta}^a(-1,-1)\upsilon_0(\eta) + F_{\eta}^{ag}(-1,1)\upsilon_1(\eta) \notag \\
    &+ g_{\eta\eta}(-1,1)\omega_1(\eta),  \\
    U_{BC}^g(\eta) =&\ g(-\eta,\eta) - \mathcal Qg(-\eta,\eta) + \mathcal QF^g(-\eta,\eta), \\
    \mathcal Qg(-\eta,\eta) =&\ g(1,-1)\rho_0(\eta) + g(-1,1)\rho_1(\eta)
    + \big[-g_{\xi}(1,-1) + g_{\eta}(1,-1) \big]\upsilon_0(\eta) \notag \\
    &+ \big[-g_{\xi}(-1,1) + g_{\eta}(-1,1) \big]\upsilon_1(\eta)
    + \big[g_{\xi\xi}(-1,1) - 2g_{\xi\eta}(-1,1) + g_{\eta\eta}(-1,1) \big]\omega_1(\eta), \\
    \mathcal QF^g(-\eta,\eta) =&\ F(1,-1)\rho_0(\eta) + F^{ag}(-1,1)\rho_1(\eta)
    + \big[-F_{\xi}(1,-1) + F^g_{\eta}(1,-1) \big]\upsilon_0(\eta) \notag \\
    &+ \big[-F^{ag}_{\xi}(-1,1) + F^{ag}_{\eta}(-1,1) \big]\upsilon_1(\eta)
    + \big[F^g_{\xi\xi}(-1,1) - 2F^g_{\xi\eta}(-1,1) + g_{\eta\eta}(-1,1) \big]\omega_1(\eta), \\
    F_{\eta}^g(1,-1) =&\ (1-\lambda_B)g_{\eta}(1,-1) + \lambda_B F_{\eta}^a(1,-1), \\
    F^{ag}(-1,1) =&\ \big[1-(1-\lambda_C)\gamma_C \big]g(-1,1)
    + (1-\lambda_C)\gamma_C F^a(-1,1), \\
    F_{\xi}^{ag}(-1,1) =&\ F_{\xi}^a(-1,1) - F_{\xi}^b(-1,1)F^{ag}(-1,1), \\
    F_{\eta}^{ag}(-1,1) =&\ F_{\eta}^a(-1,1) - F_{\eta}^b(-1,1)F^{ag}(-1,1), \\
    F_{\xi\xi}^g(-1,1) =&\ (1-\lambda_C)g_{\xi\xi}(-1,1)
    + \lambda_C[S_1g_{\eta\eta}(-1,1)+S_2 + S_3F^{ag}(-1,1)], \\
    F_{\xi\eta}^g(-1,1) =&\ -S_{AC}(1)g_{\eta\eta}(-1,1) +
    \big[Q_{AC}^a(1) + Q_{AC}^b(1)F^{ag}(-1,1) \big], \\
    F_{\xi}^{gg}(-\eta,\eta) =&\ T_{rBC}(\eta) - \alpha_{BC}W_{BC}(\eta)U_{BC}^g(\eta)
    + [R_{BC}(\eta)-1]\frac{d}{d\eta} U_{BC}^{g}(\eta), \\
    F_{\eta}^{gg}(\xi,-\xi) = &\ T_{rBC}(-\xi) - \alpha_{BC}W_{BC}(-\xi)U_{BC}^g(-\xi)  +
    R_{BC}(-\xi) \left.\frac{d}{d\eta}U_{BC}^g(\eta)\right|_{\eta=-\xi}, \\
    F_{\xi}^g(-1,\eta) =&\ T_{rAC}(\eta) - S_{AC}(\eta) V_{\eta}^g(-1,\eta)
    - \alpha_{AC}W_{AC}(\eta)V^g(-1,\eta), \\
    V^g(-1,\eta) =&\ g(-1,\eta) - [g(-1,-1)-F(-1,-1)]\rho_0(\eta)
    - [g(-1,1) - F^{ag}(-1,1)]\rho_1(\eta) \notag \\
    &- \lambda_A[g_{\eta}(-1,-1)-F_{\eta}^a(-1,-1)]\upsilon_0(\eta)
    - [g_{\eta}(-1,1) - F_{\eta}^{ag}(-1,1)]\upsilon_1(\eta), \\
    V_{\eta}^g(-1,\eta) =&\ g_{\eta}(-1,\eta) - [g(-1,-1)-F(-1,-1)]\rho_0'(\eta)
    - [g(-1,1) - F^{ag}(-1,1)]\rho_1'(\eta) \notag \\
    &- \lambda_A[g_{\eta}(-1,-1)-F_{\eta}^a(-1,-1)]\upsilon_0'(\eta)
    - [g_{\eta}(-1,1) - F_{\eta}^{ag}(-1,1)]\upsilon_1'(\eta).
  \end{align}
\end{subequations}
\begin{property}\label{lem_511}
  For any combination of $(\lambda_A,\lambda_B,\lambda_C,\gamma_C)=(0/1,0/1,0/1,0/1)$,
  the following relations hold,
  \begingroup\small\begin{align}
    \left\{\begin{array}{l}
    \lim_{\eta\rightarrow -1}F_{\xi}^g(-1,\eta)=: F_{\xi}^g(-1,-1) = F_{\xi}(-1,-1), \\
    F_{\xi}^{gg}(1,-1) = F_{\xi}(1,-1), \quad
    F_{\eta}^{gg}(-1,1) = F_{\eta}^{ag}(-1,1), \quad
    F_{\xi,\eta}^g(-1,1) = F_{\xi\eta}^g(-1,1), \\
    -F_{\xi}^g(-1,1) + F_{\eta}^{gg}(-1,1) = \frac{dU_{BC}^g}{d\eta}(1), \\
    \left.\frac{d}{d\xi}F_{\eta}^{gg}(\xi,-\xi) \right|_{\xi=-1}
    - A_{F,\eta\xi}^g(-1,1) + A_{F,\eta\eta}^g(-1,1) = 0. 
    \end{array}\right.
  \end{align}\endgroup
\end{property}
\begin{proof} By verification. \end{proof}

\begin{property}\label{lem_512}
  $PF^g(\xi,\eta)$ given by \eqref{eq_n176} satisfies, for any
  sufficiently differentiable $g(\xi,\eta)$
  therein,
  \begingroup\small\begin{align}
    \left\{\begin{array}{l}
    PF^g(\xi,-1) = F(\xi,-1), \quad PF^g(-1,\eta)=F^g(-1,\eta), \quad
    PF^g(-\eta,\eta) = U_{BC}^g(\eta) \\
    (PF^g)_{\xi}(-1,\eta)=F_{\xi}^g(-1,\eta), \quad
    (PF^g)_{\xi}(-\eta,\eta) = F_{\xi}^{gg}(-\eta,\eta), \quad
    (PF^g)_{\eta}(-\eta,\eta) = F_{\eta}^{gg}(-\eta,\eta), \\
    (PF^g)_{\xi\xi}(-1,1) = F_{\xi\xi}^g(-1,1), \quad
    (PF^g)_{\xi\eta}(-1,1) = F_{\xi\eta}^g(-1,1), \quad
    (PF^g)_{\eta\eta}(-1,1) = g_{\eta\eta}(-1,1).
    \end{array}\right.
  \end{align}\endgroup
\end{property}
\begin{proof} By verification, and by using Property \ref{lem_511} and
the relations in \eqref{eq_n10} and \eqref{eq_93}. \end{proof}

The final form for $V(\xi,\eta)$ is given by
\begingroup\small\begin{align}\label{eq_180}
  & V(\xi,\eta) = g(\xi,\eta) - Pg(\xi,\eta) + PF^g(\xi,\eta),
\end{align}\endgroup
where $g$ is a free function,
$Pg$ is defined in \eqref{eq_n152}, and $PF^g$ is given by
\eqref{eq_n176}. By using Properties \ref{lem_512} and \ref{lem_57} we
can establish the following result.
\begin{theorem}
  $V(\xi,\eta)$ given by \eqref{eq_180} satisfies the Dirichlet
  condition \eqref{eq_16a} and the Robin conditions \eqref{eq_117}
  and \eqref{eq_111a}, for any $g(\xi,\eta)$ therein that is sufficiently
  differentiable.
\end{theorem}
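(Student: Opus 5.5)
The plan is to verify the three boundary conditions directly from the TFC structure $V=g-Pg+PF^g$. The key observation is that $Pg$ is obtained from $PF$ in \eqref{eq_148} by replacing every placeholder $F$ with $g$. Property~\ref{lem_57} therefore holds verbatim for $Pg$, with all data replaced by the corresponding traces of $g$ (and $\lambda_A$ carried along in the $\upsilon_0$ term). Consequently $g-Pg$ has vanishing trace on $\overline{AB}$ ($\eta=-1$). On $\overline{AC}$ its trace reduces to the correction terms $g(-1,\eta)-[\dots]$ assembled from the $\rho,\upsilon,\omega$ blends. On $\overline{BC}$ both its trace and its first derivatives $(\cdot)_\xi,(\cdot)_\eta$ vanish. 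Property~\ref{lem_512} then supplies the traces of $PF^g$.

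The Dirichlet condition \eqref{eq_16a} follows at once: $V(\xi,-1)=g(\xi,-1)-g(\xi,-1)+F(\xi,-1)=F(\xi,-1)$.

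For the Robin condition \eqref{eq_117} on the hypotenuse, Property~\ref{lem_512} gives three identities on $(-\eta,\eta)$:
\[
V=U_{BC}^g,\qquad V_\xi=F_\xi^{gg},\qquad V_\eta=F_\eta^{gg}.
\]
Substituting these into \eqref{eq_117}, the $U'_{BC}$ contributions combine as
\[
R_{BC}(R_{BC}-1)+(1-R_{BC})R_{BC}=0,
\]
and the remaining terms give $T_{rBC}-\alpha_{BC}W_{BC}U^g_{BC}+\alpha_{BC}W_{BC}U^g_{BC}=T_{rBC}$. One consistency point must be checked. Property~\ref{lem_512} uses $F_\eta^{gg}$ evaluated at $(-\eta,\eta)$, whereas the $\kappa$ term of $PF^g$ uses $F_\eta^{gg}(\xi,-\xi)$. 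These agree under $\xi=-\eta$ by construction, which I would note explicitly.

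For the Robin condition \eqref{eq_111a} on $\overline{AC}$, I would show the following.
\begin{itemize}
\item $V_\xi(-1,\eta)=F_\xi^g(-1,\eta)$: by Property~\ref{lem_512} together with $(g-Pg)_\xi(-1,\eta)=0$.
\item $V(-1,\eta)=V^g(-1,\eta)$: by adding $g(-1,\eta)-Pg(-1,\eta)$ to $F^g(-1,\eta)$ and using the relation $\rho_0+\rho_1=1$ together with the parameter choices in \eqref{eq_n153}.
\item $V_\eta(-1,\eta)=V_\eta^g(-1,\eta)$: by differentiating the previous identity in $\eta$.
\end{itemize}
Since $F_\xi^g(-1,\eta)$ was defined precisely as $T_{rAC}-S_{AC}V^g_\eta-\alpha_{AC}W_{AC}V^g$, condition \eqref{eq_111a} holds identically.

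The main obstacle is the matching of $F^g(-1,\eta)$ plus the $\overline{AC}$ trace of $g-Pg$ with $V^g(-1,\eta)$. Two coefficients differ in form between the expressions. The $\rho_1$ coefficient is $F^{ag}(-1,1)$ versus $g(-1,1)$. The $\omega_1$ coefficient is $g_{\eta\eta}(-1,1)$, which must cancel against the corresponding term of $Pg$. To resolve this I would check each blend coefficient term by term. Every property in Property~\ref{lem_512} is needed for all combinations of $(\lambda_A,\lambda_B,\lambda_C,\gamma_C)$. This, and the corner relations of Property~\ref{lem_511} (which guarantee that $PF^g$ is consistent at $C$), is where I expect the case bookkeeping to be heaviest. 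Once those coefficients match, the argument is purely algebraic.
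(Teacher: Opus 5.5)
Your proposal is correct and follows essentially the paper's route. The paper proves the theorem by invoking Property~\ref{lem_57} (applied to $Pg$ with $F$ replaced by $g$) together with Property~\ref{lem_512}, and your write-up carries out that verification in full. One simplification: the step you flag as the main obstacle is immediate. Adding $(g-Pg)(-1,\eta)$ to $F^g(-1,\eta)$ term by term gives exactly $V^g(-1,\eta)$, since the $\omega_1$ terms cancel and the $\rho_1$ discrepancy is precisely the $-[g(-1,1)-F^{ag}(-1,1)]\rho_1$ term of $V^g$, so $\rho_0+\rho_1=1$ is never needed.
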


\begin{remark}
  If the two Robin conditions are imposed on $\overline{BC}$ and
  $\overline{AB}$ (instead of $\overline{AC}$),
  a reciprocal formulation for $V(\xi,\eta)$ can be constructed analogously by
  following the same strategy.
\end{remark}

\subsection{Geometric Domain Parameters}
\label{sec_52}

\noindent\underline{\bf Section \ref{sec_31} Variable-coefficient Poisson equation:}\\[5pt]
\underline{Domain \#1:}
\vspace{-7pt}
\begin{align*}\small
  \begin{array}{l}
    \mbs x_A=(-1,-0.5), \quad \mbs x_B=(1.5,-0.8), \quad \mbs x_C=(-0.6,1.3); \\
    \mbs x_{AB}(\xi) = \mbs x_A\phi_0(\xi) + \mbs x_B\phi_1(\xi) - (0,0.1\sin(\pi\xi)),
    \quad \xi\in[-1,1]; \\
    \mbs x_{AC}(\eta) = \mbs x_A\phi_0(\eta) + \mbs x_C\phi_1(\eta)
    + (0.2\sin(\pi \eta),0), \quad \eta\in[-1,1]; \\
    \mbs x_{BC}(\eta) = \mbs x_B\phi_0(\eta) + \mbs x_C\phi_1(\eta)
    + (0.3\sin(\pi \eta),0), \quad \eta\in[-1,1].
  \end{array}
\end{align*}

\noindent\underline{Domain \#2:}
\vspace{-7pt}
\begin{align*}\small
  \begin{array}{l}
    \mbs x_A=(-1.5,0.5), \quad \mbs x_B=(0.5,0.5), \quad \mbs x_C=(-1,3); \\
    \mbs x_{AB}(\xi) = (\cos(\pi\phi_0(\xi)), -\sin(\pi\phi_0(\xi))) + (-0.5,0.5),
    \quad \xi\in[-1,1]; \\
    \mbs x_{AC}(\eta) = (-1+0.5\sin(\pi\eta),\ 2.5\phi_1(\eta)+0.5 ),
    \quad \eta\in[-1,1]; \\
    \mbs x_{BC}(\eta) = (-0.5\sin(\pi\eta),\ 2.5\phi_1(\eta)+0.5),
    \quad \eta\in[-1,1].
  \end{array}
\end{align*}

\noindent\underline{Domain \#3:}
\vspace{-7pt}
\begin{align*}\small
  \begin{array}{l}
    \mbs x_A=(\cos(\theta_{12}-\pi/3),\ \sin(\theta_{12}-\pi/3)) + \mbs x_{o_1},
    \quad \mbs x_B=(\cos(\theta_{12}+\pi/3),\ \sin(\theta_{12}+\pi/3)) + \mbs x_{o_1}, \\
    \mbs x_C=(\cos(\theta_{12}+\pi),\ \sin(\theta_{12}+\pi)) + \mbs x_{o_1};
    \quad \mbs x_{o_1}=(1.75,0.5), \ \ \theta_{12}=3\pi/4, \ \
    \mbs x_{o_2}=(\cos\theta_{12},\sin\theta_{12})+\mbs x_{o_1}; \\
    \mbs x_{AB}(\xi) = (\cos(t),\sin(t)) + \mbs x_{o_2},
    \quad t=(4\pi/3)\phi_0(\xi) + (2\pi/3)\phi_1(\xi) + \theta_{12},
    \quad \xi\in[-1,1]; \\
    \mbs x_{AC}(\eta) = (\cos(t),\sin(t)) + \mbs x_{o_1}, \quad
    t = (5\pi/3)\phi_0(\eta) + \pi\phi_1(\eta) + \theta_{12},
    \quad \eta\in[-1,1]; \\
    \mbs x_{BC}(\eta) = (\cos(t),\sin(t)) + \mbs x_{o_1}, \quad
    t = (\pi/3)\phi_0(\eta)+\pi\phi_1(\eta)+\theta_{12},
    \quad \eta\in[-1,1].
  \end{array}
\end{align*}

\noindent\underline{Domain \#4:}
\vspace{-7pt}
\begin{align*}\small
  \begin{array}{l}
    \mbs x_A=\mbs x_{AB}(-1), \quad \mbs x_B=\mbs x_{AB}(1), \quad \mbs x_C=\mbs x_{AC}(1);
     \\
     \mbs x_{AB}(\xi) = ((1+0.7\cos(2t))\cos(t),\ (1+0.7\cos(2t))\sin(t) ) + (0,0.25),
     \ t = (3\pi/4)\phi_1(\xi),
    \ \xi\in[-1,1]; \\
    \mbs x_{AC}(\eta) = ((1+0.7\cos(2t))\cos(t),\ (1+0.7\cos(2t))\sin(t)) + (0,0.25),
    \ t = 2\pi\phi_0(\eta) + (5\pi/4)\phi_1(\eta), 
    \ \eta\in[-1,1]; \\
    \mbs x_{BC}(\eta) = ((1+0.7\cos(2t))\cos(t),\ (1+0.7\cos(2t))\sin(t)) + (0,0.25),
    \ t = (3\pi/4)\phi_0(\eta) + (5\pi/4)\phi_1(\eta),
    \ \eta\in[-1,1].
  \end{array}
\end{align*}

\noindent\underline{Domain \#5:}
\vspace{-7pt}
\begin{align*}\small
  \begin{array}{l}
    \mbs x_A=\mbs x_{AB}(-1), \quad \mbs x_B=\mbs x_{AB}(1), \quad \mbs x_C=\mbs x_{AC}(1);
     \\
     \mbs x_{AB}(\xi) = ((0.8+0.16\cos(6t))\cos(t),\ (0.8+0.16\cos(6t))\sin(t) ) + (-0.25,0.25),
     \ t = (2\pi/3)\phi_1(\xi),
    \ \xi\in[-1,1]; \\
    \mbs x_{AC}(\eta) = ((0.8+0.16\cos(6t))\cos(t),\ (0.8+0.16\cos(6t))\sin(t)) + (-0.25,0.25),
    \ t = 2\pi\phi_0(\eta) + (4\pi/3)\phi_1(\eta), \\
    \quad \eta\in[-1,1]; \\
    \mbs x_{BC}(\eta) = ((0.8+0.16\cos(6t))\cos(t),\ (0.8+0.16\cos(6t))\sin(t)) + (-0.25,0.25),
    \ t = (2\pi/3)\phi_0(\eta) + (4\pi/3)\phi_1(\eta), \\
    \quad \eta\in[-1,1].
  \end{array}
\end{align*}

\noindent\underline{\bf Section \ref{sec_32} Nonlinear Helmholtz equation:}\\[5pt]
\underline{Domain \#1:}
\vspace{-7pt}
\begin{align*}\small
  \begin{array}{l}
    \mbs x_A=\mbs x_{AB}(-1), \quad \mbs x_B=\mbs x_{AB}(1), \quad \mbs x_C=\mbs x_{AC}(1); \quad
    (X(t),Y(t)) = [0.8+0.28\cos(3t)](\cos t, \sin t); \quad \mbs x_o=(0.5,0.5);
     \\
     \mbs x_{AB}(\xi) = \frac{\sqrt{2}}{2}(X(t)-Y(t), X(t)+Y(t)) + \mbs x_o,
     \quad
     \ t = (2\pi/3)\phi_1(\xi),
    \quad  \xi\in[-1,1]; \\
    \mbs x_{AC}(\eta) = \frac{\sqrt{2}}{2}(X(t)-Y(t), X(t)+Y(t)) + \mbs x_o,
    \quad t = 2\pi\phi_0(\eta) + (4\pi/3)\phi_1(\eta), 
    \quad \eta\in[-1,1]; \\
    \mbs x_{BC}(\eta) = \frac{\sqrt{2}}{2}(X(t)-Y(t), X(t)+Y(t)) + \mbs x_o,
    \quad t = (2\pi/3)\phi_0(\eta) + (4\pi/3)\phi_1(\eta),
    \quad \eta\in[-1,1].
  \end{array}
\end{align*}

\noindent\underline{Domain \#2:}
\vspace{-7pt}
\begin{align*}\small
  \begin{array}{l}
    \mbs x_A=\mbs x_{AB}(-1), \quad \mbs x_B=\mbs x_{AB}(1), \quad \mbs x_C=\mbs x_{AC}(1);
    \quad \mbs x_{o_1}=(0,0), \quad \mbs x_{o_2}=(-0.6,0), \quad \mbs x_{o_3}=(1.05,0);
    \\
    R_1=1.5, \ R_2=1.2,\ R_3=0.45,\ h=0.6; \ \
    \theta_2 = \cos^{-1}\left(\frac{R_2^2 + h^2 - R_1^2}{2 R_2 h}\right), \ \ 
    \theta_A = \cos^{-1}\left(\frac{R_1^2+R_2^2 - h^2}{2R_1 R_2}\right), \ \
    \theta_1 = \theta_2 + \theta_A;
    \\
    \mbs x_{AB}(\xi) = R_2(\cos t, \sin t) + \mbs x_{o_2},
    \quad t=\theta_2\phi_0(\xi),
    \quad \xi\in[-1,1]; \\
    \mbs x_{AC}(\eta) = R_1(\cos t, \sin t) + \mbs x_{o_1}, \quad t=\theta_1\phi_0(\eta),
    \quad \eta\in[-1,1]; \\
    \mbs x_{BC}(\eta) = R_3(\cos t, \sin t) + \mbs x_{o_3}, \quad t = -\pi\phi_0(\eta),
    \quad \eta\in[-1,1].
  \end{array}
\end{align*}

\noindent\underline{Domain \#3:}
\vspace{-7pt}
\begin{align*}\small
  \begin{array}{l}
    \mbs x_A=(-0.5,0.5), \quad \mbs x_B=(1.5,0.5), \quad \mbs x_C=(0.5,1.5); \\
    \mbs x_{AB}(\xi) = (-0.5\phi_0(\xi)+1.5\phi_1(\xi),\
    0.5+[2/(\pi\sqrt{3})-0.21]\sin[\pi\phi_1(\xi)] + 0.07\sin[3\pi\phi_1(\xi)] ),
    \quad \xi\in[-1,1]; \\
    \mbs x_{AC}(\eta) = (\cos t, \sin t) + (0.5,0.5),
    \quad t = \pi\phi_0(\eta)+(\pi/2)\phi_1(\eta),
    \quad \eta\in[-1,1]; \\
    \mbs x_{BC}(\eta) = (\cos t, \sin t) + (0.5,0.5),
    \quad t = (\pi/2)\phi_1(\eta),
    \quad \eta\in[-1,1].
  \end{array}
\end{align*}

\noindent\underline{Domain \#4:}
\vspace{-7pt}
\begin{align*}\small
  \begin{array}{l}
    \mbs x_A=(-0.85,-0.6), \quad \mbs x_B=(1.1,-0.45), \quad \mbs x_C=(-0.1,1.2); \\
    \bm\tau^{AB}=(\tau_x^{AB},\tau_y^{AB})=\frac{\mbs x_B-\mbs x_A}{\|\mbs x_B-\mbs x_A \|},
    \ \mbs n^{AB}=(\tau_y^{AB},-\tau_x^{AB}),
    \ \bm\tau^{BC}=(\tau_x^{BC},\tau_y^{BC})=\frac{\mbs x_C-\mbs x_B}{\|\mbs x_C-\mbs x_B \|},
    \ \mbs n^{BC}=(\tau_y^{BC},-\tau_x^{BC});
    \\
    \mbs x_{AB}(\xi) = \mbs x_A \phi_0(\xi) + \mbs x_B\phi_1(\xi)
    + 0.12\sin[\pi\phi_1(\xi)]\mbs n^{AB},
    \quad \xi\in[-1,1]; \\
    \mbs x_{AC}(\eta) = \mbs x_A\phi_0(\eta) + \mbs x_C\phi_1(\eta),
    \quad \eta\in[-1,1]; \\
    \mbs x_{BC}(\eta) = \mbs x_B\phi_0(\eta) + \mbs x_C\phi_1(\eta)
    + 0.1\sin[6\pi\phi_1(\eta)]\mbs n^{BC},
    \quad \eta\in[-1,1].
  \end{array}
\end{align*}

\noindent\underline{Domain \#5:}
\vspace{-7pt}
\begin{align*}\small
  \begin{array}{l}
    \mbs x_A=\mbs x_{AB}(-1), \quad \mbs x_B=\mbs x_{AB}(1), \quad \mbs x_C=\mbs x_{AC}(1);
    \quad \mbs x_o = (0.75,-\sqrt{3}/4+0.5),
    \\
    \mbs x_{AB}(\xi) = (\cos t, \sin t) + \mbs x_C, \quad t = -(2\pi/3)\phi_0(\xi),
    \quad \xi\in[-1,1]; \\
    \mbs x_{AC}(\eta) = (\cos t, \sin t) + \mbs x_o,
    \quad t = \pi\phi_0(\eta)+(2\pi/3)\phi_1(\eta),
    \quad \eta\in[-1,1]; \\
    \mbs x_{BC}(\eta) = (\cos t, \sin t) + \mbs x_o,
    \quad t = (\pi/3)\phi_0(\eta) + (2\pi/3)\phi_1(\eta),
    \quad \eta\in[-1,1].
  \end{array}
\end{align*}

\section*{Declaration of Generative AI Use}

Claude Code (using the Claude Sonnet 5.5 model) was used to generate an initial version
of the computer code implementing the numerical method presented in this paper.
The authors subsequently reviewed, revised, and validated the AI-generated code to
produce the final implementation. All numerical experiments and simulations reported
in this paper were designed and conducted by the authors, who take full responsibility
for the implementation and the reported results.

\bibliographystyle{plain}
\bibliography{reference,bc,pinn_bc,elm,elm1,elm2,dnn,dnn1,dnn2,dnn3,dnn4,ref1,ml,bc_ref}

\end{document}